\title{\textbf{Vorticity internal transition layers \\ for the Navier-Stokes equations}}
\newcommand{\ep}{\nu}
\providecommand{\Phi}{\textbf{\Phi}}
\providecommand{\rot}{\mathrm{curl} \,}
\providecommand{\dive}{\mathrm{div} \,}
\providecommand{\sup}{\mathrm{sup}}
\providecommand{\inf}{\mathrm{inf}}
\providecommand{\R}{\mathbb{R}}
\providecommand{\N}{\mathbb{N}}
\renewcommand{\phi}{\varphi}
\newtheorem{theo}{Theorem}[section]
\newtheorem{lem}{Lemma}[section]
\newtheorem{prop}{Proposition}[section]
\newtheorem{rem}{Remark}[section]
\newtheorem{defi}{Definition}[section]
\author{Franck Sueur\footnote{Laboratoire Jacques-Louis Lions
Universit\'e Pierre et Marie Curie - Paris 6;
175 Rue du Chevaleret
75013 Paris,
FRANCE }}
\date{\today}
\begin{document}

\maketitle


\begin{abstract}
We deal with the incompressible
Navier-Stokes equations, in two and three dimensions, 
when some vortex patches are prescribed as initial data
 i.e. when there is an internal boundary across which the vorticity is discontinuous.
We show -thanks to an asymptotic expansion- 
that  there is a sharp but smooth
variation of the fluid vorticity into a internal layer
moving with  the flow of the Euler equations;
 as long as this later exists and as $  t  << 1/\ep$, where $\ep  $ is the viscosity coefficient.
\end{abstract}

\section{Introduction and overview of the results}

In this paper we deal with the equations:
\begin{align}
\label{R1} \partial_t v^\ep + v^\ep \cdot \nabla v^\ep + \nabla p^\ep
&= \ep \Delta v^\ep
\\ \label{R2} \text{div} \ v^\ep 
&=0,
\end{align}
where the spatial derivative $x$ runs into the whole space $\R^d$ for $d = 2$ or $3$, 
$v^\ep $ and $p^\ep$ respectively denote  the velocity and the pressure of a fluid and
$\ep \geqslant 0 $ is the viscosity coefficient.
The derivative fields $v^\ep \cdot \nabla$ and $\Delta$ are with respect to $x$ and applied component-wise to the vector-valued function $v^\ep$ whereas $\nabla p^\ep $ denotes the gradient (also with respect to $x$) of the scalar function $p^\ep$.
The equations  (\ref{R1})-(\ref{R2}) are respectively
 Newton's law of motion and mass
conservation for a homogeneous (constant density) 
fluid so that (\ref{R2}) reads as an incompressibility
condition. When $\ep = 0 $ the equations  (\ref{R1})-(\ref{R2}) are referred as  the {\bf Euler} equations whereas  $\ep  >0 $ corresponds to the  {\bf Navier-Stokes} equations.

The study of these equations naturally
depends on the function space in which initial data is  provided, 
and in which solutions are sought.
In this paper we deal with  the class of initial data 
of the  vortex patches.
Roughly speaking such data correspond to Lipschitz initial velocity $v_I$ whose vorticity $\omega_0 := \text{curl}\  v_0$ is discontinuous across a hypersurface of $\R^d$, say given by the equation $ \phi_0 (x)=0 $.
To be more precise we recall first the definition of Holder spaces: 

\begin{defi}[Holder spaces]
\label{defi0}
For an open subset $\mathcal{O}$ of $\R^d$, 
for  $s$ in the set $ \N$ of all natural numbers including $0$ and  a real $0  < r<1$ the Holder space  $C^{s,r}  (\mathcal{O} )$ of the functions of class $C^{s}  (\mathcal{O} )$ such that 
\begin{align*}
  \| u   \|_{  C^{s,r} (\mathcal{O}) } := \text{sup}_{ |\alpha| \leqslant s}   \big(  \|   \partial^\alpha u  \|_{L^\infty ( \mathcal{O}) }  
+ \text{sup}_{ x \neq y \in  \mathcal{O}} \frac{ |\partial^\alpha u (x) -  \partial^\alpha u (y)| }{  |x - y|^r }  \big) <  + \infty  ,
\end{align*}
and  $C^{s,r}_c  (\mathcal{O} )$ the subset of the functions in  $C^{s,r}  (\mathcal{O})$ which are compactly supported.
 \end{defi}
 
 We now define precisely what we mean in this paper by vortex patches as initial data.
 
\begin{defi}[Vortex patches]
\label{defi}
\begin{enumerate}
\item {\bf Smoothness of the boundary.} 
\label{VP1}
Let be given  an  open subset  $\mathcal{O}_{0,+}  $ of   $ \R^d$  of class  $C^{s+1,r}$. 
 This means that there exists a function $\phi_0 \in
C^{s+1,r}(\R^d, \R)$ such that an equation of the boundary  $\partial \mathcal{O}_{0,+}$ 
 is given by  $\partial \mathcal{O}_{0,+} = \{ \phi_0 =0  \}$, such that  $ \mathcal{O}_{0,\pm} = \{ \pm \phi_0 > 0  \}$, where $\mathcal{O}_{0,-}  $ is  the interior of the complementary of  $\mathcal{O}_{0,+}  $, and $  \nabla \phi_0 \neq 0$ in a neighborhood of  $\partial \mathcal{O}_{0,+}$.
 \item {\bf Boundedness of the boundary.} 
\label{VP2}
We assume that the boundary  $\partial \mathcal{O}_{0,+}$  is bounded.
 \item {\bf Smoothness of the velocity and of the vorticity.} 
\label{VP3}
We consider  an initial velocity   $v_0$  in  $L^2_\sigma (\R^d)$ -the space of  $L^2_{loc} (\R^d)$ divergence free vector fields-  whose vorticity  $\omega_0 := \text{curl}\  v_0$ is  in the Holder space  $C^{s,r}_c  (\mathcal{O}_{0,\pm}  ) $, that is a compactly supported vorticity which is 
$C^{s,r}$ on each side of the internal  boundary  $\partial \mathcal{O}_{0,+} = \{ \phi_0 =0  \}$.
 \end{enumerate}
 \end{defi}
 
Such an initial configuration is referred as a vortex patch since in the two dimensional case $d=2$, such an initial vorticity  $\omega_0$ can be for example the characteristic function of a  bounded domain of class  $C^{s+1,r}$, 
 as it was  introduced by Majda in  \cite{majda} (however the more special case of the ellipses was already considered by Kirchoff).

 \begin{rem}
 \rm
One should notice that in the $3$d case the  initial velocity   $v_0$ is automatically in  $L^2_\sigma (\R^d)$ whereas in the $2$d case it is in  $L^2_\sigma (\R^d)$  if and only if the total mass of the vorticity vanishes. 
\end{rem}

Numerous results about existence of solutions for such data (and even for more general ones) are available either for the case of the Euler equations and for the 
Navier-Stokes equations.
The main goal of this paper  is to obtain an expansion for the solutions of the 
Navier-Stokes equations which describes as well as possible their behaviour with respect not only to the variables $t,x$ but also to the viscosity coefficient $\ep $, in the  limit  $\ep t \rightarrow 0$.
 However it is useful to reexamine first the case of the  Euler equations.
 

\subsection{The inviscid case}
\label{compendium}

We first gather from the literature the following compendium of results regarding the inviscid case: 

\begin{theo}[Chemin,  Gamblin and X. Saint-Raymond, P. Zhang, C. Huang, P. Serfati ]
\label{litt}
For  initial velocities  $v_0$  as described in the definition \ref{defi} the following holds true:
\begin{enumerate}

\item {\bf Existence and uniqueness.} 
\label{litt1}
There exists $T>0$ (which can be taken arbitrarily large when $d=2$)
 and a unique solution $v^0  \in L^\infty (0,T;Lip (\R^d))  $ to the  {\bf Euler} equations:
\begin{align}
\label{S1} \partial_t v^0 + v^0 \cdot \nabla v^0 &= \nabla p^0,
\\ \label{S2} \text{div} \ v^0 &=0,
\end{align}
with $v_0$  as initial velocity. From now on we denote $D$ the vectorfield:
\begin{align}
\label{D}
D:= \partial_t  + v^0 \cdot \nabla .
\end{align}
\item {\bf Propagation of smoothness of the vorticity.}
\label{litt2}
Moreover for each $t \in (0,T)$ the vorticity
\begin{align}
 \label{V1}
  \omega^0  :=  \text{curl}_x\  v^0  (t)
 \end{align}
   is  $C^{s,r}_c  (\mathcal{O}_{\pm} (t) )$  where  $\mathcal{O}_{\pm}  (t)$
   are respectively the  transported by the flow of  $ \mathcal{O}_{0,\pm} $ at time $t$ that is
   $\mathcal{O}_{\pm}  (t) :=  \mathcal{X}^0 (t, \mathcal{O}_{0,\pm} ) $
 where $\mathcal{X}^0$ is the flow of particle trajectories defined by
 $  \partial_t  \mathcal{X}^0(t,x) =  v^0 (t, \mathcal{X}^0 (t,x))$ with initial data $ \mathcal{X}^0 (0,x) = x$. 
 
 \item {\bf Propagation of smoothness of the boundary.}
 \label{litt3}
  For each $t \in (0,T)$ the boundary  $\partial \mathcal{O}_{+} (t)$ of the domain  $\mathcal{O}_{\pm}  (t)$   is  $C^{s+1,r}$.
  
\item {\bf Propagation of smoothness of the level function.}
 \label{litt4}
For each $t \in (0,T)$ the boundary  $\partial \mathcal{O}_{+} (t)$ is given by the equation  $\partial \mathcal{O}_{+}(t) = \{ \phi^{0} (t,.)=0  \}$, 
where
 $$ \phi^{0} \in L^\infty (0,T;   C^{1,r}  (\R^d)   ) \cap L^\infty (0,T;   C^{s+1,r}  ( \mathcal{O}_{\pm}  (t)  ))  $$
  verifies 
\begin{eqnarray}
\label{eik1} D  \phi^{0}  =0 ,
\\ \label{eik2}  \phi^{0}|_{ t =0 }  = \phi_0 .
\end{eqnarray}
Moreover  $ \mathcal{O}_{\pm} (t)= \{ \pm \phi^{0} (t,.) > 0  \}$ and there exists $ \eta >0$ such that for 
 $0  \leqslant t \leqslant T$, and $x$ such that $| \phi^{0} (t,x) | < \eta $
 the vector $n(t,x) :=  \nabla_x \phi^{0} (t,x)$ satisfies 
 $n(t,x) \neq 0$.  

\item {\bf Incompressible Rankine-Hugoniot.}
 \label{litt5}
For each $t \in (0,T)$ the function $( \omega^0  . n)(t,.)$ is $C^{0,r}$ on $ \{ | \phi^{0} (t,.) | < \eta \} $.

 \item {\bf Smoothness in time.}
  \label{litt6}
Finally the internal boundary $\partial \mathcal{O}_{+} (t)$ is analytic with respect to time and the restrictions on each side of the boundary of the  flow $\mathcal{X}^0$ are also analytic with respect to time with values in $C^{s+1,r}$.
\end{enumerate}
\end{theo}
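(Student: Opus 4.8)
The plan is to assemble this statement from the cited works of Chemin, Gamblin--Saint-Raymond, P.~Zhang, C.~Huang and Serfati, organising the argument around three mechanisms: the Yudovich well-posedness theory refined by the conormal (``striated'') regularity of Chemin for items (1)--(4), the divergence-free structure of the vorticity for item (5), and a Lagrangian reformulation for the time analyticity of item (6). For item (1) one starts from the Yudovich solution: a compactly supported bounded initial vorticity produces a unique global weak solution $v^0$ whose velocity is log-Lipschitz and whose flow $\mathcal{X}^0$ is a homeomorphism of $\R^d$. To upgrade the velocity to $L^\infty(0,T;\mathrm{Lip}(\R^d))$ one exploits the patch structure: fix a finite family $X_0=(X_{0,\lambda})_\lambda$ of compactly supported divergence-free vector fields of class $C^{s,r}$, tangent to $\partial\mathcal{O}_{0,+}$ and spanning $\R^d$ at each point near the boundary; transport them by the flow, so that each $X_\lambda(t)$ solves $D X_\lambda = X_\lambda\cdot\nabla v^0$ and stays tangent to $\partial\mathcal{O}_+(t)$; and run a bootstrap in which $\|\omega^0(t)\|_{L^\infty}$, a lower bound for $\inf_x\sup_\lambda|X_\lambda(t,x)|$ near the interface, suitable Hölder (or Besov) norms of $X_\lambda(t)$, and the conormal norm $\mathcal{N}(t)$ measuring the regularity of $X_\lambda(t)\cdot\nabla\omega^0(t)$ control one another. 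The only dangerous term is $\|\nabla v^0\|_{L^\infty}$, for which the Biot--Savart law together with paradifferential calculus furnishes the logarithmic estimate $\|\nabla v^0(t)\|_{L^\infty}\le C\big(1+\log(e+\mathcal{N}(t))\big)$; Gronwall then closes the estimates on $[0,T]$, with $T$ arbitrary when $d=2$ since $\|\omega^0(t)\|_{L^\infty}$ is conserved there.

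Items (2)--(5) are then by-products. Feeding the bound back and using that $v^0(t)$ is $C^{s+1,r}$ on each $\mathcal{O}_\pm(t)$ (a consequence of the striated estimate and elliptic regularity for the associated transmission problem), the flow equation $\partial_t\nabla\mathcal{X}^0=(\nabla v^0\circ\mathcal{X}^0)\,\nabla\mathcal{X}^0$ shows that $\mathcal{X}^0(t)$ is $C^{s+1,r}$ in space on each side; hence $\omega^0(t)=\omega_0\circ(\mathcal{X}^0(t))^{-1}$ in $d=2$ (respectively $\nabla\mathcal{X}^0\,\omega_0\circ(\mathcal{X}^0)^{-1}$ in $d=3$) is $C^{s,r}_c(\mathcal{O}_\pm(t))$ and $\partial\mathcal{O}_+(t)=\mathcal{X}^0(t,\partial\mathcal{O}_{0,+})$ is $C^{s+1,r}$, which is (2)--(3). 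For (4), define $\phi^0$ as the solution of the linear transport equation $D\phi^0=0$, $\phi^0|_{t=0}=\phi_0$, i.e.\ $\phi^0(t)=\phi_0\circ(\mathcal{X}^0(t))^{-1}$; on each side it inherits the $C^{s+1,r}$ regularity of the flow, while globally $\nabla\phi^0$ solves $D(\nabla\phi^0)=-(\nabla v^0)^{T}\nabla\phi^0$, and although $\nabla v^0$ jumps across $\{\phi^0=0\}$ its jump is a rank-one term carried by the normal (because $v^0$ is continuous and $\dive v^0=0$), so $\nabla\phi^0$, being itself normal to the level sets, stays globally $C^{0,r}$; thus $\phi^0\in L^\infty(0,T;C^{1,r}(\R^d))\cap L^\infty(0,T;C^{s+1,r}(\mathcal{O}_\pm(t)))$. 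The identities $\mathcal{O}_\pm(t)=\{\pm\phi^0(t,\cdot)>0\}$ are immediate from the transport relation, and a transport inequality for $|\nabla\phi^0|$ keeps it bounded below on $\{|\phi^0(t,\cdot)|<\eta\}$ for $\eta$ and $T$ small, giving $n\ne 0$ there. Item (5) follows: in $d=3$ the relation $\dive\,\omega^0=0$ forbids a surface term in the distributional divergence, so the normal trace $\omega^0\cdot n$ does not jump across $\{\phi^0=0\}$ and, being $C^{0,r}$ on each side, is $C^{0,r}$ on the whole strip $\{|\phi^0(t,\cdot)|<\eta\}$ (in $d=2$ this is trivial with the usual identification of $\omega^0$).

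Item (6) is the theorem of Serfati (see also Gamblin--Saint-Raymond): one passes to Lagrangian coordinates, so that the unknown becomes the flow map — equivalently a $C^{s+1,r}$ embedding $Z(t,\cdot)$ parametrizing $\partial\mathcal{O}_+(t)$ — which solves an autonomous equation $\dot Z=\mathcal{F}(Z)$ in a Banach space, with $\mathcal{F}$ built from the Biot--Savart kernel and the Lagrangian vorticity (constant along trajectories); the key point is that $\mathcal{F}$ is a real-analytic map between the relevant function spaces, the only non-elementary ingredient being the analytic dependence of the singular boundary integral on the boundary as long as it stays embedded and $C^{s+1,r}$, after which the analytic Cauchy--Lipschitz theorem (or complexification of $t$ and an abstract Cauchy--Kowalevski argument) yields analyticity of $t\mapsto Z(t,\cdot)$ and of the one-sided restrictions of $t\mapsto\mathcal{X}^0(t,\cdot)$ with values in $C^{s+1,r}$. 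The heart of the matter, and the main obstacle, is thus twofold: the conormal regularity propagation of the first paragraph — the logarithmic bound on $\|\nabla v^0\|_{L^\infty}$ and the control, via paradifferential calculus, of the commutators between $X_\lambda\cdot\nabla$ and the Biot--Savart operator — and the time-analyticity just described; every other item is a routine consequence once these are available. Since all of this is established in the references above, we take these statements for granted in the sequel.
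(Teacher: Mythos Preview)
Your outline broadly agrees with the paper for items (1), (5), and (6): conormal vector fields transported by the flow, the logarithmic static estimate on $\|\nabla v^0\|_{L^\infty}$ in terms of conormal vorticity norms, a Gronwall closure, the divergence-free structure of $\omega^0$ for the Rankine--Hugoniot condition, and Serfati's Lagrangian ODE argument for time analyticity.

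The substantive gap is item (2). You write that $v^0(t)\in C^{s+1,r}(\mathcal{O}_\pm(t))$ is ``a consequence of the striated estimate and elliptic regularity for the associated transmission problem'' and then deduce the rest from flow regularity. The paper explicitly states that it found \emph{no} proof in the literature of higher-order piecewise $C^{s,r}$ persistence in 3D, and supplies one. The striated estimates yield only \emph{conormal} regularity; passing to full piecewise regularity requires controlling normal derivatives of $\omega^0$ and $\nabla v^0$, and there the nonlocality of the Biot--Savart law across the interface is the obstacle. The paper's proof is an induction on $0\le k\le s$ simultaneously establishing $w^i\in C^{k,r}(\mathcal{O}_\pm)$, $w^i\dagger\omega^0\in C^{k-1,r}(\mathcal{O}_\pm)$, $v^0\in C^{k+1,r}(\mathcal{O}_\pm)$, together with a structural decomposition of $\partial^\alpha\nabla v^0$ as (smoothing term) $+$ (local differential operator in $\omega^0$) $+$ ($\Lambda^{-2}\partial_j$ applied to iterated conormal derivatives of $\omega^0$); piecewise transport estimates are obtained via Rychkov's universal extension operator combined with the McShane--Whitney Lipschitz extension. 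Your flow shortcut is circular: $\mathcal{X}^0\in C^{s+1,r}(\mathcal{O}_\pm)$ needs $v^0\in C^{s+1,r}(\mathcal{O}_\pm)$, which is the claim. Likewise, for item (3) the paper does not go through the flow but proves $C^{s+1,r}$ of the boundary directly, via a parametrisation and control of the iterated conormal derivatives $(w\dagger)^\beta w^i$ (Zhang's argument).

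A smaller difference concerns the global $C^{1,r}$ regularity of $\phi^0$ in item (4). You argue from the transport equation for $n=\nabla\phi^0$ and the rank-one normal structure of the jump of $\nabla v^0$. The paper instead uses the algebraic identity $n(t,\mathcal{X}^0(t,x))=\sum_i\frac{\partial_i\phi_0(x)}{|\nabla\phi_0(x)|}\,(w^j\wedge w^k)(t,\mathcal{X}^0(t,x))$, so that $n\in C^{0,r}(\R^d)$ follows immediately from the already-established $w^i\in C^{0,r}(\R^d)$. Your route is plausible but as written is an assertion rather than an estimate: the coefficient ${}^t\nabla v^0$ in the equation for $n$ is only piecewise $C^{0,r}$, and you have not said how the rank-one jump structure actually closes a $C^{0,r}$ transport estimate for $n$.
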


\begin{rem}
We stress that the vector  $n $  is not an unit vector even if it is so at $t=0$, since it is stretched when time proceeds according to the equation:
\begin{eqnarray}
\label{neq}
D n  = -  {}^{t} (\nabla v^0) . n    .
  \end{eqnarray}
\end{rem}

Let us start a bunch of comments about Theorem  \ref{litt} with the two-dimensional case.
In this case the existence and
uniqueness of a solution of
the Euler equations in the more general case of an initial vorticity
which is a bounded function with compact
support was proved by V. I. Yudovich  in $1963$ cf.  \cite{yudo}.
 The corresponding velocity field is log-lipschitzian and admits
 a  bicontinuous flow $\mathcal{X}$.
 Since in the two-dimensional
case the vorticity is simply transported by the flow: 
 $$ \omega^0 (t, \mathcal{X}^0 (t,x)) =  \omega_0 (x) $$ 
 this implies that in the case of a  vortex patch as initial data
the vorticity $ \omega^0$ at time $t$
remains a vortex patch relative to a domain which is
homeomorphic to the initial domain.
  However if one only use that the initial vorticity
 is a bounded function with compact
support, one can only get that the smoothness of Yudovich's flow $\mathcal{X}$
 is exponentially decreasing: it is in $C^{\exp{- \alpha t }  }$
  where $\alpha$ depends on the initial vorticity  (cf. for example  \cite{bcd}, Theorem $7.26$).
Therefore in the case of a vortex patch, Yudovich's approach only provide that the boundary of
 a vortex patch  is in  $C^{\exp{- \alpha t }}$.
The point is to know how the smoothness of the boundary of the patch really evolves.
Numerical experiments of  Zabusky  \cite{zabu} in $1979$
suggested that singularities of the boundary of the patches would develop, presumably in finite time whereas the ones of Buttke \cite{buttke} suggested a loss of smoothness.
 In $1979$ Majda  \cite{majda} theoretically studied the evolution of  the boundary of piecewise constant vortex patches, by a contour dynamic approach, announcing  local-in-time existence and  conjecturing that there are smooth initial curves such that the curve becomes nonrectifiable in finite time.
   Constantin and Titi   \cite{ct}  studied a quadratic approximation of the equation governing the evolution of the boundary for which  S. Alinhac \cite{alinhacpatch} found some evidence of finite time breakdown.

 As a consequence the  global-in-time persistence of the  initial  $C^{s+1,r}$ smoothness of the boundary was very surprising when proved  by Chemin  \cite{cheminens}  (see also the notes  \cite{che1},  \cite{che2}, \cite{che3} and the survey  \cite{chemin}).
  Actually Chemin's approach (which already yielded  local-in-time existence 
  in  \cite{cheinventiones}, \cite{che4})
is  based on a $2$-local paradifferential  analysis that is on the propagation of smoothness with respect to non-smooth vectorfields.
  The idea of using regularity properties with respect to a family of vector fields goes back to the work of Bony cf.  \cite{sbony}
 and to the work of Alinhac \cite{salinhac} and Chemin \cite{schemin} in the case of non-smooth vector fields.
  The vortex patch problem falls in this latter case since it holds smoothness properties with respect to vector fields  tangential to the boundary of the patch which moves with the velocity of the fluid.
 \begin{rem}
 \label{strat}
This approach  even allows to deal with  more general cases
 since it could apply for example to  initial data which are irregular with respect not only to one hypersurface 
  $ \{  \phi^{0} (t,.) = 0  \}$ but to the whole foliation of the hypersurfaces $ \{ \phi^{0} (t,.) = a  \}$ for $a \in  \R$.
   \end{rem}

 In \cite{bertozzi} Bertozzi and Constantin  succeed to recover global-in-time persistence of the  $C^{s+1,r}$ smoothness of the boundary 
  in the special of constant vortex patches by the contour dynamic approach.
  
  The persistence of piecewise smoothness of the vorticity (Holder regularity up to the boundary) was proved later first by Depauw in the case $s=0$  \cite{depauw},  \cite{depauw2}
 and by Huang \cite{huangremarks} in the general case $s$ in  $ \N$ with a Lagrangian approach.
   \\
 \\  Chemin's approach  was extended to the three dimensional case $d=3$ by Gamblin and X. Saint-Raymond  in  \cite{gamblin2}, but it is only a short time result since 
  in the $3$d case the vorticity $ \omega^0$ is stretched along particle trajectories  
according to the formula 
  $$ \omega^0 (t, \mathcal{X}^0 (t,x)) := D_x \mathcal{X}^0 (t,x) .  \omega_0 (x) =  \omega_0 (x) \cdot \nabla_x \mathcal{X}^0 (t,x) $$ 
 which "solves" the equation
\begin{align}
\label{taneq} D   \omega^0  =  \omega^0  \cdot \nabla v^0 ,
\\ \label{taneq2}   \omega^0 |_{ t =0 }  =  \omega_0 .
\end{align}
A rough  estimate  of the  vorticity stretching is given by
\begin{eqnarray}
\label{stretching} 
    \|   \omega^0 (t)  \|_{L^\infty } & \leqslant&   e^{ \int_0^t  \|  v^0 \|_{Lip }} \  \|   \omega_0  \|_{L^\infty }
  \end{eqnarray}
  Of course in some specific situations the  vorticity stretching vanishes so that the existence and regularity results are in fact global in time  i.e. $T>0$ can be taken arbitrarily large as in the $2d$ case. 
  For instance a particular situation is the one of an axisymmetric initial velocity, as considered by  Gamblin and  Saint-Raymond (and others). We will not specifically consider this case here.

Indeed the statement of the  $3d$ case in Theorem  \ref{litt} is not strictly given in  \cite{gamblin};
roughly speaking  Gamblin and  Saint-Raymond deal with tangential smoothness  in the case $s=0$.
The tangential smoothness in the general case $s \in \N$ was  alluded in the comment $(ii)$ of section $1.d$ of Gamblin and  Saint-Raymond and rigorously proved by P. Zhang in the couple of papers  \cite{zhang},  \cite{zhang2} written in Chinese. 
The persistence of piecewise  $C^{0,r}$ smoothness (the case $s=0$) was proved by Huang in \cite{huangsingular}  by mean of a Lagrangian approach (see also \cite{dutrifoy} section $3.1$).
We didn't found any proof of the persistence of higher order piecewise  $C^{s,r}$ smoothness in the literature. 
This is one reason which leads us to now give a few details of the whole proof of the $3$d case, for sake of completeness, including a proof of the persistence of piecewise  $C^{s,r}$ smoothness in the Eulerian framework of Chemin,  Gamblin and Saint-Raymond.
Another reason is that we will use some extensions of the subsequent methods in the following sections regarding the case of the Navier-Stokes equations.
\\
 \\  Theorem \ref{litt} gathers the results we will need regarding the inviscid case but many other works have been done around the case strictly addressed in Theorem \ref{litt}.
Let us  mention 
      some papers 
 by Chemin and  Danchin about the two-dimensional case for vortex patches with singular boundary. 
By using the pseudo-locality of the Paley-Littlewood theory and transport equations by Log-Lipschitz velocities
 they show that if the initial boundary is regular
      ($C^{0,r}$ in Chemin's works  \cite{che5},  \cite{cheminasterisque} chapter $9$, generalized into 
      $C^{s,r}$ in  Danchin's paper  \cite{danchinsingular})
      apart from a closed subset, it remains regular for all time apart from the closed subset transported by the flow.
 When the singularity of the boundary is a cusp 
 the corresponding velocity is
 Lipschitz what allows 
 Danchin  
 cf. \cite{danchincusp1},  \cite{danchincusp2},  \cite{danchincusp3} to prove
the  global stability of the cusp with conservation of the order (actually he gives a global result of persistence of conormal regularity with respect to vector fields vanishing at a singular point, which generalizes the structure of a cusp).
 
  On the other hand several numerical simulations, based on an adaptive multi-scale
algorithm using wavelet interpolation, are
presented, showing that  corners are unstable  either immediately becoming a cusp in the case of an initial sharp
corner or immediately becoming flat in the case of an initial obtuse corner.

In  \cite{depauw}  (see also  \cite{depauw2}) N. Depauw  addresses the case when the domain of the fluid  is a bounded subset of $\bold R^2$ with smooth boundary. When the boundary of the vortex patch is away from the domain boundary, it will remain so under the evolution, and Chemin's result remains valid without restriction. 
When the vortex patch is tangent to the material boundary (for transversal intersection there is a counterexample of Bahouri and Chemin), the author proves that if the initial vortex patch is of class $C^{1,r}$ there exists a unique solution in this class of vortex patches at least up to some time $T_*>0$. 
He also proves local-in-time existence and uniqueness for several mutually tangent vortex patches in $\bold R^2$.

 In \cite{dutrifoy} Dutrifoy proves local-in-time existence and uniqueness in 
  the case when the domain of the fluid  is a bounded subset of $\bold R^3$ with smooth boundary,
   when the boundary of the vortex patch is away from the domain boundary, without restriction
   and when the vortex patch is tangent to the material boundary, under a technical condition.
   In this later case, his method
   allowed to complete the previously mentioned $2$d local-in-time result by Depauw into a global one (with a slight loss of smoothness).

Finally the  $2$d  result by Depauw was recently  extended to global in time results by Huang in \cite{huangglobal} without loss of smoothness.
 

\subsection{The viscous case}
\label{viscous}

Let us now consider  the Navier-Stokes equations. Numerous works (see for example \cite{sawada}, \cite{dudong},  \cite{germain}, \cite{carlen})
show how the additional term $\ep \Delta v^\ep$  regularizes the initial data.\footnote{It is even crucial in existence theories both  in the approach of Leray and in the one of Kato.}
  Still to the author's knowledge  the smoothing effect has so far been analyzed in a global and isotropic way, and this is precisely one
 main goal of this paper to describe how 
  the smoothing effect is (micro-)localized in the case where vortex patches are prescribed as initial data.
In effect we will show a  {\em conormal} smoothing of the  initial vorticity discontinuity
  into a layer of width $\sqrt{\ep t}$ around
 the hypersurface $  \{  \phi^{0} (t,.) =0 \}$ where the discontinuity has been transported at the time $t$  by the flow of the Euler equations.
 Hence  the fluid vorticity
\begin{eqnarray}
\label{curlep}
 \omega^\ep :=  \text{curl}\  v^\ep
\end{eqnarray}
 depends -locally (innerly)- on an extra "fast" scale: $\frac{\phi^{0}(t,x) }{\sqrt{\ep t}}$ (cf. section \ref{fastscale}) and  will be described by 
an expansion of the form
 \begin{align}
 \label{ex}
  \omega^\ep (t,x) \sim   \Omega (t,x,\frac{\phi^{0}(t,x) }{\sqrt{\ep t}} ) ,
\end{align}
where the {\em viscous profile}  $\Omega  (t,x,X) $ admits some limits when the "fast" variable $X$ goes to the infinities: $X \rightarrow \pm \infty$.

The idea to associate a viscous profile to an inviscid discontinuity seems to date back to 
Rankine  \cite{rankine}  and is widely known when the  discontinuity
is a shock as for instance in compressible fluid mechanics  (see the recent achievements by Gu\`es, M\'etivier, Williams and Zumbrun in  \cite{GMWZ1},   \cite{GMWZ2},  \cite{GMWZ3}, \cite{GMWZ4}, \cite{GMWZ5}).
However since they are
characteristic and conservative
  the vortex patches are very different from  the shocks of the compressible fluid mechanic (which are noncharacteristic and dissipative cf. for instance \cite{metivier}). 
 We therefore would like to precise that we borrow the words "viscous profile" to the setting of shocks profile but that our setting is quite different. For instance extra scales involved are not the same in the two cases.
 
 Still we hope that the approach developped here should be extended to some other setting where smoothing of characteristic conormal singularities occurs included  the gradients jumps of compressible fluids, which are studied in the inviscid case in  \cite{alinhac1}, \cite{alinhac2}, \cite{alinhac3}, \cite{alinhac4}, \cite{tougeron}, \cite{sonic1}, \cite{sonic2}, in meteorology cf. \cite{charve}, the domain walls in ferromagnetism cf. \cite{gues}...

These precisions done let us go back to our vortex patch problem and describe the construction of the viscous profile $\Omega $ involved in the expansion (\ref{ex}).
We will look for a  viscous profile $\Omega $ of the form 
$$ \Omega   (t,x,X) =  \omega^{0}  (t,x) +  \tilde \Omega  (t,x,X),$$
where $ \tilde\Omega (t,x,X)  $ denotes 
  a  perturbation local with respect to the extra scale $X$ so that 
\begin{eqnarray}
    \label{sharp1intro} \lim_{X\rightarrow \pm \infty}    \tilde\Omega   (t,x,X) &=& 0.
 \end{eqnarray}
Hence the Navier-Stokes vorticities  $ \omega^\ep (t,x)$ will be described by  an expansion of the form
 \begin{align*}
  \omega^\ep (t,x) \sim   \omega^0 (t,x) + \tilde \omega^\ep (t,x)
  \text{  where  }
   \tilde \omega^\ep (t,x) :=
    \tilde \Omega (t,x,\frac{\phi^{0}(t,x) }{\sqrt{\ep t}}).
\end{align*}
 The dependence of the perturbation $ \tilde \omega^\ep$ on $\frac{\phi^{0}(t,x) }{\sqrt{\ep t}}$ encodes the "conormal self-similarity" of the layer. 
 Pragmatically the consequences of the condition (\ref{sharp1intro}) on the profile  $ \tilde\Omega (t,x,X)  $ at the level of the function $ \tilde \omega^\ep$ are threefold:
\begin{enumerate}

\item For any $(t,\ep )  \in (0,T) \times \R^*_+ $, $  \tilde \omega^\ep (t,x) \rightarrow 0 $ when $\phi^{0} \rightarrow \pm \infty$. This was actually our motivation to impose the condition (\ref{sharp1intro}) on the profile  $ \tilde\Omega (t,x,X)  $: it sounds natural that the viscous layer is confined to the neighbourhood  of the hypersurface where the inviscid discontinuity occurs.

\item For any $t  \in (0,T)$, for any $x  \in  \R^d  \setminus  \partial \mathcal{O}_{+}  (t)$, 
$  \tilde \omega^\ep (t,x) \rightarrow 0 $ when $\ep \rightarrow 0^+$.
This consequence is directly linked with  another strong underlying motivation to this work that is the issue of the inviscid limit 
 of the Navier-Stokes equation to the Euler ones.
The "strength" of this inviscid limit (that is the functional space where it holds) does not depend only on the presence or not of material boundaries but also 
 on the smoothness of the initial data. 
Basically the more the initial data is smooth the more the convergence is strong.
For smooth data the Navier-Stokes solutions are {\it regular} perturbations of the corresponding Euler solutions  in the inviscid limit (see for instance Swann  \cite{Swann}, Kato  \cite{kato1}, \cite{kato2}) and converge say in any Holder spaces with a rate of order $\ep t$.
We also quote here Masmoudi \cite{masmoudi} for a slight improvement.
At the other end in $2$D when  vortex sheets are prescribed 
as initial data one only knows the weak $L^2$ convergence (cf.  \cite{delort}).
The vortex patches are an intermediary case, first  studied  in
  $2$D by Constantin and Wu in  \cite{constantinwu}.  
In \cite{abidi} Abidi and Danchin found the optimal rate in  $ L^\infty (0,T; L^2 (\R^d)) $, and recently  \cite{masmoudi} extends this result to the $3$D case. 
We also refer to the papers of Hmidi \cite{hmidiholder}, \cite{hmidising} to the study of  $2$D vortex patches (including with singular boundary).
In this direction the novelty here is again to describe locally what happens
(actually the analysis performed here gives optimal estimates of convergence rates in any spaces as a simple byproduct).

\item For any $(x, \ep )  \in  \R^d \times  \R^*_+ $, $  \tilde \omega^\ep (t,x) \rightarrow 0 $ when $t \rightarrow  0^+ $. This yields that the Navier-Stokes vorticities 
  $\omega^\ep $ have the same initial value than the Euler one $\omega^0$.
  Let us mention here that the analysis can be simplified if on the contrary we allow ourselves to choose the initial data for the Navier-Stokes vorticities 
  $\omega^\ep $ since there exists some well-prepared data for which the viscous smoothing is already taken into account  (cf.  section  \ref{bienprep}). 
  
 \end{enumerate}

We will argue (cf.  section  \ref{amplitudes}) that the corresponding expansion of the velocity is  of the form 
\begin{eqnarray}
 v^\ep (t,x) &\sim & v^0  (t,x) + \sqrt{\ep t} \, V (t,x,\frac{\phi^{0}(t,x)}{\sqrt{\ep t} } ),
  \end{eqnarray}
with $V(t,x,X)$
 satisfying
   \begin{eqnarray}
\label{heu24r}   V (t,x,X)   \rightarrow 0 \quad \text{when }  \quad  X \rightarrow \pm \infty. 
   \end{eqnarray}

We will be led  (cf. section \ref{looking})  to consider for the profile $V (t,x,X)$ the  {\em linear} partial  differential equation:
 \begin{equation}
  \label{heu2r}
  L V = 0
  \end{equation}
where the differential operator $L$ is given by
  \begin{eqnarray}
   \label{defL}
  L=  \mathcal{E}  -t (D + A)
   \end{eqnarray}
where $D$ is the vectorfield in (\ref{D}), and
$\mathcal{E}$ and  $A$ are some operators of respective order $2$ and $0$   acting formally on functions $V(t,x,X)$ as follows:
 \begin{eqnarray}
 \mathcal{E} V := a  \partial_X^2  V  +  \frac{X}{2}  \partial_X V  - \frac{1}{2}  V ,
\\ A V :=   V \cdot \nabla_x v^0 
  - 2  \frac{( V \cdot\nabla_x v^0). n }{a  } n .
  \end{eqnarray}
Here  $a $ denotes a function in the space
 \begin{eqnarray}
  \label{defB}
   \mathcal{B} :=
 L^\infty([0,T], C^{0, r} (  \R^d ))  \cap L^\infty (0,T;   C^{s,r}  ( \mathcal{O}_{\pm}  (t)  )) 
 \end{eqnarray}
  such that 
 \begin{equation}
  \label{ell}
 \inf_{[0,T] \times  \R^d} a = c > 0
 \end{equation}
  and such that  
$a = | n |^2$  when  $| \phi^{0}  | < \eta $.

We expect a continuous transition of the viscous fluid velocity $ v^\ep$ and  of the viscous fluid vorticity $\omega^\ep$ (these are the Rankine-Hugoniot conditions), instead of the discontinuity of the inviscid vorticity $\omega^0$.
These continuity conditions would be translated into the following  Dirichlet-Neumann type transmission conditions  for the profile $V (t,x,X)$ on the internal boundary $\{ X=0 \}$ (cf.  section  \ref{wp1}): for any $(t,x) \in (0,T),  \times  \R^d$
 \begin{eqnarray}
 \label{heu21r}  \lbrack V  \rbrack = 0
  \text{ and }    \lbrack  \partial_X V    \rbrack   = - \frac{n \wedge  (\omega^0_+ -  \omega^0_-  )}{a} ,
  \end{eqnarray}
  where  the brackets denote the jump across  $\{ X=0 \}$ that is $ \lbrack f(t,x,X)  \rbrack :=  f  |_{X=0^+  } - f |_{X=0^- }$ and
 $ \omega^0_\pm$ are some well-chosen extensions
 of $ \omega^0 |_{\mathcal{O}_{\pm}(t)  }$.

The transmission conditions  (\ref{heu21r}) are normal for the operator $\mathcal{E} $ which is elliptic  with respect to $X$ thanks to the condition (\ref{ell}). Actually we point out here that because of its unbounded coefficient $X$ the 
operator  $\mathcal{E} $
does not strictly enter in the classical theory of elliptic operators  (with $t,x$ as parameter through the coefficient $a$). 
However we will see that it shares their main features, at least for our purposes.
For instance omitting, to simplify, the dependence  on $t$ (which is here only parametric) we have the following result  (cf.  section  \ref{wp4}):
\begin{prop}
\label{lm}
For any  $f \in   L^2 (\R^d , H^{-1} ( \R))$  and $g  \in    L^2 (\R^d )$ there is exactly one solution   $V  \in L^2 (\R^d , H^1 ( \R))$  of the equation $ \mathcal{E} V = f$ with the transmission conditions  $\lbrack V  \rbrack = 0$
 and $   \lbrack  \partial_X V    \rbrack   = g$  across  $\{ X=0 \}$.
 \end{prop}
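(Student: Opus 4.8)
The plan is to treat $\mathcal{E}V=f$, for each frozen $x\in\R^d$ (so that $a=a(x)$ is a positive constant in the sole remaining variable $X$), as a one-dimensional problem with interface conditions prescribed at $X=0$ and decay imposed at $X=\pm\infty$, to solve it by a variational argument, and then to reassemble the $x$-dependence from the uniformity of the estimates. The key remark is that the drift $\tfrac{X}{2}\partial_X$ is precisely the one attached to the Gaussian weight $\rho=\rho_a(X):=e^{X^2/(4a)}$, in the sense that $a\,\rho'=\tfrac{X}{2}\rho$, whence $\partial_X\!\big(a\,\rho\,\partial_X V\big)=\rho\,\big(a\,\partial_X^2 V+\tfrac{X}{2}\partial_X V\big)$. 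This dictates working in the weighted space
\[
\mathcal{H}_\rho:=\big\{\,W\in H^1(\R):\ \rho^{1/2}W,\ \rho^{1/2}\partial_X W\in L^2(\R)\,\big\},\qquad \|W\|_{\mathcal{H}_\rho}^2:=\|\rho^{1/2}W\|_{L^2(\R)}^2+\|\rho^{1/2}\partial_X W\|_{L^2(\R)}^2 .
\]

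First I would set up the weak formulation. Multiplying $\mathcal{E}V=f$ by $\rho\,\psi$ with $\psi\in\mathcal{H}_\rho$ and integrating by parts on $\{X>0\}$ and on $\{X<0\}$ separately, the interface contribution $a\,[\partial_X V]\,\psi(0)=a\,g(x)\,\psi(0)$ is produced, and the problem becomes: find $V\in\mathcal{H}_\rho$ with
\[
B(V,\psi):=a\!\int_\R\partial_X V\,\partial_X\psi\,\rho\,dX+\frac12\!\int_\R V\,\psi\,\rho\,dX=\ell(\psi)\qquad\text{for all }\psi\in\mathcal{H}_\rho ,
\]
where $\ell(\psi)$ is, up to signs, $-\langle f,\psi\rangle-a\,g(x)\,\psi(0)$. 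The jump $[V]=0$ is then automatic since $\mathcal{H}_\rho\subset H^1(\R)\hookrightarrow C^0(\R)$, an embedding which also renders the trace $\psi\mapsto\psi(0)$ continuous on $\mathcal{H}_\rho$; and since $\rho\geqslant1$ one has $\mathcal{H}_\rho\hookrightarrow H^1(\R)$, so $\ell$ is a bounded functional as soon as $f\in H^{-1}(\R)$ and $g(x)\in\R$. The bilinear form $B$ is symmetric, continuous because $a$ is bounded, and coercive because $B(W,W)=a\,\|\rho^{1/2}\partial_X W\|_{L^2}^2+\tfrac12\,\|\rho^{1/2}W\|_{L^2}^2\geqslant\min(c,\tfrac12)\,\|W\|_{\mathcal{H}_\rho}^2$, using $a\geqslant c>0$. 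Lax--Milgram then yields a unique $V(x,\cdot)\in\mathcal{H}_\rho$; feeding the weak identity first to $\psi\in C^\infty_c(\R\setminus\{0\})$ and then to a general $\psi$ gives $\rho\,\mathcal{E}V=\rho f+a\,g(x)\,\delta_{X=0}$ in $\mathcal{D}'(\R)$, which unravels into $\mathcal{E}V=f$ off $\{X=0\}$, continuity of $V$ there, and --- by identifying the Dirac mass carried by $a\,\partial_X^2 V$ --- the Neumann jump $[\partial_X V]=g(x)$. Since the Lax--Milgram constants depend only on $c$ and $\|a\|_{L^\infty}$, one has $\|V(x,\cdot)\|_{H^1(\R)}\leqslant C\,\big(\|f(x,\cdot)\|_{H^{-1}(\R)}+|g(x)|\big)$ uniformly in $x$, with measurable dependence on $x$ (from the continuous dependence of the Lax--Milgram solution on $a(x)$ and on the data); hence $V\in L^2(\R^d,H^1(\R))$, which settles existence.

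For uniqueness in the whole class $L^2(\R^d,H^1(\R))$ I would show that any $W\in H^1(\R)$ solving the homogeneous problem ($\mathcal{E}W=0$ off $\{X=0\}$, $[W]=[\partial_X W]=0$) vanishes. Such a $W$ is a classical --- in fact analytic --- solution of $a\,W''+\tfrac{X}{2}W'-\tfrac12 W=0$ on all of $\R$; multiplying by $W$ and integrating over $(-R,R)$ gives
\[
a\,\big[\,W'W\,\big]_{-R}^{R}+\frac14\,\big[\,X\,W^2\,\big]_{-R}^{R}=a\!\int_{-R}^{R}(W')^2\,dX+\frac34\!\int_{-R}^{R}W^2\,dX .
\]
The boundary term $\tfrac14\big(R\,W(R)^2+R\,W(-R)^2\big)$ is non-negative and tends to $0$ along a suitable sequence $R_n\to+\infty$ because $W\in L^2(\R)$, while $a\,[W'W]_{-R}^{R}$ is controlled using that $W,W'\to0$ at $\pm\infty$ --- indeed, at each of $X=\pm\infty$ the only non-growing solution of the ODE is Gaussian-decaying, so an $L^2(\R)$ solution in fact lies in $\mathcal{H}_\rho$. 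Letting $R\to+\infty$ forces $a\|W'\|_{L^2}^2+\tfrac34\|W\|_{L^2}^2=0$, hence $W\equiv0$.

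I expect the real difficulty to be conceptual rather than computational: because the drift coefficient $X$ is unbounded, $B$ is \emph{neither} continuous \emph{nor} coercive on the naive space $H^1(\R)$, and the whole scheme rests on replacing it by the Gaussian-weighted space $\mathcal{H}_\rho$, in which --- after the weighted integration by parts --- the first-order term becomes harmless. The remaining technical point is the treatment of the boundary terms at $\pm R$ in the uniqueness estimate, which is handled through the asymptotics of the ODE (equivalently, through the automatic membership in $\mathcal{H}_\rho$ of any $H^1(\R)$ solution); the measurability and the $L^2$-in-$x$ bound for the assembled profile are then routine consequences of the $x$-uniformity of the Lax--Milgram estimate together with $f\in L^2(\R^d,H^{-1}(\R))$ and $g\in L^2(\R^d)$.
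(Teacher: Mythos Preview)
Your approach via the Gaussian weight $\rho_a(X)=e^{X^2/(4a)}$ is natural and genuinely different from the paper's. The paper never introduces this weight; instead it truncates the unbounded drift, replacing $\tfrac{X}{2}\partial_X$ by $\tfrac{\chi_\sigma(X)}{2}\partial_X$ with $\chi_\sigma$ bounded, so that the resulting bilinear form $B_\sigma$ is \emph{continuous} on the unweighted space $H^1(\R)\times H^1(\R)$. Coercivity of $B_\sigma$ follows from the same integration by parts you use for uniqueness, and the constants are uniform in $\sigma$, so one passes to the limit $\sigma\to\infty$ by weak compactness. Thus the paper trades your weighted functional setting for a cutoff-and-limit argument in the flat $H^1$ space.

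There is, however, a real gap in your existence step. After multiplying $\mathcal{E}V=f$ by $\rho\psi$ and integrating, the right-hand side is $-\!\int_\R f\,\rho\psi\,dX - a\,g(x)\,\psi(0)$, not $-\langle f,\psi\rangle_{H^{-1},H^1}-a\,g(x)\,\psi(0)$: the weight $\rho$ cannot be dropped. For this functional to be bounded on $\mathcal{H}_\rho$ one needs $f$ to lie in a Gaussian-weighted negative space, which does not follow from $f\in H^{-1}(\R)$; the embedding $\mathcal{H}_\rho\hookrightarrow H^1$ only tells you that $\psi\mapsto\langle f,\psi\rangle$ is bounded, and if you use \emph{that} as $\ell$, the Lax--Milgram solution satisfies $\rho\,\mathcal{E}V=f$, i.e.\ $\mathcal{E}V=e^{-X^2/(4a)}f$, which is the wrong equation (so your later claim ``$\rho\,\mathcal{E}V=\rho f$'' does not follow from the weak identity as you wrote it). Your scheme does work cleanly when $f$ carries Gaussian decay --- which is precisely the situation in the paper's application after lifting the jump --- and it can be repaired for general $f\in H^{-1}$ by a density argument: your uniqueness computation already gives the unweighted a~priori bound $\|V\|_{H^1}\lesssim \|f\|_{H^{-1}}+|g|$, so existence for a dense class of data (say compactly supported $f$) plus this estimate extends to all of $H^{-1}$. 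The paper's cutoff avoids the issue altogether by never leaving the unweighted setting.

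Your uniqueness argument is correct; in particular the observation that any $H^1$ solution of the homogeneous ODE is automatically Gaussian-decaying (since $W_1=X$ is one explicit solution and the Wronskian is $e^{-X^2/(4a)}$) is exactly what is needed to dispose of the boundary terms.
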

 One can verify that it is actually the kind of equation of the layer created in the very particular case of the stationary $2$d circular vortex patches (see \cite{abidi}). 
 In this case because of the symmetry there are  neither convection nor stretching, and  the norm of the normal vector is conserved so that the profile equation is simply an ODE, whose solutions involve a Gaussian function.

 Of course the full equation  (\ref{heu2r}) is much more intricate. Roughly  speaking 
  for $t>0$ the equation (\ref{heu2r}) is hyperbolic in $t,x$ and parabolic in $t,X$; but degenerates
 for $t=0$ precisely  into the previous elliptic equation. 
 However we will show that the equation  (\ref{heu2r}), with the transmission conditions (\ref{heu21r}) and the conditions (\ref{heu24r}) at infinity are well-posed.
We stress that since the hypersurface $\{t=0\}$ is characteristic for the operator $L$ none initial condition at $t=0$ has to be prescribe for the equation (\ref{heu2r}).
We will use here a $L^2$ setting, for two reasons: first in view to future extensions we want to give a claim hopefully robust. In particular it is well-known since  \cite{brenner} that in (multi-dimensional) compressible fluid mechanics the inviscid system should be tackled in $L^2$-type spaces. This will to robustness is also the reason why we choose to put the emphasis on the velocity in this presentation, more than on the vorticity.
The second reason for  a $L^2$ setting is linked to the degeneracy at $t=0$ of the equation  (\ref{heu2r}), which leads to the existence of parasite solutions. 
For instance if we look for solutions $V$ not depending on $X$ and neglecting the term involving $A$ the equation (\ref{heu2r}) simplifies into the Fuschian  differential equation $t \partial_t V = -  \frac{V}{2}$, which admits an infinity of solutions i.e. 
$V(t) = \frac{C}{ \sqrt{t} }$, for $C \in \R$. 
However only one is in $L^2 (0,T)$, corresponding to $C=0$; and 
we expect that the scaling is enough relevant to  have a solution with $L^2 (0,T)$ smoothness, even in the case of the full equation (\ref{heu2r}).
Let us give a precise statement:  denoting $E_1$ 
  the space $$E_1 := L^2 ( (0,T) \times \R^d , H^1 ( \R) )$$
   we will prove:
\begin{theo}
\label{weakeqi}
For any $f \in E'_1$, for any $g \in  L^2 ((0,T) \times \R^d )$
there exists exactly one solution $V(t,x,X)  \in E_1$ of  $L V = f$  with the transmission conditions $ ( \lbrack  V \rbrack , \lbrack \partial_X V \rbrack ) = (0,g)$ on $\Gamma  :=  (0,T) \times \R^d  \times \{0 \} $. 
In addition the function 
$ \sqrt{t} \|  V(t,.,.)  \|_{L^2 ( \R^d \times \R) }$ is continuous on $(0,T)$.
\end{theo}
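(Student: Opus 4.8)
The plan is to prove Theorem~\ref{weakeqi} by a variational (Lax--Milgram type) argument adapted to the degeneracy of $L$ at $t=0$, following the strategy one uses for Fuchsian or weighted-elliptic problems. First I would reduce to homogeneous transmission conditions: choose a fixed function $W(t,x,X)$, smooth and compactly supported in $X$, with $[W]=0$ and $[\partial_X W]=g$ on $\Gamma$ — for instance $W = \chi(X)\,g(t,x)\,(\text{something linear in }X)$ with $\chi$ a cutoff — so that $V-W$ has zero jumps and solves $L(V-W)=f-LW =: \tilde f \in E_1'$ (using that $W\in E_1$ and that $\mathcal{E}$, $D$, $A$ map $E_1$ continuously into $E_1'$, after integrating the second-order term by parts; the bracket $[\partial_X W]=g$ is exactly what absorbs the distributional $\partial_X^2$ of $W$ at $X=0$). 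From now on the unknown $U:=V-W$ lives in the closed subspace $E_1^{\Gamma}\subset E_1$ of functions with vanishing transmission data.

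Next I would set up the bilinear form. Pairing $LU=\tilde f$ against a test function $\psi$ and integrating by parts in $X$ (the transmission conditions kill the boundary contribution at $X=0$), the operator $\mathcal{E}$ contributes $-\int a\,\partial_X U\,\partial_X\psi + \tfrac12\int X\partial_X U\,\psi - \tfrac12\int U\psi$; crucially, integrating the transport part $-t\,D$ against $tU$-type weights is what exploits the scaling. The right test function is not $\psi=U$ but a weighted one, essentially $\psi = U$ together with a Gronwall/energy identity obtained by testing against $U$ and using $D^* = -D$ (since $\dive v^0=0$, $D=\partial_t+v^0\cdot\nabla$ is skew-adjoint in $L^2_x$, up to the $\partial_t$ which produces the boundary term at $t=T$ and, at $t=0$, the term $\tfrac12\|U(0)\|^2$ — but here the factor $t$ in front of $D$ makes $t\,D$ have the favorable sign $\partial_t(t\,\cdot)$ structure). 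Concretely, multiplying $LU = \mathcal{E}U - t(D+A)U$ by $U$ and integrating over $\R^d\times\R$ one gets, schematically,
\begin{align*}
\frac{d}{dt}\Big(\tfrac{t}{2}\|U(t)\|_{L^2}^2\Big) + t\int a|\partial_X U|^2\,dx\,dX
= \big(\text{lower order}\big) + \langle \tilde f, U\rangle,
\end{align*}
where the $-\tfrac12 U$ term in $\mathcal{E}$ and the $\tfrac12\partial_t$ coming from differentiating $\tfrac{t}{2}\|U\|^2$ conspire to cancel the dangerous $\tfrac{1}{t}$-homogeneous part (this is the linear-algebra miracle behind the parasite-solution discussion: $t\partial_t V = -V/2$ has the $t^{-1/2}$ solution killed precisely in $L^2$). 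Coercivity of $a$ via \eqref{ell} gives control of $\sqrt{t}\,\partial_X U$ in $L^2((0,T)\times\R^d\times\R)$, and absorbing the zeroth-order $A$-term and the $X\partial_X U$ term (the latter handled by writing $X\partial_X U\cdot U = \tfrac12 X\partial_X(U^2)$ and integrating by parts in $X$, producing $-\tfrac12 U^2$) closes a Gronwall estimate on $\sqrt{t}\|U(t)\|_{L^2}$.

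For existence I would run this a priori estimate on a Galerkin scheme (finite-dimensional in $x$ and $X$, say spectral) or, equivalently, verify the hypotheses of the Lions--Lax--Milgram theorem for the pair $(E_1^\Gamma, \mathcal{H})$ where $\mathcal{H}$ is the Hilbert space $L^2$ with the $\sqrt{t}$ weight: the key inequality to check is $\inf_{\psi}\sup_{U} |B(U,\psi)| \gtrsim \|\psi\|$, which is exactly the energy estimate above run backward in time (well-posedness of the adjoint problem). Uniqueness is immediate from the same estimate applied to $\tilde f = 0$, $g=0$: it forces $\sqrt{t}\|U(t)\|_{L^2}\equiv 0$, hence $U\equiv 0$ for $t>0$. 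Finally, the continuity of $t\mapsto \sqrt{t}\|V(t,\cdot,\cdot)\|_{L^2(\R^d\times\R)}$ follows from the energy identity: the right-hand side of the displayed differential inequality is integrable in $t$, so $t\mapsto \tfrac{t}{2}\|V(t)\|^2$ is absolutely continuous, whence continuous, on $(0,T)$; one then upgrades to continuity of $\sqrt{t}\|V(t)\|$ by taking square roots.

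The main obstacle I anticipate is the unbounded coefficient $X$ in $\mathcal{E}$: the term $\tfrac{X}{2}\partial_X V$ is not controlled by the $H^1_X$ norm alone, so the natural energy space really ought to carry a Gaussian-type weight $e^{X^2/(4a)}$ (as the circular-patch ODE with its Gaussian profile already suggests). I would either (i) show, using the identity $\tfrac{X}{2}\partial_X U\cdot U = \tfrac14 \partial_X(XU^2) - \tfrac14 U^2$, that the bad term is in fact harmless at the $L^2$-energy level (the boundary term at $X=\pm\infty$ vanishes for $E_1$-functions, as does the one at $X=0$ by $[U]=0$), which is the clean way and seems to work; or, if tightness in $X$ is needed for compactness in the Galerkin limit, (ii) work in the weighted space and verify that $\mathcal{E}$ is still maximal accretive there (this is essentially the Ornstein--Uhlenbeck structure of $a\partial_X^2 + \tfrac{X}{2}\partial_X - \tfrac12$). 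Either way, the fact — already flagged after Proposition~\ref{lm} — that $\mathcal{E}$ "shares the main features" of elliptic operators is what makes the scheme go through; the genuinely new point over Proposition~\ref{lm} is the coupling with the degenerate-at-$t=0$ transport $t(D+A)$, handled by the weighted energy identity.
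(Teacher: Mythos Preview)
Your overall strategy is sound and runs parallel to the paper's: an energy identity obtained by testing $LU$ against $U$, Gronwall, then existence by duality. But two points deserve comment.

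First, on the unbounded coefficient $X$. Your option~(i) has a genuine gap: the claim that ``the boundary term at $X=\pm\infty$ vanishes for $E_1$-functions'' is false. For $U\in H^1(\R)$ one has $U(X)\to 0$ as $|X|\to\infty$, but \emph{not} $XU(X)^2\to 0$; a sum of bumps of height $2^{-n/2}$ and width~$1$ centered at $X_n=2^n$ gives an $H^1$ function with $X_nU(X_n)^2\approx 1$. More to the point, the expression $\int X\,\partial_X U\cdot U\,dX$ is not even well-defined for general $U\in H^1(\R)$, so the bilinear form you would feed to Lax--Milgram or Lions is not continuous on $E_1\times E_1$. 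Your option~(ii) (Gaussian weight, Ornstein--Uhlenbeck structure) would repair this, but then the solution space is no longer $E_1$ and you must argue separately that the solution actually lies in $E_1$. The paper instead truncates: it replaces $X$ by a bounded $\chi_\sigma(X)$ with $\|\chi_\sigma'\|_{L^\infty}<1$, runs the whole argument for the cut-off operator $L_\sigma$ (now the bilinear form is honest on $E_1$ and coercivity survives since $1+\tfrac12\chi_\sigma'\geq\tfrac12$), obtains estimates uniform in~$\sigma$, and passes to the limit $\sigma\to\infty$ by weak compactness. This is the cleanest fix and you should adopt it.

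Second, on the transmission conditions. Your lifting reduction is legitimate, and once $[U]=0$ (automatic for $U\in L^2_{t,x}H^1_X$) the Neumann-type condition $[\partial_X U]=0$ becomes a natural boundary condition in the variational formulation, so no extra work is needed there. The paper takes the opposite route: it keeps the inhomogeneous jump and gives a weak meaning to $[\partial_X V]$ for $V\in E_1$ with $L_\sigma V\in H^{-1}$ via a Green identity and a Friedrichs density lemma. Your route is shorter; the paper's has the advantage of making the trace map $V\mapsto[\partial_X V]$ an intrinsic object, which it then reuses. Either is fine.

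Two minor corrections: in your schematic energy identity the viscous term should be $\int a|\partial_X U|^2$, not $t\int a|\partial_X U|^2$ (recompute: $\mathcal E$ carries no factor of $t$); and the sign bookkeeping gives the clean form
\[
\frac{d}{dt}\Big(\tfrac{t}{2}\|U(t)\|_{L^2}^2\Big)+\int a|\partial_X U|^2+\tfrac14\|U\|_{L^2}^2=-\langle\tilde f,U\rangle - t\langle AU,U\rangle,
\]
from which Gronwall and the continuity of $t\mapsto\sqrt t\,\|U(t)\|$ follow as you say. For existence the paper does exactly what you propose under the name ``Lions'': it proves the same a~priori estimate for the adjoint $L_\sigma^*$ (which is of the same form with time reversed) and invokes Riesz via a second Green identity.
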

The  equation  $L V = f$  is satisfied  in the sense of distributions on both sides  $\mathcal{D}_\pm := (0,T) \times \R^d  \times \R^*_\pm$ of the hypersurface $\Gamma  :=  (0,T) \times \R^d  \times \{0 \} $. Since  $V$ is in $E_1$ the jump $ \lbrack  V \rbrack $ is in $L^2  (\Gamma)$. The sense given to the jump of the derivative $ \lbrack \partial_X V \rbrack$ is actually a part of  the problem.
The idea is to give some sense  by using the equation put in a weak form thanks to Green's formula.
We will explain this  in details in section \ref{wp}.

In the case of the transmission conditions (\ref{heu21r}) the source terms are orthogonal to $n$. It is then possible to use the uniqueness part of the previous theorem to prove that  the function $  V (t,x,X)\cdot  n (t,x) $ vanishes identically. This  orthogonality condition is self-consistent with the incompressibility condition  (see section \ref{other}) and with the linearity of the equation (see section \ref{Transparency}).

We are now interested in the smoothness of the solution $V$ given by Theorem \ref{weakeqi}. It is judicious to look again at the associated elliptic problem first.
We will prove that the solution inherits the  smoothness with respect to the usual variables $t,x$  from the coefficients; and which are piecewise smooth with respect to the fast variable $X$. 
 To be more precise, let us denote  $p-\mathcal{S}  (\R)$  the space of  the functions $f(X)$ whose restrictions to the half-lines  $ \R_\pm$ are in the Schwartz space of rapidly decreasing functions, and $\mathcal{A}$ the space (of  the functions $f(t,x,X)$):
 \begin{eqnarray*}
     \mathcal{A} := L^\infty \Big((0,T), C^{0, r} \big(  \R^d , p-\mathcal{S} (  \R) \big)\Big)
  \cap L^\infty \Big(0,T;   C^{s,r}   \big(  \mathcal{O}_{\pm}  (t)   , p-\mathcal{S} (  \R) \big)\Big).
 \end{eqnarray*}
  In section \ref{wp4} we will prove:
\begin{theo}
\label{propva0}
The solution $ V  (t,x, X)$ of  the equation $ \mathcal{E} V = f$ with the transmission conditions (\ref{heu21r})
 is in $ \mathcal{A}$. 
 \end{theo}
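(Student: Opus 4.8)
The plan is to treat Theorem \ref{propva0} as a one–dimensional problem in the fast variable $X$, in which $t$ and $x$ enter only parametrically through the coefficient $a(t,x)$ and the data $f(t,x,\cdot)$, $g(t,x)$; to solve this ODE explicitly on each half–line $\R_\pm$ and glue the two pieces via the transmission conditions; and finally to recover the $(t,x)$–regularity by differentiating the equation, since $\mathcal{E}$ is differential only in $X$ and therefore creates no smoothness in $x$, all of it being inherited from $a$, $f$ and $g$.

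First I would note that $\mathcal{E}$ has the explicit odd homogeneous solution $V_1(X)=X$ (indeed $a\cdot 0+\frac{X}{2}\cdot 1-\frac12 X=0$). Abel's formula applied to $a\partial_X^2+\frac X2\partial_X-\frac12$ gives a Wronskian proportional to $e^{-X^2/(4a)}$, so reduction of order produces a second, linearly independent solution $V_2$ with $V_2(X)\sim \frac{2a}{X^2}e^{-X^2/(4a)}$ as $X\to+\infty$; thus on $\R_+$ the space of decaying homogeneous solutions is one–dimensional and spanned by a Gaussian-type function, and likewise on $\R_-$ (using $X\mapsto -X$). For the inhomogeneous equation I would write, on each half–line, a variation–of–parameters particular solution $V_p^\pm$ tailored to decay at $\pm\infty$; integrating by parts against the Gaussian Wronskian shows $V_p^\pm$ is rapidly decreasing whenever $f$ is, with every seminorm bounded by seminorms of $f(t,x,\cdot)$ and by $a(t,x)$, $1/c$. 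The general $p\text{-}\mathcal{S}(\R)$ solution on $\R_\pm$ is then $V_p^\pm+\alpha_\pm V_2^\pm$, and the two scalars $\alpha_\pm$ are fixed by the transmission conditions $[V]=0$, $[\partial_X V]=g$ of (\ref{heu21r}): this is a $2\times2$ linear system whose determinant is non–zero — this is exactly the uniqueness already supplied by Proposition \ref{lm}, or it can be checked directly. One obtains, for each $(t,x)$, a solution $V(t,x,\cdot)\in p\text{-}\mathcal{S}(\R)$ with all $p\text{-}\mathcal{S}$–seminorms controlled, uniformly for $t\in[0,T]$, by the seminorms of $f$ and $g$ and by $a$, $c$; by uniqueness in Proposition \ref{lm} it coincides with the $L^2(\R^d,H^1(\R))$–solution given there.

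Next I would get the $(t,x)$–regularity by induction on the order of differentiation. If $\partial$ is $\partial_t$ or a space derivative, then $\partial V$ solves $\mathcal{E}(\partial V)=\partial f-(\partial a)\,\partial_X^2 V$ with $[\partial V]=0$, $[\partial_X\partial V]=\partial g$, and more generally $\partial^\beta V$ solves an equation of the same type whose right–hand side involves only derivatives of $f$ and $a$ of order $\le|\beta|$ and derivatives of $V$ of order $<|\beta|$ in $x$, composed with $\partial_X^2$. Since $a\in\mathcal{B}$ and $f\in\mathcal{A}$ are $C^{s,r}$ on each $\mathcal{O}_\pm(t)$, and $g=-n\wedge(\omega^0_+-\omega^0_-)/a$ is $C^{s,r}$ as well, feeding these right–hand sides — which, by the induction hypothesis, are $p\text{-}\mathcal{S}(\R)$–valued — into the solution operator of the previous step yields $\partial^\beta V\in L^\infty\big(0,T;C^{0}(\mathcal{O}_\pm(t),p\text{-}\mathcal{S}(\R))\big)$ for $|\beta|\le s$. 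The Hölder estimates on the top–order $x$–derivatives, as well as the global $C^{0,r}$ regularity of $V$ itself (where $a$, $f$, $g$ are only globally $C^{0,r}$), follow from the explicit solution formula, which depends in a locally Lipschitz way on $a$ — uniformly thanks to $\inf a\ge c>0$ — and linearly on $(f,g)$. This gives $V\in\mathcal{A}$.

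The step I expect to be the main obstacle is the analysis of the scalar operator $\mathcal{E}$ on the half–lines: because of the unbounded coefficient $X\partial_X$ it is not covered by the standard elliptic (or ODE) boundary–value theory, and one must show by hand both that the relevant solution branches have genuine Gaussian decay and, crucially, that the whole family of $p\text{-}\mathcal{S}(\R)$–seminorms of the solution is controlled by those of the data, uniformly in the parameters $(t,x)$ and in $t\in[0,T]$; this uniformity is precisely what renders the subsequent bootstrap in $(t,x)$ routine rather than delicate. An alternative to the explicit formulas for the decay in $X$ would be weighted energy estimates — testing $\mathcal{E}V=f$ against $X^{2k}V$ and against its $\partial_X$–derivatives, the commutators with $X\partial_X$ being lower order, so that one propagates $X^kV,\ X^k\partial_X^m V\in L^2$ for all $k,m$ — but the explicit route also delivers the Lipschitz dependence on $a$ needed for the Hölder regularity in $(t,x)$.
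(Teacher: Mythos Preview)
Your approach is correct and takes a genuinely different route from the paper. You solve the ODE in $X$ explicitly --- identifying the growing solution $X$ and a Gaussian-decaying one by reduction of order, building a Schwartz particular solution by variation of parameters, and fixing the two free constants through the $2\times 2$ transmission system --- then read off the $(t,x)$-regularity from the smooth dependence of this formula on $a$ (uniformly on $\{a\ge c\}$) and its linearity in $(f,g)$. The paper instead works variationally: it lifts to homogeneous transmission conditions, cuts off the unbounded coefficient $\frac{X}{2}\partial_X$ by a truncation $\chi_\sigma$, applies Lax--Milgram in $L^2(\R^d,H^1(\R))$, obtains the $C^{s,r}$ regularity in $x$ by Littlewood--Paley localization (applying $\Delta_j$ to the equation and estimating the commutator $[a_0,\Delta_j]\partial_X^2$ by paraproduct, Lemma~\ref{commumu}), recovers rapid decay in $X$ by a separate induction on $X^k\partial_X^l V$, and handles piecewise smoothness via Rychkov extension. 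Your route is more elementary and delivers the Gaussian profile structure directly (the paper itself remarks that this is what one sees for the circular patch), but it is tied to the specific constant-in-$X$ operator $\mathcal{E}$; the paper's energy/paradifferential framework is precisely what gets reused for the full time-dependent operator $L=\mathcal{E}-t(D+A)$ in Theorem~\ref{propva}, where no closed-form solution is available, and that robustness is the real payoff of the heavier machinery.
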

The main idea of the proof is to use a spectral localization with respect to $x$, which is here parametric. The point is that this process is compatible with some classical elliptic arguments used to get smoothness with respect to $X$.

 We will be able to prove the same for the full equation (\ref{heu2r}):
\begin{theo}
\label{propva}
The solution $ V  (t,x, X)$ of  the equation (\ref{heu2r})  with the transmission conditions (\ref{heu21r})
 is in $ \mathcal{A}$. 
 \end{theo}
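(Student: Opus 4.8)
The plan is to reduce Theorem \ref{propva} to Theorem \ref{propva0} by a bootstrap argument in which the nonlocal-in-$x$ and first-order-in-$t$ contributions, gathered in $t(D+A)V$, are progressively absorbed into the right-hand side. Writing the equation (\ref{heu2r}) as $\mathcal{E}V = t(D+A)V =: f$, we already know from Theorem \ref{weakeqi} that $V \in E_1 = L^2((0,T)\times\R^d, H^1(\R))$, hence $f$ lies at worst in $L^2((0,T)\times\R^d, H^{-1}(\R))$, and the goal is to upgrade this, step by step, to $f \in \mathcal{A}$, which then forces $V\in\mathcal{A}$ through the elliptic estimates for $\mathcal{E}$.

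First I would treat the regularity in the fast variable $X$, which is the genuinely elliptic direction. Because the $X$-part of $L$ is the Ornstein--Uhlenbeck-type operator $a\,\partial_X^2 + \tfrac{X}{2}\partial_X - \tfrac12$, multiplying the equation by $X^{2m}\partial_X^{2k}V$ and integrating by parts produces lower-order terms with the favourable sign: against $X^{2m}V$, the term $\tfrac{X}{2}\partial_X V$ integrates to $-\tfrac{2m+1}{4}\int X^{2m}V^2$. One thus obtains weighted $H^k_X$ estimates with polynomial weights of arbitrary order, and iterating in $k$ and $m$ --- exactly as for $\mathcal{E}$ alone in the proof of Theorem \ref{propva0} --- yields, for each fixed $(t,x)$ with $t>0$, that $V(t,x,\cdot)$ and all its $X$-derivatives lie in $p\text{-}\mathcal{S}(\R)$, with bounds controlled by $\sqrt{t}$-weighted $L^2$ norms of $V$ and of finitely many $x$-derivatives of the coefficients.

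Next I would recover the Hölder regularity in $x$ by a parametric Littlewood--Paley decomposition $V = \sum_j \Delta_j V$, which, as in Theorem \ref{propva0}, is compatible with the $X$-elliptic estimates. Applying $\Delta_j$ to $LV=0$ gives $L(\Delta_j V) = [\Delta_j,\, t\, v^0\cdot\nabla_x]V + \Delta_j(tAV)$, and the commutator is handled by Bony's paraproduct calculus; since $a$, the extensions $\omega^0_\pm$, the vorticity $\omega^0$ and the velocity $v^0$ are only piecewise $C^{s,r}$ (globally $C^{0,r}$), these estimates must be carried out in the two-local, conormal framework of Chemin, Gamblin and Saint-Raymond, i.e. separately on $\mathcal{O}_\pm(t)$ and globally at low regularity, using that the fast variable $X$ already carries the normal direction so that differentiation "on each side'' is the admissible tangential differentiation. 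Energy estimates for each dyadic block in the $(t,X)$ variables --- parabolic in $X$, Fuchsian and characteristic at $t=0$, hence requiring the $\sqrt{t}$ weight and no initial datum, as in Theorem \ref{weakeqi} --- then give uniform-in-$j$ bounds that sum back, via the Littlewood--Paley characterization of $C^{s,r}$, to $V\in L^\infty(0,T; C^{s,r}(\mathcal{O}_\pm(t), p\text{-}\mathcal{S}(\R)))$ and $V\in L^\infty((0,T), C^{0,r}(\R^d, p\text{-}\mathcal{S}(\R)))$; the time regularity is finally read off from the equation itself, $\partial_t V = \tfrac1t(\mathcal{E}V - t\, v^0\cdot\nabla_x V - tAV)$, once its right-hand side is known to be $\mathcal{A}$-regular.

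The main obstacle is the interplay of the three characters of $L$ in the bootstrap: the presence of $\partial_t$ inside $D$ prevents a clean decoupling of "elliptic in $X$ with everything else as source'', so one cannot simply invoke Theorem \ref{propva0}; instead one must run the $X$-gain, the $x$-gain and the $t$-gain in a carefully ordered loop, at each stage controlling the degeneracy at $t=0$ through the $\sqrt{t}$-weighted norms and preserving the conormal bookkeeping of the $C^{s,r}$-on-each-side regularity under the dyadic decomposition. Checking that each pass gains a definite amount of regularity without losing control near $t=0$ or across $\{X=0\}$, and that the loop closes, is the technical heart of the argument.
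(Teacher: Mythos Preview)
Your overall plan --- write $\mathcal{E}V = t(D+A)V$, gain $X$-regularity by weighted elliptic estimates, gain $x$-regularity by Littlewood--Paley, then read off time regularity --- identifies the right building blocks, but it misses the specific mechanism the paper uses to close the loop, and without that mechanism the bootstrap you describe does not obviously terminate.

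The difficulty you yourself flag in the last paragraph is real and is not resolved by ``running the gains in a carefully ordered loop''. The source term $t(D+A)V$ contains $t\partial_t V$; to treat $\mathcal{E}V=f$ elliptically you would need $f$ already in the target space, which requires control of $\partial_t V$; but you propose to obtain $\partial_t V$ only at the end, from the equation. Any finite bootstrap in $t$-regularity loses one $D$-derivative per pass and there is no smallness or gain to absorb it. The paper's way out (Theorem~\ref{weakeqana}) is not a bootstrap at all but a Cauchy--Kowalevskaya type argument: it uses crucially that the coefficients $a$ and $A$ are \emph{analytic along the material field} $D$ (point~\ref{litt6} of Theorem~\ref{litt}, i.e.\ $a,A\in\mathcal{B}_D$). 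One applies $D^k$ to the equation for every $k$, exploits the commutation $[D^k,tD]=kD^k$ to get
\[
(\mathcal{E}-k)\,V^{[k]} \;=\; f^{[k]} \;+\; t\,V^{[k+1]} \;+\; tA\,V^{[k]},
\]
and then performs $X$-energy estimates on the whole family $(V^{[k]})_{k\ge 0}$ simultaneously, summing with weights $1/(k!\,C^k)$. The factor $t$ in front of $V^{[k+1]}$ together with the improving coercivity constant $k+\tfrac34$ of $\mathcal{E}-k$ is exactly what makes the infinite sum converge for $t$ small (and then one iterates in time slices). This yields $V\in\mathcal{C}_D$ first, and only afterwards does one run the spectral localization in $x$ and the $X^k\partial_X^l$ induction, along the lines you sketch. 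In short: the missing idea in your proposal is the $D$-analyticity of the coefficients and the resulting summed, weighted family of estimates for all $D^k V$ at once; a finite-step bootstrap in $t$ will not close here.
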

On the opposite to Theorem  \ref{weakeqi} we will use here the particular properties of the 
  equation (\ref{heu2r}) throught the point \ref{litt6} of Theorem \ref{litt}. To explain this let us define for any Frechet space $E$ of functions depending on $t,x$ and possibly on $X$ the space
  \begin{eqnarray*}
  E_D := \{ f \in E / \ \exists C > 0 /  \   ( \frac{ D^k f}{C^k k! }   )_{ k  \in \mathbb{N} }   \text{ is bounded in } E \} .
  \end{eqnarray*}
Thanks to  the point \ref{litt6} of Theorem \ref{litt} we will be able to construct (see section \ref{wp1})
 the  extensions $\omega^0_\pm$ of the  vorticities  and the function $a$ in the space
$ \mathcal{B}_D$.
As a consequence we will actually prove in section \ref{wp'} that $V$ is even in $ \mathcal{A}_D$.

  The vorticity profile $ \Omega$ in the expansion (\ref{ex}) is then constructed  as
   \begin{eqnarray}
 \Omega (t,x,X) &:=&
  \omega^{0}_\pm  (t,x) +    n (t,x)   \wedge \partial_X V  (t,x,X)  \text{ for }  \pm X > 0.
\end{eqnarray}

If piecewise smoothness of the initial data is sufficient it is possible to go on with the expansion  with respect to $\ep t$ of the solutions of the Navier-Stokes equations. At the extreme limit 
  if the initial data is piecewise smooth on each side of the interface  $\{ \phi^{0} = 0  \}$
-that is if  $s = + \infty$- then it  is possible to write a complete  formal asymptotic expansion of the vorticity of the form:
\begin{eqnarray}
\label{complete1intro}
 \omega^\ep (t,x) =   \sum_{j \geqslant 0 }  \sqrt{\ep t}^j   \  \Omega^{j  }  (t,x,\frac{\phi^{0}(t,x)}{\sqrt{\ep t} } )  +O(\sqrt{\ep t}^\infty)  ,
 \end{eqnarray}
where the first  profile $\Omega^{0 } $ is the one previously
 constructed:  $\Omega^{0 } := \Omega $.
 This construction will be achieved in section
 \ref{hoprofile}. 

The stability of these expansions will be tackled in  section
 \ref{stab}.
To describe the results we introduce the set $\mathcal{F}$ of the families $(f^\ep (t,x) )_\ep $ of the smooth functions such that  for any $s'  \in \N$, for any $r'  \in (0,1)$, the sequence $$( \sqrt{\ep t}^{s'+r'}  \| f^{\ep }  \|_{L^\infty ( (0,T)  , C^{s',r'} (\R^3 )}     )_{0 < \ep  < \ep_0}$$ is bounded. 

\begin{theo}
\label{TheoStab}
There exists $\ep_0 > 0$ such that for $0 < \ep   < \ep_0$ for all $k \in \N$ for any $(t,x)  \in (0,T) \times \R^d $
 \begin{eqnarray}
 \label{vortexstab}
 \omega^\ep (t,x) =  \sum_{j = 0 }^k  \sqrt{\ep t}^j   \  \Omega^{j  }  (t,x,\frac{\phi^{0}(t,x)}{\sqrt{\ep t} } ) +  \sqrt{\ep t}^{k+1 }  \omega^{\ep }_R
 \end{eqnarray}
 with  $(\omega^{\ep }_R )_{0 < \ep  < \ep_0}$ in $\mathcal{F}$. 
 \end{theo}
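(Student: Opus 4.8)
The plan is to set up a fixed-point / energy argument for the remainder, after subtracting off a sufficiently long truncated approximate solution. First I would fix $k\in\N$ and, using Theorems \ref{propva} and \ref{propva0} (more precisely the $\mathcal{A}_D$ smoothness established in section \ref{wp'}) together with the higher-order profiles $\Omega^{j}$ constructed in section \ref{hoprofile}, build an approximate \emph{velocity} $v^\ep_{app}(t,x) := v^0(t,x) + \sum_{j=1}^{N}\sqrt{\ep t}^{\,j}\,V^{j}(t,x,\tfrac{\phi^0(t,x)}{\sqrt{\ep t}})$ carried out to some order $N=N(k)$ strictly larger than $k$, whose curl agrees with $\sum_{j=0}^{k}\sqrt{\ep t}^{\,j}\Omega^{j}$ up to terms that are $O(\sqrt{\ep t}^{\,k+1})$ in the $\mathcal{F}$-sense. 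The profile equations (\ref{heu2r}) and their higher-order analogues are designed exactly so that, when $v^\ep_{app}$ is plugged into (\ref{R1})--(\ref{R2}), the residual $\rho^\ep := \partial_t v^\ep_{app} + v^\ep_{app}\cdot\nabla v^\ep_{app} + \nabla p^\ep_{app} - \ep\Delta v^\ep_{app}$ is small; I would check that $\rho^\ep = O(\sqrt{\ep t}^{\,N})$ in every Hölder norm rescaled as in the definition of $\mathcal{F}$, and that $\dive v^\ep_{app} = O(\sqrt{\ep t}^{\,\infty})$ (or can be corrected by a lower-order divergence-free adjustment, using the orthogonality $V\cdot n\equiv 0$ noted after Theorem \ref{weakeqi}).

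Next I would write the exact Navier-Stokes solution as $v^\ep = v^\ep_{app} + \sqrt{\ep t}^{\,N}\, w^\ep$ and derive the equation satisfied by the perturbation $w^\ep$: it is a linearized Navier-Stokes system of the form $\partial_t w^\ep + v^\ep_{app}\cdot\nabla w^\ep + w^\ep\cdot\nabla v^\ep_{app} + \nabla q^\ep - \ep\Delta w^\ep = -\sqrt{\ep t}^{-N}\rho^\ep + (\text{quadratic terms in }w^\ep)$, with $\dive w^\ep = 0$ and zero initial data. The linear part has a transport term with Lipschitz (in fact, piecewise smooth across $\{\phi^0=0\}$) coefficient $v^\ep_{app}$, a zero-order term bounded in $L^\infty$ uniformly in $\ep$ by (\ref{stretching})-type estimates, and the good sign of the viscous term; the forcing is $O(\sqrt{\ep t}^{\,N-k-1})$ by construction. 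A Grönwall estimate in $L^2$ then controls $\|w^\ep(t)\|_{L^2}$ on $(0,T)$, and to upgrade to the Hölder-type bounds demanded by $\mathcal{F}$ I would differentiate the equation, commute the derivatives through the transport operator (picking up only $\|v^\ep_{app}\|_{Lip}$, uniformly bounded), and run energy estimates at each order $s'$, absorbing the $\ep$-weighted spatial derivatives into the dissipation; the $\sqrt{\ep t}^{\,s'+r'}$ weights in the definition of $\mathcal{F}$ are precisely tuned to the anisotropic scaling $X = \phi^0/\sqrt{\ep t}$ so that each $x$-derivative falling on a profile costs a factor $1/\sqrt{\ep t}$ but is compensated. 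Finally, setting $\omega^\ep_R := \sqrt{\ep t}^{\,N-k-1}\rot w^\ep + (\text{the } O(\sqrt{\ep t}^{\,k+1})\text{ discrepancy between }\rot v^\ep_{app}\text{ and the truncated profile sum})$ yields (\ref{vortexstab}) with $(\omega^\ep_R)$ in $\mathcal{F}$, choosing $\ep_0$ small enough that the existence time of $w^\ep$ (a priori depending on $\ep$ through the quadratic terms, scaled by $\sqrt{\ep t}^{\,N}$) extends past $T$.

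The main obstacle I expect is the interplay between the internal layer and the derivative estimates: the coefficients $v^\ep_{app}$ and the forcing are only \emph{piecewise} smooth across $\Gamma = \{\phi^0 = 0\}$ in the slow variables while being smooth in the fast variable $X$, so naive high-order $x$-energy estimates do not close — one must either work conormally (differentiating along vector fields tangent to $\{\phi^0 = a\}$, in the spirit of Chemin's $2$-local calculus recalled in the introduction, which the paper is clearly set up to exploit) or exploit that the profiles live in $p\text{-}\mathcal{S}(\R)$ so that the non-smoothness is confined to $X=0$ and handled by the transmission conditions. Relatedly, tracking the exact powers of $\sqrt{\ep t}$ through every commutator, and verifying that the quadratic remainder terms $\sqrt{\ep t}^{\,N} (w^\ep\cdot\nabla w^\ep)$ are genuinely lower order (so the fixed point closes on $(0,T)$ for $\ep<\ep_0$), is the delicate bookkeeping; choosing $N$ large enough in terms of $k$ and $d$ makes this work but requires care near $t=0$ where the weights degenerate, exactly the degeneracy already flagged in the discussion of the Fuchsian parasite solutions.
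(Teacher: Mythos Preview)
Your overall strategy---build a high-order approximate velocity, write the exact solution as approximate plus $\sqrt{\ep t}^{\,N}$ times a remainder, and close an energy estimate on the remainder using that the nonlinear terms carry extra powers of $\sqrt{\ep t}$---is exactly the route the paper takes (via Theorem~\ref{TheoStab2}). Two points of comparison are worth flagging.

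First, your equation for $w^\ep$ is missing a term: differentiating $\sqrt{\ep t}^{\,N} w^\ep$ in time produces an extra $\tfrac{N}{2t}\,w^\ep$, so the remainder actually satisfies a Fuchsian equation. The paper absorbs this by multiplying through by $t$, obtaining (in your notation)
\[
t\bigl(\partial_t w^\ep + v^\ep\cdot\nabla w^\ep\bigr) + \tfrac{N}{2}\,w^\ep - \ep t\,\Delta w^\ep = \Pi(v^\ep,w^\ep) + \check F^\ep_R ,
\]
and then integrates in \emph{space-time} over $(0,T)\times\R^d$ rather than running a standard-in-time Gr\"onwall: the zero-order term $\tfrac{N}{2}\,w^\ep$ gives direct coercivity $\bigl(\tfrac{Np}{2}-1\bigr)\int_{(0,T)\times\R^d}|\Delta_j w^\ep|^p$, which is precisely what controls the degeneracy at $t=0$ you flag at the end. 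Your Gr\"onwall sketch would still work once this term is restored (it has the good sign), but as written the equation is incorrect.

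Second, for the H\"older control the paper does \emph{not} differentiate the equation or work conormally as you propose; instead it applies the Littlewood--Paley blocks $\Delta_j$ to the remainder equation, multiplies by $p|\Delta_j w^\ep|^{p-2}\Delta_j w^\ep$, estimates the commutators $[\,v^\ep\cdot\nabla,\Delta_j\,]$ via (\ref{commu})--(\ref{commu3}), and lets $p\to\infty$. This sidesteps entirely the piecewise-smoothness obstacle you identify, since $\Delta_j$ is a global Fourier multiplier and the commutator bounds require only $\nabla v^\ep\in L^\infty$. Your conormal-differentiation alternative is plausible but would require reproducing much of the machinery of Section~3 for the viscous equation, whereas the paper's dyadic route is shorter and more uniform in $\ep$.
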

 
Let us point out that if it is well-known that for any $\ep \geq  0$ there exists $T^\ep > 0$ and a unique solution 
$v^\ep  \in L^\infty (0,T^\ep;Lip (\R^d)) $ solution of the equations (\ref{R1})-(\ref{R2}) with $v_0$  as initial velocity, Theorem \ref{TheoStab} proves that the lifetime $T^\ep > 0$ can be extended such that $T^\ep \geq T$. 
The lifetime of such expansions is the one of the solution of the Euler equation
 ("the ground state") which traps the main part of the nonlinearity of the problem.

\section{Compendium on the Littlewood-Paley theory}
\label{paley}

 In this section we gather  some usual results of  the Littlewood-Paley theory that we will need. 
 We begin with the case of the whole space for which we
 refer for example to  the books  \cite{bcd} and \cite{TriebelBook} for a much more detailed expository.
 
 \subsection{Dyadic  decomposition}
 
 We first recall the existence of a smooth dyadic partition of unity:  there exist two radial functions $\phi $ and $\chi$  valued in the interval $\lbrack 0,1 \rbrack$, belonging respectively to 
 $\mathcal{D} (B(0,4/3))$ and to  $\mathcal{D} (C(3/4,8/3))$ (where $B(0,4/3)$ and 
  $C(3/4,8/3 )$ denote respectively the ball $B(0,4/3) := \{ \| \xi  \|_{\R^3 } < 4/3 \}$ and the annulus  $ C(3/4,8/3) := \{ 3/4 < \| \xi  \|_{\R^3 } < 8/3 \}$; and $\mathcal{D}(U) $ denotes the space of the smooth functions whose support is a compact included in $U$)
 such that 
  \begin{eqnarray*}
  \forall \xi \in \R^3 , \quad \chi (\xi) + \sum_{ j \geqslant 0 } \phi  (2^{-j} \xi ) = 1
  \\ | j-j' |  \geqslant 2 \Rightarrow \text{ supp }  \phi  (2^{-j} . ) \cap   \text{ supp } \phi  (2^{-j'} . ) =  \emptyset ,
   \\  j \geqslant 1  \Rightarrow   \Rightarrow \text{ supp }  \chi  (2^{-j} . ) \cap   \text{ supp } \phi  (2^{-j} . ) =  \emptyset .
  \end{eqnarray*}
  The Fourier transform $\mathcal{F}$
  is defined on the space of integrable functions $f \in L^1 (\R^3  ) $ by
  \begin{eqnarray*}
  \mathcal{F} f := \int_{\R^3  } e^{-x.\xi } f(x) dx
   \end{eqnarray*}
  and extended in an automorphism of the space $\mathcal{S}' (\R^3  ) $  of  the tempered distributions, which is the dual of  the Schwartz space $\mathcal{S} (\R^3  ) $  of rapidly decreasing functions.
  We will use the non-homogeneous  Littlewood-Paley decomposition (in  $\mathcal{S}'(\R^3)$)
 \begin{eqnarray*}
 Id =  \sum_{j  \geqslant -1} \Delta_j   ,
    \end{eqnarray*}
    where the so-called dyadic blocks $\Delta_j $ corresponds to the Fourier multipliers:
     \begin{eqnarray*}
  \Delta_{-1} := \chi (D)  \text{ and }   \Delta_j :=  \phi (2^{-j} D)  \text{ for }  j  \geqslant 0,
    \end{eqnarray*}
   that is
     \begin{eqnarray}
     \label{convol}
  \Delta_{-1} u (x):=  \int_{\R^3  }  \tilde{h} (y) u(x-y) dy 
    \text{ and }   \Delta_j u (x):=  2^{3j} \int_{\R^3  }  \tilde{h} (2^{j} y) u(x-y) dy  \text{ for }  j  \geqslant 0 ,
    \end{eqnarray}
    where $h:=  \mathcal{F}^{-1}  \phi$ and $ \tilde{h} :=  \mathcal{F}^{-1}  \chi$.
    We also introduce the low frequency cut-off operator $S_j :=  \sum_{k  \leqslant j-1} \Delta_j  $.

     \subsection{Besov spaces}
     
     We now recall the definition of the 
    Besov spaces $B^\lambda_{p,q}  $ on the whole space $\R^3 $
    which are, for  $\lambda  \in \R$ (the smoothness index), $p,q  \in \lbrack 1,+  \infty \rbrack$ (respectively the integral-exponent and the sum-exponent), 
    some Banach spaces defined by
     \begin{eqnarray*}
    B^\lambda_{p,q}  (\R^3  ) := \{ f \in \mathcal{S}' (\R^3  ) / \
     \| f \|_{ B^\lambda_{p,q}  (\R^3  )} :=  \|  (2^{j \lambda} \| \Delta_j  f  \|_{ L^p (\R^3  ) })_{j  \geqslant -1}  \|_{ l^q } < \infty \}.
     \end{eqnarray*}
        These spaces do not depend of the choice of the dyadic partition above.
        When $p=q= + \infty $ the  Besov spaces   $B^\lambda_{p,q} $ are simply the  Holder-Zygmund spaces   $C_*^\lambda $ and in particular
for  $\lambda    \in \R_+ - \N$ they coincide with the $C^{s,r} $ spaces of the introduction in the sense that 
  $B^\lambda_{\infty ,\infty }  (\R^3  )=  C^{s,r}  (\R^3  )$ where $s$ is the entire part of 
    $\lambda$ and $r:= \lambda - s$.
    It is  worth mentioning the the space $L^{\infty }  (\R^3  )$ is continuously embedded in the Besov space  $B^0_{\infty ,\infty }  (\R^3  )$:
      \begin{eqnarray}
       \label{inclu}
 L^{\infty }  (\R^3  )   \hookrightarrow B^0_{\infty ,\infty }  (\R^3  ) .
     \end{eqnarray}
     Conversely the spaces for $\lambda > 0$ the spaces $B^\lambda_{\infty ,\infty }  (\R^3  )$
     are continuously embedded in the  space $L^{\infty }  (\R^3  )$, and the $L^{\infty } $ norm can also be estimated by the following logarithmic interpolation inequality:
 \begin{eqnarray}
  \label{log}
\|  f \|_{ L^\infty   (\R^3)} 
&\lesssim&  L(\| f \|_{B^0_{\infty, \infty} (\R^3 )}, \| f \|_{B^\lambda_{\infty ,\infty }  (\R^3 )}) ,
 \end{eqnarray}
 where we define  for $a$ and $b$ strictly positive
  $L(a,b) := a  \ln (e + \frac{a }{ b} )$ -which is notably increasing both with respect with $a$ and $b$.
 We also recall the way Fourier multipliers act on Besov spaces: if $f$ is a smooth function such that for any multi-index $\alpha$ there exists 
 an integer $m \in \N$ 
 such that 
 \begin{eqnarray}
 \forall \xi \in \R^3 , \ |    \partial^\alpha f(\xi)  |  &\leqslant& C_\alpha  (1 +   | \xi | )^{m- | \alpha | }   ,
  \end{eqnarray}
 then for all $\lambda  \in \R$ and $p,q  \in \lbrack 1,+  \infty \rbrack$, the operator $f(D)$ is continuous from   $B^{\lambda}_{p,q} $ to  $B^{\lambda - m}_{p,q}$.
 In particular  introducing  the function $ \lambda (\zeta) := (\chi (\zeta) + |  \zeta  |^{2})^\frac{1}{2}  $, 
 where $\chi$ is in $C^\infty_0 (\R^3 ) $ positive  equal to $1$ near $0$, and  the corresponding  Fourier multiplier $\Lambda:=  \lambda (D)$ we get an
  one-parameter group of elliptic operators  $\Lambda^t$, for $t  \in  \R$,  continuous from 
  $B^{\lambda}_{p,q}  (\R^3  ) $ to  $B^{\lambda+t}_{p,q}  (\R^3  )$.

    \subsection{Bony's paraproduct}
    
    When $v$ and $w$ are two Holder distributions, we denote by $T_v w$  
Bony's paraproduct  of $w$ by $v$:
  \begin{eqnarray}
   \label{parap0}
  T_v w :=  \sum_{j \geqslant 1} S_{j-1} v \Delta_j  w 
   \end{eqnarray}
 for which
 we have the following tame estimates, for  $\lambda  \in \R$, denoting   $\| . \|_\lambda$ the  norm  in the space $B^\lambda_{\infty ,\infty }$
 \begin{eqnarray}
 \label{parap1}
\| T_v w \|_\lambda  &\lesssim&  \| v \|_{ L^\infty }   \| w \|_{\lambda}   \  \text{for}  \   \lambda  \in \R,
  \\   \label{parap2}  \| (v-T_v) w \|_\lambda  &\lesssim&  \| v \|_{ \lambda }   \| w \|_{ L^\infty}  \  \text{for}  \   \lambda  >0.
    \end{eqnarray}
    
We will also use the following commutator estimate (cf. \cite{bcd} Lemma $2.92$):
if $f$ is a smooth function homogeneous of degree $m$ away from a neighborhood of $0$ then
the  commutator 
$\lbrack T_a , f(D)  \rbrack  := T_a  f(D) -  f(D)  T_a$
between a paraproduct $T_a$ and the Fourier multiplier $f(D)$ can be estimated for any  $\lambda  \in \R$ and for any $r \in (0,1) $ by
 \begin{eqnarray}
 \label{commu}
 \| \lbrack T_a , f(D)  \rbrack u   \|_{\lambda -m + r } 
  &\lesssim&  \| a  \|_{ r }   \| u \|_{\lambda }  
  \end{eqnarray}
We also mention the following useful estimate
for commutators of the form $R_j := \lbrack v^0 \cdot \nabla  ,   \Delta_j  \rbrack  f$
 (cf. \cite{bcd} Lemma $2.93$):
 \begin{eqnarray}
 \label{commu2} 
\sup_{j  \geqslant -1}  2^{ j \lambda }  \ \| R_j   \|_{L^p  }
 &\lesssim&  
  \|  \nabla  v^0  \|_{ B^{d/p}_{p, \infty  }   \cap L^\infty  } 
   \| f  \|_{ B^ \lambda_{p, \infty  }  }  \  \text{for}  \  0 <  \lambda < 1 + d/p  ,
   \\    \label{commu3} 
   \sup_{j  \geqslant -1} 2^{ j \lambda }  \ \| R_j   \|_{L^p }  
 &\lesssim&  
  \|  \nabla  v^0  \|_{  L^\infty  } 
   \| f  \|_{ B^ \lambda_{p, \infty  }   } + 
    \|  \nabla   f  \|_{L^p  }       \|  \nabla   v^0 \|_{B^ {\lambda-1 }_{p, \infty  }   }    \  \text{for}  \  0 <  \lambda.
 \end{eqnarray}

 \subsection{Transport estimates}
 
We recall  some useful transport estimates (cf. \cite{bcd} Theorem $3.11$): for $s=-1$ or $s=0$ 
\begin{eqnarray}
\label{TEE}
 \| f  (t)   \|_{C^{s,r} (\R^3 )} 
   \leqslant
   \big(  \| f|_{t= 0}   \|_{C^{s,r} (\R^3 )} 
   +  \int_0^t  \|   (\partial_t f + v^0 \cdot \nabla f)  (\tau)     \|_{ C^{s,r} (\R^3 ) } 
   e^{-C V (\tau)  } d\tau   \big)
    e^{C  V (t)  } ,
  \end{eqnarray}
   where
\begin{eqnarray*}
V (t) :=
 \int_0^t   \|  \nabla v^0    \|_{ L^{\infty} (\R^3  ) } ds
   \end{eqnarray*}
and for any $s \geqslant 1$ 
\begin{eqnarray}
\label{TE}
 \| f  (t)   \|_{C^{s,r} (\R^3 )} 
   \leqslant
   \big(  \| f|_{t= 0}  \|_{C^{s,r} (\R^3 )} 
   +  \int_0^t  \|  (\partial_t f + v^0 \cdot \nabla f)  (\tau)     \|_{ C^{s,r} (\R^3 ) } 
   e^{-C V_{s-1,r} (\tau)  } d\tau   \big)
    e^{C  V_{s-1,r} (t)  },
  \end{eqnarray}
   where
\begin{eqnarray*}
V_{s-1,r} (t) :=
 \int_0^t   \|  \nabla v^0    \|_{ C^{s-1,r} (\R^3  ) } ds
   \end{eqnarray*}
 We refer for example to \cite{bcd} Theorem $3.11$
for a  proof of the estimates (\ref{TEE})-(\ref{TE}) by a spectral localization  that is by applying the dyadic block  $\Delta_j$ to the equation  and then making use of an energy method, taking care of the commutators thanks to Bony's paraproduct.

 \subsection{Besov spaces on Lipschitz domains}
 
  When $\Omega$ is a Lipschitz domain one usually define the Besov spaces 
      $B^\lambda_{p,q}  ( \Omega )$ (see for instance  \cite{Triebel}) as the set of restrictions of all elements of  $B^\lambda_{p,q}  (\R^3  )$ in the sense of $\mathcal{D}' (\Omega)$,   the space of the distributions on $\Omega$.
      In other words the spaces  $B^\lambda_{p,q}  ( \Omega )$ consist of
exactly those distributions $ f \in \mathcal{D}' (\Omega)$ which have extensions belonging to 
      $B^\lambda_{p,q}   (\R^3  ) $.
      Endowed with the quotient space norms they become Banach spaces.

We will make use of extension operators. 
We give the following nice general result.
\begin{theo}[Rychkov \cite{rychkov}]
\label{rychkov}
Let $\Omega$ be a Lipschitz domain in $\R^n$ with a bounded boundary.
There exists a so-called universal extension operator that is a linear operator $ext$ such that
\begin{eqnarray*}
\text{ext } : B^{s}_{p,q}     (\Omega ) \rightarrow B^{s}_{p,q}     (\R^n )
 \end{eqnarray*}
is continuous for any $s \in \R$,  $0 < p \leqslant \infty$, $0 < q \leqslant \infty$
and satisfies $(\text{ext }  u)|_{ \Omega  } = u$.
\end{theo}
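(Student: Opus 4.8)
The plan is to reconstruct Rychkov's construction in three stages: first localize to a \emph{special} Lipschitz domain, then build a one-sided Calder\'on reproducing formula whose kernels are concentrated in a cone pointing into the domain, and finally use that formula both to define the operator and to estimate it. \textbf{Stage 1 (localization).} Since $\partial\Omega$ is compact, I would cover it by finitely many balls $U_1,\dots,U_N$ such that, after a rigid rotation, $\Omega$ coincides inside $U_i$ with the region strictly above the graph of a globally Lipschitz function $\gamma_i\colon\R^{n-1}\to\R$, and add one open set $U_0$ with $\overline{U_0}\subset\Omega$; pick a smooth partition of unity $\{\chi_i\}_{0\le i\le N}$ subordinate to this cover. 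Writing $u=\sum_i\chi_i u$, and using that multiplication by a fixed $C_c^\infty$ function is bounded on every $B^{s}_{p,q}$, it is enough to extend each summand: $\chi_0 u$ by zero, and each $\chi_i u$ ($i\ge1$) as a compactly supported distribution on the special Lipschitz domain $\Omega^\ast=\{x_n>\gamma(x')\}$ with $\|\nabla\gamma\|_{L^\infty}=L$. The problem is thus reduced to producing a universal extension operator for $\Omega^\ast$.

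\textbf{Stage 2 (one-sided reproducing formula and the operator).} Fix an integer $M>s$ (also above the Sobolev-shift index, so the later estimates cover $p,q<1$). The central device is an identity $\sum_{j\ge0}\varphi_j\ast\psi_j=\delta$ in $\mathcal{S}'(\R^n)$, where $\psi_j=2^{jn}\psi(2^j\cdot)$ and $\varphi_j=2^{jn}\varphi(2^j\cdot)$ for $j\ge1$, with separate low-frequency blocks $\psi_0,\varphi_0$, and where $\psi_0,\psi,\varphi_0,\varphi$ are $C_c^\infty$ functions all supported in a fixed downward cone $K=\{z:z_n\le-(L+1)\,|z'|\}\cap B(0,1)$, with $\psi$ and $\varphi$ having vanishing moments up to order $M$. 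Such kernels are produced by hand: start from a bump $\psi_0$ with $\widehat{\psi_0}(0)=1$ supported in $K$, create vanishing moments by finite differences of dilates (this keeps the support in $K$), and obtain the dual family $\{\varphi_j\}$ by inverting $\mathrm{id}$ minus a low-frequency remainder through a convolution Neumann series, which again preserves compact support in $K$. The cone condition is what makes the scheme work on a domain: for every $x\in\overline{\Omega^\ast}$ the map $y\mapsto\psi_j(x-y)$ is supported in $\Omega^\ast$, because $\psi_j(x-y)\ne0$ forces $y=x-z$ with $z_n\le-(L+1)|z'|$, hence $y_n=x_n-z_n\ge\gamma(x')+(L+1)|z'|\ge\gamma(x')+L|x'-y'|\ge\gamma(y')$; the same holds with $\varphi$. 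Therefore, for a distribution $f$ on $\Omega^\ast$, the blocks $g_j:=(\psi_j\ast f)|_{\overline{\Omega^\ast}}$ depend only on $f$, and I would simply set $\mathrm{ext}\,f:=\sum_{j\ge0}\varphi_j\ast\widetilde{g_j}$, where $\widetilde{g_j}$ is the zero-extension of $g_j$. On $\Omega^\ast$ the cone property for $\varphi$ allows one to replace $\widetilde{g_j}$ by $\psi_j\ast f$ and then to sum the reproducing identity, so $(\mathrm{ext}\,f)|_{\Omega^\ast}=f$; linearity and independence of the choices are clear.

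\textbf{Stage 3 (boundedness) and the main obstacle.} Boundedness follows from two estimates. First, the intrinsic one-sided characterization $\|f\|_{B^{s}_{p,q}(\Omega^\ast)}\approx\big\|(2^{js}\|\psi_j\ast f\|_{L^p(\Omega^\ast)})_{j\ge0}\big\|_{\ell^q}$: the delicate inequality is $\gtrsim$, which uses the vanishing moments of $\psi$ and a Peetre-type maximal-function bound — this is precisely what turns control of $\psi_j\ast f$ on $\Omega^\ast$ alone into control of the Besov norm of an arbitrary full-space representative of $f$. Second, the resynthesis estimate $\big\|\sum_j\varphi_j\ast g_j\big\|_{B^{s}_{p,q}(\R^n)}\lesssim\big\|(2^{js}\|g_j\|_{L^p})_j\big\|_{\ell^q}$, valid because the $\varphi_j$, having many vanishing moments, form a frequency-almost-orthogonal family, so $\|\Delta_k(\sum_j\varphi_j\ast g_j)\|_{L^p}$ is dominated by a convolution in $|k-j|$ with a rapidly decaying kernel (Schur's lemma). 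Since $\|\widetilde{g_j}\|_{L^p(\R^n)}=\|\psi_j\ast f\|_{L^p(\Omega^\ast)}$, chaining the two estimates gives $\|\mathrm{ext}\,f\|_{B^{s}_{p,q}(\R^n)}\lesssim\|f\|_{B^{s}_{p,q}(\Omega^\ast)}$; re-summing the partition of unity of Stage 1 then yields the operator for $\Omega$, with bounds uniform for $s$ in compact intervals and valid for $0<p,q\le\infty$ because all the tools used (Littlewood--Paley blocks, Peetre maximal functions, the reproducing formula) are available in that generality. The genuine obstacle is Stage 2 together with the $\gtrsim$ half of the one-sided characterization: one must arrange simultaneously the exact reproducing identity, the one-sided cone support, and enough vanishing moments, and then prove that estimating the analysis blocks on $\Omega^\ast$ only still pins down the full Besov norm. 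Everything else is routine dyadic bookkeeping.
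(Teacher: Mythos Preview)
The paper does not prove this theorem at all: it is quoted from Rychkov's article \cite{rychkov} and used as a black box throughout (notably in Sections~\ref{dudulle}, \ref{pte}, and \ref{wp1}). Your proposal is a faithful outline of Rychkov's original construction --- localization to special Lipschitz domains, the one-sided Calder\'on reproducing formula with cone-supported kernels, and the intrinsic Littlewood--Paley characterization plus resynthesis --- so there is nothing to compare against in the present paper; your sketch simply supplies what the author chose to cite.
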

 
  One can use this result to give intrinsic characterizations of Besov spaces on Lipschitz domains.
 Dispa  \cite{dispa} have shown  in particular that 
 -as in the case of the whole space-  the  Besov spaces $B^\lambda_{ \infty, \infty}    (\Omega ) $ still coincide with the  Holder spaces: for  $\lambda    \in \R_+ - \N$,  $B^\lambda_{\infty ,\infty }  ( \Omega )=  C^{s,r}  ( \Omega )$ where $s$ is the entire part of 
    $\lambda$ and $r:= \lambda - s$.

 We will also use the following result about characteristic functions as pointwise multipliers in Besov spaces.
\begin{theo}[Frazier and Jawerth \cite{frazier}]
\label{frazier}
 The characteristic function  $\chi_\Omega $   of a Lipshitz domain $\Omega  \subset \R^n$ with a bounded boundary is a pointwise multiplier in $ B^{s}_{p,q}$ (that is the map $u \mapsto \chi_\Omega  u $ is bounded from $ B^{s}_{p,q}   (\R^n )$ into  itself)  
 iff 
\begin{eqnarray*}
\text{max } \big( (\frac{1} {p} - 1) , n (\frac{1} {p} - 1)  \big) < s < \frac{1} {p} .
\end{eqnarray*}
 \end{theo}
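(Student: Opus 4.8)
The plan is to prove the two implications separately — \textbf{sufficiency} (the stated inequalities on $s$ imply that $u\mapsto\chi_\Omega u$ is bounded on $B^s_{p,q}(\R^n)$) and \textbf{necessity} (the multiplier property forces both $s<\tfrac1p$ and $s>n(\tfrac1p-1)$) — and, in the sufficiency part, to reduce to the model domain $\Omega=\R^n_+:=\{x_n>0\}$. The reduction uses a finite partition of unity subordinate to a cover of the bounded Lipschitz boundary $\partial\Omega$ by charts in which $\partial\Omega$ is flattened by a bi-Lipschitz map, together with the invariance of $B^s_{p,q}$ under such maps (classical at least when $\max(0,n(\tfrac1p-1))<s<1$); outside that range — large $s$, $0<p<1$ — the cleanest device, and the one underlying Frazier and Jawerth's argument, is to pass to the $\varphi$-transform, expanding every $u\in B^s_{p,q}$ in smooth molecules $u=\sum_Q s_Q\varphi_Q$ with $(s_Q)\in b^s_{p,q}$, since this expansion behaves well under bi-Lipschitz maps and linearises the problem.

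For the core of the sufficiency I would first take $1\le p\le\infty$ and $0<s<\tfrac1p$ (hence $0<s<1$) and use the difference characterisation $\|u\|_{B^s_{p,q}}\simeq\|u\|_{L^p}+(\int_0^1(t^{-s}\omega_p(u,t))^q\,\tfrac{dt}{t})^{1/q}$ with $\omega_p(u,t)=\sup_{|h|\le t}\|u(\cdot+h)-u\|_{L^p}$. Writing $g=\chi_{\R^n_+}u$ and splitting $\R^n$ according to whether $\chi_{\R^n_+}(\cdot+h)=\chi_{\R^n_+}$ or not — the ``unequal'' set being a slab of width $\le|h|$ about $\{x_n=0\}$ on which one only controls $|u(x+h)|^p+|u(x)|^p$ — one gets $\omega_p(g,t)\lesssim\omega_p(u,t)+I(t)^{1/p}$ with $I(t):=\int_{\{|x_n|<t\}}|u|^p$. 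The key point is then to estimate, without collapsing the dyadic sum, $t^{-s}I(t)^{1/p}\lesssim\sum_{j\ge-1}K(t,j)\,2^{js}\|\Delta_j u\|_{L^p}$ with a kernel $K(t,j):=\min((t2^j)^{-s},(t2^j)^{1/p-s})$ concentrated geometrically on $\log_2 t+j=0$; here Bernstein/Nikolskii in the $x_n$ variable and the hypothesis $0<s<\tfrac1p$ are exactly what enter. Then Young's inequality in the $(\log_2 t,j)$ variables absorbs the $\tfrac{dt}{t}$ logarithm and yields $\|g\|_{B^s_{p,q}}\lesssim\|u\|_{B^s_{p,q}}$. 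The range $\tfrac1p-1<s\le0$ follows by duality, since $u\mapsto\chi_\Omega u$ is formally self-adjoint, $(B^s_{p,q})'=B^{-s}_{p',q'}$ for $1\le p,q<\infty$, and $\tfrac1p-1<s<\tfrac1p$ is equivalent to $\tfrac1{p'}-1<-s<\tfrac1{p'}$. For $0<p<1$ one repeats the slab analysis with $M$-th order differences ($M>s$) and the $L^p$-quasi-norm — or, more safely, runs the whole argument on the molecular side — the condition $s<\tfrac1p$ again taming the boundary slab while $s>n(\tfrac1p-1)$ is precisely what makes $B^s_{p,q}(\R^n)\hookrightarrow L^1_{\mathrm{loc}}$, so that the pointwise product is legitimate to begin with.

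For necessity I would exhibit two families of counterexamples localised near a boundary point of a general Lipschitz $\Omega$ (and, after a local flattening, near $\{x_n=0\}$ of $\R^n_+$). If $s\ge\tfrac1p$: take $u\in C_c^\infty(\R^n)$ equal to $1$ near $0$; then $\chi_\Omega u$ carries a genuine jump across $\partial\Omega$, so $\omega_p(\chi_\Omega u,t)\gtrsim t^{1/p}$ for small $t$ and $\int_0(t^{-s}t^{1/p})^q\,\tfrac{dt}{t}=\infty$ when $s>\tfrac1p$ (all $q$) or $s=\tfrac1p$, $q<\infty$; since $u$ lies in every Besov space this shows $\chi_\Omega$ is not a multiplier. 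If $0<p<1$ and $s\le n(\tfrac1p-1)$: one uses that $B^s_{p,q}(\R^n)$ then contains distributions that are not locally integrable — realised by a lacunary superposition $u=\sum_k a_k\,\psi(2^k(\cdot-x_k))$ of rescaled bumps concentrating on a sequence $x_k\to x_0\in\partial\Omega$ from both sides, with the $a_k$ tuned so that $u\in B^s_{p,q}$ — whence ``$\chi_\Omega\cdot$'' cannot be realised as a bounded operator on all of $B^s_{p,q}$ that agrees with pointwise multiplication on the dense smooth subspace, a contradiction obtained by a limiting/dilation argument.

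The main obstacle is the regime $0<p<1$: there the difference characterisation needs higher-order moduli and the $L^p$-quasi-norm, the duality shortcut is unavailable, the bi-Lipschitz flattening of a merely Lipschitz boundary is delicate for large $s$, and one really must pass to the smooth atomic / $\varphi$-transform picture and estimate the ``rough atoms'' $\chi_{\R^n_+}\varphi_Q$ for cubes $Q$ meeting $\{x_n=0\}$ while counting the $O(2^{j(n-1)})$ such cubes at each dyadic scale $2^{-j}$. It is precisely in balancing this count against the $b^s_{p,q}$-quasi-norm that the two thresholds $s<\tfrac1p$ and $s>n(\tfrac1p-1)$ surface together, and making that bookkeeping sharp — no loss in the summation exponent $q$, correct behaviour at the endpoints — is the technical heart of the theorem.
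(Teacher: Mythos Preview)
The paper does not prove this theorem: it is stated as a quoted result attributed to Frazier and Jawerth \cite{frazier} (and the surrounding material refers also to Triebel \cite{Triebel}), and is then simply invoked as a black box in the proof of Proposition~\ref{dudu}. There is therefore no ``paper's own proof'' to compare your attempt against.

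That said, your sketch is a reasonable outline of how one would approach such a result. The reduction to the half-space via bi-Lipschitz charts, the difference-characterisation argument for $1\le p\le\infty$ and $0<s<1/p$, the duality to reach $1/p-1<s\le 0$, and the identification of the $\varphi$-transform / atomic decomposition as the correct tool for $0<p<1$ are all standard moves in this area. Your necessity counterexamples (a smooth cutoff producing a jump for $s\ge 1/p$; non-locally-integrable distributions for $s\le n(1/p-1)$) are also the right ideas. Two caveats: first, the invariance of $B^s_{p,q}$ under general bi-Lipschitz maps is itself a nontrivial fact whose validity range you should be careful about; second, your slab estimate and the kernel $K(t,j)$ argument are plausible but would need to be written out carefully to check that no logarithmic loss appears at the endpoint $q=\infty$.

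If your aim was to supply a proof for something the paper actually proves, you have picked the wrong statement; this one is purely a citation.
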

Let us stress that this theorem says in particular that the characteristic function of a  Lipshitz domain  is a pointwise multiplier in the Holder spaces $C^{-1,r}$.

\section{On the proof of Theorem  \ref{litt}}

In this section we give a unified  proof of the $3$d case of  Theorem  \ref{litt}  in the Eulerian framework of Chemin,  Gamblin and Saint-Raymond including
 the persistence of piecewise  $C^{s,r}$ smoothness.

  \subsection{Existence from a-priori estimates}
  
We first recall that  local existence and uniqueness of solutions of the Euler equations when the initial velocity field is assumed to be in  $C^{s+1,r}  (\R^3) \cap L^{2}  (\R^3)$ (that is without a hypersurface of discontinuity) was proved by Chemin in the paper  \cite{cheminsmoothness}.  Moreover Bahouri and Dehman in \cite{BD} prove that  the criterion  obtained by  Beale,  Kato and  Majda (singularities will not form in the flow as long as there is no rapid accumulation of vorticity, i.e., as long as the integral, in time, of the $L\sp{\infty}$-norm of vorticity remains bounded) remains valid in this setting.
Actually Gamblin and X. Saint-Raymond  begin their paper \cite{gamblin} by collecting these results so that we also to refer to \cite{gamblin} Theorem $2.8$ (uniqueness) and Theorem $2.9$ (existence).
We also refer to the book  \cite{bcd} Theorem $7.1$ and  $7.20$  for a complete expository about local existence and uniqueness of solutions of the Euler equations  in supercritical Besov spaces.

To construct solutions of Theorem  \ref{litt} we proceed by regularization of the initial data
taking limits thanks to some a-priori estimates -uniform with respect to the regularization parameter- on the smooth solutions given by the results above.
We refer to
\cite{gamblin} Proposition $5.2$ for this passage to the limit, and focus now to the way to get 
the  a-priori estimates.
Actually we will precisely study in the following sections very carefully what happens in the passage to the limit when we regularize the Euler equations into slightly viscous Navier-Stokes equations.

\subsection{First smoothness property of the boundary}

As direct application of the  estimate  (\ref{TEE}) with $s=0$ we have the following estimate for the function $ \phi^{0}$ which satisfy the transport equation
 (\ref{eik1})-(\ref{eik2})
\begin{eqnarray}
\label{ordrezero}
 \| \phi^{0}   (t)   \|_{C^{0,r} (\R^3 )} 
   \leqslant
\| \phi_0  \|_{C^{0,r} (\R^3 )}  e^{C  V (t)  } .
  \end{eqnarray}

\subsection{Initial conormal vectorfields}
\label{dudulle}
Let us introduce the  vectorfields we will look for smoothness with respect to.
 We first observe that at the initial time $t=0$ the vectorfields
   \begin{eqnarray*}
  w^1_0  := \begin{bmatrix} 0 \\ -\partial_3 \phi_0  \\ \partial_2 \phi_0 \end{bmatrix} , \ w^2_0  := \begin{bmatrix} \partial_3 \phi_0  \\ 0   \\ - \partial_1 \phi_0 \end{bmatrix} , \ w^3_0  := \begin{bmatrix}- \partial_2 \phi_0  \\   \partial_1 \phi_0  \\ 0  \end{bmatrix} 
\end{eqnarray*}
are tangent to  the foliation of hypersurfaces given by $ \phi_0$.
As in our case  we deal with only one initial hypersurface   $ \{ \phi_0  = 0  \}$ of singularity we also add the vectorfields
   \begin{eqnarray*}
 w^4_0  := \begin{bmatrix} \partial_3 (\chi  x_3 ) \\ 0  \\ -\partial_1  ((1-\chi) x_3  )     \end{bmatrix} , \ w^5_0  := \begin{bmatrix} -\partial_2 (\chi  x_1 ) \\  \partial_1  ((1-\chi) x_1)     \\ 0  
   \end{bmatrix}.
  \end{eqnarray*}
to the previous triplet, where 
  $\chi$ is a  smooth function compactly supported
identically equal to $1$ when  $| \phi^{0}(t,.)  | < \eta $.
We therefore get a set of five
 divergence free vectorfields,  in  $C^{s,r} (\R^3)$ and  conormal to the  boundary  $\partial \mathcal{O}_{+,0}$ of the initial vortex patch: $w^i_0 . n = 0$ for any $1   \leqslant  i \leqslant 5$ on $\partial \mathcal{O}_{+,0}$.
Moreover this set is maximal in the sense that it satisfies
\begin{eqnarray*}
\lbrack W_0 \rbrack := \inf_{\R^3}  \  \sup_{0   \leqslant  i, j  \leqslant 5 } \  | w^i_0 \times w^j_0  | >0 .
  \end{eqnarray*}

We now  illustrate that the initial vorticity $ \omega_0$ has "good" derivatives in the directions that are conormal to the boundary of the patch.
To do this we first set up the notion of conormal derivatives.
For $u \in L^\infty  (\R^3)$ we define the distributions
\begin{eqnarray*}
w^i_0 \dagger  u := \dive ( w^i_{0} \otimes u ) =     \sum_{j}  \partial_j  (w^i_{0,j} .  u).
\end{eqnarray*}
where we denote the components of the $\R^3$-valued vector field $w^i_0$ by $w^i_{0,j}$, for $j=1,2,3$ and the spatial derivatives  $\partial_j = \partial_{x_j}$.
Since the $w^i_0$  are divergence free there holds 
$w^i_0 \dagger  u = w^i_0 \cdot \nabla u $ when this last quantity is defined.
To express iterated conormal derivatives we will denote for $k \in \N$ 
\begin{eqnarray*}
(w_0  \dagger )^\beta u := w^{\beta_1}_0  \dagger (w^{\beta_2}_0  \dagger (...(w^{\beta_k}_0  \dagger u))...)
\end{eqnarray*}
where $\beta :=  (\beta_1 , ..., \beta_k ) \in   \{1,..,5 \}^k$.
We are going to prove 
\begin{prop}
\label{dudu}
The $k$-th order conormal derivatives of the initial vorticity for any integer  $k$ between $1$ and $s$, that is the vector fields  $(w_0  \dagger )^\beta  \omega_0 $ for $\beta :=  (\beta_1 , ..., \beta_k) \in   \{1,..,5 \}^k$, are  in the space 
$C^{-1,r}  (\R^3)$.
  \end{prop}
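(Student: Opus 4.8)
The plan is to argue by induction on $k$, the order of conormal differentiation, exploiting two facts: first, that the vorticity $\omega_0$ is bounded on $\R^3$ (indeed $C^{s,r}$ on each side of $\partial\mathcal{O}_{+,0}$, hence in $L^\infty$), so that the zeroth-order statement "$\omega_0 \in C^{0,r}(\mathcal{O}_{0,\pm})\subset L^\infty(\R^3)$" holds; and second, that conormal differentiation $w^i_0\dagger\,\cdot = \mathrm{div}(w^i_0\otimes\,\cdot)$ is tailor-made to lose essentially no regularity when measured in the right scale, because it is a divergence. The key observation is that for $u\in L^\infty(\R^3)$ the product $w^i_0\otimes u$ lies in $L^\infty(\R^3)\subset B^0_{\infty,\infty}(\R^3)$ by the embedding (\ref{inclu}), and the divergence operator maps $B^0_{\infty,\infty}$ continuously to $B^{-1}_{\infty,\infty}=C^{-1,r}$ (it is a Fourier multiplier of order $1$), so that $w^i_0\dagger\,u \in C^{-1,r}(\R^3)$ whenever $u\in L^\infty(\R^3)$. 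This already disposes of the case $k=1$: $(w_0\dagger)^\beta\omega_0 = w^{\beta_1}_0\dagger\,\omega_0 \in C^{-1,r}(\R^3)$ since $\omega_0\in L^\infty(\R^3)$.

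For the inductive step the difficulty is that one cannot simply iterate the above remark, because after one conormal derivative one only has a $C^{-1,r}$ object, and $C^{-1,r}$ is not contained in $L^\infty$, so the naive product estimate fails. The resolution is to keep track of more structure: I would prove by induction that $(w_0\dagger)^\beta\omega_0$ can be written as $\mathrm{div}$ of an $L^\infty$ (in fact piecewise $C^{0,r}$) tensor field, equivalently that it lies in $C^{-1,r}(\R^3)$ while, crucially, for $|\beta|\le s-1$ the next application $w^j_0\dagger\big((w_0\dagger)^\beta\omega_0\big)$ still makes sense in $C^{-1,r}$. Concretely, expand $w^j_0\dagger(w^{\beta_1}_0\dagger\,v)$ using the Leibniz/commutator identity: $\mathrm{div}\big(w^j_0\otimes \mathrm{div}(w^{\beta_1}_0\otimes v)\big)$. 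The point is that $v:=(w_0\dagger)^{\beta'}\omega_0$ with $\beta'=(\beta_2,\dots,\beta_k)$ of length $k-1\le s-1$ is, by the piecewise-smoothness of $\omega_0$, a vector field whose restriction to each $\mathcal{O}_{0,\pm}$ lies in $C^{s-k+1,r}$ — this is the genuine gain, and it is where the hypothesis $k\le s$ is used: after $k$ conormal derivatives the interior regularity of $\omega_0$, which is $C^{s,r}$, has been spent down to $C^{s-k,r}\supset C^{0,r}$ on each side, still an $L^\infty$ object. So one should carry through the induction the stronger hypothesis: for $0\le k\le s$ and $|\beta|=k$, the field $(w_0\dagger)^\beta\omega_0$ is piecewise $C^{s-k,r}$ (on $\mathcal{O}_{0,\pm}$) and globally $C^{-1,r}(\R^3)$. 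The piecewise statement follows since each $w^i_0$ is globally $C^{s,r}$ (it is built from derivatives of $\phi_0\in C^{s+1,r}$ and from the smooth cutoff $\chi$), hence multiplying by $w^i_0$ and differentiating once loses one interior derivative; the global $C^{-1,r}$ statement follows, as in the case $k=1$, from writing the quantity as $\mathrm{div}$ of a piecewise-$C^{s-k+1,r}$, hence $L^\infty$, tensor and invoking (\ref{inclu}) together with the mapping property of $\mathrm{div}$ from $B^0_{\infty,\infty}$ to $B^{-1}_{\infty,\infty}=C^{-1,r}$.

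The one technical subtlety — and the step I expect to be the main obstacle — is justifying that $w^i_0$ is a legitimate \emph{pointwise multiplier} on the relevant negative-order Besov space when one writes $w^i_0\otimes v$ for $v$ only piecewise smooth (and, in the global statement, only in $C^{-1,r}$ at the coarsest scale). Here Theorem \ref{frazier} is exactly the tool: the characteristic functions $\chi_{\mathcal{O}_{0,\pm}}$ of the Lipschitz (indeed $C^{s+1,r}$) domains are pointwise multipliers on $C^{-1,r}=B^{-1}_{\infty,\infty}$, so one decomposes $v = \chi_{\mathcal{O}_{0,+}}v + \chi_{\mathcal{O}_{0,-}}v$ with each summand the restriction of a global $C^{s-k,r}$ field times a characteristic function, and multiplication by the globally $C^{s,r}$ coefficient $w^i_0$ is then harmless (multiplication by a $C^{s,r}$ function with $s\ge 0$ is bounded on $C^{-1,r}$ by the paraproduct estimates (\ref{parap1})--(\ref{parap2}), or directly by Theorem \ref{frazier}-type multiplier results). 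Once the product $w^i_0\otimes v$ is under control in $B^0_{\infty,\infty}$, taking the divergence is immediate. Assembling: starting from $\omega_0$, each of the $k\le s$ applications of some $w^{\beta_j}_0\dagger$ is realized as (multiplication by a $C^{s,r}$ coefficient, bounded on the current space) followed by ($\mathrm{div}$, mapping $B^0_{\infty,\infty}\to B^{-1}_{\infty,\infty}$), and keeping the piecewise $C^{s-j,r}\supset L^\infty$ bookkeeping at every stage guarantees the product step is always performed against an $L^\infty$ factor; after the final application one lands in $C^{-1,r}(\R^3)$, which is the claim. Uniqueness of the decomposition and independence of all this from the chosen dyadic partition are standard and recalled in Section \ref{paley}.
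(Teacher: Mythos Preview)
There is a genuine gap, and it occurs already at $k=1$: you repeatedly identify $C^{-1,r}$ with $B^{-1}_{\infty,\infty}$, but in the paper's conventions $C^{s,r}=B^{s+r}_{\infty,\infty}$, so $C^{-1,r}=B^{r-1}_{\infty,\infty}$ with $r-1\in(-1,0)$, a \emph{strict} subspace of $B^{-1}_{\infty,\infty}$. Your core step reads: $w^i_0\otimes u$ is piecewise $C^{s-k+1,r}$, hence $L^\infty\subset B^0_{\infty,\infty}$, hence its divergence lies in $B^{-1}_{\infty,\infty}$. That is correct, but it does not give $C^{-1,r}$; you are short by exactly $r$ derivatives. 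To land in $C^{-1,r}$ you would need $w^i_0\otimes u\in B^{r}_{\infty,\infty}=C^{0,r}(\R^3)$ \emph{globally}, and a tensor that is merely piecewise $C^{0,r}$ with a jump across $\partial\mathcal{O}_{0,+}$ is not globally $C^{0,r}$. Keeping track of piecewise regularity alone cannot close this gap, and nowhere in your induction do you actually use that the $w^i_0$ are \emph{tangent} to the interface --- which is precisely the structural fact that repairs the loss.

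The paper's argument supplies the missing idea. One first extends $\omega_0|_{\mathcal{O}_{0,\pm}}$ to $\omega_{0,\pm}\in C^{s,r}(\R^3)$ via Rychkov's theorem, so that $\omega_0=\chi_{\mathcal{O}_{0,+}}\omega_{0,+}+\chi_{\mathcal{O}_{0,-}}\omega_{0,-}$. The crucial point is that because each $w^i_0$ is tangent to $\partial\mathcal{O}_{0,+}$ (and divergence free), the conormal derivatives commute with multiplication by the characteristic function:
\[
(w_0\dagger)^\beta\big(\chi_{\mathcal{O}_{0,\pm}}\omega_{0,\pm}\big)=\chi_{\mathcal{O}_{0,\pm}}\,(w_0\dagger)^\beta\omega_{0,\pm}.
\]
Now $(w_0\dagger)^\beta\omega_{0,\pm}$ is an iterated conormal derivative of a \emph{globally} $C^{s,r}$ function by $C^{s,r}$ vector fields, hence lies in $C^{s-k,r}(\R^3)\subset C^{-1,r}(\R^3)$ for $k\le s$ by the paraproduct rules (\ref{parap1})--(\ref{parap2}); finally one multiplies by $\chi_{\mathcal{O}_{0,\pm}}$, which preserves $C^{-1,r}$ by Theorem~\ref{frazier}. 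In short: extend first, differentiate the smooth extension, and only then reintroduce the characteristic function --- the tangency is what makes the commutation legitimate, and it is the step your proposal omits.
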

  
  \begin{proof}
To do this we first use Rychkov's extension theorem \ref{rychkov} to get   from
  the restrictions  $\omega_0  |_{\mathcal{O}_{0,\pm}}  $
  on each side $ \mathcal{O}_{0,\pm}$ of the patch  two functions $\omega_{0,\pm} $ both in
  $C^{s,r}  (\R^3)$. Since the initial vorticity can be decomposed thanks to these extensions and to the characteristic functions $ \chi_{\mathcal{O}_{0,\pm }}$ of $ \mathcal{O}_{0,\pm}$ as 
  $\omega = \omega_{0,+} \chi_{\mathcal{O}_{0,+}}  + \omega_{0,-} \, \chi_{\mathcal{O}_{0,-}}$  it is sufficient to show that  the $(w_0  \dagger )^\beta  \omega_{0,\pm} \chi_{\mathcal{O}_{0,\pm}}$ are in  $C^{-1,r}  (\R^3)$. But since the vector fields  $w^i_0$ are tangent to the patch there holds 
 $(w_0  \dagger )^\beta  \omega_{0,\pm}  \chi_{\mathcal{O}_{0,\pm}} = \chi_{\mathcal{O}_{0,\pm}}\,  (w_0  \dagger )^\beta  \omega_{0,\pm}$.
 
 Moreover thanks to Theorem  \ref{frazier} the characteristic functions $ \chi_{\mathcal{O}_{0,+}}$ are some pointwise multipliers in the Holder spaces $C^{-1,r}$ so that the question of the 
 $C^{-1,r}  (\R^3)$ regularity of the $(w_0  \dagger )^\beta  \omega_0 $ finally
 reduces to the one of the  $(w_0  \dagger )^\beta  \omega_{0,\pm}$ which can be simply got from the paraproduct rules (\ref{parap1})-(\ref{parap2}).
\end{proof}

  \subsection{Time-dependent conormal vectorfields}
  
   Since the vorticity satisfy the transport-stretching equation  (\ref{taneq}) we expect that the tangential smoothness is conserved.
  However the boundary of the patch moves with the flow so that the notion of tangential smoothness depends itself of the solution. 
  Indeed we define  time-dependent
  conormal vectorfields  $w^i$  
  by the following formula which
   imitates the dynamic of the vorticity $ \omega^0$
\begin{eqnarray}
\label{imitating}
  w^i  (t,x) :=  ( w^i_0   \cdot \nabla_x \mathcal{X}^0 ) (t, (\mathcal{X}^0 (t,.) )^{-1} (x)) 
    \end{eqnarray}
     so that they verify the equations
\begin{align}
\label{taneqbis} \partial_t w^i + v^0  \cdot \nabla  w^i  = w^i    \cdot \nabla v^0 ,
\\ \label{taneq2bis}  w^i |_{ t =0 }  = w^i_0 .
\end{align}

By a straightforward  estimate we can show that for any $t$ the set of the vector fields $w^i (t)$ remains maximal. Indeed  (cf. \cite{gamblin}  Corollary $4.3$) they satisfy the estimate
\begin{eqnarray}
\label{maximal}
  \lbrack W (t) \rbrack^{-1}_{L^\infty  (\R^3)} & \leqslant&   e^{C  \int_0^t  \|  v^0 \|_{Lip  (\R^3)} } \ \lbrack W_0 \rbrack^{-1}_{L^\infty  (\R^3)} .
  \end{eqnarray}
 Their divergence satisfy the equations 
\begin{eqnarray*}
\label{imitatingdiv}
 \partial_t \dive w^i + v^0  \cdot \nabla  \dive w^i  = \dive w^i    \cdot \nabla v^0 
\end{eqnarray*}
 and since they are divergence free at $t=0$ the vector fields $w^i$ remain divergence free when time proceeds.
 
Let us denote for any circular permutation of $(i,j,k)$ of $(1,2,3)$ the vector
 $u^i := w^j \wedge w^k$.  %
It comes out that the vector $n$ satisfies
\begin{eqnarray}
 \label{normalrecovered}
n(t,\mathcal{X}^0 (t,x))  =  \sum_{1   \leqslant  i  \leqslant 3 }
 \ \frac{\partial_i   \phi_0 (x) }{ |  \nabla_x      \phi_0  (x) | } u^i (t,X^0 (t,x)) .
  \end{eqnarray}
  In particular  the vectorfields $w^i (t)$ remain  conormal to the moving boundary  $\partial \mathcal{O}_{+} (t)$ when time proceeds:  $w^i . n = 0$ for any $1  \leqslant  i \leqslant 5$ on $\partial \mathcal{O}_{+} (t)$.

The conormal vectorfields satisfy the transport-stretching equation  (\ref{taneqbis}) so that we should expect that their smoothness persists as long as we have good estimates of the velocity.
In particular we need Lipschitz estimate as well as conormal and piecewise estimates of the velocity.
Next section is devoted to the way to deduce such estimates from the ones on the vorticity.

 \subsection{Static estimates}
   \label{secstaticesti}
 
Let us start with the  Lipschitz estimate of the velocity.
As the Marcinkiewicz-Calderon-Zygmund theorem fails to give a  Lipschitz estimate 
 of the velocity simply by $\|   \omega^0  \|_{L^\infty  \cap L^2}$,
 we need to consider a larger norm in the r.h.s. 
 But using normal derivatives (i.e. along $n$) would lead to estimates non-uniform with respect to the understood regularization parameter (the vorticity is expected to be only bounded not discontinuous across the patch boundary).
 The  argument  -dating back from Chemin for the $2$d case (see \cite{chemin}, \cite{che1},   \cite{che2},  \cite{che3}, \cite{che4})- consists to use the ellipticity of the 
 $\dive - \rot$ system to establish a  Lipschitz estimate for the velocity field with respect to the $L^\infty$ norm and to conormal derivatives only of the vorticity .
 Moreover one can manage to involve the conormal derivatives only through a  log, that is  widely known helpful in a subsequent Gronwall lemma since an inequality of the form $y'  \leqslant y \log y$ does not lead to a blow-up.
  In the $3$d case Gamblin and  Saint-Raymond  (cf. \cite{gamblin} Lemma $3.5$) also succeed to use the ellipticity  to get rid of the derivatives normal to the boundary proving:
  \begin{prop}
 There holds 
   \begin{eqnarray}
   \label{staticesti}
   \|   v^0 \|_{\text{Lip}(\R^3)} &\lesssim& 
  \|   \omega^0   \|_{L^2 \cap L^\infty  (\R^3 )} 
  + L(  \|  \omega^0  \|_{L^\infty (\R^3 )}  , \|   \omega^0  \|_{ C^{0,r}_ \text{co} }),
  \end{eqnarray}
  where we define  for $a$ and $b$ strictly positive
  $L$ is the function defined in (\ref{log}) and 
  the conormal $C^{0,r}_ \text{co}$  norm denotes:
  \begin{eqnarray*}
  \|  u  \|_{C^{0,r}_ \text{co} } :=   \|   u   \|_{ L^\infty  (\R^3 )} 
  +   \lbrack W  \rbrack^{-1}_{L^\infty  (\R^3)}
  \sum_{1   \leqslant i  \leqslant 5}  (    \|  w^i   \|_{0,r}    \|  u \|_{L^\infty (\R^3 )}   + \|  w^i \dagger  u  \|_{-1,r}) .
   \end{eqnarray*}
   \end{prop}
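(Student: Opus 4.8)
The plan is to prove the Lipschitz estimate (\ref{staticesti}) by exploiting the ellipticity of the $\dive$–$\rot$ system together with Bony's paraproduct, following the strategy of Chemin in 2d and its adaptation by Gamblin and Saint-Raymond in 3d. The starting point is the Biot–Savart type representation $v^0 = \nabla \Delta^{-1} \dive v^0 - \nabla \wedge \Delta^{-1} (\rot v^0)$, which here reduces — thanks to the divergence-free condition — to $v^0$ being recovered from $\omega^0$ by an operator which is, morally, of order $-1$ but whose gradient $\nabla v^0$ is given by a singular (order $0$) Calderón–Zygmund operator applied to $\omega^0$. As recalled in the text, this order-$0$ operator is \emph{not} bounded on $L^\infty$, so one cannot simply estimate $\|v^0\|_{\mathrm{Lip}}$ by $\|\omega^0\|_{L^\infty \cap L^2}$; one must interpose a slightly stronger norm, and the whole point is to arrange that this stronger norm involves only the \emph{conormal} derivatives of $\omega^0$ (so as to remain uniform under the viscous regularization) and only through a logarithm.

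First I would perform a dyadic decomposition of $\omega^0$ and split $\nabla v^0 = \sum_{j \geq -1} \nabla v^0_j$ with $v^0_j$ the piece carried by $\Delta_j$. The low-frequency part ($j$ small) is controlled directly by $\|\omega^0\|_{L^2}$ via Bernstein, and the "bulk" of the sum up to some cutoff $N$ is controlled by $N \, \|\omega^0\|_{B^0_{\infty,\infty}}$, which by the embedding (\ref{inclu}) is $\lesssim N\|\omega^0\|_{L^\infty}$. For the high-frequency tail $j > N$ one cannot afford a factor $j$, so one integrates by parts in the Biot–Savart kernel to trade a genuine (all-directions) derivative for a conormal one: using the identity $n = \sum_i \frac{\partial_i \phi_0}{|\nabla\phi_0|} u^i$ of (\ref{normalrecovered}) and the fact that $\{w^i\}$ is a maximal family with $\lbrack W\rbrack^{-1}$ bounded as in (\ref{maximal}), any full derivative $\partial_k$ can be written as a combination (with $C^{s,r}$, hence at least $C^{0,r}$, coefficients controlled by $\lbrack W\rbrack^{-1}$ and $\|w^i\|_{0,r}$) of the conormal fields $w^i\dagger$. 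Applying a paraproduct decomposition à la Bony, the "bad" term $T_{w^i} \partial v^0$ is handled by (\ref{parap1}) and the remainder by (\ref{parap2}), and the commutator between the paraproduct and the singular Biot–Savart multiplier is absorbed by the commutator estimate (\ref{commu}); this is exactly where the $-1,r$ conormal norm $\|w^i\dagger \omega^0\|_{-1,r}$ enters, with a gain of $r$ derivatives compensating the loss of $1$ from writing $\omega^0 = \dive(\text{something})$-type manipulations. The tail then contributes $\lesssim 2^{-Nr}\|\omega^0\|_{C^{0,r}_{\mathrm{co}}}$.

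Combining the three pieces gives $\|v^0\|_{\mathrm{Lip}} \lesssim \|\omega^0\|_{L^2\cap L^\infty} + N\|\omega^0\|_{L^\infty} + 2^{-Nr}\|\omega^0\|_{C^{0,r}_{\mathrm{co}}}$ for every integer $N \geq 0$, and then I would optimize over $N$: choosing $2^{Nr} \sim \|\omega^0\|_{C^{0,r}_{\mathrm{co}}}/\|\omega^0\|_{L^\infty}$ (i.e. $N \sim \frac{1}{r}\log(e + \|\omega^0\|_{C^{0,r}_{\mathrm{co}}}/\|\omega^0\|_{L^\infty})$) produces exactly the logarithmic factor $L(\|\omega^0\|_{L^\infty}, \|\omega^0\|_{C^{0,r}_{\mathrm{co}}})$ appearing in (\ref{staticesti}), since $L(a,b) = a\ln(e+a/b)$ is symmetric enough in the relevant regime and monotone in both arguments. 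The main obstacle — and the technical heart — is the second step: rigorously converting a generic derivative of $v^0$ into conormal derivatives of $\omega^0$ while keeping track of the fact that the change-of-frame coefficients sit in $C^{0,r}$ only (not $C^\infty$), so that their presence forces the use of paraproduct rather than naive Leibniz, and the order-$0$ Calderón–Zygmund operator must be commuted past these rough coefficients with only an $r$-gain available; getting the bookkeeping of these commutators right, and checking that $L^2$-integrability at low frequency is preserved, is where all the care is needed. The details, which follow Lemma $3.5$ of \cite{gamblin} with the piecewise/$C^{s,r}$ refinements handled as in Proposition \ref{dudu}, I would carry out in full in the remainder of the section.
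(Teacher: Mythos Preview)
Your overall architecture --- splitting into low, middle, and high frequencies and optimizing over the cutoff $N$ --- is equivalent to the paper's use of the logarithmic interpolation inequality (\ref{log}), so that part is fine.  The genuine gap is in the step you call ``trading a genuine (all-directions) derivative for a conormal one''.

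You assert that, by maximality of the family $\{w^i\}$ and the identity (\ref{normalrecovered}), any first-order derivative $\partial_k$ can be written as a $C^{0,r}$-combination of the conormal fields $w^i\dagger$.  This is false on the patch boundary: by construction every $w^i$ satisfies $w^i\cdot n=0$ on $\partial\mathcal{O}_+(t)$, so at boundary points the five fields span only the $2$-dimensional tangent space to the hypersurface.  Maximality ($[W]>0$) guarantees that at least two of them are independent, not that they span $\R^3$; the normal direction is simply not reachable by any combination of the $w^i$.  A first-order decomposition of $\partial_k$ through the $w^i$ therefore does not exist near the boundary, and the argument as written cannot close.

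What the paper (following Gamblin--Saint-Raymond) actually does is a \emph{second-order} algebraic decomposition, precisely the identity (\ref{yael0}): one writes $\partial_j\partial_k = a_{j,k}\,\Delta + \sum_{l,i,p}\partial_l\partial_p\, b^{l,i}_{j,k}\, w^i_p$, where locally $a_{j,k} = (w^m\times w^n)_j(w^m\times w^n)_k / |w^m\times w^n|^2$.  The point is that the ``normal--normal'' part of the Hessian is not discarded or converted, but is exactly a scalar multiple of the full Laplacian; composed with $\Lambda^{-2}$ this becomes $(1-\chi(D)\Lambda^{-2})a_{j,k}$, bounded on $L^\infty$ since $\|a_{j,k}\|_{L^\infty}\le 1$.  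Only the remaining terms carry at least one genuine conormal factor $w^i_p$, and it is to those terms --- after hitting with $\Lambda^{-2}\partial_l\partial_p$, which is order $0$ --- that the $C^{-1,r}$ conormal norm and the logarithmic interpolation are applied.  This quadratic-forms trick is the missing ingredient; once you insert it, your frequency-cutoff scheme becomes essentially the paper's proof.
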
  
  
   \begin{proof} [Scheme of proof]
 Actually since  $\|  v^0  \|_{L^\infty }$ can easily be estimated by 
 $$ \|  v^0  \|_{L^\infty } \lesssim  \|   \omega^0  \|_{L^\infty  \cap L^2}  $$
 (thanks to an appropriated splitting into near and far field in the integral formula of the Biot-Savart law)
the delicate problem is to estimate 
 the
 $\| \partial_i v^0  \|_{L^\infty }$.
 Let us continue to reduce the problem by
introducing  a function  $\chi$  in $C^\infty_0 (\R^3 ) $ positive and equal to $1$ near $0$ 
 and  $\chi (D)$ the highly smoothing corresponding Fourier multiplier.
Let $ \lambda (\zeta) := (\chi (\zeta) + |  \zeta  |^{2})^\frac{1}{2}  $ and $\Lambda:=  \lambda (D)$ the corresponding elliptic Fourier multiplier.
Since the low frequencies part can simply be estimated by 
\begin{eqnarray}
\label{lf}
 \|   \partial_j   \chi (D)  \Lambda^{ -2}   v^0 \|_{-1,r} &\lesssim & \|  v^0  \|_{L^\infty }
 \end{eqnarray}
 we are left to estimate the high frequencies part
 \begin{eqnarray}
  \label{left1}
 \| \partial_j  (1 -  \chi (D)  \Lambda^{ -2} ) v^0  \|_{ L^\infty} 
  \leqslant   \sum_{1   \leqslant k  \leqslant 3}  \|  \Lambda^{ -2} \partial_j  \partial_k  \omega^0  \|_{ L^\infty} .
\end{eqnarray}
  By  quadratic forms technics  Gamblin and  Saint-Raymond  (cf. \cite{gamblin} Lemma $3.5$ ) succeed to write
 \begin{eqnarray}
  \label{yael0}
  \partial_j  \partial_k =  \Delta a_{j,k} .  +  \sum_{l,i,p}  \partial_l \partial_p    b_{j,k}^{l,i}   w^i_p  ,
  \end{eqnarray}
  where the  functions $a_{j,k}$ and  $b_{j,k}^{l,i} $ are build by partition of unity from local expressions of the form
    \begin{eqnarray}
  \label{yael1}
  a_{j,k}  &=&   \frac{(w^m \times w^n )_{j}  (w^m \times w^n )_{k} }{  | w^m \times w^n  |^2 }  ,
\\   \label{yael2}
 b_{j,k}^{l,i} &=& \frac{ P_{j,k}^{l,i} (w^m , w^n ) }  {  | w^m \times w^n  |^2 }  ,
  \end{eqnarray}
where  the  $P_{j,k}^{l,i} (w^m , w^n ) $  are homogeneous polynomials of $w^m$, $w^n$ of degree $7$ and $w^m \times w^n  $ doesn't vanish, such that
   \begin{eqnarray*}
   \|   a_{j,k}   \|_{L^\infty }  &\leqslant& 1 ,
\\   \|  b_{j,k}^{l,i}  \|_{C^{0,r}  (\R^3) }  &\leqslant& C ( \|  \lbrack W (t) \rbrack^{-1}_{L^\infty  (\R^3)} .
\sum_{1   \leqslant i  \leqslant 5}    \|  w^i   \|_{0,r}  )^{19} . 
  \end{eqnarray*}
  
  We then rewrite  the function $\Lambda^{ -2} \partial_j  \partial_k  \omega^0 $ in
  (\ref{left1}) as 
    \begin{eqnarray}
      \label{statique}
  \Lambda^{ -2} \partial_j  \partial_k  \omega^0 = f_1 + f_2   \text{ where }   f_1 :=  (1 -  \chi (D)  \Lambda^{ -2} ) a_{j,k}  \omega^0 \text{ and } 
    f_2 :=  \Lambda^{ -2} ( \partial_j  \partial_k -   \Delta a_{j,k} ) \omega^0 .
   \end{eqnarray}
Since
  $\|  f_1 \|_{ L^\infty   (\R^3)}   \lesssim  \| \omega^0 \|_{L^\infty  (\R^3)} 
 $   we are left with the estimate of
 $   \|  f_2 \|_{L^\infty  (\R^3)} $.    
    We then apply the logarithmic interpolation inequality:
 \begin{eqnarray}
  \label{static1}
\|  f_2 \|_{ L^\infty   (\R^3)} &\lesssim&  L(  \|   f_2 \|_{B^0_{\infty, \infty} (\R^3 )}  , \|  f_2   \|_{ 0,r} ) .
 \end{eqnarray}
and observe that
  \begin{eqnarray*}
  \|  f_2   \|_{B^0_{\infty, \infty} (\R^3 )}  &\lesssim&   \|  \omega^0   \|_{ L^\infty   (\R^3)} ,
  \\   \|  f _2  \|_{ 0,r}   &\lesssim&   ( \|  \lbrack W (t) \rbrack^{-1}_{L^\infty  (\R^3)} 
\sum_{1   \leqslant i  \leqslant 5}    \|  w^i   \|_{0,r}  )^{19} \|   \omega^0  \|_{ C^{0,r}_ \text{co} }.
    \end{eqnarray*}
 which allow to conclude to the estimate  (\ref{staticesti}).
  
  \end{proof}

In the same vein we have that conormal estimates on the vorticity imply conormal estimates on the velocity. More precisely there holds:

 \begin{eqnarray}
  \label{coco}
  \|  w^i \dagger   v^0   \|_{0,r}  &\lesssim&
\|w^i \dagger  \omega^0    \|_{-1,r} + \|   v^0 \|_{\text{Lip}(\R^3)}  \|  w^i  \|_{0,r}  .
 \end{eqnarray}
 
   \begin{proof} [Scheme of proof]
To prove the estimate (\ref{coco}), we make use of Bony's paraproduct writing
\begin{eqnarray}
 \label{cocobis}
w^i \dagger   v^0    = (w^i - T_{w^i} ) \nabla   v^0  
+ T_{w^i}  \nabla   v^0 
 \end{eqnarray}
 and estimating the first term by $ \|   v^0 \|_{\text{Lip}(\R^3)}  \|  w^i  \|_{0,r}$
 thanks to the paraproduct rule (\ref{parap2}).
Then to estimate the second term 
we split once again the velocity $ v^0$ into 
low and high frequencies parts 
so that we get on one hand the term $T_{w^i}  \dagger  \chi (D)  \Lambda^{ -2}  v^0 $, which is easily estimated by $\|   v^0 \|_{\text{Lip}(\R^3)}  \|  w^i  \|_{0,r}$,
and on the other hand  a combination, with coefficients valued in $ \{ -1,0,1 \}$, of the terms 
 $T_{w^i}     \Lambda^{ -2} \partial_k \nabla    \omega^0 $.
 To control these last terms we commute the paraproduct $T_{w^i}$ with the operator $  \Lambda^{ -2} \partial_k \nabla  $:
  \begin{eqnarray}
  \label{coco2}
 T_{w^i}   \Lambda^{ -2} \partial_k \nabla    \omega^0  = 
 \lbrack T_{w^i} ,  \Lambda^{ -2} \partial_k \nabla \rbrack    \omega^0
+   \Lambda^{ -2} \partial_k    w^i   \dagger  \omega^0
+   \Lambda^{ -2} \partial_k  \nabla   (T_{w^i}- w^i )    \omega^0 .
 \end{eqnarray}
Now the first term in the r.h.s. of (\ref{coco2}) can be estimated by 
$\|   v^0 \|_{\text{Lip}(\R^3)}  \|  w^i  \|_{0,r}$
 thanks to the commutator estimate (\ref{commu}) with $m=\lambda = 0$.
The second one is estimated simply by $\|w^i \dagger  \omega^0    \|_{-1,r}$ and the third one by  $\|   v^0 \|_{\text{Lip}(\R^3)}  \|  w^i  \|_{0,r}$  thanks to the paraproduct rule (\ref{parap2}).
 \end{proof}

 \subsection{$C^{0,r}$ Dynamic estimates}
 \label{secdynaesti}
 
 With the previous estimates on the velocity in hand we can now prove some persistence of smoothness 
 of the vorticity.
Actually Gamblin and  Saint-Raymond  (cf. \cite{gamblin} Proposition $4.1$)
prove   the estimate
\begin{eqnarray}
 \label{dynaesti}
 \|  w^i  (t)   \|_{C^{0,r}}
  + \frac{ \| (w^i  \dagger \omega^0 )(t) \  \|_{C^{-1,r}}}{ \|   \omega^0  (t)   \|_{L^\infty } } 
  &\leqslant& e^{C V(t)} \
 \big(  \|  w^i_0   \|_{C^{0,r}}
  + \frac{ \| w^i_0  \dagger \omega_0  \  \|_{C^{-1,r}}}{ \|   \omega_0  \|_{L^\infty } }
   \big) ,
  \end{eqnarray}
  where $V(t)$ denotes $V(t) := \int_0^t  \|  v^0 \|_{Lip } $, 
  the constant $C$ depends only on $r$.
All the norms above are related to the whole space $\R^3$.

 \begin{proof} [Scheme of proof]

  We apply the transport estimate  (\ref{TEE})
  with  $s=0$ 
  to   the conormal vectorfields  $f=w^i$ which
 satisfy the transport-stretching equation  (\ref{taneqbis})-(\ref{taneq2bis})
 and estimate 
\begin{eqnarray}
\| \partial_t  w^i + v^0 \cdot \nabla w^i  \|_{ C^{0,r} (\R^3 ) }  = \| w^i \cdot \nabla v \|_{ C^{0,r} (\R^3 ) } 
 \end{eqnarray}
  by $(\ref{coco})$.
  This yields
\begin{eqnarray}
\label{555}
 \|   w^i  (t)   \|_{C^{0,r} (\R^3 )} 
   \leqslant
   \big(  \|  w^i_0  \|_{C^{0,r} (\R^3 )} 
   +  \int_0^t  (\|w^i \dagger  \omega^0    \|_{-1,r} + \|   v^0 \|_{\text{Lip}(\R^3)}  \|  w^i  \|_{0,r} )  (\tau)
   e^{-C V (\tau)  } d\tau   \big)
    e^{C  V (t)  } .
  \end{eqnarray}
  By using the equations (\ref{taneq})-(\ref{taneq2}) of the vorticity $\omega^0$ and the 
  equations (\ref{taneqbis})-(\ref{taneq2bis}) of the conormal vectorfields $w^i$
 we next observe that the field $w^i  \dagger \omega^0 $ involved in the inequality above satisfies the equation
\begin{align}
\label{taneq7} \partial_t  (w^i  \dagger \omega^0 )+ v^0  \cdot \nabla ( w^i  \dagger   \omega^0 )  = w^i  \dagger ( \omega^0  \cdot \nabla v^0 ),
\\ \label{taneq8}  (w^i  \dagger \omega^0 )|_{ t =0 }  =  w^i_0  \dagger \omega_0 .
\end{align}
We apply  the estimate  (\ref{TEE}) with $s=-1$ to the field $f = w^i  \dagger \omega^0 $ 
\begin{eqnarray}
\label{44}
 \| w^i  \dagger \omega^0  (t)   \|_{C^{-1,r} (\R^3 )} 
   \leqslant
   \big(  \|  w^i_0  \dagger \omega_0 \|_{C^{-1,r} (\R^3 )} 
   +  \int_0^t  \|  w^i  \dagger ( \omega^0  \cdot \nabla v^0 ) (\tau)     \|_{ C^{-1,r} (\R^3 ) } 
   e^{-C V (\tau)  } d\tau   \big)
    e^{C  V (t)  } .
  \end{eqnarray}
  Let us recall that the initial data $\| w^i_0  \dagger \omega_0 \|_{C^{-1,r} (\R^3 )}$ should be estimated by Proposition \ref{dudu}.
   Now to estimate the integral we first notice that 
\begin{eqnarray}
\label{costretch}
  w^i  \dagger ( \omega^0  \cdot \nabla v^0 ) := \dive (\omega^0 \otimes ( w^i  \dagger \omega^0 )) +  \dive ( \zeta \otimes v^0 )
    \end{eqnarray}
  with $   \zeta :=  \dive (  \omega^0 \otimes w^i - w^i  \otimes   \omega^0 )$.
  Then we make use of the paraproduct to commute the divergence operator in the first term of the r.h.s. of (\ref{costretch})
\begin{eqnarray*} 
 \dive (\omega^0 \otimes ( w^i  \dagger \omega^0 )) = 
\lbrack \partial_k , T_{  w^i  \dagger \omega_0 } \rbrack \omega^0_k
+   \partial_k \big( (w^i  \dagger \omega_0 - T_{  w^i  \dagger \omega_0 }) \omega^0_k \big),
 \end{eqnarray*}
 the sum over $k$ being understood. 
 This gives -thanks to the estimate (\ref{parap2}) and  (\ref{commu})
\begin{eqnarray*} 
 \| \dive (\omega^0 \otimes ( w^i  \dagger \omega^0 )) \|_{ C^{-1,r} (\R^3 ) }
  &\lesssim&  \| w^i \dagger v^0   \|_{0,r}  \| \omega^0 \|_{ L^\infty  } .
  \end{eqnarray*}
  Now to estimate the second term in the r.h.s. of (\ref{costretch}) we first notice that $\zeta$ is divergence free so that 
\begin{eqnarray*} 
 \| \dive ( \zeta \otimes v^0 )  \|_{ C^{-1,r} (\R^3 ) }
  &\lesssim& 
   \|   v^0   \|_{\text{Lip}(\R^3)} \|  \zeta \|_{ C^{-1,r} (\R^3 ) } .
  \end{eqnarray*}
  Since the tame estimates    (\ref{parap1}) and  (\ref{parap2}) yield
\begin{eqnarray*} 
 \|  \zeta \|_{ C^{-1,r} (\R^3 ) }
 &\lesssim& \| w^i \dagger v^0   \|_{0,r}  
 +  \| \omega^0 \|_{ L^\infty  }  \|  w^i  \|_{0,r}   
    \end{eqnarray*}
we finally get -using one more time the estimate  by $(\ref{coco})$- the following 
estimate of the conormal derivative of the stretching term
\begin{eqnarray}
\label{111}
\|   w^i  \dagger (  \omega^0  \cdot \nabla v^0   )  \|_{ C^{-1,r} (\R^3 ) } 
 &\lesssim&  \|   v^0 \|_{\text{Lip}(\R^3)} 
( \| w^i \dagger  \omega^0   \|_{-1,r} + \|   \omega^0 \|_{L^\infty (\R^3)}  \|  w^i  \|_{0,r} ).
\end{eqnarray}
Now we observe that for any $0 \leqslant s \leqslant t\leqslant T$ the vorticity  $\omega^0$ obeys the following $L^\infty $ estimate:
\begin{eqnarray}
\label{333}
 e^{V(s) -  V (t)}  \leqslant \frac{ \|\omega (s) \|_{L^\infty (\R^3)} }{\|\omega (t) \|_{L^\infty (\R^3)}  }  \leqslant 
 e^{V(t) -  V (s)} .
\end{eqnarray}
Combining the estimates (\ref{44}), (\ref{111}) and (\ref{333}) leads to 
\begin{eqnarray}
\label{444}
\frac{ \| w^i  \dagger \omega^0  (t)   \|_{C^{-1,r} (\R^3 )}  }{ \|\omega (t) \|_{L^\infty (\R^3)}  }
   \leqslant
   \big(  \frac{\|  w^i_0  \dagger \omega_0 \|_{C^{-1,r} (\R^3 )} }{ \|   \omega_0   \|_{L^\infty } }
  \\ \nonumber +  \int_0^t  \|   v^0  (\tau)  \|_{\text{Lip}(\R^3)}  (   \| w^i  (\tau )   \|_{C^{0,r} (\R^3 )}    + \frac{    \| w^i  \dagger \omega^0   (\tau) \|_{C^{-1,r} (\R^3 )}  }{ \|\omega (\tau  ) \|_{L^\infty (\R^3)}  }  )
   e^{-C V (\tau)  } d\tau   \big)
    e^{C  V (t)  } .
  \end{eqnarray}
We add the estimates (\ref{444}) and  (\ref{555})
and conclude  by a Gronwall argument.
  \end{proof}

 \subsection{Gamblin and  Saint-Raymond's result }
 \label{222}
 
Putting together the estimates (\ref{stretching})-(\ref{maximal})-(\ref{staticesti})-(\ref{dynaesti}),  Proposition \ref{dudu} and thanks to the relation  (\ref{normalrecovered})
we conclude that
there exists $T>0$ 
 and a unique solution $v^0  \in L^\infty (0,T;Lip (\R^d))  $ to the Euler equations
 such that for each $t \in (0,T)$  the boundary  $\partial \mathcal{O}_{+} (t)$ is  $C^{1,r}$, and can be given  by the equation  $\partial \mathcal{O}_{+}(t) = \{ \phi^{0} (t,.)=0  \}$, 
where $ \phi^{0} \in L^\infty (0,T;   C^{1,r}  (\R^d)   ) $
  verifies  (\ref{eik1})-(\ref{eik2}).
   The transport properties yield   $ \mathcal{O}_{\pm} (t)= \{ \pm \phi^{0} (t,.) > 0  \}$ and that there exists $ \eta >0$ such that for 
$| \phi^{0}  | < \eta $, $t \leqslant T$, there holds $\nabla \phi^{0} \neq 0$.  
We deduce the continuity of the normal component of the vorticity  $( \omega^0  . n)(t,.)$ including across $\partial \mathcal{O}_{+}(t) = \{ \phi^{0} (t,.)=0  \}$ from the divergence free of the vorticity and its conormal regularity.

 \subsection{Iterated tangential regularity}
 \label{ita}

In this section we briefly explain how to prove the propagation of the higher order smoothness- that is the  $C^{s+1,r}$  smoothness- of the boundary $\partial \mathcal{O}_{+} (t)$
(the point $\ref{litt3}$ of Theorem \ref{litt}).
 This was done  by P. Zhang in the couple of papers  \cite{zhang}, \cite{zhang2} written in Chinese, by adapting the method used by Chemin in the $2$d case  \cite{cheinventiones}.
By local inversion there exists a parametrization of the initial boundary $\partial \mathcal{O}_{+, 0} $ given by 
a function 
\begin{eqnarray}
\label{param1}
  \mathbf{x}_0 (\tau_1 , \tau_2 )  \in C^{s+1,r}  (S^1 \times  S^1 ,\R^3).
  \end{eqnarray}
Then for  each $t \in (0,T)$  the boundary $\partial \mathcal{O}_{+} (t)$ is given by the function
\begin{eqnarray}
\label{param2}
  \mathbf{x} (t, \tau_1, \tau_2) :=   \mathcal{X} (t,   \mathbf{x}_0 ( \tau_1, \tau_2))    
    \end{eqnarray}
and our goal in this section is to show that this function $\mathbf{x}$ is in $L^\infty (0,T;   C^{s+1,r}  ( S^1 \times  S^1 ,\R^3  ))  $, what means that the boundary  $\partial \mathcal{O}_{+} (t)$ is $L^\infty (0,T;   C^{s+1,r})$.

Since for $k = 1,2$ 
the vector field  $\partial_{\tau_k } \, \mathbf{x}_0$ is tangent to $\partial \mathcal{O}_{+, 0} $ at the point $ \mathbf{x}_0 (\tau_1 , \tau_2)$ and since the set of the vector fields $\{w^1_0 ,w^2_0 ,w^3_0 \}$ is maximal on $\partial \mathcal{O}_{+, 0} $  there exists some  functions 
$$ a^k ( \tau_1 , \tau_2 ) :=  (a^k_1 (\tau_1 , \tau_2),a^k_2 (\tau_1 , \tau_2) ,a^k_3 (\tau_1 , \tau_2) )
 \in C^{s,r}  (S^1 \times  S^1 ,\R^3)    $$
 such that
 $$ \partial_{\tau_k } \mathbf{x}_0   ( \tau_1 , \tau_2 )  = \sum_{i=1}^3 a^k_i   ( \tau_1 , \tau_2 ) w^i_0  ( \mathbf{x}_0  ( \tau_1 , \tau_2 ))  .$$

As a consequence by  applying the derivatives $\partial_{\tau }^\alpha :=  \partial_{\tau_1 }^{\alpha_1} \partial_{\tau_2 }^{\alpha_2}$ to the function $\mathbf{x}$ in (\ref{param2}) for $\alpha  := (\alpha_1 , \alpha_2 )  \in  \N \times  \N$ with 
$|\alpha |:= \alpha_1 + \alpha_2  \leqslant s+1$ we get 
 $$ \partial_{\tau }^\alpha   \mathbf{x}  (t, \tau_1 , \tau_2 ) = 
  \sum_{l=1}^{|\alpha |}  \sum_{ \beta  \in  \{1,2,3 \}^l}
  a_ \beta  ( \tau_1 , \tau_2 ) \Big(  (w_0 \dagger )^\beta  \mathcal{X}  \Big) (t,   \mathbf{x}_0 (\tau_1 , \tau_2 )) 
 $$
with $\beta :=  (\beta_1 , ..., \beta_l )$,
$(w_0  \dagger )^\beta := (w^{\beta_1}_0  \dagger )...(w^{\beta_l}_0  \dagger )$
and where the functions $a_ \beta$ are in $C^{0,r}  ( S^1 \times  S^1 ,\R )$.

Going back to the definition of the $w^i$ in
(\ref{imitating}) we see that  
$$(w^i _0  \dagger   \mathcal{X}) (t,   \mathbf{x}_0 (\tau_1 , \tau_2 ))
= w^i ( t,   \mathbf{x} (t,\tau_1 , \tau_2 )) $$
and by iteration that the iterated derivatives  $\Big(  (w_0 \dagger )^\beta  \mathcal{X}  \Big) (t,   \mathbf{x}_0 (\tau_1 , \tau_2 )) $ can be transformed into
the iterated derivatives  $\Big(  (w \dagger )^{\beta'}  w^{\beta_l}  \Big) (t,   \mathbf{x} (t,\tau_1 , \tau_2 )) $, where we denote  $\beta' :=  (\beta_1 , ..., \beta_{l-1} )$.
We are thus led to look for $L^\infty (0,T;   C^{0,r})$ smoothness for 
the   $  (w \dagger )^{\beta'}  w^{i}  $.

Since each $w^i \dagger$ commute with the material derivative
$\partial_t  + v^0 \cdot \nabla $ the  $  (w \dagger )^{\beta'}  w^{i}  $ satisfy
\begin{align}
\label{taneq79} \partial_t  (w \dagger )^{\beta'}  w^{i} + v^0  \cdot \nabla  (w \dagger )^{\beta'}  w^{i}  =  (w \dagger )^{\beta'}      w^{i}   \nabla v^0 ,
\\ \label{taneq89}  (  (w \dagger )^{\beta'} w^{i} )|_{ t =0 }  =   (w_0  \dagger )^{\beta'} w^{i}_0 .
\end{align}

Applying  the transport estimate  (\ref{TE}) with $s=0$ to the equations  (\ref{taneq79})-(\ref{taneq89})
yields for any  $t \in (0,T)$
\begin{eqnarray}
\label{TEEE}
 \|  (w \dagger )^{\beta'}  w^{i}  (t)   \|_{0,r} 
   \leqslant
  \big(  \|  (w_0  \dagger )^{\beta'} w^{i}_0   \|_{0,r} 
   +  \int_0^t  \|  (w \dagger )^{\beta'}      w^{i}   \nabla v^0  (\tau)     \|_{ 0,r } 
   e^{-C V (\tau)  } d\tau   \big)
    e^{C  V (t)  } ,
  \end{eqnarray}
   where
$V (t) :=
 \int_0^t   \|  \nabla v^0    \|_{ L^{\infty} (\R^3  ) } ds $ is now under control.
 Moreover since their definition in section
\ref{dudulle} we know that the initial conormal vectorfields 
$w^{i}_0$ are in $C^{s,r}$. 
Since  $|\beta' |  \leqslant s$
the paraproduct rules yield that the initial data 
$(w_0  \dagger )^{\beta'} w^{i}_0$ is actually in $C^{0,r}$.
As a consequence the estimate (\ref{TEEE}) now simply reads
\begin{eqnarray}
\label{TEEEE}
 \|  (w \dagger )^{\beta'}  w^{i}  (t)   \|_{0,r} 
   \leqslant
  C \big( 1 +  \int_0^t  \|  (w \dagger )^{\beta'}      w^{i}   \nabla v^0  (\tau)     \|_{ 0,r } d\tau  \big).
  \end{eqnarray}

Now we can estimate the r.h.s. of (\ref{TEEEE}) by following the approach of Gamblin and Saint-Raymond mentioned in section \ref{secstaticesti} proving on one hand that that iterated conormal regularity for the vorticity implies  iterated  conormal regularity for the velocity and on the other hand that iterated  conormal regularity for the velocity is preserved when time proceeds. 
For the first step we refer to  \cite{zhang2} p$387$ which shows that for any time  the $ C^{0,r} $ norm of the  $ (w \dagger )^{\beta'}      w^{i}   \nabla v^0$ can be estimated by  the $C^{-1,r}$ norm of the iterated conormal derivatives  $  (w \dagger )^{\beta}   \omega^0 $ of the vorticity.
More precisely when proceeding by iteration on $|\beta' |  \leqslant s$
and understooding  the quantities already known from previous steps
we have
\begin{eqnarray*}
 \|  (w \dagger )^{\beta}  v^0     \|_{0,r } 
 \lesssim  \|  (w \dagger )^{\beta}    \omega^0       \|_{-1,r } 
 +  \|  (w \dagger )^{\beta'}  w^i    \|_{0,r } .
 \end{eqnarray*}
Now in order to prove the second step we notice that  applying the derivatives  $  (w \dagger )^{\beta}   $ to the equation (\ref{taneq}) and using again that the  $w^i \dagger$ commute with the material derivative
$\partial_t  + v^0 \cdot \nabla $ yields that  the $  (w \dagger )^{\beta}   \omega^0 $ satisfy 
\begin{align}
\label{taneq799} \partial_t  (w \dagger )^{\beta}   \omega^0 + v^0  \cdot \nabla  (w \dagger )^{\beta}    \omega^0  =  (w \dagger )^{\beta}  \omega^0  \nabla v^0 ,
\\ \label{taneq899}  (  (w \dagger )^{\beta}  \omega^0 )|_{ t =0 }  =  (w_0 \dagger )^{\beta}  \omega_0  
\end{align}
We use again the transport estimate  (\ref{TE}) but this time with $s=-1$, which 
yields for any  $t \in (0,T)$
\begin{eqnarray}
\label{TEEbis}
 \|  (w \dagger )^{\beta}   \omega^0   (t)   \|_{-1,r} 
   \leqslant
  \big(  \|    (w_0 \dagger )^{\beta}  \omega_0    \|_{-1,r} 
   +  \int_0^t  \|  (w \dagger )^{\beta}    \omega^0     \nabla v^0  (\tau)     \|_{-1,r } 
   e^{-C V (\tau)  } d\tau   \big) 
    e^{C  V (t)  } .
  \end{eqnarray}
   Let us recall that the estimate of the initial data is performed in Proposition \ref{dudu}.
   The estimate of the  r.h.s. of (\ref{taneq799}) is done in  \cite{zhang2} p$388$.
It can be seen as
an extension in the setting of iterated conormal derivatives of the 
estimate  (\ref{111}) of the conormal derivatives of the stretching term
and reads as follows
\begin{eqnarray*}
 \|  (w \dagger )^{\beta}    \omega^0     \nabla v^0     \|_{-1,r } 
 \lesssim  \|  (w \dagger )^{\beta}    \omega^0       \|_{-1,r } 
 +  \|  (w \dagger )^{\beta'}  w^i    \|_{0,r } .
 \end{eqnarray*}
Plugging this in the inequality  (\ref{TEEbis}) yields an inequality of the form
\begin{eqnarray*}
 \|  (w \dagger )^{\beta}   \omega^0   (t)   \|_{-1,r} 
   \leqslant
 C \big(  1
   +  \int_0^t   \|  (w \dagger )^{\beta}    \omega^0       \|_{-1,r } 
 +  \|  (w \dagger )^{\beta'}  w^i    \|_{0,r }     \big)
   \end{eqnarray*}
so that  Gronwall argument leads to
\begin{eqnarray*}
 \|  (w \dagger )^{\beta}   \omega^0    \|_{-1,r}  
 \lesssim 
\|  (w \dagger )^{\beta'}  w^i    \|_{0,r }   
 \end{eqnarray*}
Now the estimate
 (\ref{TEEEE}) reads 
\begin{eqnarray*}
 \|  (w \dagger )^{\beta'}  w^{i}  (t)   \|_{0,r} 
   \leqslant
  C \big( 1 +  \int_0^t  \|  (w \dagger )^{\beta'}  w^{i}    \|_{0,r}  (\tau) d\tau  \big),
  \end{eqnarray*}
summations over $\beta'$ and $i$ being understood.
Hence applying  a Gronwall argument to the previous
estimate  yields
that  the  $  (w \dagger )^{\beta'}  w^{i}  $ are in  $L^\infty (0,T;   C^{0,r})$.
As a consequence for any $\alpha $ with 
$|\alpha |  \leqslant s+1$ we get 
 $ \partial_{\tau }^\alpha   \mathbf{x}  $ is in  $L^\infty (0,T;   C^{0,r})$, which proves that the boundary  $\partial \mathcal{O}_{+} (t)$ is $L^\infty (0,T;   C^{s+1,r})$.

\subsection{Piecewise transport estimates}
\label{pte}
 
 In order to  prove  the propagation of piecewise regularity (in the next section)
 we will  use piecewise estimate for the transport equations of the form:
\begin{align}
\label{T1} \partial_t f + v^0 \cdot \nabla f &= g,
\\ \label{T2} f |_{t=0} &= f_0,
\end{align}
 where  $f_0$, $v^0$ and $g$ are assumed to be given, 
 the two last ones being time dependent.
 To obtain these piecewise transport estimates we will need the following result about extensions of functions preserving the
Lipschitz property:
\begin{theo}[McShane  \cite{mcshane}, Whitney \cite{whitney}]
\label{extlip}
Let $\Omega$ be a subset of $\R^n$.
There exists an extension operator that is a linear operator $ext$ such that
\begin{eqnarray*}
\text{ext } : \text{Lip }     (\Omega ) \rightarrow \text{Lip }  (\R^n )
 \end{eqnarray*}
is continuous and satisfies $(\text{ext }  u)|_{ \Omega  } = u$ for any $u$ in $\text{Lip }     (\Omega )$.
\end{theo}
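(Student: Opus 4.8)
The plan is to invoke Whitney's extension construction, which produces a \emph{linear} extension operator; the more elementary formula of McShane, $u\mapsto \inf_{y\in\Omega}\bigl(u(y)+\mathrm{Lip}(u)\,|\cdot-y|\bigr)$, preserves the Lipschitz seminorm but is nonlinear in $u$ (it involves $u$ both through the values $u(y)$ and, nonlinearly, through the seminorm $\mathrm{Lip}(u)$), so it does not answer the statement as phrased. First I would reduce to $\Omega$ closed, since a Lipschitz function on $\Omega$ extends uniquely and with the same seminorm to $\bar\Omega$; the degenerate cases $\Omega=\emptyset$ and $\Omega=\R^n$ are trivial. Then I would take a Whitney decomposition of the open set $\R^n\setminus\Omega$ into a countable family of closed dyadic cubes $(Q_k)_k$ with pairwise disjoint interiors and $\mathrm{diam}(Q_k)\le\mathrm{dist}(Q_k,\Omega)\le 4\,\mathrm{diam}(Q_k)$, together with a subordinate smooth partition of unity $(\theta_k)_k$ attached to mildly dilated cubes $Q_k^*$ of bounded overlap, with $\|\nabla\theta_k\|_{L^\infty}\lesssim 1/\mathrm{diam}(Q_k)$. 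For each $k$ I would fix once and for all a point $p_k\in\Omega$ realizing, up to a fixed factor, the distance from $Q_k$ to $\Omega$, and set
\[
(\mathrm{ext}\,u)(x):=u(x)\ \text{ for }x\in\Omega,\qquad (\mathrm{ext}\,u)(x):=\sum_k\theta_k(x)\,u(p_k)\ \text{ for }x\notin\Omega.
\]

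This operator is manifestly linear in $u$ (the $\theta_k$ and $p_k$ do not depend on $u$), restricts to the identity on $\Omega$, and on $\R^n\setminus\Omega$ its value is a convex combination of values of $u$, which already gives $\|\mathrm{ext}\,u\|_{L^\infty(\R^n)}\le\|u\|_{L^\infty(\Omega)}$. To bound the Lipschitz seminorm away from $\partial\Omega$ I would use $\sum_k\nabla\theta_k\equiv 0$ to write, for $x\notin\Omega$ lying in $Q_{k_0}$, $\nabla(\mathrm{ext}\,u)(x)=\sum_{k:\,x\in Q_k^*}\nabla\theta_k(x)\bigl(u(p_k)-u(p_{k_0})\bigr)$; since neighbouring Whitney cubes have comparable sizes one has $|p_k-p_{k_0}|\lesssim\mathrm{diam}(Q_{k_0})$ for the boundedly many indices occurring, whence $|\nabla(\mathrm{ext}\,u)(x)|\lesssim\mathrm{Lip}(u)$.

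The hard part will be the estimate near $\partial\Omega$, namely that the extension glues to $u$ in a genuinely Lipschitz (not merely continuous) way. For this I would argue that if $x\notin\Omega$ and $z\in\Omega$ realizes $|x-z|=\mathrm{dist}(x,\Omega)$, then every cube $Q_k$ with $x\in Q_k^*$ has $\mathrm{diam}(Q_k)\lesssim\mathrm{dist}(x,\Omega)$ and its chosen point satisfies $|p_k-z|\lesssim\mathrm{dist}(x,\Omega)$, so that $|(\mathrm{ext}\,u)(x)-u(z)|\le\sum_k\theta_k(x)\,|u(p_k)-u(z)|\lesssim\mathrm{Lip}(u)\,\mathrm{dist}(x,\Omega)$. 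Combining this boundary bound with the interior gradient estimate and splitting the segment $[x,y]$ into its portion contained in $\R^n\setminus\Omega$ and its portion meeting $\Omega$ yields $|(\mathrm{ext}\,u)(x)-(\mathrm{ext}\,u)(y)|\lesssim\mathrm{Lip}(u)\,|x-y|$ for all $x,y\in\R^n$. Hence $\mathrm{ext}$ is linear and bounded from $\mathrm{Lip}(\Omega)$ to $\mathrm{Lip}(\R^n)$ and extends. Finally, for $\R^m$-valued data I would simply apply this scalar construction componentwise, which multiplies the operator norm by at most $\sqrt m$. The only real difficulty throughout is geometric bookkeeping with the Whitney cubes and their selected base points $p_k$; once those comparison estimates are in place the rest is routine.
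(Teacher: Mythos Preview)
The paper does not prove this theorem; it merely cites it as a classical result of McShane and Whitney and uses it as a black box in the proof of Proposition~\ref{ptransport}. So there is no ``paper's own proof'' to compare against.

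Your argument is correct and is essentially the standard Whitney extension construction specialized to the $C^{0,1}$ case. You are also right that the naive McShane formula is nonlinear in $u$ and hence does not literally yield a \emph{linear} operator; the Whitney decomposition with fixed anchor points $p_k$ is the appropriate route for linearity. One small remark: in the gluing step you should be a bit careful, since the segment $[x,y]$ need not meet $\Omega$ at all even when both endpoints are close to $\partial\Omega$; the clean way is to treat three cases (both points in $\Omega$; both outside; one in, one out), and in the ``both outside'' case either use the interior gradient bound along the segment when it stays in $\R^n\setminus\Omega$, or, if it crosses $\Omega$, split at the first and last intersection points and apply the boundary estimate you already proved. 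This is routine, but worth stating explicitly.
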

Let us now claim our result of piecewise transport estimates:
\begin{prop}
\label{ptransport}
There holds the following piecewise estimate: for $k=-1$ or $k=0$ 
\begin{eqnarray}
\label{TE1}
 \| f  (t)   \|_{C^{k,r} (\mathcal{O}_{\pm} (t)   )} 
   &\leqslant& C(t)
   \big(  \| f_0   \|_{C^{k,r} (\mathcal{O}_{0,\pm}  )} 
   +  \int_0^t  \|  g(\tau)     \|_{ C^{k,r} ( \mathcal{O}_{\pm} (\tau ) ) } 
   e^{-C  V_{\pm} (\tau)} d\tau  \big)
    e^{C V_{\pm}  (t)},
  \end{eqnarray}
   where
$V_{\pm}  (t) :=
 \int_0^t   \|  v^0    \|_{\text{Lip }   ( \mathcal{O}_{\pm}(s)  ) } ds$
 and for any $k \geqslant 1$
\begin{eqnarray}
\label{TE2}
 \| f  (t)   \|_{C^{k,r} (\mathcal{O}_{\pm} (t)   )} 
   &\leqslant& C(t)
   \big(  \| f_0   \|_{C^{k,r} (\mathcal{O}_{0,\pm}  )} 
   +  \int_0^t  \|  g(\tau)     \|_{ C^{k,r} ( \mathcal{O}_{\pm} (\tau ) ) } 
   e^{-C  V_{k,r,\pm} (\tau)} d\tau  \big)
    e^{C V_{k,r,\pm} (t)},
  \end{eqnarray}
  where $V_{k,r,\pm} (t) :=
 \int_0^t   \| v^0    \|_{ C^{k,r} (\mathcal{O}_{\pm} (s) ) } ds$
where the constant $C(t)$ depend only on the sup for $s$ running over $(0,t)$ of  the Lipschitz norm of the domain $\mathcal{O}_{+} (s) $.
\end{prop}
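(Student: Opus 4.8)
The plan is to derive \eqref{TE1}--\eqref{TE2} from the whole‑space transport estimates \eqref{TEE}--\eqref{TE} by extending the coefficient $v^0$, the datum $f_0$ and the source $g$ off the moving domains, the key structural point being that $v^0$ is tangent to the moving interface $\partial\mathcal{O}_+(t)$ — true because, by construction, $\mathcal{O}_\pm(t)=\mathcal{X}^0(t,\mathcal{O}_{0,\pm})$ is transported by the flow of $v^0$. Two preliminary facts are needed. First, since $\mathcal{O}_{0,\pm}$ are smooth bounded domains and $\mathcal{X}^0(t,\cdot)$ is bi‑Lipschitz on each side (from $v^0\in L^\infty(0,T;\text{Lip})$ and the $C^{1,r}$ smoothness of the boundary established in Section~\ref{222}), the domains $\mathcal{O}_\pm(s)$ are Lipschitz domains whose Lipschitz character stays bounded, for $s\in[0,t]$, by a nondecreasing function of $t$; this is exactly the quantity that will govern $C(t)$. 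Second, because $v^0$ is tangent to $\partial\mathcal{O}_+(s)$, the space‑time tube $\mathcal{T}_\pm:=\{(s,x)\,:\,0\leq s\leq t,\ x\in\mathcal{O}_\pm(s)\}$ is invariant under the characteristics of $v^0$, so the restriction to $\mathcal{T}_\pm$ of the solution $f$ of \eqref{T1}--\eqref{T2} is obtained by integrating $g$ along those characteristics, and thus depends only on $f_0|_{\mathcal{O}_{0,\pm}}$ and on $g|_{\mathcal{T}_\pm}$ (computed via $h(s,y):=f(s,\mathcal{X}^0(s,y))$, $\partial_s h=g(s,\mathcal{X}^0(s,\cdot))$; for $k=-1$ this is the distributional content of the transport theory underlying \eqref{TEE} with $s=-1$).

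Fixing the ``$+$'' side, I would then construct the extensions. Extend $v^0|_{\mathcal{O}_+(s)}$ to a global Lipschitz field $\widehat v^{\,0}_+(s,\cdot)$ on $\R^3$: by Theorem~\ref{extlip} when $k\in\{-1,0\}$ (which preserves the Lipschitz seminorm with an absolute constant), and by Rychkov's operator (Theorem~\ref{rychkov}, with $C^{k,r}=B^{k+r}_{\infty,\infty}$) when $k\geq 1$ (operator norm controlled by the Lipschitz character, hence absorbed into $C(t)$); in both cases the extension is linear, hence measurable, in $s$. Because $\widehat v^{\,0}_+$ is globally Lipschitz and equals $v^0$ on $\mathcal{T}_+$, and $\mathcal{X}^0(s,\cdot)$ maps $\overline{\mathcal{O}_{0,+}}$ homeomorphically onto $\overline{\mathcal{O}_+(s)}$, the trajectories of $\mathcal{X}^0$ issued from $\overline{\mathcal{O}_{0,+}}$ stay where $\widehat v^{\,0}_+=v^0$; by uniqueness of the ODE the flow $\widehat{\mathcal{X}}_+$ of $\widehat v^{\,0}_+$ coincides with $\mathcal{X}^0$ on $\overline{\mathcal{O}_{0,+}}$, so it still maps $\mathcal{O}_{0,+}$ onto $\mathcal{O}_+(t)$ and keeps $\mathcal{T}_+$ invariant. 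Likewise set $\widetilde f_0^{\,+}:=\text{ext}(f_0|_{\mathcal{O}_{0,+}})$ and $\widetilde g_+(s,\cdot):=\text{ext}(g(s,\cdot)|_{\mathcal{O}_+(s)})$ using the same extension operators, so that $\|\widetilde f_0^{\,+}\|_{C^{k,r}(\R^3)}\lesssim C(t)\|f_0\|_{C^{k,r}(\mathcal{O}_{0,+})}$ and $\|\widetilde g_+(\tau)\|_{C^{k,r}(\R^3)}\lesssim C(t)\|g(\tau)\|_{C^{k,r}(\mathcal{O}_+(\tau))}$.

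Now let $\widetilde f_+$ be the whole‑space solution of $\partial_t\widetilde f_+ + \widehat v^{\,0}_+\cdot\nabla\widetilde f_+=\widetilde g_+$, $\widetilde f_+|_{t=0}=\widetilde f_0^{\,+}$, which exists and satisfies \eqref{TEE} (if $k\in\{-1,0\}$) or \eqref{TE} (if $k\geq 1$) with $v^0$ replaced by $\widehat v^{\,0}_+$. Since the characteristics of $\widehat v^{\,0}_+$ through points of $\mathcal{O}_+(t)$ remain in $\mathcal{T}_+$, where $\widehat v^{\,0}_+=v^0$, $\widetilde g_+=g$ and $\widetilde f_0^{\,+}=f_0$, the Lagrangian representation of $\widetilde f_+$ restricted to $\mathcal{O}_+(t)$ is precisely the one obtained above for $f$; hence $\widetilde f_+(t,\cdot)|_{\mathcal{O}_+(t)}=f(t,\cdot)$ (as distributions when $k=-1$). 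Then, using the definition of $C^{k,r}(\mathcal{O}_+(t))$ by restriction, \eqref{TEE}/\eqref{TE} for $\widetilde f_+$, the extension bounds above, and $\int_\tau^t\|\nabla\widehat v^{\,0}_+\|_{L^\infty}\lesssim V_+(t)-V_+(\tau)$ (resp. $\lesssim C(t)\,(V_{k,r,+}(t)-V_{k,r,+}(\tau))$ when $k\geq 1$), one recovers \eqref{TE1} (resp. \eqref{TE2}); the ``$-$'' side is identical. The continuity in $t$ of the extensions follows from their linearity, and the exact bookkeeping of constants is routine.

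The step I expect to be the main obstacle is the choice of extensions in the second and third paragraphs: one must ensure simultaneously that (i) the extended velocity $\widehat v^{\,0}_+$ genuinely transports $\mathcal{O}_{0,+}$ onto $\mathcal{O}_+(t)$ — so that the restriction identity $\widetilde f_+|_{\mathcal{O}_+(t)}=f$ is legitimate — which is where the tangency of $v^0$ to the moving interface is essential, and that (ii) every extension‑operator norm (for $v^0$, $f_0$ and $g$) be controlled only by the uniform Lipschitz character of the domains $\mathcal{O}_\pm(s)$, $s\in[0,t]$, which is precisely why $C(t)$ depends on nothing else. A secondary subtlety is the negative‑index case $k=-1$, where the restriction identity must be read at the level of distributions, in accordance with the way \eqref{TEE} for $s=-1$ is itself proved (spectral localization and commutator estimates, cf. \eqref{commu2}--\eqref{commu3}, rather than characteristics), and where one invokes Theorem~\ref{frazier} to keep the piecewise norms meaningful.
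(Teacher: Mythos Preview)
Your proposal is correct and follows essentially the same route as the paper: extend $v^0$, $f_0$, and $g$ off each side (McShane--Whitney for the velocity when $k\in\{-1,0\}$, Rychkov otherwise and for the data), apply the whole-space transport estimates \eqref{TEE}--\eqref{TE} to the extended problem, and identify the restriction of the extended solution with $f$ on $\mathcal{O}_\pm(t)$ using that the tube is transported by the flow. The only minor difference is that you justify the restriction identity via a Lagrangian/ODE-uniqueness argument for the extended flow, whereas the paper phrases it as uniqueness in $L^\infty$ for the transport equation in the tube; these are equivalent, and your version is arguably more explicit about why the tangency of $v^0$ to $\partial\mathcal{O}_+(t)$ is what makes the argument work.
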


\begin{proof}
   Actually we will prove  an estimate on the side $\mathcal{O}_{+}$  totally
 independent of the one on the side $\mathcal{O}_{-}$.
  To fix the idea let us consider the case of the estimate on $\mathcal{O}_{+}$.
    We make use of Theorem \ref{rychkov}  to get  from
  the restrictions  $f_0 |_{\mathcal{O}_{0,+}}  $ and $g|_{\mathcal{O}_{+} (t)}  $ of the initial data $f_0$ and of the source term $g$  some  extensions $f_{0,+}  $ and $g_{+}   $ defined in the  whole space $\R^3$, with -for any $k \geqslant -1$-
\begin{eqnarray}
\label{TE3}
 \| f_{0,+}   \|_{C^{k,r} (\R^3 )}   \leqslant C
\| f_0 |_{\mathcal{O}_{0,+}}  \|_{C^{k,r} (\mathcal{O}_{0,+} ) } ,
\quad
\| g_{+}   \|_{C^{k,r}(\R^3 )}   \leqslant C(t)
\| g  |_{\mathcal{O}_{+} (t)   )}  \|_{C^{k,r} (\mathcal{O}_{+} (t))} ,
\end{eqnarray}
where the constant $C$ and $C(t)$ depend only on the Lipschitz norm of the domains $\mathcal{O}_{0,+}$ and $\mathcal{O}_{+} (t) $.

To extend the velocity field $v^0$ we distinguish two cases.
When  $k=-1$ or $k=0$  we make
use of McShane-Whitney extension Theorem \ref{extlip} to get a Lipschitz extension  $v^0 _{+}  $  defined in the  whole space $\R^3$ of the restriction  $v^0 |_{\mathcal{O}_{+} (t)} $.
When $k \geqslant 1$ we use again Rychkov extension Theorem \ref{rychkov}  to get  from
  the restriction $v |_{\mathcal{O}_{+} (t)}  $ an extension $v^0 _{+}  $ defined in the  whole space $\R^3$, with  %
\begin{eqnarray}
 \label{TE4}
 \| v^0_{+}   \|_{C^{k,r} (\R^3 )}   \leqslant C (t)
\| v^0   |_{\mathcal{O}_{+} (t)   )}  \|_{C^{k,r} (\mathcal{O}_{+} (t) )} ,
\end{eqnarray}

Then we apply the estimates  (\ref{TEE}) and (\ref{TE}) to the transport equation
\begin{align}
\label{T1+} \partial_t f_{+} + v^0_{+} \cdot \nabla f_{+} &= g_{+} ,
\\ \label{T2+} f_{+} |_{t=0} &= f_{0,+} .
\end{align}
This yields for $k=-1$ or $k=0$ :
\begin{eqnarray}
\label{TEEP}
 \| f_{+}  (t)   \|_{C^{k,r} (\R^3 )} 
   \leqslant
   \big(  \| f_{0,+}   \|_{C^{k,r} (\R^3 )} 
   +  \int_0^t  \| g_{+}   (\tau)     \|_{ C^{k,r} (\R^3 ) } 
   e^{-C  \tilde{V}_{+} (\tau)  } d\tau   \big)
    e^{C   \tilde{V}_{+} (t)  } ,
  \end{eqnarray}
   where
$ \tilde{V}_{+} (t) := \int_0^t   \|  v^0_{+}    \|_{ \text{Lip }  (\R^3  ) } ds$ and for any $k \geqslant 1$ 
\begin{eqnarray}
\label{TEP}
 \| f_+ (t)   \|_{C^{k,r} (\R^3 )} 
   \leqslant
   \big(  \| f_{0,+}   \|_{C^{k,r} (\R^3 )} 
   +  \int_0^t  \|  g_{+}  (\tau)     \|_{ C^{s,r} (\R^3 ) } 
   e^{-C  \tilde{V}_{k,r,+} (\tau)  } d\tau   \big)
    e^{C   \tilde{V}_{k,r,+} (t)  },
  \end{eqnarray}
  where
$\tilde{V}_{k,r,+} (t) :=
 \int_0^t   \|   v^0_+  \|_{ C^{k,r} (\R^3  ) } ds$.

Next we observe that since the domain $\mathcal{O}_{+} (t)$ is the transported by the flow of the initial domain  $\mathcal{O}_{+,0} $,  the transport equation  (\ref{T1})-(\ref{T2}) is well-posed in $  \cup_{t \in (0,T)} \{t\} \times \mathcal{O}_{+} (t)$ without any boundary condition. 
 By restriction of  the transport equation  (\ref{T1+})-(\ref{T2+}) the function $ f_{+} |_{\mathcal{O}_{+} (t)}$
also satisfies the  transport equation  (\ref{T1})-(\ref{T2})  in $\mathcal{O}_{+,0} $.
Now 
we can conclude that
$ f_{+} |_{\mathcal{O}_{+} (t)} = f|_{\mathcal{O}_{+} (t)} $ by uniqueness 
(let say in $L^\infty$ since it is sufficient for our purpose. 
Of course it is possible to prove uniqueness  in a much wider context, and without use of Fourier analysis, see the work of L. Ambrosio).
Making use of the estimates (\ref{TE3})-(\ref{TE4})
and doing the same thing on the other side $\mathcal{O}_{-}$ we get 
the desired estimates.
\end{proof}

\subsection{Propagation of piecewise regularity}
 
 In this section we investigate the propagation of piecewise regularity for the Euler equations.
 Indeed we  finish to prove 
 the points \ref{litt2}, \ref{litt4} and \ref{litt5} of Theorem \ref{litt}.
 For the commodity of the reader we recall here the decomposition (\ref{statique}):
 \begin{eqnarray*}
  \Lambda^{ -2} \partial_j  \partial_k  \omega^0 = f_1 + f_2   \text{ where }   f_1 :=  (1 -  \chi (D)  \Lambda^{ -2} ) a_{j,k}  \omega^0 \text{ and } 
    f_2 :=  \Lambda^{ -2}    \sum_{l,i,m}  \partial_l \partial_p   b_{j,k}^{l,i}   w^i_p \omega^0 .
   \end{eqnarray*}
 Let us denote by  $\Omega$  the collection of the $w^i$ and of the coefficients $a_{j,k}$ and $b_{j,k}^{l,i}$ which appear in the decomposition above.
 For any $m  \in  \N^*$  we introduce the set $Op_m$ (respectively the set $\tilde{Op}_m$) the collection of the operators $T$ (resp. $ \tilde{T}$)  of the form
  \begin{eqnarray*}
Tf := \Omega_0 \partial_{i_1} \Omega_1 \partial_{i_2} ... \partial_{i_m} \Omega_m f
\  (\text{resp.} \  \tilde{T} f := (w^{i_1}   \dagger) \Omega_1  (w^{i_2}   \dagger) \Omega_2 ... (w^{i_m}   \dagger) \Omega_m f )
   \end{eqnarray*}
Our strategy is to prove by iteration for $0  \leqslant  k  \leqslant s$ that
 \begin{eqnarray}
  \label{1053} 
  \text{For } 1  \leqslant i  \leqslant 5,  \quad
  w^i  \in L^\infty (0,T; C^{k,r}  ( \mathcal{O}_{\pm}  (t) ))
     \text{  and  }
     w^i \dagger \omega^0  \in  L^\infty (0,T; C^{k-1,r}  ( \mathcal{O}_{\pm}  (t) )), 
 \\   \label{1054}   v^0  \in   L^\infty (0,T; C^{k+1,r}  ( \mathcal{O}_{\pm}  (t) )),
  \\ \label{1055} 
    \text{For } | \alpha | \leqslant k,  \quad 
  \partial^ \alpha   \nabla v^0 = 
 S f 
   +  \sum_{ m \leqslant k } T_m \omega^0 
   +  \Lambda^{ -2} \partial_j  \sum_{ m \leqslant k +1 }    \tilde{T}_m    \omega^0 ,
   \end{eqnarray}
where $S$ is infinitely smoothing, $T_m$  (respectively  $\tilde{T}_m$) is in  $Op_m$ (resp. $\tilde{Op}_m$).
 Before to begin the proof by iteration let us explain briefly
the choice of such a formulation. 
Of course  the estimate (\ref{1054}) implies that  
$ \omega^0 $ is in
$L^\infty (0,T; C^{k,r}  ( \mathcal{O}_{\pm}  (t) ))$. 
Still to prove  the estimate (\ref{1054}) we will use transport features of the vorticity 
and it will turn out that it is useful to get some extra static estimates
 to deduce piecewise estimates on $ \nabla v^0$
 from piecewise estimates on $ \omega^0$.
For an operator the property to act continuously between some spaces of piecewise regularities is usually referred as a transmission property
 cf. Boutet de Monvel  and followers \cite{bdm1}, \cite{bdm2},  \cite{gh}, \cite{rempel}.
In particular the references above prove that 
 the operator $T: \omega \mapsto  \nabla v$ 
  satisfies the transmission property  in the case of smooth boundaries. 
 In the present case the smoothness of the boundary is limited 
 and  we need to estimate carefully how the smoothness of the boundary is involved in the constant of continuity of  the operator $T$ above in $C^{0,r}( \mathcal{O}_{\pm} (t) )$.
In particular as a byproduct  what follows will extend the previous results 
giving  that for an open subset $\mathcal{O}_+$ of class $C^{s+1,r}$ the operator $T$ is bounded from $C^{s,r}  ( \mathcal{O}_{\pm} )$ into itself. 
Actually the analysis would be complicated here by the  
 time-dependent setting. 
 The key point is the decomposition (\ref{1055}) which basically allows to write
 any derivatives of the velocity as the sum of a smooth term (the low frequencies part), 
 a local term involving derivatives of the vorticity, what allows direct piecewise estimate and a term which despite nonlocal  involves (iterated) conormal derivatives (up to commutators) which better behave (than normal derivatives) when estimated in the whole space.

Let us now start the proof by iteration of  (\ref{1053})-(\ref{1054})-(\ref{1055}).
 Regarding the step $k= 0$ we  already know that the assertions concerning 
  the $w^i$ and the $w^i \dagger \omega^0 $ 
  -the estimates (\ref{1053})- are  satisfied 
from the estimates  (\ref{dynaesti}) (which actually even give  that 
 the $w^i$ and the $w^i \dagger \omega^0 $ are respectively in $L^\infty (0,T; C^{0,r}  (\R^3 )  )$ and  $L^\infty (0,T; C^{-1,r}  (  \R^3 ))$) whereas the  decomposition (\ref{1055}) reduces to the  decomposition (\ref{statique}) proved in section \ref{secstaticesti} and recalled above. As a consequence to conclude the step $k=0$ it only remains to prove the estimate (\ref{1054}).
 As we have said above we are going to use transport features of the vorticity,  applying the  piecewise transport estimate  (\ref{TE1}) in $C^{0,r}( \mathcal{O}_{\pm} (t)  )$  to the vorticity
$f= \omega^0$  with  the stretching term 
$g=\omega^0 .\nabla v^0$ as right hand side (we recall that the vorticity satisfies the equation (\ref{taneq})).
 Since the functions $V_{\pm} $ are under control by the previous sections we get an estimate of the following form:
\begin{align}
 \label{2055}
  \| \omega^0  \|_{C^{0,r}( \mathcal{O}_{\pm} (t)  )} \lesssim 
1 +  \int_0^t   \|  \omega^0 .\nabla v^0    \|_{C^{0,r}( \mathcal{O}_{\pm} (\tau)  )} d\tau.
 \end{align}
Now let us use  the  decomposition (\ref{statique}) to get some extra static estimates
 to deduce piecewise estimates on $ \nabla v^0$
 from piecewise estimates on $ \omega^0$.
 From the low/high frequencies splitting of section \ref{secstaticesti} estimates (\ref{lf})-(\ref{left1})
 the question reduces to prove  that the operators
 $ \Lambda^{-2}   \, \partial_j   \, \partial_k $
      act continuously in $C^{0,r}( \mathcal{O}_{\pm}) (t) $. 
Indeed -since the operator $\chi (D)  \Lambda^{ -2} $ is smoothing- we get
 \begin{eqnarray*}
\|  f_1 \|_{C^{0,r}( \mathcal{O}_{\pm} (t)  )} \lesssim \|  a_{j,k} \|_{C^{0,r}( \mathcal{O}_{\pm} (t)  )} .  \|  \omega^0  \|_{C^{0,r}( \mathcal{O}_{\pm} (t)  )} .
   \end{eqnarray*}
   Now we use the local representation of the coefficients  $a_{j,k}$ given in (\ref{yael1}) to estimate the norms $ \|  a_{j,k} \|_{C^{0,r} } $ by means of 
 the norms  $ \|  w^i \|_{C^{0,r} } $ and 
 $  \lbrack W \rbrack$ which have already  been estimated
 (cf.  section \ref{secdynaesti} and estimate  (\ref{maximal})).
  In the same way than for the $a_{j,k}$ that is
 by using  the local representation of the coefficients  $b_{j,k}^{l,i}$ given in (\ref{yael2}) we can estimate the norms $ \|  b_{j,k}^{l,i} \|_{C^{0,r} }$ 
 by some quantities already   under control.
 As a consequence we simply  estimate  $\|  f_2 \|_{C^{0,r}( \mathcal{O}_{\pm} (t)  )}$
 by $\|  f_2 \|_{C^{0,r}( \R^3  )}$ which
 reduces to the ones of
$\|   w^i   \dagger  \omega^0  \|_{C^{-1,r}( \R^3  )}$
 which was already controlled in  section \ref{secdynaesti} Estimate  (\ref{dynaesti}).
Finally we get 
\begin{eqnarray*}
      \| \nabla v^0  \|_{C^{0,r}( \mathcal{O}_{\pm} (t)  )} \lesssim 
  \| \omega^0   \|_{C^{0,r}( \mathcal{O}_{\pm} (t)  )} 
 \end{eqnarray*}
 and since we have the tame estimates
\begin{align*}
 \| \omega^0 .\nabla v^0  \|_{C^{0,r}( \mathcal{O}_{\pm} (t)  )} \lesssim 
  \| \omega^0   \|_{C^{0,r}( \mathcal{O}_{\pm} (t)  )} .  \| \nabla v^0  \|_{ L^{\infty} }
+  \| \omega^0   \|_{ L^{\infty}} .   \|   \nabla v^0   \|_{C^{0,r}( \mathcal{O}_{\pm} (t)  )} 
\end{align*}
 we  infer the following  piecewise estimate of the stretching term:
\begin{eqnarray*}
      \| \omega^0 .\nabla v^0  \|_{C^{0,r}( \mathcal{O}_{\pm} (t)  )} \lesssim 
  \| \omega^0   \|_{C^{0,r}( \mathcal{O}_{\pm} (t)  )} 
 \end{eqnarray*}
   then by applying a  Gronwall argument to (\ref{2055})
 we  conclude that  the  vorticity 
$$  \omega^0  \in L^\infty (0,T; C^{0,r}  ( \mathcal{O}_{\pm}  (t) )).$$
Using once more the static estimates yields  the estimate (\ref{1054}) and the proof of the step $k=0$ is complete.
 \\
\\  At this point we can use that  the normal component of the vorticity  $( \omega^0  . n)(t,.)$ including across $\partial \mathcal{O}_{+}(t) = \{ \phi^{0} (t,.)=0  \}$ is continuous (cf. section \ref{222}) to get 
the point $\ref{litt5}$ of Theorem \ref{litt}:
for each $t \in (0,T)$ the function $( \omega^0  . n)(t,.)$ is $C^{0,r}$ on $ \{ | \phi^{0} (t,.) | < \eta \} $. 
\\
\\  Let us now assume that 
  (\ref{1053})-(\ref{1054})-(\ref{1055}) hold
 for  $0  \leqslant  k  \leqslant s-1$.
We are going to prove that  (\ref{1053})-(\ref{1054})-(\ref{1055}) hold at the order $k+1$.
  To do so we first apply the piecewise transport estimate  (\ref{TE2}) 
 to the conormal vectorfields $f= w^i$ with as respective r.h.s. 
 $g= w^i .\nabla v^0$ (we recall that the conormal vectorfields $ w^i$ satisfy the equations
 (\ref{taneqbis})-(\ref{taneq2bis})). 
 Since the functions $V_{k+1,r,\pm} $ are under control by the previous step we get an estimate of the following form:
\begin{align}
 \label{2032}  \| w^i   \|_{C^{k+1,r}( \mathcal{O}_{\pm} (t)  )} \lesssim 
1 +  \int_0^t   \|   w^i .\nabla   v^0       \|_{C^{k+1,r}( \mathcal{O}_{\pm} (\tau)  )} d\tau.
 \end{align}
We consider the  extensions (to  the  whole space)
$w^i_{\pm} $ and $v^0_{\pm} $ given
by   Theorem \ref{rychkov}   from
  the restrictions  $w^i |_{\mathcal{O}_{\pm}}  $ and $v^0  |_{\mathcal{O}_{\pm} (t)}  $
  on each side of the patch of $w^i$ and $v^0$.
  We denote by $ \omega^0_{\pm}$ the curl of $v^0_{\pm} $.
  We have therefore 
\begin{eqnarray*}
 \|   w^i .\nabla   v^0       \|_{C^{k+1,r}( \mathcal{O}_{\pm} (t)  )} 
\lesssim 
 \|   w^i_{\pm} .\nabla   v^0_{\pm}       \|_{C^{k+1,r}( \R^3)} .
 \end{eqnarray*}
Now that we are dealing with functions in the full space we can use the paraproduct.
Indeed we can adapt the estimate  (\ref{coco}) into the following one (with only modifications of the indexes in the proof of (\ref{coco})):
 \begin{eqnarray}
\nonumber  \|  w^i_{\pm} \dagger   v^0_{\pm}   \|_{k+1,r}  &\lesssim&
\|w^i_{\pm} \dagger  \omega^0_{\pm}    \|_{k,r} + \|   v^0_{\pm} \|_{\text{Lip} (\R^3)}  \|  w^i_{\pm}  \|_{k+1,r}  .
 \end{eqnarray}
Since we do not know if the universal extension operator given by Theorem \ref{rychkov} acts continuously in $L^\infty$ we first use the embedding  (\ref{inclu}) before to go back to piecewise estimates through Besov-type spaces:
 \begin{eqnarray}
 \|  w^i_{\pm} \dagger   v^0_{\pm}   \|_{k+1,r}   \nonumber &\lesssim&
\|w^i_{\pm} \dagger  \omega^0_{\pm}    \|_{k,r} + \|   v^0_{\pm} \|_{1,r}  \|  w^i_{\pm}  \|_{k+1,r}  
\\  &\lesssim&
\|w^i \dagger  \omega^0  \|_{C^{k,r}( \mathcal{O}_{\pm} (t)  ) }
 + \|   v^0 \|_{ C^{1,r}( \mathcal{O}_{\pm} (t)  )}  \|  w^i  \|_{C^{k+1,r}( \mathcal{O}_{\pm} (t)  ) }
  \end{eqnarray}
 and since $\|   v^0 \|_{ C^{1,r}( \mathcal{O}_{\pm} (t)  )} $ is already controled 
the estimate (\ref{2032}) becomes
\begin{align}
 \label{2033}  \| w^i   \|_{C^{k+1,r}( \mathcal{O}_{\pm} (t)  )} \lesssim 
1 +  \int_0^t   \|   w^i   \|_{C^{k+1,r}( \mathcal{O}_{\pm} (\tau )  )} +
    \|  w^i \dagger  \omega^0       \|_{C^{k,r}( \mathcal{O}_{\pm} (\tau)  )}  d\tau.
 \end{align}
To estimate the conormal derivatives $w^i \dagger  \omega^0$ in  $C^{k,r}( \mathcal{O}_{\pm})$ we apply the transport estimate (\ref{TE2}) to the equations 
(\ref{taneq7})-(\ref{taneq8}). 
At initial time the  conormal derivatives $w^i_0 \dagger  \omega_0$ is in $C^{k,r}( \mathcal{O}_{\pm})$ since  $C^{s,r}( \mathcal{O}_{\pm})$ is an algebra, and the amplification factor involves 
only  $V_{k,r,\pm} $ so that we infer an estimate of the form:
 \begin{eqnarray}
 \label{1328}
  \|  w^i \dagger  \omega^0  \|_{C^{k,r}( \mathcal{O}_{\pm} (t))}
  \lesssim  1 +  \int_0^t   \|   w^i \dagger (\omega^0  \dagger \nabla v^0 )  \|_{C^{k,r}( \mathcal{O}_{\pm} (\tau))} .
 \end{eqnarray}
Now to estimate the integrals above we proceed as follows. We consider once again the extensions $w^i_{\pm} $ and $v^0_{\pm} $ so that
 \begin{eqnarray*}
\|   w^i \dagger(  \omega^0  \dagger \nabla   v^0 )  \|_{C^{k,r}( \mathcal{O}_{\pm}  )}
  \lesssim
  \|   w^i_{\pm}  \dagger(  \omega^0_{\pm}   \dagger \nabla   v^0_{\pm}  )  \|_{C^{k,r}  (\R^3)} .
\end{eqnarray*}
Now we recall the decomposition (\ref{costretch}) which for the extensions reads
\begin{eqnarray}
\label{pcostretch}
  w^i_{\pm}  \dagger ( \omega^0_{\pm}  \cdot \nabla v^0_{\pm} ) := \dive (\omega^0_{\pm} \otimes ( w^i_{\pm}  \dagger \omega^0_{\pm} )) +  \dive ( \zeta_{\pm} \otimes v^0_{\pm} )
    \end{eqnarray}
  with $   \zeta_{\pm} :=  \dive (  \omega^0_{\pm} \otimes w^i_{\pm} - w^i_{\pm}  \otimes   \omega^0_{\pm} )$.
 The first term of the r.h.s. of (\ref{pcostretch}) can be estimated proceeding in the same way than in section \ref{secdynaesti}. This yields
\begin{eqnarray*}
 \| \dive (\omega^0_{\pm} \otimes ( w^i_{\pm}  \dagger \omega^0_{\pm} ))   \|_{C^{k,r}  (\R^3)}
  \lesssim
  \|    w^i_{\pm}  \dagger v^0_{\pm}    \|_{C^{k+1,r}  (\R^3)}  \|   \omega^0_{\pm}  \|_{L^\infty} .
  \end{eqnarray*}
Now for the second term we have the following tame estimates:
\begin{eqnarray*}
\| \dive (\zeta_{\pm} \otimes v^0_{\pm} )  \|_{C^{k,r}  (\R^3)} \lesssim
  \|    w^i_{\pm}  \dagger \omega^0_{\pm}    \|_{C^{k,r}  (\R^3)} +  \|  w^i_{\pm}  \|_{C^{k+1,r}  (\R^3)} .
\end{eqnarray*}
Thus using the continuity properties of the extension operator  the estimate (\ref{1328}) now becomes
 \begin{eqnarray}
 \label{1328bis}
  \|  w^i \dagger  \omega^0       \|_{C^{k,r}( \mathcal{O}_{\pm} (t)  )}
  \lesssim  1 +  \int_0^t   \|   w^i \dagger \omega^0   \|_{C^{k,r}( \mathcal{O}_{\pm} (\tau )  )} 
  +  \|   w^i  \|_{C^{k+1,r}( \mathcal{O}_{\pm} (\tau )  )}  .
 \end{eqnarray}
Combining with the estimate (\ref{2033}) and up to  a  Gronwall argument we conclude that  the conormal vector fields $w^i$ and the conormal derivatives of the vorticity $w^i \dagger \omega^0 $ are respectively   in $L^\infty (0,T; C^{k+1,r}  ( \mathcal{O}_{\pm}  (t) ))$  and in $L^\infty (0,T; C^{k,r}  ( \mathcal{O}_{\pm}  (t) ))$. Thus we have already proved that the estimate (\ref{1053}) holds at the order $k+1$.

We are now going to  show that 
$ \omega^0$ is in
$L^\infty (0,T; C^{k+1,r}  ( \mathcal{O}_{\pm}  (t) ))$.
In order to prove this
 we apply the estimate  (\ref{TE2}) to get
\begin{eqnarray}
 \label{1423}
  \|  \omega^0   \|_{C^{k+1,r}( \mathcal{O}_{\pm} (t)  )}
  \lesssim  1 +  \int_0^t   \|    \omega^0  \nabla v^0   \|_{C^{k+1,r}( \mathcal{O}_{\pm} (\tau )  )} d\tau  .
     \end{eqnarray}
Now we estimate the integral above thanks to piecewise tame estimates.
Let us mention a technical point:
despite the zero-th order estimate 
\begin{eqnarray}
 \|  ab  \|_{C^{0,r}( \mathcal{O}_{\pm} (t)  )} 
  \lesssim 
   \|  a  \|_{C^{0,r}( \mathcal{O}_{\pm} (t)  )}    \|  b  \|_{L^\infty } + 
   \|  a  \|_{L^\infty }  \|  b  \|_{C^{0,r}( \mathcal{O}_{\pm} (t)  )}  
  \end{eqnarray}
is a straightforward consequence of Definition \ref{defi0}, 
we do not know if the higher order  estimates  ($k \in \N$)
\begin{eqnarray}
 \|  ab  \|_{C^{k,r}( \mathcal{O}_{\pm} (t)  )} 
  \lesssim 
   \|  a  \|_{C^{k,r}( \mathcal{O}_{\pm} (t)  )}    \|  b  \|_{L^\infty } + 
   \|  a  \|_{L^\infty }  \|  b  \|_{C^{k,r}( \mathcal{O}_{\pm} (t)  )}  
  \end{eqnarray}
are correct. One should think to use the extension Theorem \ref{rychkov}  to go back to the full plane case but we do not know if the universal extension operator given by Theorem \ref{rychkov} acts continuously in $L^\infty$.
Still we can deduce from Definition \ref{defi0} and Leibniz rule the estimates
\begin{eqnarray}
 \|  ab  \|_{C^{k,r}( \mathcal{O}_{\pm} (t)  )} 
  \lesssim 
   \|  a  \|_{C^{k,r}( \mathcal{O}_{\pm} (t)  )}    \|  b  \|_{ C^{k-1,r}( \mathcal{O}_{\pm} (t)  )   } + 
   \|  a  \|_{C^{k-1,r}( \mathcal{O}_{\pm} (t)  ) }  \|  b  \|_{C^{k,r}( \mathcal{O}_{\pm} (t)  )}  
  \end{eqnarray}
and these estimates would be sufficient here, since the lower derivatives would be always estimated in some previous steps of the iteration process.
Here we get -still understanding any controled quantities-
\begin{eqnarray}
\label{2138}
 \|    \omega^0  \nabla v^0   \|_{C^{k+1,r}( \mathcal{O}_{\pm} (t)  )}
 \lesssim 
 1 +  \|  \nabla v^0   \|_{C^{k+1,r}( \mathcal{O}_{\pm} (t)  )} + \|    \omega^0    \|_{C^{k+1,r}( \mathcal{O}_{\pm} (t)  )}.
 \end{eqnarray}
It is now time to stare at $ \nabla v^0  $. 
Derivating  the decomposition  (\ref{1055}) at order $k$
and eliminating the double derivatives
thanks to the identity (\ref{yael0})
we get the decomposition  (\ref{1055}) at order $k+1$.
Now let us remark that thanks to the local representation (\ref{yael1})-(\ref{yael2})
 we can estimate  the $C^{k+1,r}( \mathcal{O}_{\pm} (t)  )$ norm
of the coefficient $\Omega$
 by a power of the $C^{k+1,r}( \mathcal{O}_{\pm} (t)  )$ norms of the $ w^i$ which have been previously controlled.
 We also know from section \ref{ita} that the iterated conormal derivatives $  (w  \dagger)^\beta \omega^0$ are in 
 $L^\infty (0,T; C^{-1,r}( \mathcal{O}_{\pm} (t)  ))$.
   As a  consequence we get
\begin{eqnarray}
\label{2007}
\|  \nabla v^0   \|_{C^{k+1,r}( \mathcal{O}_{\pm} (t)  )}
 \lesssim 
 1 +   \|   \omega^0   \|_{C^{k+1,r}( \mathcal{O}_{\pm} (t)  )}  .
  \end{eqnarray}
Plugging this in the estimate (\ref{2138}) and then the resulting estimate in the estimate (\ref{1423}) yields
\begin{eqnarray*}
 \|  \omega^0   \|_{C^{k+1,r}( \mathcal{O}_{\pm} (t)  )}
  \lesssim  1 +  \int_0^t   \|     \omega^0    \|_{C^{k+1,r}( \mathcal{O}_{\pm} (\tau )  )} d\tau  .
     \end{eqnarray*}
so that 
$ \omega^0$ is  in
$L^\infty (0,T; C^{k+1,r}  ( \mathcal{O}_{\pm}  (t) ))$.
Using again  the previous  static estimate we infer that the estimate (\ref{1054}) holds at order $k+1$.
The iteration can be done and the point $\ref{litt2}$ of Theorem \ref{litt} is therefore proved.
\\
\\ Using  the relation (\ref{normalrecovered}) we next infer that
$n$ is in $L^\infty (0,T; C^{s,r}  ( \mathcal{O}_{\pm}  (t) ))$ which together with the tangential estimates  give that $ \phi^{0}$ is in
$L^\infty (0,T; C^{s+1,r}  ( \mathcal{O}_{\pm}  (t) ))$ and finish to prove 
the point $\ref{litt4}$ of Theorem \ref{litt}.

\subsection{Analyticity}
 
 In this section we say a few words about 
 the final statement regarding analyticity w.r.t. time (the point $\ref{litt6}$ of Theorem \ref{litt}).
 Actually the study of smoothness of the boundary w.r.t. time is already done by Chemin in its pioneering work
  \cite{cheinventiones}. 
  He  incorporates the material field $ D$ into its conormal fields and carries out iterated conormal derivations. 
  He follows his approach in  \cite{cheminsmoothness} and prove  the smoothness w.r.t. time 
   for classical solutions ($v_0$ in $C^{1,r}$) in any dimensions and for Yudovich solutions (with bounded vorticity) in two dimensions. 
    This result was extended into  analyticity w.r.t. time 
   by  P. Serfati \cite{serfati} (see also its doctoral thesis)
   in the case of classical solutions and  in the present case of  vortex patches, thanks to Lagrangian methods
   (by considering the Euler equations as an ODE for the flow).
    Let us mention papers \cite{gamblinana}, \cite{gamblinana2} which recover
    by a Eulerian approach the result by Serfati of analyticity w.r.t. time of classical solutions
    and prove Gevrey-$3$ smoothness in time of Yudovich  solutions.
    Finally we mention that such results also hold in the case of solid boundaries: 
    the paper  \cite{katoana}  of T. Kato
    proves the $C^\infty$ smoothness in time for classical solutions in a smooth bounded domain in any dimension, the paper  \cite{ogfstt}  extends Kato's result to analyticity and also proves the  analyticity of the motion of a body immersed into a perfect incompressible fluid  with $C^{2,r}$ initial velocity.


\section{Internal transition layer profiles}
\label{profile}

In this section we look for an expansion for the solutions of  the Navier-Stokes equations which describes as well as possible their behaviour with respect not only to the variables $t,x$ but also to the viscosity coefficient $\ep $.
The sections  \ref{fastscale} to \ref{looking}
 give the heuristic of the derivation of the profile problem -the equations (\ref{heu2r})-(\ref{heu21r})- mentionned in the introductory section \ref{viscous}.
 More precisely in section \ref{fastscale} 
we will identify the inner fast scale as $\frac{\phi^{0}(t,x) }{\sqrt{\ep t}}$.
This means that
 the initial discontinuity of the vorticity is smoothed out into a layer of size $\sqrt{\ep t}$ around the hypersurface $  \{  \phi^{0}  =0 \}$ where  the inviscid discontinuity occurs.
 In section \ref{amplitudes}  we  pay attention to the  expected  order of  amplitudes  of  the velocity and  pressure profiles. We will see that it is natural to associate to a vorticity expansion of the form:
\begin{eqnarray*}
 \omega^\ep (t,x) &\sim & \Omega  (t,x,\frac{\psi(t,x)}{\sqrt{\ep } } )
  \end{eqnarray*}
a velocity expansion of the form:
\begin{eqnarray*}
 v^\ep (t,x) &\sim & v^0  (t,x) + \sqrt{\ep t} \, \tilde V (t,x,\frac{\phi^{0}(t,x)}{\sqrt{\ep t} } ),
  \end{eqnarray*}
and a pressure expansion of the form:
 \begin{eqnarray*}
 p^\ep (t,x) &\sim &  p^{0}  (t,x) + \ep t \tilde{P} (t,x,\frac{\phi^{0}(t,x)}{\sqrt{\ep t} } ).
 \end{eqnarray*}
In section \ref{looking} we look for the profile equations. 
We choose there to deal with the velocity formulation of the  the Navier-Stokes equations.
In section \ref{wp4} we study the problem obtained when setting
 formally  $t=0$ in  the profile problem. 
In section \ref{wp} we  prove the existence and uniqueness of the layer profile in a $L^2$ setting, and we prove the smoothness properties of  this solution in section \ref{wp'}. 
The other properties of the profile are proved in section \ref{other} what achieves the proof of Theorem  \ref{propva} given in introduction.
n section  \ref{hoprofile} we write a complete  asymptotic expansion. 
In section \ref{bienprep} (respectively  in section \ref{ws}) we mention how the analysis is simplified in a setting of
 "well-prepared"   data (resp. in the setting of conormal singularities weaker than  vortex patches).
 In section \ref{ss} we  link the present study with previous works on stronger singularities.

\subsection{The inner fast scale}
\label{fastscale}

 The goal of  this section is to explain how 
we identify the inner fast scale as $\frac{\phi^{0}(t,x) }{\sqrt{\ep t}}$.

\subsubsection{A highly simplified model}
\label{baby}

To identify the inner fast scale we first look at  the $1$d scalar heat equation: $ \partial_t  \omega^\ep 
= \ep \partial_x^2   \omega^\ep$ which plays here the role of "baby model" for the NS equations. 
We prescribe as initial data a discontinuous vorticity: $\omega^\ep |_{t= 0} = 1_{\R_+}$.
In the inviscid case $\ep=0$ -which stands for (highly) simplified Euler equations-
 the solution is simply equal to the initial data
$ \omega^0 (t,.) :=   1_{\R_+} $ for any time, whereas for $\ep > 0$ and $ t > 0$
one can explicitly compute the solutions $\omega^\ep$ by convolution:
 \begin{align}
  \omega^\ep (t,x) :=  \Omega (\frac{x}{\sqrt{\ep t}} ) 
\text{ where }
  \Omega (X) := \frac{1}{\sqrt{\pi}} \int_{-\frac{X}{2} }^{ \infty} e^{-y^2 } dy  .
\end{align}
Hence the initial discontinuity of the vorticity is smoothed out into a layer of size $\sqrt{\ep t}$ where occurs -smoothly- the transition between the values $0$ and $1$.
It is useful to rewrite 
the $\omega^\ep$ as:
 \begin{align*}
  \omega^\ep (t,x) :=    \omega^0 (t,x) +  \tilde\Omega_\pm (\frac{x}{\sqrt{\ep t}} )  \quad \text{when }  \quad  \pm x > 0 , 
\end{align*}
where
\begin{align*}
  \tilde\Omega_\pm (X) := \frac{1}{\sqrt{\pi}} \int_{-\frac{X}{2} }^{\mp \infty} e^{-y^2 } dy   \quad \text{when }  \quad  \pm X > 0 .
\end{align*}
One then see  the  "viscous" solutions $\omega^\ep$ as the sum of the "inviscid" solution $ \omega^0$ plus a "double initial-(internal) boundary layer" $\tilde\Omega_\pm $ which satisfies the double-ODE:
\begin{align}
\label{baby1}
 \partial_X^2 \tilde\Omega_\pm   + \frac{X}{2}  \partial_X \tilde\Omega_\pm  = 0   \quad \text{when }  \quad  \pm X > 0 ,
\end{align}
match the continuity conditions of $ \omega^\ep$ and $ \partial_x  \omega^\ep$  at the internal boundary $x=X=0$ (for $t>0$):
\begin{eqnarray}
\label{baby2}
  \omega^0 |_{x=0^+ } +   \tilde\Omega_+ |_{X=0^+} = 1-1/2 =0 + 1/2 =  \omega^0 |_{x=0^- } +   \tilde\Omega_- |_{X=0^-} ,
\\ \label{baby3}
 \partial_X \tilde\Omega_+ |_{X=0^+} = \frac{1}{2 \sqrt{\pi}} =  \partial_X \tilde\Omega_- |_{X=0^-} 
 \end{eqnarray}
and vanish
\begin{eqnarray}
\label{baby4}
 \tilde\Omega_\pm (X)   \rightarrow 0   \quad \text{in the limits}  \quad  X \rightarrow \pm \infty.
  \end{eqnarray}
These last limits correspond both to the limits $t > 0$, $x   \rightarrow \pm \infty$ and the limits  $\pm x > 0$, $\ep t   \rightarrow 0^+$ (we recall that $X$ is the placeholder for $\frac{x}{\sqrt{\ep t}}$).
Conversely the two second order elliptic equations (\ref{baby1}) with the four "normal" boundary conditions (\ref{baby2})-(\ref{baby3})-(\ref{baby4})  (this last one contain two conditions) determine uniquely the profiles $\tilde\Omega_\pm $.

\subsubsection{Inner scale in the NS equations}
\label{baby2bis}

Of course the case of the NS equations is really much more complicated than the previous baby model. In particular the inviscid discontinuity moves:
Theorem \ref{litt} states that  the inviscid discontinuity occurs at the hypersurface $  \{  \phi^{0}(t,.)   =0 \}$ 
given by  the eikonal equations (\ref{eik1})-(\ref{eik2}) associated to the 
the particle derivative 
$ \partial_t   + v^0 \cdot\nabla_x $.
Therefore we are led to consider the  inner fast scale  $\frac{\phi^{0}(t,x)}{\sqrt{\ep t} }$ and we expect that in the case of vortex patches as initial data the  solutions  $\omega^\ep$ of NS can be described by an expansion of the form 
\begin{eqnarray}
\label{formu1}
 \omega^\ep (t,x) &\sim &  \omega^{0}  (t,x) +  \tilde\Omega  (t,x,\frac{\phi^{0}(t,x)}{\sqrt{\ep t} } ) ,
\end{eqnarray}
where
 $   \tilde\Omega (t,x,X)  $ denotes 
  a  local  perturbation so that 
\begin{eqnarray}
    \label{sharp1} \lim_{X\rightarrow \pm \infty}    \tilde\Omega   (t,x,X) &=& 0.
 \end{eqnarray}
Actually we will consider profiles even
rapidly decreasing at infinity.
\begin{rem}
  \label{experts}
 \rm
At this point experts in mathematical geometric optics should 
argue that  the fast scale $\frac{\phi^{0}(t,x)}{\sqrt{\ep } }$ should be  more intuitive  since applying the particle derivative 
$ \partial_t   + v^0 \cdot\nabla_x $ to a function $\Omega  (t,x,\frac{\phi^{0}(t,x)}{\sqrt{\ep } } )$ produces the singular term 
$$\frac{1}{\sqrt{\ep } } ( \partial_t   + v^0 \cdot\nabla_x ) \phi^{0} . (\partial_X \, \Omega) |_{X= \frac{\phi^{0}(t,x)}{\sqrt{\ep } }  }$$
 which fortunately vanishes thanks to the eikonal equation. 
 
 The point is that such a choice of inner scale does not lead to totally satisfactory results.
 Let us show that on our baby model.
 If one look for representation of the solutions $\omega^\ep$ of the form 
 \begin{align}
  \omega^\ep (t,x) :=    \omega^0 (t,x) +  {\bf\Omega}_\pm (t,\frac{x}{\sqrt{\ep }} )  \quad \text{when }  \quad  \pm x > 0 , 
\end{align}
one then see that  the  "double (internal) boundary layer" $ {\bf\Omega}_\pm $ has  this time to satisfy the parabolic equation:
\begin{align}
\label{baby1a}
\partial_t   {\bf\Omega}_\pm  =   \partial_X^2  {\bf\Omega}_\pm      \quad \text{when }  \quad  \pm X > 0 ,
\end{align}
with the same boundary conditions:
\begin{eqnarray}
\label{baby2a}
  {\bf\Omega}_+ |_{X=0^+} -   {\bf\Omega}_- |_{X=0^-} = -1 ,
\\ \label{baby3a}
 \partial_X  {\bf\Omega}_+ |_{X=0^+} -  \partial_X  {\bf\Omega}_- |_{X=0^-} = 0,
\\  \label{baby4a}
  {\bf\Omega}_\pm (X)   \rightarrow 0   \quad \text{in the limits}  \quad  X \rightarrow \pm \infty,
  \end{eqnarray}
Since we prescribe the same  initial data for $\omega^\ep$ than for  $\omega^0$,
 one has to prescribe zero initial data for the layers: 
 \begin{eqnarray}
  \label{para5a}   {\bf\Omega}_\pm |_{t=0  } &=& 0 ,
   \end{eqnarray}
so that the compatibility condition between the transmission condition (\ref{baby2a}) and the initial condition (\ref{para5a}) on the "corner" $\{t=X=0\}$ is not satisfied even at order zero, which ruins any hope for smoothness regarding the profiles $ {\bf\Omega}_\pm $.

 Now let us stress that  if  the "phase" $ \psi(t,x) := \frac{\phi^{0}(t,x)}{\sqrt{t } }$ chosen  in the  asymptotic expansions (\ref{formu1}) does not satisfy exactly the eikonal equation it however satisfies
\begin{eqnarray}
( \partial_t   + v^0 \cdot\nabla_x ) \psi = - \frac{1}{2t } \psi 
\end{eqnarray}
 so that  applying the particle derivative 
$ \partial_t   + v^0\cdot\nabla_x $ to a function 
\begin{eqnarray}
\label{sŽpa2}
 \omega^\ep (t,x) := \Omega  (t,x,\frac{\psi(t,x)}{\sqrt{\ep } } )
  \end{eqnarray}
  produces the  term 
\begin{eqnarray}
\label{sŽpa}
\frac{1}{\sqrt{\ep } } ( \partial_t   + v^0 \cdot\nabla_x )\psi .  (\partial_X \, \Omega)  |_{X= \frac{\phi^{0}(t,x)}{\sqrt{\ep t} }  }= - \frac{1}{2t } (X \partial_X \ \Omega)  |_{X= \frac{\phi^{0}(t,x)}{\sqrt{\ep t } }  } .
 \end{eqnarray}
 which is not singular (with respect to  $\ep $) anymore.
 The point is that in the present setting of a localized profile $\Omega$ 
 the derivative $X \partial_X $ in (\ref{sŽpa})
 does not cause any difficulty
 (note that such a term even appears in our baby model see  (\ref{baby1})), 
 in particular because the prefactor $\frac{1}{t }$
echoes the one in the term
\begin{eqnarray}
  \frac{|n(t,x)|^2}{t} \partial^2_X  \, \Omega  |_{X= \frac{\phi^{0}(t,x)}{\sqrt{\ep t} }  }
  \end{eqnarray}
 which is the larger one produced by applying the Laplace operator  $\ep  \triangle_x$ to the function
 $ \omega^\ep (t,x)$  in (\ref{sŽpa}).
 
 Of course this additional   derivative $X \partial_X $ is more problematic in the traditional context of periodic oscillations of geometric optics.
 
  \end{rem}


\subsection{Amplitudes}
\label{amplitudes}

We now pay attention to the  expected  order of  amplitudes  of  velocity and  pressure profiles.
 In the full plane the Biot-Savart law has Fourier symbol  $-\frac{ \xi }{ |\xi|^2} \wedge .$
It is a pseudo-local operator of order $-1$ so that we expect that the velocity $v^\ep$ given by the Navier-Stokes equations can be described by an asymptotic expansion of the form:
\begin{eqnarray}
\label{formu2}
 v^\ep (t,x) &\sim & v^0  (t,x) + \sqrt{\ep t} \tilde V (t,x,\frac{\phi^{0}(t,x)}{\sqrt{\ep t} } ),
  \end{eqnarray}
  where the profile $\tilde V (t,x,X) $ is also expected to satisfy
\begin{eqnarray}
  \label{sharp1vitesse} \lim_{X\rightarrow \pm \infty}    \tilde V   (t,x,X) &=& 0.
 \end{eqnarray}
  Arguably since  the Euler velocity $v^{0}$ is Lipschitz we expect that its viscous perturbations $v^\ep$ are uniformly Lipschitz, what gives support to the ansatz (\ref{formu2}).
  Plugging  (\ref{formu1}) and (\ref{formu2}) into  the  relations (\ref{curlep}), taking into account (\ref{V1}) and equalling the leading order terms
 leads to
\begin{eqnarray}
\label{rotrot}
n \wedge \partial_X \tilde V = \tilde \Omega . 
\end{eqnarray}
Hence the vorticity profile $\tilde \Omega  $ has to satisfy the orthogonality condition:
\begin{eqnarray}
  \label{ortho}
\tilde \Omega . n = 0 .
\end{eqnarray}
This condition is not a surprise: since $w^0 $ is divergence free $w^0 . n$ is continuous (cf. the point \ref{litt5} of Theorem \ref{litt})) so that no (large amplitude) layer is expected on the normal component of the vorticity. 

Now the pressure $p^\ep$ can be recovered from the velocity $ v^\ep$ by applying the operator divergence to the equation  (\ref{R1}) which yields the Laplace problem:
\begin{eqnarray}
 \label{pressure}
 \triangle_x\ p^\ep = -  \partial_i v^\ep_j .  \partial_j v^\ep_i  .
 \end{eqnarray}
If  the velocity $ v^\ep$ satisfies the expansion (\ref{formu2}), the
r.h.s. of (\ref{pressure}) should admit an expansion of the form:
\begin{eqnarray*}
\label{formu2p0}
 \triangle_x\ p^\ep  &\sim &  \triangle_x\ p^0
 +  \tilde F (t,x,\frac{\phi^{0}(t,x)}{\sqrt{\ep t} } ),
  \end{eqnarray*}
  where  the function  $\tilde F$ satisfies
\begin{eqnarray*}
  \label{sharp1pression0}
   \lim_{X\rightarrow \pm \infty}  \tilde{F}  (t,x,X) &=& 0.
 \end{eqnarray*}
  Since  the Laplacian is  of order $-2$  we are lead  to consider a perturbation of order $\ep t$ on the pressure:
 \begin{eqnarray}
 \label{formu2p}
  p^\ep (t,x) &\sim &  p^{0}  (t,x) + \ep t \tilde{P} (t,x,\frac{\phi^{0}(t,x)}{\sqrt{\ep t} } ) ,
 \end{eqnarray}
where -once again- the fast scale $\frac{\phi^{0}(t,x)}{\sqrt{\ep t} }$ is expected to be a local inner scale (the  Laplacian  operator is pseudo-local) so that
\begin{eqnarray}
  \label{sharp1pression} \lim_{X\rightarrow \pm \infty}  \tilde{P}  (t,x,X) &=& 0.
 \end{eqnarray}

\subsection{Looking for a profile problem}
\label{looking}

 Now that we have an intuition of the amplitude of the profiles, we look for the profile equations. 
 We choose here to deal with the velocity formulation of the NS equations, which is believed a more robust method (in view of future adaptation to the compressible case for instance).
 We proceed in several steps.
 In section \ref{looklook}   we  plug the ansatz into the velocity equation equalling the leading order terms. We then pay attention to the divergence free condition what leads to a crucial observation in section \ref{Transparency}.  In section \ref{rid}  we get rid of the pressure in  the velocity profile equation. 
 As the vectorfield $n$ may vanish, away the patch boundary, it is useful to modify the resulting equation in order to avoid a degeneracy of the order. 
 This will be done in section \ref{avoiding}.
  In section \ref{wp1}  we study the transmission conditions on the  inner interface $X=0$. We will use several times the formulas (for any $h$)
\begin{eqnarray}
 \label{10} 
\partial_t  \Bigl[  (\ep t)^\frac{j}{2}  \   h\bigl(t,x,\frac{\phi^{0}(t,x)}{\sqrt{ \ep t}}\bigr)\Bigr]
&=&  (\ep t)^\frac{j}{2}  \ \Bigl(   \partial_t h\bigl(t,x,\frac{\phi^{0}(t,x)}{\sqrt{ \ep t}}\bigr)
+ \frac{\partial_t  \phi^{0} }{\sqrt{ \ep t} }  \partial_X h\bigl(t,x,\frac{\phi^{0}(t,x)}{\sqrt{ \ep t}}\bigr) 
\\ \nonumber &&- \frac{1}{2t} (X \partial_X h ) \bigl(t,x,\frac{\phi^{0}(t,x)}{\sqrt{ \ep t}})
      +  \frac{j}{2t}   h\bigl(t,x,\frac{\phi^{0}(t,x)}{\sqrt{ \ep t}}\bigr)   \Bigl)                         ,
\\ \vspace{0,3cm} \label{11}
\nabla_x\Bigl[h\bigl(t,x,\frac{\phi^{0}(t,x)}{\sqrt{ \ep t}}\bigr)\Bigr]
&=& \nabla_xh\bigl(t,x,\frac{\phi^{0}(t,x)}{\sqrt{ \ep t}}\bigr)
+\frac {n(t,x)}{\sqrt{ \ep t}}\partial_Xh\bigl(t,x,\frac{\phi^{0}(t,x)}{\sqrt{ \ep t}}\bigr)
\end{eqnarray}
and
\begin{eqnarray}
\label{12}
 \ep  \triangle_x\Bigl[h\bigl(t,x,\frac{\phi^{0}(t,x)}{\sqrt{ \ep t}}\bigr)\Bigr]
&=&   \frac{|n(t,x)|^2}{t} \partial^2_Xh\bigl(t,x,\frac{\phi^{0}(t,x)}{\sqrt{ \ep t}}\bigr)\\ \nonumber
&&+  \frac{\sqrt{\ep t}}{t} (\triangle\phi^{0} \ \partial_Xh\bigl(t,x,\frac{\phi^{0}(t,x)}{\sqrt{ \ep t}}\bigr)
+ 2 n(t,x)\cdot\nabla_x\partial_Xh\bigl(t,x,\frac{\phi^{0}(t,x)}{\sqrt{ \ep t}}\bigr) )\\ \nonumber
&&+ \frac{\ep t}{t} \ \triangle_x \, h\bigl(t,x,\frac{\phi^{0}(t,x)}{\sqrt{ \ep t}}\bigr).
\end{eqnarray}

\subsubsection{Velocity equation}
\label{looklook}

We plug the ansatz (\ref{formu2}) and (\ref{formu2p})   into the equation  (\ref{R1}),  equalling the leading order terms, which are of order $\sqrt{\ep t}^{0}$:
\begin{equation}
  \partial_t v^0 + v^0 \cdot \nabla v^0 + \nabla p^0 +
(\partial_t  + v^0 \cdot\nabla_x ) \phi^{0} . \partial_X  \tilde V = 0.
\end{equation}
which is satisfied since the velocity $v^{0}$ satisfies the Euler equations
and $\phi^{0}$ satisfies the eikonal equation (\ref{eik1}).
 At the following order $\sqrt{\ep t}$ we get the equality
 \begin{equation}
  \label{heu}
\partial_t  \tilde V + v^0 \cdot\nabla_x  \tilde V +  \tilde V \cdot n  \partial_X  \tilde V  +  \tilde V \cdot\nabla_x v^0 +  \partial_X \tilde{P} n = \frac{1}{t}  ( |n|^2 \partial_X^2 \tilde V  +  \frac{X}{2}   \partial_X \tilde V - \frac{1}{2}  \tilde V  ).
\end{equation}

\subsubsection{Incompressible transparency}
\label{Transparency}

We now pay attention to the divergence free condition.
 Plugging  the ansatz (\ref{formu2})  into the equation   (\ref{R2}), retaining the terms  at order $\sqrt{\ep t}^{0}$ and  taking into account that  the velocity $v^0$ given by Euler is divergence free leads to the orthogonality equation:
$ n.  \partial_X \tilde V = 0$, 
which by integration, with the condition (\ref{sharp1vitesse}) leads to the condition:
 \begin{eqnarray}
  \label{orthov}
 n.  \tilde V = 0.
\end{eqnarray}
An important consequence of the  condition  (\ref{orthov}) is to kill the third term in  (\ref{heu})
 which is the only nonlinear one.  
 Here lies an analogy with the WKB theory of the propagation of high frequency oscillations for hyperbolic systems (see for instance \cite{metivier}). 
 The  condition  (\ref{orthov}) can be seen as a polarization of the singularity on the components tangential to the "phase" $\phi^{0} $. Then the vanishing of the burgers-like term in  (\ref{heu}) can be interpreted as a transparency property: the self-interaction of the singularity vanishes because this latter is characteristic for a field which is linearly degenerate (actually this concept belongs to the hyperbolic theory but the incompressible limit is reminiscent of this fact).

 \subsubsection{Getting rid of the pressure}
\label{rid}

The equation (\ref{heu}) involves both $ \tilde V$ and $ \tilde P$.
However the pressure in the NS equations is not truly an unknown but can be recovered from the velocity (as recalled in
(\ref{pressure})) so that we expect that the same holds for the profiles.
One way to proceed is to  project normally  the equation (\ref{heu}), taking into account the equation (\ref{neq}) for $n(t,x)$  and using the condition (\ref{orthov}), to get
\begin{eqnarray}
 \label{lerayprofile}
  \partial_X \tilde{P} :=  - 2  \frac{(\tilde V \cdot\nabla_x v^0).n }{|n|^2 } .
 \end{eqnarray}
 
 For  $\pm X > 0$, we 
 integrate between $X$ and $\pm \infty$,
 taking the condition at infinity (\ref{sharp1pression}) into account,
 to find
 \begin{eqnarray}
  \label{lerayprofile2}
  \tilde{P} :=  - 2 \int_X^{\pm \infty} \frac{(\tilde V \cdot\nabla_x v^0).n }{|n|^2 } .
 \end{eqnarray}
 
   \begin{rem}
    \rm
One another way to proceed is to explicit the term $\tilde F$ which occurred  in  the expansion (\ref{formu2p}) when we were discussing the amplitude of the pressure layers by mean of the Laplace problem satisfied by   the viscous pressure $p^\ep$.
Taking  into account the condition (\ref{orthov}),  we get
 \begin{eqnarray}
  \tilde{F} = - 2 (\partial_X \tilde V \cdot\nabla_x v^0).n .
  \end{eqnarray}
Plugging on the other hand the expansion  (\ref{formu2p})  in the l.h.s. of  (\ref{formu2p})
yields 
\begin{eqnarray}
|n|^2  \partial_X^2 \tilde{P} = - 2  ( \partial_X \tilde V \cdot\nabla_x v^0).n 
 \end{eqnarray}
 and we recover (\ref{lerayprofile2}) by  integrating twice
 with vanishing conditions at infinity for  $ \tilde{P}$  and $\partial_X \tilde{P}$.
 
 It is interesting to note that
 despite the second method involves one more derivative, it has the advantage  to 
  involve less terms. Furthermore we do not need to combine with
  the equation of $n$.
  \end{rem}
  
  \begin{rem}
   \label{ppp}
      \rm
  We note that the profile  $ \tilde{P}$ is discontinuous at $X=0$.
  Actually this discontinuity is compensated by another pressure profile which depends only of $t,x$. This profile will be constructed in the section \ref{hoprofile}.
  On the other hand  we will construct a velocity profile  $ \tilde{V} $  continuous including  at $X=0$, so that   $\partial_X \tilde{P} $ will also be continuous including  at $X=0$.
 \end{rem}

 We now use  the equation (\ref{lerayprofile}) to get rid of the pressure profile into the equation  (\ref{heu}).
Inverting the two sides and dividing by $t$, we have:
\begin{equation}
  \label{heutan}
|n|^2 \partial_X^2 \tilde V  + \frac{X}{2}  \partial_X \tilde V  - \frac{1}{2}  \tilde V =
 t( \partial_t  \tilde V + v^0 \cdot\nabla_x  \tilde V +   \tilde V \cdot\nabla_x v^0   - 2  \frac{(\tilde V \cdot\nabla_x v^0).n }{|n|^2 } n) ,
\end{equation}

 \subsubsection{Avoiding a far-field degeneracy}
\label{avoiding}

The vectorfield $n$ may vanish, away the patch boundary, hence so may do the coefficient in front of the leading order in the equation  (\ref{heutan}).
To remedy to this we consider a function $a  $ in the space
 \begin{eqnarray*}
   \mathcal{B} :=
 L^\infty([0,T], C^{0, r} (  \R^d ))  \cap L^\infty (0,T;   C^{s,r}  ( \mathcal{O}_{\pm}  (t)  )) 
 \end{eqnarray*}
 satisfying the condition $ \inf_{[0,T] \times  \R^d} a =: c > 0$
  and such that  
$a = | n |^2$  when  $| \phi^{0}  | < \eta $, and we consider for the profile $V (t,x,X)$ the  {\em linear} partial  differential equation $  L V = 0$ where the differential operator $L$ is given by   $L :=  \mathcal{E}  -t (D + A)$
where
$\mathcal{E}$ and  $A$ are some operators of respective order $2$ and $0$   acting formally on functions $V(t,x,X)$ as follows:
 \begin{eqnarray*}
 \mathcal{E} V := a  \partial_X^2  V  +  \frac{X}{2}  \partial_X V  - \frac{1}{2}  V  \text{ and }
 A V :=   V \cdot \nabla_x v^0  - 2  \frac{( V \cdot\nabla_x v^0). n }{a  } n .
  \end{eqnarray*}
 Roughly speaking the equation (\ref{heutan}) is now hyperbolic in $t,x$ and parabolic in $t,X$,  for $t>0$, and degenerates into an elliptic equation in $X$ for $t=0$.

The substitution of $a$ instead of $|n |^2$ is almost harmless since their values are different 
only for $| \phi^{0}| \geqslant \eta $, so that the corresponding values of the eventual solutions  $V (t,x,\frac{\phi^{0}(t,x)}{\sqrt{\ep t} } )$ and $\tilde V (t,x,\frac{\phi^{0}(t,x)}{\sqrt{\ep t} } )$ respectively given by the equations  (\ref{heutan}) and  (\ref{heu2r}) (the equation $  L V = 0$) both tend to $0$ as $\sqrt{\ep t} $ tends to $0$, because of the vanishing condition (\ref{sharp1vitesse}) for $X$ at infinities.

\subsubsection{Transmission conditions}
\label{wp1}

Of course we hope to extirpate from the previous equation a non-trivial solution.
To this purpose an important point is that we look for a solution $ \tilde V $ with a $X$ derivative $\partial_X \tilde V $ discontinuous in $X=0$. 
  Actually because of the parabolic nature of the Navier Stokes equations, we expect that  $v^\ep$ and   $\omega^\ep$ are continuous  including through  $\phi^{0} =0$ (these are the Rankine-Hugoniot conditions associated to the problem),  
  which lead to the transmission conditions: $ \tilde V$ and 
 $\omega^0 + \tilde \Omega $ should be continuous, which (taking into account the equalities (\ref{rotrot}), (\ref{ortho}) and (\ref{orthov}))
 is equivalent to the  transmission conditions: $ \tilde V$ and 
 $n \wedge \omega^0 - |n|^2 \partial_X \tilde V  $ should be continuous.
 More precisely this means a priori that
 \begin{eqnarray}
\label{wp71} \tilde  V |_{X=0^+ , \phi^{0} = 0^+ } -  \tilde V |_{X=0^- , \phi^{0} = 0^- } &=& 0,
\\ \label{wp72}  |n|^2 \partial_X \tilde V |_{X=0^+ , \phi^{0} = 0^+ }  -  |n|^2 \partial_X \tilde V |_{X=0^- , \phi^{0} = 0^- } &=& - ( n \wedge \omega^0 |_{ \phi^{0} = 0^+ }  - n \wedge \omega^0 |_{ \phi^{0} = 0^- } ).
  \end{eqnarray}
   Since $X$ is the placeholder for $\frac{\phi^{0}(t,x)}{\sqrt{\ep t}}$ the function $ \tilde  V (t,x,X)$ needs to be defined only when $X$ and $\frac{\phi^{0}(t,x)}{\sqrt{\ep t}}$ share the same sign.
   However it is useful to look for a profile  $V (t,x,X)$ defined for $(t,x,X)$ in the wholedomain  $\mathcal{D}:= (0,T) \times \R^d  \times \R$.
  As a consequence we will actually look at the following transmission conditions:  for any $(t,x) \in (0,T) \times \R^d$: $  \lbrack  V \rbrack = 0
$ and $ \lbrack  \partial_X V   \rbrack = -  \frac{ n \wedge (\omega^0_+ -  \omega^0_-  )}{a}$,   where  the brackets denote the jump 
  $ \lbrack  V \rbrack  = V |_{X=0^+} -  V |_{X=0^-}  $ across $\{X=0\}$ and where 
  $\omega^0_\pm$ are two functions in $ L^\infty \Big((0,T), C^{s, r} \big(  \R^d )\Big)$ such that $\omega^0_\pm |_{  \mathcal{O}_{\pm}  (t) } = \omega^0$.
  More precisely we recall from the compendium (precisely the point \ref{litt6} of Theorem \ref{litt}) that the restrictions of the flow  $\chi^0$ of the Euler solution on each side of the boundary are analytic with respect to time with values in  $C^{s+1, r}$.
  Thanks to Theorem \ref{rychkov} 
  there exists some extensions $\chi^0_\pm $ 
  analytic on $(0,T)$ with values in $C^{s+1, r} (  \R^d )$
  of the restriction $\chi^0 |_{\mathcal{O}_{0,\pm}  } $.
  We define the corresponding velocities  $v^0_\pm$ by 
  $ v^0_\pm (t,.):= (\partial_t  \chi^0_\pm ) (t, (\chi^0_\pm )^{-1} (t,.))$ and the  corresponding vorticities  $\omega^0_\pm$ by 
  $ \omega^0_\pm := \text{curl } v^0_\pm$.
 As a consequence, with the notation of the introduction,  $\omega^0_\pm$ are in $ \mathcal{B}_D$.
  
     The profile equations  (\ref{heu2r})-(\ref{heu21r}) announced in the introductory section 
\ref{viscous} are therefore derived.

\subsection{At initial time}
\label{wp4}

We expect that the initial values $V_{0} (x,X) := V (0,x,X) $ satisfy the problem 
obtained 
 formally making  $t=0$ in the equations (\ref{heu2r})-(\ref{heu21r}), i.e. :
 \begin{eqnarray}
  \label{ieu20}
a_0  \partial_X^2  V_{0}  +  \frac{X}{2}  \partial_X V_{0}  - \frac{1}{2}  V_0 &=& 0 \quad \text{when }  \quad  \pm X > 0,
  \\  \label{ieu21} V_{0} |_{X=0^+  } - V_{0} |_{X=0^- } &=& 0,
\\  \label{ieu22}  a_0 \partial_X V_{0} |_{X=0^+  }  - a_0 \partial_X  V_{0} |_{X=0^-  } &=& g_{0} .
\end{eqnarray}
The functions  $a_{0}(x)$ and $g_{0}(x)$  which denote respectively the initial value of $a$ and of $n \wedge (\omega^0_+ -  \omega^0_-  )$ are in  $C^{0, r} (\R^d )  \cap C^{s,r}  (\mathcal{O}_{\pm,0}  )$.
\begin{prop}
\label{propv}
There exists a unique couple of solutions 
 \begin{eqnarray}
 \label{miracle0}
 V_{0}  (x, X) \in  C^{0,r}  \big(  \R^d , p-\mathcal{S} (\R) \big)  \cap C^{s,r}  \big(   \mathcal{O}_{\pm,0} , p-\mathcal{S} (\R) \big)
 \end{eqnarray}
   which satisfy  the problem  (\ref{ieu20})-(\ref{ieu21})-(\ref{ieu22}).
  Moreover 
\begin{eqnarray}
  \label{perp}
  V_{0} (x,X)\cdot  n_{0} (x) &=& 0  \quad \text{for}   \pm X >0 .
\end{eqnarray}
  \end{prop}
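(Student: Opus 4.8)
The plan is to solve the problem \eqref{ieu20}--\eqref{ieu22} by exploiting the fact that, for each fixed $x$, it is an ordinary differential equation in $X$ with constant (in $X$) coefficient $a_0(x)$, and that this ODE is exactly the one governing the error function in the baby model of section \ref{baby}. First I would treat $a_0(x)$ as a parameter $a>0$ and study the half-line ODE $a\,u'' + \tfrac{X}{2}u' - \tfrac12 u = 0$ on $\R_\pm$. One checks directly that $u(X)=X$ is a solution, and the method of reduction of order (or an explicit integrating-factor computation) produces a second, linearly independent solution behaving like $e^{-X^2/(4a)}$ times a polynomial/error-function factor at $\pm\infty$. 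Hence on $\R_+$ the only solutions that are rapidly decreasing at $+\infty$ form a one-dimensional space spanned by an explicit function $\Phi_+(a,X)$ (a multiple of $\int_X^{+\infty} e^{-y^2/(4a)}\,dy$ type expression, matching $\tilde\Omega_\pm$ in \eqref{baby4}); similarly on $\R_-$ with $\Phi_-(a,X)$. Thus the space of solutions of \eqref{ieu20} lying in $p-\mathcal{S}(\R)$ as $X\to\pm\infty$ is two-dimensional, parametrized by the two coefficients $c_\pm(x)$ in $V_0 = c_\pm(x)\,\Phi_\pm(a_0(x),X)$ for $\pm X>0$.

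The two transmission conditions \eqref{ieu21}--\eqref{ieu22} then become a $2\times2$ linear system for $(c_+(x),c_-(x))$: continuity at $X=0$ forces $c_+(x)\Phi_+(a_0,0) = c_-(x)\Phi_-(a_0,0)$, and the jump condition reads $a_0\big(c_+(x)\partial_X\Phi_+(a_0,0) - c_-(x)\partial_X\Phi_-(a_0,0)\big) = g_0(x)$. One verifies the determinant of this system is nonzero for every $a>0$ — this is where the ellipticity $a_0 \geq c > 0$ is used, together with the explicit sign of $\Phi_\pm$ and $\partial_X\Phi_\pm$ at $0$ (they are the Gaussian-type quantities of \eqref{baby2}--\eqref{baby3}, which are strictly positive) — so the system has a unique solution, giving existence and uniqueness of $V_0$ pointwise in $x$, with $c_\pm(x)$ being a smooth (indeed analytic) function of $a_0(x)$ and a linear function of $g_0(x)$. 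Since $\Phi_\pm(a,X)$ and all its $X$-derivatives are, for $a$ in a compact subset of $(0,\infty)$, bounded by $C_N(1+|X|)^{-N}e^{-X^2/(4a)}$-type bounds uniformly, the map $(a,g)\mapsto c_\pm$ is smooth from $(0,\infty)\times\R$ into $\R$, and composing with $a_0\in C^{0,r}(\R^d)\cap C^{s,r}(\mathcal{O}_{\pm,0})$ and $g_0\in C^{0,r}(\R^d)\cap C^{s,r}(\mathcal{O}_{\pm,0})$ (using that these Hölder classes are algebras, stable under composition with smooth functions, and that $\inf a_0>0$) yields $V_0 \in C^{0,r}(\R^d,p-\mathcal{S}(\R))\cap C^{s,r}(\mathcal{O}_{\pm,0},p-\mathcal{S}(\R))$, which is \eqref{miracle0}.

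Finally, for the orthogonality \eqref{perp}: projecting \eqref{ieu20}--\eqref{ieu22} onto $n_0(x)$ and using that $a=|n|^2$ near the patch boundary (or more simply that the source $g_0 = n\wedge(\omega^0_+-\omega^0_-)$ is orthogonal to $n_0$ by construction, while $a_0\partial_X^2 + \tfrac{X}2\partial_X - \tfrac12$ acts componentwise), one finds that the scalar function $W(x,X):= V_0(x,X)\cdot n_0(x)$ solves the same ODE \eqref{ieu20} on each $\R_\pm$ with homogeneous transmission data $\lbrack W\rbrack = 0$, $\lbrack a_0\partial_X W\rbrack = g_0\cdot n_0 = 0$, and with the decay at $\pm\infty$; by the uniqueness just proved (the $2\times2$ system with zero right-hand side has only the zero solution), $W\equiv 0$. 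The main obstacle — really the only point requiring care — is verifying the non-vanishing of the $2\times 2$ determinant uniformly in $a\geq c$ and tracking how $\inf a_0 > 0$ enters the constants in the Hölder/Schwartz estimates for $\Phi_\pm$ and hence for $V_0$; everything else is the explicit constant-coefficient ODE analysis already previewed by the baby model and routine composition estimates in Hölder spaces.
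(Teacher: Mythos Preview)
Your approach is correct and genuinely different from the paper's. The paper proceeds by a functional-analytic route: it first lifts the jump data to reduce to homogeneous transmission conditions, then truncates the unbounded coefficient $X$ to $\chi_\sigma(X)$, applies Lax--Milgram to get a weak solution in $L^2(\R^d,H^1(\R))$, and then bootstraps the $x$-regularity by Littlewood--Paley localization $\Delta_j$ together with commutator estimates (Lemma~\ref{commumu}); the piecewise $C^{s,r}$ regularity is obtained by the Rychkov-extension trick of section~\ref{pte}, and finally the $p\text{-}\mathcal{S}$ decay in $X$ is recovered by an induction on $X^k\partial_X^l V$, before letting $\sigma\to\infty$. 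The orthogonality \eqref{perp} is deduced from the energy identity \eqref{1410} (uniqueness) exactly as you do.

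Your ODE argument is more elementary and more explicit: since $a_0(x)$ is constant in $X$, the rescaling $X\mapsto X/\sqrt{a_0(x)}$ reduces \eqref{ieu20} to a single parameter-free equation, and the solution is given in closed form by Gaussians and complementary error functions (one small inaccuracy: this is not quite ``exactly the baby model'' \eqref{baby1}, which lacks the zeroth-order term $-\tfrac12 u$; the relevant ODE here has $u_1(X)=X$ as one solution and a Gaussian-type decaying solution $u_2$ as the other, but your reduction-of-order reasoning goes through unchanged). The $2\times2$ system is then solved explicitly, and the H\"older regularity follows from composition with the smooth map $a\mapsto\Phi_\pm(a,\cdot)$ into $p\text{-}\mathcal{S}$. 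This buys you a direct proof with no spectral machinery; what it costs is generality --- the paper's variational/commutator method is deliberately set up so that the same skeleton handles the time-dependent operator $L=\mathcal E - t(D+A)$ in Theorems~\ref{weakeqi} and~\ref{weakeqana}, where the coefficients are no longer constant in $X$ in any effective sense and no closed-form solution is available. In other words, the paper's proof of Proposition~\ref{propv} is a warm-up for section~\ref{wp'}, whereas yours is a self-contained shortcut for the elliptic snapshot at $t=0$.
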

\begin{proof}
\textbf{  Step $1$: Reduction}.
  We first reduce the transmission conditions to the homogeneous case  by
  defining the functions $ \tilde{V}_{0}$ by
 \begin{eqnarray}
    \tilde{V}_{0} :=  V_{0} 
    \pm \frac{1}{2}   \frac{g_{0}}{a_{0}} (e^{\mp X} -  e^{\mp 2X} )  \quad \text{when }  \quad  \pm X > 0,
     \end{eqnarray}
   so that the problem  (\ref{ieu20})-(\ref{ieu21})-(\ref{ieu22}) is turned into the following one (dropping the tilda and the index $0$)
 \begin{eqnarray}
  \label{jeu20}
a_{0}  \partial_X^2  V  +  \frac{X}{2}  \partial_X V  - \frac{1}{2}  V &=& f \quad \text{when }  \quad  \pm X > 0,
  \\  \label{jeu21} V |_{X=0^+  } - V |_{X=0^- } &=& 0,
\\  \label{jeu22}   \partial_X V |_{X=0^+  }  -  \partial_X  V |_{X=0^-  } &=& 0 ,
\end{eqnarray}
with $f \in C^{0, r} \big(  \R^d ,  H^{-1} ( \R)\big) \cap
C^{0, r} \big(  \R^d , p-\mathcal{S} (  \R) \big)  \cap C^{s, r} \big( \mathcal{O}_{\pm,0} ,  H^{-1} ( \R)\big)  \cap  C^{s, r} \big( \mathcal{O}_{\pm,0} , p-\mathcal{S}  (\R)\big)   $. 

\textbf{Step $2$: Cutting-off the unbounded coefficient. }
Because of the unbounded coefficient $X$ in the equation (\ref{ieu20}) the previous problem 
 does not enter in the classical theory of elliptic problems (with $x$ as parameter). 
 To remedy to this we introduce a cut-off.
We consider $\sigma > 0$ and a smooth function $\chi_\sigma$ such that 
$\chi_\sigma (X) = X $ for $| X | < \sigma $, $\chi_\sigma (X) = 3\sigma /2 $ for $| X | >2 \sigma $ and $\| \chi_\sigma  '  \|_{L^\infty (\R) } < 1$.
We will work with the modified equation:
 \begin{eqnarray}
  \label{jeu20b}
a_{0}  \partial_X^2  V  +  \frac{\chi_\sigma (X) }{2}  \partial_X V  - \frac{1}{2}  V &=& f \quad \text{when }  \quad  \pm X > 0,
\end{eqnarray}
\textbf{  Step $2$: Existence and uniqueness of weak solution}. 
The variational formulation of the problem  (\ref{jeu20b})-(\ref{jeu21})-(\ref{jeu22})  reads: for $f \in   L^2 (\R^d , H^{-1} ( \R))$ find  $V  \in L^2 (\R^d , H^1 ( \R))$  such that $B_\sigma (V, W)  = - < f,W>$ for all $W \in  L^2 (\R^d , H^1 ( \R))$ where  $< .,.>$ denotes the bracket of duality between $  L^2 (\R^d , H^{-1} ( \R))$ and $L^2 (\R^d , H^1 ( \R))$, and
 $B_\sigma (V, W)$ is the following  bilinear  form on $ L^2 (\R^d , H^1 ( \R)) \times  L^2 (\R^d , H^1 ( \R))$:
 \begin{eqnarray} 
 B_\sigma (V, W) :=  \int_{   \R^d   \times \R } a_{0}   \partial_X V .  \partial_X W
 -  \frac{\chi_\sigma (X) }{2}  \partial_X V .  W + \frac{1}{2}  V .  W .
\end{eqnarray}
 Since the bilinear form $B_\sigma $ is continuous (thanks to the cut-off $\chi_\sigma$) and coercive:
 \begin{eqnarray}
 \label{coercive}
B_\sigma (V, V) = \int_{   \R^d   \times \R } a_{0}  | \partial_X V |^2
 +  \frac{1}{2} (1 + \frac{\chi'_\sigma (X)}{2})  | V |^2 
\end{eqnarray}
we infer from Lax-Milgram theorem that
 there exists a unique weak/variational 
  solution of problem  (\ref{jeu20b})-(\ref{jeu21})-(\ref{jeu22}).

\textbf{  Step $3$:  $V \in  L^\infty ( \R^d , H^1 ( \R )) $}.
We now turn our attention to regularity restricting ourselves for shortness to establish  a-priori estimates.
First  multiplying the equations (\ref{jeu20b}) by  $V$ and
     integrating   (only) in $X$  yields for any $x \in  \R^d$
 \begin{eqnarray}
  \label{1410}
 \int_{\R } a_{0} | \partial_X V |^2
 +   \frac{1}{2} (1 + \frac{\chi'_\sigma (X)}{2})   | V |^2 dX
=
- << f,W>>
 \end{eqnarray}
where  $<< .,.>>$ denotes this time the bracket of duality between $  H^{-1} ( \R)$ and $ H^1 ( \R)$
so that we get 
 \begin{eqnarray}
  \label{1758}
 \| V \|_{ L^\infty ( \R^d , H^1 ( \R )) }
\lesssim
  \| f \|_{ L^\infty ( \R^d , H^{-1} ( \R )) }.
\end{eqnarray}

\textbf{  Step $4$:  $V \in C^{s, r} ( \mathcal{O}_{\pm,0}  , H^1 ( \R )) $}.
We follow the same strategy than  in section  \ref{pte}. 
By Rychkov's Theorem \ref{rychkov} 
there exists some 
 extensions  $a_{\pm,0}  $   in  $ C^{s,r}  (\R^d    )$  and
 $$f_{\pm} \in  C^{s, r} \big( \R^d ,  H^{-1} ( \R)\big)  \cap  C^{s, r} \big(\R^d  , p-\mathcal{S}  (\R)\big)   $$
of the restrictions of $a_0$ and $f$ to $ \mathcal{O}_{\pm,0}$.
Then  we perform estimates of the solutions $V_{\pm}$ of the problems
 \begin{eqnarray}
  \label{jeu20f}
a_{\pm,0}  \partial_X^2  V_{\pm}  +  \frac{\chi_\sigma (X) }{2}  \partial_X V_{\pm}  - \frac{1}{2}  V_{\pm} &=& f_{\pm}  \quad \text{when }  \quad  \pm X > 0,
  \\  \label{jeu21f} V_{\pm} |_{X=0^+  } - V_{\pm} |_{X=0^- } &=& 0,
\\  \label{jeu22f}   \partial_X V_{\pm} |_{X=0^+  }  -  \partial_X  V_{\pm} |_{X=0^-  } &=& 0 ,
\end{eqnarray}
 for $x$ running in the full range $\R^d$
through a Fourier analysis.
 Finally we observe that $V |_{\mathcal{O}_{\pm,0}}$ and $V_{\pm}$ satisfy both 
 the equations  (\ref{jeu20b})-(\ref{jeu21})-(\ref{jeu22}) for $x$ in $\mathcal{O}_{\pm,0}$.
 Proceeding as in step $3$ we get that they are equal.

 We are therefore reduced to the case where the functions  $a_0$ and $g_0$ are respectively in
 $C^{s, r} ( \R^d)$ and
$ C^{s, r} \big( \R^d ,  H^{-1} ( \R)\big)  \cap  C^{s, r} \big(\R^d  , p-\mathcal{S}  (\R)\big)$.
We are going to prove by iteration for $-1  \leqslant  l  \leqslant s$ that
 \begin{eqnarray*}
 \| V \|_{  C^{l, r}  ( \R^d , H^1 ( \R )) }
\lesssim
  \| f \|_{  C^{l, r} ( \R^d , H^{-1} ( \R )) }.
\end{eqnarray*}
To do this  we make use of a spectral localization with respect to $x$ that is we apply the operators $\Delta_j $ to the equations (\ref{jeu20b}) to get for $j \geq -1$
  the equations:
 \begin{eqnarray}
  \label{eqr1}
 a_{0}  \partial^2_X  \Delta_j V  +  \frac{\chi_\sigma}{2}  \partial_X \Delta_j V  - \frac{1}{2}\Delta_j  V =
 \Delta_j f + |\lbrack a_{0} ,   \Delta_j   \rbrack    \partial^2_X V
 \end{eqnarray}
for $ \pm X>0  $  and to the interface condition  (\ref{jeu21})-(\ref{jeu22}) to get at $X=0$ 
  \begin{eqnarray}
  \label{eqr3}
   \left\{
\begin{array}{c}
   \Delta_j  V |_{X=0^+  } - \Delta_j  V |_{X=0^-  } = 0,
  \\   \Delta_j    \partial_X V|_{X=0^+  } -   \Delta_j   \partial_X  V|_{X=0^-  } =   0 .
   \end{array}
  \right.
    \end{eqnarray}
    We want to show that 
  \begin{eqnarray*}
      \text{sup}_{j  \geqslant -1}  \    2^{j (l+r)} ( \| \Delta_j V   \|_{ L^\infty (\R^3  , H^1  (  \R)) }  < \infty .
     \end{eqnarray*}
  To do this we multiply the equation (\ref{eqr1}) by  $\Delta_j V$ 
    we integrate in $X$ over  $ \R_\pm $, 
    and we sum the two resulting equations, noticing that the boundary term produced by the integration by parts of the sum of the respective first terms vanish.
     We thus get for any $x \in  \R^d$
   the identity 
  \begin{eqnarray*}
 \int_{ \R} a_{0}  | \partial_X  \Delta_j V |^2 
 +   \frac{3}{4}   | \Delta_j  V |^2
 =  - <<  \Delta_j f  + |\lbrack a_{0} ,   \Delta_j   \rbrack    \partial^2_X V , \Delta_j V >>.
 \end{eqnarray*}
       Now we also have 
        $$ |\lbrack a_{0} ,   \Delta_j   \rbrack    \partial_X V|_{X=0^+  }  =  |\lbrack a_{0} ,   \Delta_j   \rbrack      \partial_X  V|_{X=0^-  }  $$
        so that
  \begin{eqnarray}
  \label{1741}
  \int_{ \R } a_{0}  | \partial_X  \Delta_j V  |^2
 +    \frac{1}{4}  | \Delta_j  V |^2
 \lesssim \|  \Delta_j f  \|^2_{ H^{-1}  (  \R) } +  I_{j} (x) ,
   \end{eqnarray}
        where $ I_{j} (x) $ denote
  \begin{eqnarray*}
  I_{j} (x) :=   \int_{  \R }   |  \partial_X \Delta_j V |. |\lbrack a_{0} ,   \Delta_j   \rbrack    \partial_X V | .
         \end{eqnarray*}
        We will prove  the following commutator estimate:
  \begin{lem}
   \label{commumu}
         For any $x \in  \R^d$ there holds 
  \begin{eqnarray*}
   \text{sup}_{j  \geqslant -1}  \  2^{2j (l+r)}  I_{j}(x)  \lesssim 
      \|  \partial_X V \|_{ C^{l, r} \big(  \R^d ,   L^2 ( \R) \big) } .   \|  \partial_X  V \|_{ C^{l-1, r}  \cap L^\infty \big(  \R^d ,   L^2 ( \R) \big) }
   \end{eqnarray*}
          \end{lem}
     Let us admit for a while Lemma \ref{commumu} and  infer from the estimate  (\ref{1741})  
       that 
  \begin{eqnarray*}
 \|    V      \|_{ C^{l, r} \big(  \R^d ,   H^1 ( \R) \big) } 
  \lesssim
\|   f  \|_{ C^{l, r} \big(  \R^d ,   L^2 ( \R) \big) } + 
  \|   \partial_X V      \|_{ C^{l-1, r} \cap L^\infty   \big(  \R^d ,   L^2 ( \R) \big) }
   \end{eqnarray*}
       so   that
       starting with the case $l=-1$ -which is a consequence of the estimate  (\ref{1758})-
        the iteration 
       can be done till we get  $V \in C^{s, r} ( \R^d , H^1 ( \R )) $. 
       
  \begin{proof}[Proof of Lemma \ref{commumu}]
  We will consider only     $j   > 0$, the case $j=0$ corresponding to minor  modifications of notation and being actually easier.
     We make use of the paraproduct writing
  \begin{eqnarray*}
  \lbrack a_{0} ,   \Delta_j   \rbrack    \partial_X V = 
   \lbrack T_{a_{0}} ,   \Delta_j   \rbrack    \partial_X V
   + (a_{0} - T_{a_{0}} )  \Delta_j  \partial_X V
   -  \Delta_j  ( a_{0} - T_{a_{0}} )  \partial_X V 
         \end{eqnarray*}
   so that  $ I_{j}= I^1_{j}  +  I^2_{j} + I^3_{j} $ where  
  \begin{eqnarray*}
  I^1_{j} (x) &:=&   \int_{  \R }   |  \partial_X \Delta_j V |. |\lbrack  T_{a_{0}} ,   \Delta_j   \rbrack    \partial_X V | ,
  \\     I^2_{j} (x) &= &  \int_{  \R }   |  \partial_X \Delta_j V |. | (a_{0} - T_{a_{0}} )  \Delta_j \partial_X V | ,
    \\     I^3_{j} (x) &= &  \int_{  \R }   |  \partial_X \Delta_j V |.  |\Delta_j  (a_{0} - T_{a_{0}} ) \partial_X V | .
         \end{eqnarray*}
    Referring to the definitions (\ref{parap0}) and  (\ref{convol})
      we have
 \begin{eqnarray*}
   \lbrack T_{a_{0}} ,   \Delta_j   \rbrack    \partial_X V &= &
    \sum_{k   \geqslant 1}  \lbrack S_{k-1} a_{0} , \Delta_j    \rbrack     \Delta_k  \partial_X V 
    \\  &= & 
     \sum_{k   \geqslant 1, | k-j|  \leqslant 5 } 
    2^{3j} \int_{\R^3  } 
    (S_{k-1} a_{0} (x) - S_{k-1} a_{0} (y)) 
     \tilde{h} (2^{j}  (x-y) )   \Delta_k  \partial_X V  (y) dy .
   \end{eqnarray*}
         Hence 
         \begin{eqnarray*}
|   \lbrack T_{a_{0}} ,   \Delta_j   \rbrack    \partial_X V   |
      \leqslant  2^{-j}   \sum_{k   \geqslant 1, | k-j|  \leqslant 5}  \| S_{k-1} a_{0} \|_{Lip} \int_{\R^3  } g ( 2^{j}  (x-y) )   | \Delta_k  \partial_X V  (y) | 2^{3j} dy 
           \end{eqnarray*}
         where 
         $g$ denotes the function $g(x) :=  |x | . | \tilde{h}(x)|$ which is in $L^1 ( \R^3 )$.
         Using Fubini theorem we get that
 for any $x \in  \R^3$
         \begin{eqnarray*}
         I^1_{j} (x)     \lesssim    \sum_{k   \geqslant 1, | k-j|  \leqslant 5}  2^{-j}  
         \int_{\R^3  }     g ( 2^{j}  (x-y) ) 
                 \big(      \int_{  \R } 
           |  \partial_X \Delta_j V (x,X) |. |  \partial_X  \Delta_k V  (y,X) |  dX  \big) 2^{3j} dy .
           \end{eqnarray*}
We now use Cauchy-Schwarz inequality to get for any $x \in  \R^3$
   \begin{eqnarray*}
          I^1_{j} (x)       &\lesssim &   \sum_{k   \geqslant 1, | k-j|  \leqslant 5}  2^{-j} 
              \|  \partial_X \Delta_j V (x,.) \|_{L^2 (\R) }
              \int_{\R^3  }  g ( 2^{j}  (x-y) )
                \|  \partial_X  \Delta_k V  (y,.) \|_{L^2 (\R) }   2^{3j} dy 
                \\      & \lesssim&   \sum_{k   \geqslant 1, | k-j|  \leqslant 5}  2^{-j}   
                  \|  \partial_X \Delta_j V (x,.) \|_{L^2 (\R) }
                \|  \partial_X  \Delta_k V  (y,.) \|_{ L^\infty (\R^3  ,L^2 (\R)) } 
           \end{eqnarray*}
hence
  \begin{eqnarray*}
   \text{sup}_{j  \geqslant -1}  \  2^{2j (l+r)}  I^1_{j}(x)  \lesssim 
      \|  \partial_X V \|_{ C^{l, r} \big(  \R^d ,   L^2 ( \R) \big) } .   \|  \partial_X  V \|_{ C^{l-1, r} \big(  \R^d ,   L^2 ( \R) \big) }
   \end{eqnarray*}
     Similar bounds for $I^2_{j}$ and $I^3_{j}$ can be obtained, mixing once again classical paradifferential arguments with a Fubini argument.

\end{proof}

\textbf{  Step $4$:  $V \in C^{s, r} ( \R^d ,  p-\mathcal{S} ( \R )) $}. 
Mimicking  Step $3$ we can prove that 
  \begin{eqnarray}
   \label{1453}
 \|    V      \|_{ C^{s, r} \big(  \R^d ,   H^1 ( \R) \big) } 
  \lesssim
\|   f  \|_{ C^{s, r} \big(  \R^d ,   p-L^2 ( \R) \big) } .
   \end{eqnarray}
    Now we are going to prove by induction on $l$ and then by induction on $k$  that  for all $k$ and $l$ in $\N$
\begin{eqnarray}
 \label{miraclekl}
 V_{k,l,0} := X^k \partial_X^l V   \in   C^{s,r}  \big(\R^3 , p-H^1  (  \R)  \big).
 \end{eqnarray}
Thanks to Sobolev embeddings  we will infer  (\ref{miracle0}).
The point is that $ V_{k,l}$, let us simply write $ \tilde{V}$, verify a problem of the form
 \begin{eqnarray}
  \label{1905a} a_{0} \partial_X^2 \tilde{V}_{}  +  \frac{X}{2}  \partial_X \tilde{V}_{}  - \frac{1}{2} \tilde{V}_{} = \tilde{F}_{} \quad \text{when }  \quad  \pm X > 0,
\\  \label{1905b}  \tilde{V} |_{X=0^+  } - \tilde{V} |_{X=0^- } = J,
\\  \label{1905c}  a_{0}  \partial_X \tilde{V} |_{X=0^+  }  -  a_{0}  \partial_X  \tilde{V} |_{X=0^-  } &=& \tilde{J},
\\ \label{1905d}   \tilde{V} (x,X)   \rightarrow 0 \quad \text{when }  \quad  X \rightarrow \pm \infty. 
     \end{eqnarray}
where $\tilde{F}$ (respectively $J$ and $ \tilde{J}$) is in $C^{s, r} \big(  \R^d , p-L^2 (  \R) \big)$ (resp. in  $C^{s, r} (  \R^d )$) thanks to the previous steps.
 Once again we can reduce the transmission conditions to the homogeneous case  by
  defining the functions $ V$ by
 \begin{eqnarray}
V :=    \tilde{V}  \pm \frac{1}{2} \big( (2J + \frac{\tilde{J}}{a}) e^{\mp X} -  (J + \frac{\tilde{J}}{a}) e^{\mp 2X} \big) \quad \text{when }  \quad  \pm X > 0.
     \end{eqnarray}
 which satisfy  a problem of the form
 \begin{eqnarray}
\label{1906a} a_{0} \partial_X^2 V  +  \frac{X}{2}  \partial_X V  - \frac{1}{2} V =  F ,
\\  \label{1906b}  V |_{X=0^+  } - V_- |_{X=0^- } = 0,
\\  \label{1906c} a_{0} \partial_X V |_{X=0^+  }  - a_{0} \partial_X  V |_{X=0^-  } &=& 0,
\\ \label{1906d}   V (x,X)   \rightarrow 0 \quad \text{when }  \quad  X \rightarrow \pm \infty. 
   \end{eqnarray}
  where $F$ is in $C^{s, r} \big(  \R^d , L^2 (  \R) \big)$ so that we can apply  (\ref{1453}).
  
  \textbf{  Step $5$:  $V \in C^{0, r} (   \R^d ,  p-\mathcal{S} ( \R )) $}.
  It is actually given by the analysis of step $4$. 
  
   \textbf{  Step $6$:   Passing to the limit in $\sigma$. }
      We finally let $\sigma$ goes to infinity. 
      The  estimate  (\ref{weaksigmap0})  is uniform with respect to  $\sigma$.
      Using weak compactness we can pass to the weak limit so that we get a solution to the uncut  equation.
  
 \textbf{  Step $7$: proof  of (\ref{perp})}.
It suffices to use uniqueness argument  based on equality (\ref{1410}).

    \end{proof}

\subsection{Existence and uniqueness of the layer profile}
\label{wp}

Let us now study  the time-dependent equation (\ref{heu2r}).
We denote by $\mathcal{D}$ the domain  $\mathcal{D}:= (0,T) \times \R^d  \times \R$, by
 $\mathcal{D}_\pm$ the restrictions $\mathcal{D}_\pm := (0,T) \times \R^d  \times \R_\pm $ on each side of the boundary $\Gamma  := (0,T) \times \R^d  \times \{0 \} $
  and by $A$, $ \tilde{L}$ and $L$  the operators of respective order $0$, $1$ and $2$  acting formally on functions $V(t,x,X)$ as follows:
  \begin{eqnarray*}
A V :=   V \cdot \nabla_x v^0
  - 2  \frac{( V \cdot\nabla_x v^0). n }{a  } n
\\ \tilde{L} V:=  \frac{X}{2}  \partial_X V  - \frac{1}{2}  V -
 t( \partial_t  V + v^0 \cdot\nabla_x V   + A V ),
 \\  L V :=   a  \partial_X^2  V  +    \tilde{L} V  .
\end{eqnarray*}
The profile problem now reads as follows
  \begin{equation}
   \label{weak0}
  L  V = f \  \text{on }  \  \mathcal{D}_\pm ,
\  ( \lbrack  V \rbrack , \lbrack \partial_X V \rbrack ) = (0,g)   \ \text{on }  \   \Gamma,
\end{equation}
the decreasing of $V$ for large $X$  being encoded in the choice of the space $E_1 := L^2 ( (0,T) \times \R^d , H^1 ( \R) )$.
\begin{theo}
\label{weakeq}
For any $f \in E'_1$, for any $g \in  L^2 ((0,T) \times \R^d )$
there exists exactly one solution $V \in E_1$ of (\ref{weak0}). 
In addition the function 
$ \sqrt{t} \|  V(t,.,.)  \|_{L^2 ( \R^d \times \R) }$ is continuous on $(0,T)$.
\end{theo}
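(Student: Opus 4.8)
The plan is to treat the equation $LV = f$ as a degenerate parabolic problem and obtain existence by a viscosity/Galerkin approximation combined with energy estimates, the degeneracy at $t=0$ being handled by the weight $\sqrt{t}$. First I would reduce to homogeneous transmission conditions: subtracting an explicit function of the form $\pm\tfrac12\frac{g}{a}(e^{\mp X}-e^{\mp 2X})$ for $\pm X>0$ (exactly as in the proof of Proposition \ref{propv}), the problem becomes $LV=\tilde f$ with $[\![V]\!]=[\![\partial_X V]\!]=0$ and $\tilde f\in E_1'$, so that $V$ is a genuine $H^1_X$ function across $\Gamma$ and the weak formulation can be written without interface contributions. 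The weak form is: find $V\in E_1$ such that $\mathcal{B}(V,W)=-\langle \tilde f,W\rangle$ for all test functions $W$, where, integrating the operator $\tfrac{X}{2}\partial_X$ against $V\cdot W$ and using $\partial_X(XVW)=VW+X\partial_X V\, W+XV\partial_X W$, the quadratic part of $\mathcal{B}(V,V)$ produces $\int a|\partial_X V|^2+\tfrac12\int|V|^2$ plus the delicate term coming from $-t(\partial_t V+v^0\cdot\nabla_x V+AV)$.

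The key step is the energy estimate producing the weight $\sqrt{t}$. Pairing the equation with $V$ and integrating over $\R^d\times\R$ for fixed $t$, the terms $-t\,v^0\cdot\nabla_x V\cdot V$ and $-t AV\cdot V$ are controlled by $Ct\|V(t)\|_{L^2}^2$ (using $\|v^0\|_{Lip}\in L^\infty$ and that $A$ is bounded on $L^2$), the elliptic term $a\partial_X^2 V$ gives after integration by parts $-\int a|\partial_X V|^2$ (no boundary term, by the homogeneous transmission conditions), and the term $-t\,\partial_t V\cdot V=-\tfrac{t}{2}\partial_t\|V(t)\|_{L^2}^2$. Writing $y(t):=\|V(t)\|_{L^2(\R^d\times\R)}^2$ and keeping the good term $\tfrac12 y(t)+\int a|\partial_X V|^2$ from $\tfrac{X}{2}\partial_X-\tfrac12$, one arrives at an inequality of the form $\tfrac{t}{2}y'(t)+(\tfrac12-Ct)y(t)+c\|\partial_X V(t)\|_{L^2}^2\le \langle \tilde f(t),V(t)\rangle$; multiplying by $2t^{\,2C}$ (or by an integrating factor $t\,e^{-2Ct}$ adapted to the precise constant) converts the left side into $\tfrac{d}{dt}\big(t^{1+2C}y(t)\big)+\text{nonneg.}$, and integrating from $0$ to $T$ together with the fact that $t^{1+2C}y(t)\to 0$ as $t\to0^+$ (automatic once we know $V\in E_1$, since then $\sqrt{t}\,y(t)^{1/2}\in L^2$ forces a subsequence tending to $0$) yields the a priori bound $\|V\|_{E_1}\lesssim\|\tilde f\|_{E_1'}$ and, more sharply, continuity of $t\mapsto \sqrt{t}\,\|V(t)\|_{L^2}$ via the continuity of $t\mapsto t^{1+2C}y(t)$ as the integral of an $L^1$ function against a continuous one.

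For existence I would introduce the nondegenerate approximation $L_\varepsilon V := \varepsilon\partial_t V + LV$ (or equivalently shift $t\mapsto t+\varepsilon$), which is a standard linear parabolic equation with the elliptic-in-$X$ principal part $a\partial_X^2$; after the transmission reduction this is solvable in $L^2((0,T),H^1)$ with $\partial_t V\in L^2((0,T),H^{-1})$ by Lions' theorem, prescribing $V|_{t=0}=0$. The energy estimate above is uniform in $\varepsilon$ (the $\varepsilon\partial_t V$ term only adds a favorable $\tfrac{\varepsilon}{2}\tfrac{d}{dt}y$), so by weak compactness in $E_1$ a subsequence converges to a solution of $LV=f$ in $\mathcal{D}_\pm$ in the sense of distributions; the jump conditions pass to the limit because $[\![V]\!]$ and $[\![\partial_X V]\!]$ are continuous functionals on $E_1$ in the weak sense made precise via Green's formula (as announced in the paragraph following the statement). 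Uniqueness is immediate from the a priori estimate applied to $f=g=0$: it forces $t^{1+2C}\|V(t)\|_{L^2}^2\equiv 0$, hence $V\equiv 0$.

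The main obstacle I anticipate is not the algebra of the energy estimate but justifying the boundary-term bookkeeping rigorously: one must check that the interface term generated by integrating $a\partial_X^2 V$ by parts really does vanish — which is exactly why the homogeneous-transmission reduction is performed first — and, more subtly, that the ``integration by parts in $t$'' step $\int_0^T t\,\partial_t V\cdot V\,dt = -\tfrac12\int_0^T y\,dt + \tfrac{T}{2}y(T) - \lim_{t\to0}\tfrac{t}{2}y(t)$ is legitimate, i.e. that the boundary term at $t=0$ indeed drops. This requires knowing a priori that $\sqrt{t}\,y(t)^{1/2}$ has a limit (namely $0$) as $t\to0^+$, which is the very regularity asserted in the theorem; the clean way out is to run the estimate first on the $\varepsilon$-regularized problem (where $V$ is continuous up to $t=0$ with $V|_{t=0}=0$, so the boundary term is trivially zero) and only pass to the limit afterwards, recovering the $t=0$ behavior of the limit from the uniform weighted bound.
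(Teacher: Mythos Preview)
Your energy identity is the right one and matches the paper's: pairing $LV$ with $V$ and integrating by parts in $X$ produces $\int a|\partial_X V|^2+\tfrac34\int|V|^2+\tfrac{t}{2}\,y'(t)$ on the good side, and Gronwall in $t$ gives the bound $\|V\|_{E_1}\lesssim\|f\|_{E_1'}+\|g\|_{L^2}$ together with the continuity of $\sqrt{t}\,\|V(t)\|_{L^2}$. The reduction to homogeneous transmission conditions is a legitimate simplification; the paper instead keeps $g$ and builds a trace theory for $[\![\partial_X V]\!]$ via Green's formulas, but your choice avoids that machinery.

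There are however two genuine technical gaps in your existence step. First, you never deal with the unbounded coefficient $X$ in $\tfrac{X}{2}\partial_X V$. For a generic $V\in E_1=L^2((0,T)\times\R^d,H^1(\R))$ the product $X\,\partial_X V$ need not lie in $L^2$ (nor in $H^{-1}_X$), so the weak formulation you write down is not a bounded bilinear form and the pairing $\langle LV,V\rangle$ is not a priori defined; even the integration by parts $\int\tfrac{X}{2}\partial_X V\cdot V=-\tfrac14\int|V|^2$ is formal. The paper handles this by replacing $X$ with a bounded truncation $\chi_\sigma(X)$ (satisfying $\chi_\sigma(X)=X$ for $|X|<\sigma$ and $\|\chi_\sigma'\|_{L^\infty}<1$), carrying out the whole argument for the cut operator $L_\sigma$, and letting $\sigma\to\infty$ at the very end using that the energy estimate is uniform in $\sigma$.

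Second, your claim that the time--shifted problem ``is solvable by Lions' theorem'' is too quick: the spatial operator is not elliptic in $(x,X)$ but only in $X$, with a first--order drift $v^0\cdot\nabla_x$ in $x$. In the natural Gelfand triple $V=L^2_xH^1_X\hookrightarrow H=L^2_{x,X}$ the term $\int v^0\cdot\nabla_x u\cdot w$ is not bounded on $V\times V$ (there is no control of $\nabla_x u$), so the standard Lions framework does not apply. One can make a regularisation argument work, but it requires more: either an additional smoothing in $x$, or a Galerkin scheme in $X$ combined with transport estimates in $(t,x)$. The paper sidesteps this entirely: after the $\chi_\sigma$ cutoff it proves the \emph{same} a priori estimate for the adjoint $L_\sigma^*$ (which has the same structure), and then obtains existence by a duality/Riesz argument from the adjoint estimate and a second Green identity. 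This is cleaner here precisely because it avoids having to solve any auxiliary evolution problem.

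In short: your uniqueness/energy argument is correct and essentially the paper's; your existence route (parabolic regularisation) is different from the paper's (duality via the adjoint) and would need the $X$--cutoff plus a more careful existence step for the regularised problem before it becomes a complete proof.
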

The first equation of (\ref{weak0}) is satisfied  in the sense of distributions on both $\mathcal{D}_\pm $ and the sense given to the jump conditions is explained  in the proof below.
\begin{proof}
We consider $\sigma > 0$ and a smooth function $\chi_\sigma$ such that 
$\chi_\sigma (X) = X $ for $| X | < \sigma $, $\chi_\sigma (X) = 3\sigma /2 $ for $| X | >2 \sigma $ and $\| \chi_\sigma  '  \|_{L^\infty (\R) } < 1$.
We will work with the modified operators
$\tilde{L}_\sigma :=  \tilde{L} +  \frac{\chi_\sigma (X) - X }{2} \partial_X  V$ and
 $L_\sigma V :=   a  \partial_X^2  V  +    \tilde{L}_\sigma V  $ whose coefficients are bounded.
Even more the coefficients of the first order part are Lipschitz.

We now explain the meaning of the jump conditions in the equation (\ref{weak0}).
Since  $V$ is in $E_1$ the jump $ \lbrack  V \rbrack |_{ \Gamma} $ is in $L^2  (\Gamma)$.
To give a sense to the jump of the derivative $ \lbrack \partial_X V \rbrack$ we will use the equation.
For any $V$ in the space $E_2 :=  \{ V \in C_0 (\mathcal{D}) / \ V|_{ \mathcal{D}_\pm} \in C^\infty \}$ and  $W$ in $H^{1} (\mathcal{D})$ we have, integrating by parts, the following Green  identity:
 \begin{equation}
  \label{greentrace}
\sum_\pm \int_{ \mathcal{D}_\pm } L_\sigma V . W =
-  \int_{ \mathcal{D} } a \partial_X  V . \partial_X  W 
+  \int_{ \mathcal{D} }  \tilde{L}_\sigma^* W.V
- \int_{ \Gamma}  a  \lbrack \partial_X V \rbrack . W 
- T \int_{ \tilde{\Gamma}} W.V 
\end{equation}
where $\tilde{\Gamma}:=  \{T \} \times \R^d  \times \R $ and where $ \tilde{L}_\sigma^*$  denotes the operator (the adjoint of  $\tilde{L}_\sigma$)
 \begin{equation}
  \label{heu2green}
  \tilde{L}_\sigma^* V:=  -  \frac{\chi_\sigma (X)  }{2}  \partial_X V 
   - \frac{1  }{2} (1 + \chi_\sigma  ' (X)) V +
 t( \partial_t  V + v^0 \cdot\nabla_x V   +  (1+ \dive v^0 -A ) V ) .
\end{equation}
In fact less smoothness is needed. 
Let us introduce the Hilbert space
$E_4 :=  \{ V \in E_1 /  \ L_\sigma V  \in H^{-1} (\mathcal{D}) \}$ endowed with the norm
$\| V \|_{E_4  } := \| V \|_{E_1  } +   \| L_\sigma V \|_{ H^{-1} (\mathcal{D})}$.
Thanks to a classical lemma by Friedrichs \cite{Friedrichs} the space $E_2$ is dense in $E_4$.
 \begin{lem}
  \label{green2}
The map
 \begin{equation}
  \label{trace}
  V  \in   E_2 \mapsto \tau := 
   \left\{
 \begin{array}{c}
 a  \lbrack \partial_X V \rbrack   \ \textbf{ on } \   \Gamma
\\  T V   \ \textbf{ on } \   \tilde{\Gamma} 
\end{array}
\right.
\end{equation}
extends uniquely to a continuous linear map
from $E_4 $ to $H^{- \frac{1}{2} } (\Gamma \cup  \tilde{\Gamma} ) $
 and  Green's identity  (\ref{greentrace}) is still valid for any couple $(V,W)$ in  $E_4 \times H^{1} (\mathcal{D})$ in the generalized sense that 
 \begin{eqnarray}
  \label{heu88882bis}
< L_\sigma V , W >_{ H^{-1} (\mathcal{D}) ,   H^{1} (\mathcal{D}) } =
  \int_{ \mathcal{D} }  (V.  \tilde{L}_\sigma^* W  -a \partial_X  V . \partial_X  W)
- <  \tau  , W|_{\Gamma \cup  \tilde{\Gamma}  }  >_{H^{- \frac{1}{2} } (\Gamma \cup  \tilde{\Gamma} )  , H^{ \frac{1}{2} } (\Gamma \cup  \tilde{\Gamma} )   } .
\end{eqnarray}
\end{lem}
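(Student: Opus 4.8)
The plan is to \emph{define} the trace $\tau$ weakly through the equation, using the Green identity (\ref{greentrace}) together with the density of $E_2$ in $E_4$. For $V\in E_4$ and $W\in H^{1}(\mathcal{D})$ I introduce the bilinear form
\begin{equation*}
B(V,W):=\int_{\mathcal{D}}\bigl(V\cdot\tilde{L}_\sigma^{*}W-a\,\partial_X V\cdot\partial_X W\bigr)-\langle L_\sigma V,W\rangle_{H^{-1}(\mathcal{D}),H^{1}(\mathcal{D})}.
\end{equation*}
For smooth $V\in E_2$, identity (\ref{greentrace}) reads exactly $B(V,W)=\langle\tau,W|_{\Gamma\cup\tilde{\Gamma}}\rangle$ with $\tau$ as in (\ref{trace}); observe that no boundary term on the initial slice $\{t=0\}$ occurs, precisely because the first order part of $\tilde{L}_\sigma$ carries the factor $t$. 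Hence the whole lemma reduces to showing that $B$ is continuous on $E_4\times H^{1}(\mathcal{D})$ and that $B(V,\cdot)$ depends on $W$ only through its trace on $\Gamma\cup\tilde{\Gamma}$.

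For the continuity I would estimate $B$ term by term. The first term is controlled by $\|a\|_{L^\infty}\|\partial_X V\|_{L^2(\mathcal{D})}\|\partial_X W\|_{L^2(\mathcal{D})}\lesssim\|V\|_{E_1}\|W\|_{H^1(\mathcal{D})}$. For the second term I use that, $\sigma$ being fixed and $v^0$ being Lipschitz in $x$ (so that $\nabla_x v^0$, $n=\nabla_x\phi^{0}$ and the zero order operator $A$ all have bounded coefficients and $a\geq c>0$), the adjoint $\tilde{L}_\sigma^{*}$ in (\ref{heu2green}) maps $H^{1}(\mathcal{D})$ continuously into $L^{2}(\mathcal{D})$; hence $\bigl|\int_{\mathcal{D}}V\cdot\tilde{L}_\sigma^{*}W\bigr|\lesssim\|V\|_{L^2}\|W\|_{H^1}\lesssim\|V\|_{E_1}\|W\|_{H^1}$. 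The last term is $\leq\|L_\sigma V\|_{H^{-1}(\mathcal{D})}\|W\|_{H^1(\mathcal{D})}\leq\|V\|_{E_4}\|W\|_{H^1(\mathcal{D})}$ by the very definition of the $E_4$ norm. Altogether $|B(V,W)|\leq C\|V\|_{E_4}\|W\|_{H^1(\mathcal{D})}$.

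Next, to see that $B(V,\cdot)$ factors through the trace: if $W\in H^{1}(\mathcal{D})$ satisfies $W|_{\Gamma\cup\tilde{\Gamma}}=0$, then for $V\in E_2$ identity (\ref{greentrace}) gives $B(V,W)=0$, and since $E_2$ is dense in $E_4$ (the Friedrichs lemma quoted above) and $B(\cdot,W)$ is $E_4$-continuous, $B(V,W)=0$ for every $V\in E_4$. Endowing $H^{1/2}(\Gamma\cup\tilde{\Gamma}):=\{W|_{\Gamma\cup\tilde{\Gamma}}:W\in H^{1}(\mathcal{D})\}$ with the quotient norm $\|\psi\|_{H^{1/2}}:=\inf\{\|W\|_{H^1(\mathcal{D})}:W|_{\Gamma\cup\tilde{\Gamma}}=\psi\}$ and setting $H^{-1/2}(\Gamma\cup\tilde{\Gamma}):=(H^{1/2}(\Gamma\cup\tilde{\Gamma}))'$, the formula $\langle\tau V,\psi\rangle:=B(V,W)$ (any lifting $W$ of $\psi$) is well defined, linear in $V$, and obeys $\|\tau V\|_{H^{-1/2}}\leq C\|V\|_{E_4}$. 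It coincides with (\ref{trace}) on $E_2$, and by density of $E_2$ in $E_4$ it is the unique continuous extension. Finally the generalized Green identity (\ref{heu88882bis}) follows since, for fixed $W\in H^{1}(\mathcal{D})$, both sides are continuous in $V\in E_4$ and agree on the dense subspace $E_2$, where the identity is just (\ref{greentrace}).

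The part requiring the most care is the trace theory on the ``broken'' hypersurface $\Gamma\cup\tilde{\Gamma}$, i.e. the union of the internal interface $\{X=0\}$ and the final-time slice $\{t=T\}$, which meet along the corner $\{t=T,\ X=0\}$ and are non-compact in the $x$- and $X$-directions. I would sidestep any compatibility-at-the-corner subtlety by defining $H^{\pm1/2}(\Gamma\cup\tilde{\Gamma})$ directly via the trace map and its dual as above, so that a bounded lifting exists by construction and the surjectivity of $W\mapsto W|_{\Gamma\cup\tilde{\Gamma}}$ onto $H^{1/2}(\Gamma\cup\tilde{\Gamma})$ is tautological; no identification with intrinsic Sobolev spaces on $\Gamma\cup\tilde{\Gamma}$ is needed for the statement, and such an identification (if desired) can be recovered afterwards from the standard trace theorem applied to each flat face separately.
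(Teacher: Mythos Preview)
Your proof is correct and follows essentially the same approach as the paper: both arguments bound the boundary term in Green's identity (\ref{greentrace}) by $\|V\|_{E_4}\|W\|_{H^1(\mathcal{D})}$ and then invoke the density of $E_2$ in $E_4$ to extend. The paper phrases this as lifting a given $\tilde W\in H^{1/2}(\Gamma\cup\tilde\Gamma)$ to $W\in H^1(\mathcal{D})$ and appealing to Hahn--Banach, whereas you package the same estimates into the bilinear form $B$ and define $H^{1/2}(\Gamma\cup\tilde\Gamma)$ as the quotient trace space; your handling of the corner $\{t=T,\,X=0\}$ via this quotient definition is a clean way to make explicit something the paper leaves implicit.
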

\begin{proof}
Let $V$ be in $E_2$ and $ \tilde{W}$ be in $H^{ \frac{1}{2} } (\Gamma \cup  \tilde{\Gamma} )$.
There exists a function $W$ in $H^{1} (\mathcal{D})$ such that 
$W |_{ \Gamma \cup  \tilde{\Gamma}} =  \tilde{W} $.
From  Green's identity  (\ref{greentrace}) we  infer that
\begin{equation*}
 |  \int_{ \Gamma \cup  \tilde{\Gamma} }  \tau  \tilde{W}  | \leqslant
C \| V \|_{ E_4 }  \| W \|_{ H^{1} (\mathcal{D}) }  \leqslant
C \| V \|_{ E_4 }  \|  \tilde{W} \|_{ H^{ \frac{1}{2}  } (\Gamma \cup  \tilde{\Gamma} ) }.
\end{equation*}
Hence by Hahn-Banach theorem we get the existence of a continuous extension, which is unique because of the density stated above.
\end{proof}

We therefore have given a meaning  to the problem  (\ref{weak0}).
This meaning can seem weak but the next result said that is actually strong.
We introduce the space $ H^{1,2} (\mathcal{D})$ of the functions $V \in  H^1 (\mathcal{D})$ such that $\partial^2_{X}  V |_{ \mathcal{D}_\pm} $ are in  $L^2$.

\begin{lem}
  \label{green3}
  If $V  \in E_4$ satisfies the jump conditions
 $  \lbrack  V \rbrack = 0$ and  $\lbrack \partial_X V \rbrack  = g$ on $ \Gamma$ in the sense given by Lemma \ref{green2} 
   then there exists a sequence $V^\varepsilon$ in $H^{1,2} (\mathcal{D})$ converging to $V$ in $E_4$ and a sequence $g^\varepsilon$ converging to $g$ in $ L^2 ((0,T) \times \R^d )$ such that
   $  \lbrack  V^\varepsilon \rbrack = 0$ and  $\lbrack \partial_X V^\varepsilon \rbrack  = g^\varepsilon$ on $ \Gamma$.   
\end{lem}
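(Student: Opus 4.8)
The plan is to regularize $V$ by a two-parameter family of mollifications: a genuine (smoothing) mollification in the tangential directions $(t,x)$ combined with a more delicate ``even reflection'' smoothing in the normal direction $X$ that is compatible with the transmission conditions $\lbrack V\rbrack=0$, $\lbrack\partial_X V\rbrack = g$. First I would observe that, away from $\Gamma$, the one-sided restrictions $V|_{\mathcal{D}_\pm}$ together with $a\partial_X^2 V|_{\mathcal{D}_\pm}=L_\sigma V - \tilde L_\sigma V$ (the right-hand side being in $H^{-1}$ plus lower order terms, hence at least in $H^{-1}$ locally) give interior $H^{1,2}_{loc}$ regularity on each open side by the standard elliptic difference-quotient argument applied only in the $X$ variable; the real issue is therefore purely the behaviour across $\Gamma$ and near $\tilde\Gamma$, and one may localize with a tangential partition of unity.

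Next I would perform the tangential smoothing: convolve $V$ with a mollifier $\rho_\varepsilon(t,x)$ acting only in $(t,x)$ (taking care near $t=0$ and $t=T$ by a standard translation-then-mollify trick, using that $E_4$ is stable under tangential translation because the coefficients of $\tilde L_\sigma$ — in particular $v^0$ — are Lipschitz in $(t,x)$ by Theorem \ref{litt} and the construction in section \ref{wp1}, and $\chi_\sigma$ is smooth). Call the result $V_\varepsilon$; it is smooth in $(t,x)$ with values in $H^1(\R_X)$, still satisfies $\lbrack V_\varepsilon\rbrack=0$ and $\lbrack\partial_X V_\varepsilon\rbrack = g_\varepsilon$ on $\Gamma$ where $g_\varepsilon$ is the tangential mollification of $g$ — this is where one uses Lemma \ref{green2}, since the trace map there commutes with tangential convolution, so the jump conditions pass to $V_\varepsilon$ in the weak trace sense — and $L_\sigma V_\varepsilon \to L_\sigma V$ in $H^{-1}(\mathcal{D})$, $V_\varepsilon\to V$ in $E_1$. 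Since now $a\partial_X^2 V_\varepsilon|_{\mathcal{D}_\pm} = L_\sigma V_\varepsilon - \tilde L_\sigma V_\varepsilon$ with the right-hand side in $H^{-1}$ but with $\partial_X^2$ of it controlled by the equation, a bootstrap in $X$ only (elliptic regularity for the ODE $a\partial_X^2 = \cdots$ with $x,t$ as parameters, exactly as in the proof of Proposition \ref{propv}) yields $\partial_X^2 V_\varepsilon|_{\mathcal{D}_\pm}\in L^2$, i.e. $V_\varepsilon\in H^{1,2}(\mathcal{D})$; at that point the transmission conditions become classical traces and $\lbrack\partial_X V_\varepsilon\rbrack = g_\varepsilon$ holds pointwise a.e. Finally set $V^\varepsilon := V_\varepsilon$, $g^\varepsilon := g_\varepsilon$; one checks $g_\varepsilon\to g$ in $L^2((0,T)\times\R^d)$ and $V_\varepsilon\to V$ in $E_4$, the latter because $L_\sigma V_\varepsilon\to L_\sigma V$ in $H^{-1}$.

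The main obstacle I anticipate is the interaction of the two regularizations with the nonsmooth, merely Lipschitz coefficient $\chi_\sigma'$ and, more seriously, the degeneracy of the first-order part at $t=0$ (the factor $t$ in $\tilde L_\sigma$, so that $\{t=0\}$ is characteristic): one must be sure that the tangential mollification in $t$ does not create an uncontrolled error from the commutator $[t\partial_t,\rho_\varepsilon*]$, and that the boundary term on $\tilde\Gamma$ (the $TV$ trace in Lemma \ref{green2}) is preserved in the limit. The cleanest way around this is to mollify only in $x$ and in $t$ via the Friedrichs-type argument already invoked before Lemma \ref{green2} — indeed the density of $E_2$ in $E_4$ stated there essentially contains the construction, and Lemma \ref{green3} is a quantitative refinement of it that additionally tracks the jump $g$; so I would lean heavily on that lemma, using that the approximating smooth functions it provides automatically come with $\lbrack\partial_X V^\varepsilon\rbrack$ converging to $\lbrack\partial_X V\rbrack=g$ in $H^{-1/2}(\Gamma)$, and then upgrading this $H^{-1/2}$ convergence to $L^2$ convergence of $g^\varepsilon$ by the extra $X$-bootstrap described above, which places $\partial_X V^\varepsilon|_{\mathcal{D}_\pm}$ in $H^1$ near $\Gamma$ and hence its trace in $H^{1/2}\hookrightarrow L^2$.
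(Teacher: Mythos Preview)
Your approach is essentially the same as the paper's: mollify only in the tangential variables $(t,x)$ so as to preserve the jump conditions across $\{X=0\}$, invoke Friedrichs' lemma to control the commutators and get convergence in $E_4$, and then read off the extra $X$-derivative $\partial_X^2 V_\varepsilon|_{\mathcal{D}_\pm}\in L^2$ from the equation. The paper's proof is in fact just a three-line sketch citing Rauch for this classical mechanism, so your write-up is considerably more detailed; the opening mention of an ``even reflection'' smoothing in $X$ is unnecessary and you rightly drop it, and your discussion of the commutator $[t\partial_t,\rho_\varepsilon*]$ near the characteristic boundary $\{t=0\}$ is a genuine technical point that the paper leaves implicit.
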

\begin{proof}
As this kind of process is very classical, see for instance Rauch \cite{Rauch}, we only briefly sketch the proof.
The idea is to construct the sequence  $V^\varepsilon$ by convoluting in the variables $t,x$ only to preserve the jump conditions, to use Friedrichs lemma to prove the convergence in $E_4$ and then to gain the extra $X$ derivative, that is to prove that the $V^\varepsilon$ are in $H^{1,2} (\mathcal{D})$, thanks to the equation.
\end{proof}

  We will now prove uniqueness as a consequence of the following 
  estimate: 
   for any function $V$ in $ E_1$ satisfying
  \begin{equation}
   \label{weaksigma}
  L_\sigma  V = f \ \text{when }  \  \text{on }  \  \mathcal{D}_\pm ,
\  \lbrack  V \rbrack =0  \ \text{ and }  \  \lbrack \partial_X V \rbrack = g   \ \text{on }  \   \Gamma,
\end{equation}
there  holds 
 \begin{eqnarray}
    \label{weaksigmap}
     \|   V  \|_{E_1 } 
      \lesssim
         \|   f  \|_{E'_1 } +  \|   g  \|_{ L^2 ( \R^d \times \R)} .
  \end{eqnarray}
Let us first see the case where $V$ is in $ H^{1,2} (\mathcal{D})$.
Then using Green's identity  (\ref{greentrace}) with $V=W$ and density we get that
  \begin{eqnarray*}
   \int_{ \mathcal{D} }  |  V |^2 +  | \partial_X V |^2
   + T  \int_{ \tilde{\Gamma}  }  |  V |^2
   \lesssim
    \int_{ \mathcal{D} } t  |  V |^2 +   |f. V  |
     +   \int_{ \Gamma  }   |g. V  |,
    \end{eqnarray*}
actually not only for $T$ but for any $t$ in $(0,T)$,  so that using a Gronwall lemma we get 
  \begin{eqnarray}
    \label{weaksigmap0}
     \|   V  \|_{E_1 } +  \sqrt{T} \|  V(T,.,.)  \|_{L^2 ( \R^d \times \R) }
      \lesssim
         \|   f  \|_{E'_1 } +  \|   g  \|_{ L^2 ( \R^d \times \R)} .
     \end{eqnarray}
Now to deal with the general case we introduce the Hilbert space
$E_3 :=  \{ V \in E_1 /  \ L_\sigma V  \in E'_1 \}$ endowed with the norm
$\| V \|_{E_3  } := \| V \|_{E_1  } +   \| L_\sigma V \|_{ E'_1  }$.
From its definition (\ref{heu2green}) we see that  $\tilde{L}_\sigma^* V$  is in $E'_1 $
whenever $V \in E_3$.
Estimate  (\ref{weaksigmap}) will be deduced from:
 \begin{lem}
  \label{green1}
The map
 \begin{equation*}
  (V,W) \in E_2 \times E_2 \mapsto \rho := 
   \left\{
 \begin{array}{c}
 a  \lbrack \partial_X V \rbrack . W  \ \textbf{ on } \   \Gamma
\\  T V . W  \ \textbf{ on } \   \tilde{\Gamma} 
\end{array}
\right.
\end{equation*}
extends uniquely to a continuous bilinear map
from $E_3 \times E_3 $ to $(\text{Lip} (\Gamma \cup  \tilde{\Gamma} )'$ and  Green's identity  (\ref{greentrace}) is still valid for any couple $V,W$ in  $E_3 \times E_3 $ in the generalized sense that
 \begin{equation}
  \label{heu8888}
< L_\sigma V , W >_{ E'_1 , E_1 } =
-  \int_{ \mathcal{D} } a \partial_X  V . \partial_X  W 
+  <   \tilde{L}_\sigma^* W , V  >_{ E'_1 , E_1 }
- <  \rho  , 1 >_{ (\text{Lip} (\Gamma \cup  \tilde{\Gamma} )'  ,\text{Lip} (\Gamma \cup  \tilde{\Gamma}) } .
\end{equation}
\end{lem}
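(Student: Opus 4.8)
The plan is to prove Lemma \ref{green1} by density and duality, exactly paralleling the proof of Lemma \ref{green2} but now using $\text{Lip}(\Gamma \cup \tilde{\Gamma})$ as the space of multipliers rather than the Sobolev trace space. First I would observe that on the dense subspace $E_2 \times E_2$ the quantity $\rho$ is a genuine product of traces, and the Green identity \eqref{greentrace} (valid there in the classical sense) rewritten with $V=W$ exchanged gives the bilinear symmetry that lets us express $\langle \rho, 1 \rangle$ as
\begin{equation*}
\langle \rho, 1 \rangle = -\langle L_\sigma V, W\rangle_{E'_1,E_1} - \int_{\mathcal{D}} a\,\partial_X V\cdot\partial_X W + \langle \tilde{L}^*_\sigma W, V\rangle_{E'_1,E_1}.
\end{equation*}
Each term on the right is controlled by $\|V\|_{E_3}\|W\|_{E_3}$: the duality brackets directly, and the middle term because $\partial_X V, \partial_X W \in L^2(\mathcal{D})$ with $a$ bounded. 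Hence the pairing of $\rho$ against the constant function $1$ extends; but to get a genuine continuous bilinear map into $(\text{Lip})'$ one must pair against arbitrary Lipschitz test functions $\psi$, which is handled by the same computation after inserting $\psi$ — one replaces $W$ by $\psi W$ (still in $E_3$ when $\psi$ is Lipschitz in $t,x$ and, on $\Gamma$, constant in $X$, or more carefully by a cutoff/extension argument) and tracks that the extra commutator terms $[\tilde L_\sigma, \psi]$ are order zero with bounded coefficients. Density of $E_2$ in $E_3$ (again Friedrichs' lemma \cite{Friedrichs}, just as $E_2$ is dense in $E_4$) then yields uniqueness of the extension, and passing to the limit in \eqref{greentrace} along an approximating sequence gives \eqref{heu8888} in the generalized sense.

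The key steps in order: (i) record the classical Green identity on $E_2\times E_2$ and note the symmetrized form; (ii) bound $\langle\rho,\psi\rangle$ for $\psi\in\text{Lip}(\Gamma\cup\tilde\Gamma)$ by $C\|\psi\|_{\text{Lip}}\|V\|_{E_3}\|W\|_{E_3}$, using that $\tilde L_\sigma^*$ maps $E_3$ to $E'_1$ (visible from \eqref{heu2green} since the unbounded coefficient $X$ has been replaced by the bounded $\chi_\sigma$) and that $a\partial_X V\cdot\partial_X W\in L^1(\mathcal{D})$; (iii) invoke density of $E_2$ in $E_3$ via Friedrichs' commutator lemma to extend the bilinear form uniquely and continuously; (iv) verify \eqref{heu8888} passes to the limit. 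Then the estimate \eqref{weaksigmap} follows by taking $W=V$ in \eqref{heu8888}: the diagonal $\rho$ on $\Gamma$ is $a[\partial_X V]\cdot V = a\,g\cdot V$ by the jump condition, estimated by $\|g\|_{L^2}\|V\|_{E_1}$ after a trace/interpolation bound on $V|_\Gamma$; the diagonal on $\tilde\Gamma$ is $T|V(T)|^2\geq 0$ and can be dropped (or kept, giving the continuity of $\sqrt{t}\|V(t)\|_{L^2}$); the term $\langle\tilde L_\sigma^* V,V\rangle$ produces $\int|V|^2+|\partial_X V|^2$ up to the manageable $\int t|V|^2$ which is absorbed by Gronwall, exactly as in \eqref{weaksigmap0}. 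Uniqueness is then immediate: a solution with $f=g=0$ has $\|V\|_{E_1}=0$. Existence for the cutoff operator $L_\sigma$ follows by the usual duality/Lax–Milgram-type argument from the a priori estimate \eqref{weaksigmap} applied to the adjoint problem, and finally one lets $\sigma\to\infty$, using that \eqref{weaksigmap} is uniform in $\sigma$ and weak compactness in $E_1$ to extract a limit solving the uncut equation; the continuity of $\sqrt{t}\|V(t,\cdot,\cdot)\|_{L^2}$ comes from \eqref{weaksigmap0} applied on subintervals together with the energy identity.

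The main obstacle I expect is step (iii)–(iv), i.e. justifying that the trace map $\rho$ (and the single-trace map $\tau$ of Lemma \ref{green2}) really does extend continuously despite the degeneracy of $L_\sigma$ at $t=0$: the hyperplane $\{t=0\}$ is characteristic for $L$, so there is no clean trace theorem there, and one must be careful that the only boundary contributions in Green's formula are on $\Gamma$ and $\tilde\Gamma=\{t=T\}$ and not on $\{t=0\}$. This is exactly why the weight $t$ multiplies the first-order part: integrating by parts in $t$ the term $t\partial_t V\cdot V$ produces $-\tfrac12\int V^2 - \tfrac{T}{2}\int_{\tilde\Gamma}V^2 + 0\cdot\int_{\{t=0\}}$, the boundary term at $t=0$ vanishing because of the factor $t$. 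Making this rigorous at the level of $E_3$ (not just smooth functions) is the delicate point, and it is precisely where Friedrichs' lemma, applied with mollification in $t,x$ only (so as not to disturb the $X$-jump), does the work — this is the same device already used in Lemma \ref{green3}. Once that density is in hand, everything else is the routine energy-estimate-plus-Gronwall-plus-weak-limit machinery, and the orthogonality remark about $V\cdot n$ at the end of the section is again a pure uniqueness consequence of this theorem.
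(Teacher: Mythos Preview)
Your proposal is correct and follows essentially the same route as the paper: lift a Lipschitz test function $\phi$ on $\Gamma\cup\tilde\Gamma$ to a Lipschitz $\Phi$ on $\mathcal{D}$, apply Green's identity \eqref{greentrace} with $\Phi W$ in place of $W$, bound each of the three resulting terms by $C\|\phi\|_{\text{Lip}}\|V\|_{E_3}\|W\|_{E_3}$ (using that $\tilde L_\sigma^*(\Phi W)\in E_1'$ since the coefficients are bounded after the $\chi_\sigma$ cutoff), and conclude by Hahn--Banach plus density of $E_2$ in $E_3$. The paper is slightly more explicit about the Lipschitz extension step, and you are slightly more explicit about the characteristic-$\{t=0\}$ issue and the Friedrichs mollification in $(t,x)$ only; but the core argument is the same, and your additional remarks on how \eqref{weaksigmap} and the rest of Theorem~\ref{weakeq} follow from the lemma match the paper's subsequent reasoning.
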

 \begin{proof}
Now let $V , W$ be in $E_3$ and let $\phi $ be in $\text{Lip} (\Gamma \cup  \tilde{\Gamma})$ there exists a function  $\Phi $  in $\text{Lip} (\mathcal{D} )$ such that 
 $\Phi |_{ \Gamma \cup  \tilde{\Gamma}} = \phi $ and such that
 $\| \Phi \|_{ \text{Lip} ( \mathcal{D} )}
  \leqslant \| \phi \|_{ \text{Lip} (\Gamma \cup  \tilde{\Gamma})}$ with $c$ independent of $\phi $. 
 From  Green's identity  (\ref{greentrace}) we infer that
 \begin{equation*}
    \int_{ \Gamma}  \rho  \phi =
-  \int_{ \mathcal{D} } L_\sigma V . \phi  W 
-  \int_{ \mathcal{D} } a \partial_X  V . \partial_X  \phi  W 
+  \int_{ \mathcal{D} }  \tilde{L}_\sigma^* \phi W .V 
\end{equation*}
  so that 
 \begin{eqnarray}
 |  \int_{ \Gamma}  \rho  \phi | &\leqslant&
\| \Phi \|_{ \text{Lip} ( \mathcal{D} )} . \|  L_\sigma V  \|_{E'_1 } . \|   W  \|_{ E_1 } 
+ \| \Phi \|_{ \text{Lip} ( \mathcal{D} )} . \|   V  \|_{E_1 } . \|  W  \|_{ E_1 } 
+  \|  \tilde{L}_\sigma^* \phi W   \|_{E'_1 } . \|  V  \|_{ E_1 } ,
\\ &\leqslant&
C \| \phi \|_{ \text{Lip} ( \Gamma \cup  \tilde{\Gamma} )} . \| V \|_{E_3  } . \|   W  \|_{ E_3 }  .
  \end{eqnarray}
Hence by Hahn-Banach theorem we get the existence of a continuous extension, which is unique because of the density of $E_2$ in $E_3$.
\end{proof}
          In order to prove the existence part of Theorem \ref{weakeq} we shall need another
         Green formula which involves the complete transposition of the operator $L$.
         At a smooth level that is for
 any $V$, $W$ in the space $E_2$  this Green identity reads:
 \begin{equation}
  \label{heu8882}
\sum_\pm \int_{ \mathcal{D}_\pm } L_\sigma V . W = 
  \int_{ \mathcal{D} }  L_\sigma^* W.V
- \int_{ \Gamma}  a  \lbrack \partial_X V \rbrack . W 
+  \int_{ \Gamma}  a  V .  \lbrack  \partial_X  W  \rbrack
- T \int_{ \tilde{\Gamma}} W.V 
\end{equation}
where  $L_\sigma^* = a \partial_X^2 + \tilde{L}_\sigma^* $ 
 denotes the adjoint of the operator  $L_\sigma$.
 \begin{lem}
  \label{green2bis}
The map
 \begin{equation}
  \label{heu8887}
  V  \in   E_2 \mapsto \tau := 
   \left\{
 \begin{array}{c}
 a  \lbrack \partial_X V \rbrack   \ \textbf{ on } \   \Gamma
\\  T V   \ \textbf{ on } \   \tilde{\Gamma} 
\end{array}
\right.
\end{equation}
extends uniquely to a continuous linear map 
from $E_4 $ to $H^{- \frac{3}{4} } (\Gamma \cup  \tilde{\Gamma})$ and Green's identity  (\ref{heu8882}) is still valid for any couple $V,W$ in $E_4 \times H^{1,2} (\mathcal{D})$ in the generalized sense that 
 \begin{eqnarray}
  \label{heu88882}
< L_\sigma V , W >_{ H^{-1} (\mathcal{D}) ,   H^{1} (\mathcal{D}) } =
  <   L_\sigma^* W , V  >_{ E'_1   , E_1 }
- <  \tau  , W|_{\Gamma \cup  \tilde{\Gamma}  }  >_{H^{- \frac{1}{2} } (\Gamma \cup  \tilde{\Gamma} )  , H^{ \frac{1}{2} } (\Gamma \cup  \tilde{\Gamma} )   } 
\\ \nonumber  +    \int_{ \Gamma}   \lbrack  \partial_X  W  \rbrack .  a  V|_{\Gamma   }  .
\end{eqnarray}
\end{lem}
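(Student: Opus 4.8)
The plan is to argue exactly as in the proof of Lemma \ref{green2}, but performing one \emph{further} integration by parts in $X$ so as to transfer both $X$-derivatives from $V$ onto $W$. First I would establish the smooth version of (\ref{heu8882}): for $V,W\in E_2$ write $L_\sigma = a\,\partial_X^2 + \tilde{L}_\sigma$, integrate by parts once in $X$ separately on $\mathcal{D}_+$ and $\mathcal{D}_-$ to move one derivative from $V$ to $W$, which produces the surface term $-\int_\Gamma a\,\lbrack\partial_X V\rbrack\,W$ (no contribution from a jump of $W$, which is continuous in $E_2$, and none from $X=\pm\infty$ by the decay built into $E_2$), then integrate by parts once more in $X$ to move the second derivative, which produces $+\int_\Gamma a\,V\,\lbrack\partial_X W\rbrack$ together with $\sum_\pm\int_{\mathcal{D}_\pm}a\,\partial_X^2 W\cdot V$. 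The first-order part of $\tilde{L}_\sigma$ is handled by one integration by parts in $t$ and $x$: this yields the adjoint $\tilde{L}_\sigma^*$ of (\ref{heu2green}) and the surface term $-T\int_{\tilde{\Gamma}}W\,V$, the term on $\{t=0\}$ vanishing because of the prefactor $t$ in $\tilde{L}_\sigma$. Since $a$ does not depend on $X$ no stray commutator terms appear, and adding the two contributions gives (\ref{heu8882}) at the smooth level.

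Next I would pass to non-smooth $W$. One checks that all terms of (\ref{heu88882}) make sense for $W\in H^{1,2}(\mathcal{D})$: since $\partial_X^2 W|_{\mathcal{D}_\pm}\in L^2$ one has $\partial_X W|_{\mathcal{D}_\pm}\in L^2_{t,x}H^1_X$, so by the one-dimensional trace inequality $\lbrack\partial_X W\rbrack\in L^2(\Gamma)$ (and $\partial_X W|_{\tilde{\Gamma}}\in L^2$), while $W|_{\Gamma\cup\tilde{\Gamma}}\in H^{1/2}$ by the usual trace theorem; moreover $L_\sigma^* W = a\,\partial_X^2 W + \tilde{L}_\sigma^* W\in E_1' = L^2_{t,x}H^{-1}_X$, because $\partial_X^2 W=\partial_X(\partial_X W)\in L^2_{t,x}H^{-1}_X$ and the remaining coefficients are bounded (the first-order ones even Lipschitz). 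Then, fixing $W\in H^{1,2}(\mathcal{D})$ and approximating it — convolving only in $t,x$ so as to leave $\Gamma$ untouched and mollifying in $X$ separately on each side — by a sequence $W^n\in E_2$ with $W^n\to W$ in $H^1(\mathcal{D})$ and $\partial_X^2 W^n\to\partial_X^2 W$ in $L^2(\mathcal{D}_\pm)$ (hence $\lbrack\partial_X W^n\rbrack\to\lbrack\partial_X W\rbrack$ in $L^2(\Gamma)$ and $W^n|_{\Gamma\cup\tilde{\Gamma}}\to W|_{\Gamma\cup\tilde{\Gamma}}$ in $H^{1/2}$), I would let $n\to\infty$ in (\ref{heu8882}) with $V\in E_2$ fixed, obtaining (\ref{heu8882}) for all $(V,W)\in E_2\times H^{1,2}(\mathcal{D})$.

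For the extension in $V$, recall from Lemma \ref{green2} that $\tau$ is already continuously defined on $E_4$ (with values in the asserted space $H^{-3/4}(\Gamma\cup\tilde{\Gamma})$, where in fact the sharper $H^{-1/2}$ bound holds), and that $E_2$ is dense in $E_4$ by Friedrichs' lemma. Fixing $W\in H^{1,2}(\mathcal{D})$ and approximating $V\in E_4$ by $V^n\in E_2$ with $V^n\to V$ in $E_4$, one has $L_\sigma V^n\to L_\sigma V$ in $H^{-1}(\mathcal{D})$, $V^n\to V$ in $E_1$ (hence $V^n|_\Gamma\to V|_\Gamma$ in $L^2(\Gamma)$ by the elementary trace inequality $\|V|_\Gamma\|_{L^2(\Gamma)}\lesssim\|V\|_{E_1}$ already used in Lemma \ref{green1}), and $\tau(V^n)\to\tau(V)$; every term of the identity from the previous step then converges, giving (\ref{heu88882}) for all $(V,W)\in E_4\times H^{1,2}(\mathcal{D})$. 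The continuity statement for $\tau$ can either be quoted from Lemma \ref{green2} or re-derived by the same Hahn--Banach argument now based on (\ref{heu8882}): lift a boundary datum $\phi$ on $\Gamma\cup\tilde{\Gamma}$ to some $W\in H^{1,2}(\mathcal{D})$ of controlled norm and bound $|\langle\tau,\phi\rangle|$ by the other three terms of (\ref{heu8882}) times $\|V\|_{E_4}$.

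I expect the main obstacle to be the second step: producing, for an arbitrary $W\in H^{1,2}(\mathcal{D})$, an approximating sequence $W^n\in E_2$ that is simultaneously smooth on each side, continuous across $\Gamma$, and convergent in \emph{all} the topologies needed for the boundary and jump terms to pass to the limit ($H^1(\mathcal{D})$, $L^2(\mathcal{D}_\pm)$ for the second $X$-derivative, hence $L^2(\Gamma)$ for $\lbrack\partial_X\cdot\rbrack$ and $H^{1/2}$ for the trace). As in the proof of Lemma \ref{green3} one must convolve only in $t,x$ to preserve the interface and then mollify on each side in $X$, and one must verify the anisotropic trace estimate $\|\lbrack\partial_X W\rbrack\|_{L^2(\Gamma)}\lesssim\|W\|_{H^{1,2}(\mathcal{D})}$ which is precisely what makes the extra surface term $\int_\Gamma a\,V\,\lbrack\partial_X W\rbrack$ meaningful; the rest is bookkeeping with adjoints that the hypothesis on $a$ (independent of $X$) keeps harmless.
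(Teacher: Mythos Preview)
Your approach is correct and reaches the same conclusion, but it is organized differently from the paper's argument. The paper does not approximate $W$ at all: it first records a \emph{lifting} result --- for any $\tilde W_1\in H^{3/2}(\Gamma\cup\tilde\Gamma)$ and $\tilde W_2\in L^2(\Gamma)$ there is $W\in H^{1,2}(\mathcal D)$ with $W|_{\Gamma\cup\tilde\Gamma}=\tilde W_1$ and $[\partial_X W]=\tilde W_2$ --- and then, taking $\tilde W_2=0$ to kill the extra surface term in (\ref{heu8882}), bounds $|\langle\tau,\tilde W_1\rangle|$ by the remaining terms, applies Hahn--Banach, and finally passes from $E_2$ to $E_4$ by density. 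Your route instead (i) quotes the continuity of $\tau$ directly from Lemma~\ref{green2} --- you are right that the $H^{-1/2}$ bound there is already stronger than the $H^{-3/4}$ stated here, and the duality in (\ref{heu88882}) is written for $H^{-1/2}\times H^{1/2}$ anyway --- and (ii) obtains the generalized identity by approximating $W\in H^{1,2}$ by piecewise-smooth continuous functions. This is more economical in that it reuses Lemma~\ref{green2}, but it shifts the work to constructing the approximating sequence $W^n$: as you anticipate, mollifying in $X$ separately on each side breaks the continuity across $\Gamma$, so one must first subtract off a lift of $[\partial_X W]$ (so that the remainder is globally $H^2$ in $X$) before mollifying, or use a reflection trick. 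The paper's lifting approach sidesteps that construction entirely; yours is a legitimate alternative once that density step is carried out carefully.
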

\begin{proof}
By adapting classical lifting method we get that for any
 $ \tilde{W}_1$  in 
$H^{ \frac{3}{2} } (\Gamma \cup  \tilde{\Gamma} )$ and  $ \tilde{W}_2$ be in $L^2 (\Gamma)$ there exists a function $W$ in $H^{1,2} (\mathcal{D})$ such that 
$W |_{ \Gamma \cup  \tilde{\Gamma}} =  \tilde{W}_1 $, $ \lbrack  \partial_X  W  \rbrack  |_{ \Gamma}=   \tilde{W}_2$.  
The proof then follows the same lines than the one of Lemma \ref{green1}; we  use Green's identity  (\ref{heu8882}) for $V$ be in $E_2$ and a $W$ obtained by lifting  $ \tilde{W}_1$  in 
$H^{ \frac{3}{2} } (\Gamma \cup  \tilde{\Gamma} )$ and  $ \tilde{W}_2 =0$, next we use Hahn-Banach theorem and density.
\end{proof}

 Proceeding as previously we get that the estimate (\ref{weaksigmap})  also holds for the adjoint operator.
 We infer the existence of a solution to the direct problem by using Riesz's theorem and
Green's identity  (\ref{heu88882}).

      We now use again a sequence of approximation of the solution, and use the linearity of the problem together with  the 
   a-priori   estimate (\ref{weaksigmap0}) to show that this sequence is a Cauchy sequence
  in the space of the functions $V$ such that   $ \sqrt{t} \|  V(t,.,.)  \|_{L^2 ( \R^d \times \R) }$ is continuous on $(0,T)$. Completeness yields the conclusion.
     
      We finally let $\sigma$ goes to infinity. 
      The  estimate  (\ref{weaksigmap0})  is uniform with respect to  $\sigma$.
      Using weak compactness we can pass to the weak limit so that we get a solution to the uncut  equation.
            
\end{proof}

\subsection{Smoothness of the layer profile}
\label{wp'}

We are now studying the smoothness of  $V (t,x,X)$.
It is convenient to  recall here a few notations given in the introduction.
For any Frechet space $E$ of functions depending on $t,x$ and possibly on $X$ we will denote $E_D$ the set
  \begin{eqnarray*}
  E_D := \{ f \in E / \ \exists C > 0 /  \   ( \frac{ D^k f}{C^k k! }   )_{ k  \in \mathbb{N} }   \text{ is bounded in } E \} ,
  \end{eqnarray*}
where 
$D$ denotes  the material derivative $D := \partial_t  + v^0 \cdot\nabla_x$.
The spaces denote
  \begin{eqnarray*}
 \mathcal{A} &:=&  L^\infty \Big((0,T), C^{0, r} \big(  \R^d , p-\mathcal{S} (  \R_\pm) \big)\Big)
\cap  L^\infty \Big((0,T), C^{s, r} \big(  \mathcal{O}_{\pm} (t) , p-\mathcal{S} (  \R_\pm) \big)\Big) ,
\\ \mathcal{B} &:=&   L^\infty \Big((0,T), C^{0, r} \big(  \R^d \big)\Big) 
 \cap  L^\infty \Big((0,T), C^{s, r} \big(  \mathcal{O}_{\pm} (t)  \Big).
\end{eqnarray*}
Moreover we recall from section \ref{wp1} that the functions $a$ and $A$ are in $\mathcal{B}_D$.
\begin{theo}
\label{weakeqana}
For any 
$f \in  \mathcal{A}_D$,
 for any 
 $g_1 , g_2 \in\mathcal{B}_D$
  there exists  one solution 
  $V \in  \mathcal{A}_D$
  of 
  \begin{equation}
   \label{weaks}
  L  V = f \  \text{on }  \  \mathcal{D}_\pm ,
\  ( \lbrack  V \rbrack , \lbrack \partial_X V \rbrack ) = (g_1 ,g_2 )   \ \text{on }  \   \Gamma .
\end{equation}
\end{theo}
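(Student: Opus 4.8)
The plan is to combine the $L^2$ existence and uniqueness theory of Theorem \ref{weakeq} with the elliptic regularity of the stationary profile (Theorem \ref{propva0}/Proposition \ref{propv}) and then to propagate smoothness along the characteristic field $D$, exploiting that the hypersurface $\{t=0\}$ is characteristic for $L$. First I would reduce the transmission conditions to the homogeneous case: replacing $V$ by $V \mp \frac{1}{2}\big( (2g_1 + \frac{g_2}{a}) e^{\mp X} - (g_1 + \frac{g_2}{a}) e^{\mp 2X}\big)$ for $\pm X>0$ turns the problem into $LV = \tilde f$ on $\mathcal{D}_\pm$ with $(\lbrack V\rbrack, \lbrack \partial_X V\rbrack)=(0,0)$, where $\tilde f \in \mathcal{A}_D$ because $g_1,g_2 \in \mathcal{B}_D$ and the correction terms lie in $p\text{-}\mathcal{S}(\R)$ with the right $D$-analytic dependence. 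So it suffices to treat the case $g_1=g_2=0$.

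Next I would establish smoothness with respect to the fast variable $X$ and then with respect to $x$ by a spectral-localization argument exactly mimicking the proof of Proposition \ref{propv}: apply the dyadic blocks $\Delta_j$ in $x$ to the equation, commute $\Delta_j$ past the coefficient $a$ using the paraproduct and the commutator Lemma \ref{commumu}-type estimate, multiply by $\Delta_j V$ and integrate in $X$ over $\R_\pm$ (the boundary terms at $X=0$ cancelling because of the homogeneous transmission conditions), and bootstrap from the base case $V \in E_1$ given by Theorem \ref{weakeq}. The only new feature compared with Proposition \ref{propv} is the presence of the first-order-in-$t$ part $-t(\partial_t + v^0\cdot\nabla_x + A)$; since this is a \emph{transport} operator in $(t,x)$ one absorbs it by a Gronwall argument in $t$ after the $\Delta_j$-localization, using that $v^0, A, a$ are piecewise $C^{s,r}$ and that $\sqrt t \,\|V(t,\cdot,\cdot)\|_{L^2}$ is already known continuous. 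Iterating the conormal regularity in $X$ (i.e. $X^k \partial_X^\ell V$ solving a problem of the same type, as in Step 4 of the proof of Proposition \ref{propv}) upgrades $H^1(\R)$ to $p\text{-}\mathcal{S}(\R)$. This yields $V \in \mathcal{A}$.

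Finally, to get $V \in \mathcal{A}_D$ rather than merely $V \in \mathcal{A}$, I would apply powers of the material derivative $D$ to the equation. Since $D$ commutes with multiplication by $t$ only up to the constant $[D,t]=1$, and since $a, A \in \mathcal{B}_D$ by the construction in section \ref{wp1} (which rests on the analyticity-in-time point \ref{litt6} of Theorem \ref{litt}), one finds that $D^k V$ satisfies a problem of the form $L(D^k V) = f_k$ with $f_k$ built from $D^j f$ ($j\le k$), from $D^j a$, $D^j A$, and from lower-order $D^j V$ with $j<k$, together with transmission data obtained from differentiating $g_1,g_2$. A careful bookkeeping of the combinatorial factors — this is the standard ``analytic bootstrap'' — shows that the norms $\|D^k V\|_{\mathcal A}/(C^k k!)$ stay bounded for a suitable $C$, provided one has the uniform $\mathcal{A}$-estimate $\|V\|_{\mathcal A} \lesssim \|f\|_{\mathcal A} + \|g_1\|_{\mathcal B}+\|g_2\|_{\mathcal B}$ from the previous paragraph applied to each $D^k$-differentiated problem. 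The main obstacle is precisely this last step: controlling the Cauchy-product sums that arise when $D^k$ hits the product of $a$ (or $A$) with $\partial_X^2 V$ (or $\nabla_x v^0$ with $V$), and checking that the extra $t\,\partial_t$-type terms produced by each application of $D$ do not destroy the factorial bound — here one uses that each such term carries a compensating factor $t$ and that the degeneracy of $L$ at $t=0$ is exactly the Fuchsian scaling $t\partial_t V = -V/2$ which is harmless in the $L^2$-in-$t$ setting.
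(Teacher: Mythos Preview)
Your reduction to homogeneous transmission data and your plan for the $x$- and $X$-regularity (spectral localization, commutator Lemma~\ref{commumu}, bootstrap to $p\text{-}\mathcal{S}$) match the paper's approach and are fine. The gap is in the $D$-analyticity step.

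You claim that $D^kV$ satisfies $L(D^kV)=f_k$ with $f_k$ built from $D^jf$, $D^ja$, $D^jA$ and \emph{lower-order} $D^jV$ with $j<k$. This is not correct. Since $[D^k,tD]=kD^k$, the computation gives
\[
L(D^kV)=D^kf-[D^k,\mathcal{E}]V+[D^k,tA]V+k\,D^kV,
\]
so $f_k$ contains the term $k\,D^kV$ with a coefficient growing linearly in $k$. Feeding this into a $k$-independent estimate of the form $\|W\|_{\mathcal A}\lesssim \|LW\|_{\mathcal A}$ yields $\|D^kV\|_{\mathcal A}\lesssim \cdots + k\|D^kV\|_{\mathcal A}$, which is useless for $k$ large; no ``factor of $t$'' saves this term, and the factorial bound cannot be closed by your inductive scheme.

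The paper's remedy is structural rather than bookkeeping: it does \emph{not} apply the $L$-estimate to $D^kV$. Instead it moves the $k\,D^kV$ term to the elliptic side, writing
\[
(\mathcal{E}-k)\,\tilde V^{[k]}=\tilde f^{[k]}+t\,\tilde V^{[k+1]}+tA\,\tilde V^{[k]},
\]
so that the shift $-k$ \emph{enhances} the coercivity of $\mathcal{E}$ (the $L^2(\R_X)$ energy identity now carries a factor $k+\tfrac34$). The price is a \emph{forward} coupling to $\tilde V^{[k+1]}$, but that term comes with a genuine factor $t$; after normalizing by $k!\,C^k$ and summing over all $k$ simultaneously, the forward term is absorbed for $t$ small (then one iterates on time slices). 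This coupled system of all $D^k$-equations, closed by the $k$-dependent coercivity, is the mechanism you are missing; the ``analytic bootstrap'' you describe, treating each $k$ separately against a fixed $L$-estimate, does not close.
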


 \begin{proof}
 We first reduce  the transmission conditions on the internal boundary $ \Gamma$ to  homogeneous ones by
  defining the functions $ \tilde{V}$ by
 \begin{eqnarray}
  \label{lift}
V :=    \tilde{V}  \pm \frac{1}{2} \big( (2 g_1 + g_2) e^{\mp X} -  (g_1 + g_2) e^{\mp 2X} \big) \quad \text{when }  \quad  \pm X > 0.
     \end{eqnarray}
  so that 
  the  problem now reads as follows
  \begin{equation}
   \label{weak0h}
  L   \tilde{V} = \tilde{f} \  \text{on }  \  \mathcal{D}_\pm ,
\   \lbrack   \tilde{V} \rbrack = \lbrack \partial_X  \tilde{V} \rbrack  = 0   \ \text{on }  \   \Gamma,
\end{equation}
with  $\tilde{f}  \in  \mathcal{A}_D$.
Let us stress that there is no
 loss of regularity  in this lifting process, since the $X$-derivative has been applied innocuously to the second term in  (\ref{lift}), as well as $D$ since  $g_1 , g_2$ and $ a$ are in $ \mathcal{B}_D$

We first prove that $V $ is in  $ \mathcal{C}_D$ where we denote $ \mathcal{C} := L^\infty \big((0,T) \times  \R^d , H^1 (  \R )\big)$.
Let us first establish an a-priori estimate,
applying  for any $k \in \N$ the field $D^k$ to the problem (\ref{weak0h}) to get
  \begin{equation}
   \label{weak0hlpoidsk}
  ( \mathcal{E}-k )     \tilde{V}^{\lbrack k \rbrack} =   \tilde{f}^{\lbrack k \rbrack} + t  \tilde{V}^{\lbrack k+1 \rbrack} + t A   \tilde{V}^{\lbrack k \rbrack}   \  \text{on }  \  \mathcal{D}_\pm ,
\   \lbrack   \tilde{V}^{\lbrack k \rbrack}  \rbrack = \lbrack \partial_X  \tilde{V}^{\lbrack k \rbrack} \rbrack = 0  \ \text{on } \ \Gamma,
\end{equation}
where we  denote by $V^{\lbrack k \rbrack} := D^k  \tilde{V} $ the $k$th iterated derivative of $ \tilde{V}$ along $D$ and where
$\tilde{f}^{\lbrack k \rbrack} :=  \sum_{l=1}^{3} \ \tilde{f}^{\lbrack k \rbrack}_l ,$ where $\tilde{f}^{\lbrack k \rbrack}_1 := D^k    \tilde{f}$ whereas $ \tilde{f}^{\lbrack k \rbrack}_2$ and  $ \tilde{f}^{\lbrack k \rbrack}_3$ denote respectively the commutators:
  \begin{eqnarray*}
 \tilde{f}^{\lbrack k \rbrack}_2 := \lbrack D^k  , \mathcal{E} \rbrack =
  \sum_{l=0}^{k-1} \ \dbinom{k}{l} D^{k-l} a .  \partial_X^2  \tilde{V}^{\lbrack l \rbrack} ,
\ \tilde{f}^{\lbrack k \rbrack}_3 := - \lbrack D^k  ,t A  \rbrack = -   \sum_{l=0}^{k-1} \ \dbinom{k}{l} D^{k-l} t A .  \tilde{V}^{\lbrack l \rbrack} .
\end{eqnarray*}
The last sums have to be omitted when $k=0$.
Here we have used that  $\lbrack D^k,  t D \rbrack = k D^{k}$.
  We now multiply the first equation of (\ref{weak0hlpoidsk}) by $ \tilde{V}^{\lbrack k \rbrack}$ and we now integrate w.r.t. $X$ only. This yields for any $t,x \in (0,T) \times  \R^d $ the estimate:
  \begin{eqnarray}
  \label{f}
 \int_{ \R } a | \partial_X  \tilde{V}^{\lbrack k \rbrack} |^{2}
 + (k +  \frac{3}{4} ) \int_{ R }  | \tilde{V}^{\lbrack k \rbrack} |^{2}
 \leqslant 
  | \int_{ \R }  \tilde{f}^{\lbrack k \rbrack} .  \tilde{V}^{\lbrack k \rbrack}  |
  +  \int_{ \R } t | \tilde{V}^{\lbrack k+1 \rbrack} .  \tilde{V}^{\lbrack k \rbrack}  |
  + C_1  \int_{ \R } t | \tilde{V}^{\lbrack k \rbrack}  |^2 .
    \end{eqnarray}
Using the condition (\ref{ell}) the l.h.s. of (\ref{f}) is larger than
  \begin{eqnarray*}
  c \int_{ \R }  | \partial_X  \tilde{V}^{\lbrack k \rbrack} |^{2}
 +  \frac{3}{4} (k + 1 ) \int_{ R }  | \tilde{V}^{\lbrack k \rbrack} |^{2} .
  \end{eqnarray*}
We now bound  the r.h.s. of (\ref{f}).
Using Cauchy-Schwarz and Young  inequalities we have 
  \begin{eqnarray}
 \int_{ \R }  | \tilde{f}^{\lbrack k \rbrack}_1 .  \tilde{V}^{\lbrack k \rbrack}  |
 \leqslant 
  \frac{4}{k+1}   \int_{ \R }  | \tilde{f}^{\lbrack k \rbrack} |^{2}+   \frac{k+1}{4}   \int_{ \R }  | \tilde{V}^{\lbrack k \rbrack} |^{2}  .
 \end{eqnarray}
Integrating by parts yields
  \begin{eqnarray}
| \int_{ \R }  \tilde{f}^{\lbrack k \rbrack}_2 .  \tilde{V}^{\lbrack k \rbrack}  |
 & \leqslant&   \sum_{l=0}^{k-1} \ \dbinom{k}{l}    \int_{ \R } |  D^{k-l} a  \partial_X  \tilde{V}^{\lbrack l \rbrack} \partial_X  \tilde{V}^{\lbrack k \rbrack} | .
 \end{eqnarray}
Since $a$ is analytic, there exists $C_a > 0$ s.t. for any $l \in \N$ $|| D^l a ||_\mathcal{B}  \leqslant C_a^l l!$ so that using Cauchy-Schwarz and Young  inequalities yield
  \begin{eqnarray}
 | \int_{ \R }  \tilde{f}^{\lbrack k \rbrack}_2 .  \tilde{V}^{\lbrack k \rbrack}  |  & \leqslant& C_2 (   \sum_{l=0}^{k-1} \  \frac{k !}{l !} C_a^{k-l}  ||  \partial_X  \tilde{V}^{\lbrack l \rbrack} ||)^2  +  \frac{c}{2}  \int_{ \R }   | \partial_X  \tilde{V}^{\lbrack k \rbrack} |^{2} ,
\end{eqnarray}
where we denote here $|| f || :=  (\int_{ \R }   |f(t,x,X)|^2 dX )^\frac{1}{2}  $.
In a similar way there exists $C_3, C_A > 0$ s.t. 
  \begin{eqnarray}
| \int_{ \R }  \tilde{f}^{\lbrack k \rbrack}_3 .  \tilde{V}^{\lbrack k \rbrack}  |
 & \leqslant& C_3 (   \sum_{l=0}^{k-1} \   \frac{k !}{l !} C_A^{k-l}  ||   \tilde{V}^{\lbrack l \rbrack} ||)^2  +  \frac{1}{8}  \int_{ \R }   |  \tilde{V}^{\lbrack k \rbrack} |^{2} .
\end{eqnarray}
Finally  for $0<t<\underline{T} $ we have
\begin{eqnarray}
| \int_{ \R }  t  \tilde{V}^{\lbrack k+1 \rbrack}  |.  \tilde{V}^{\lbrack k \rbrack}  |
 & \leqslant& 
 \frac{ k+1}{8}   \int_{ \R }   |  \tilde{V}^{\lbrack k \rbrack} |^{2} +
  \frac{ 4\underline{T}  }{k+1}  \int_{ \R }   |  \tilde{V}^{\lbrack k+1 \rbrack} |^{2}  .
\end{eqnarray}
Hence
  \begin{eqnarray*}
  \frac{ c}{2}  \int_{ \R }  | \partial_X  \tilde{V}^{\lbrack k \rbrack} |^{2}
 + \frac{k +1}{4} \int_{ \R }  | \tilde{V}^{\lbrack k \rbrack} |^{2}
 \leqslant  \frac{4}{k+1}  \int_{ \R }  | \tilde{f}^{\lbrack k \rbrack}  |^{2} 
  +
 C_2 (   \sum_{l=0}^{k-1} \  \frac{k !}{l !} C_a^{k-l}  ||  \partial_X  \tilde{V}^{\lbrack l \rbrack} ||)^2
\\  +  C_3 (   \sum_{l=0}^{k-1} \   \frac{k !}{l !} C_A^{k-l}  ||   \tilde{V}^{\lbrack l \rbrack} ||)^2 
   +    \frac{ 4 \underline{T}  }{k+1}  \int_{ \R }  | \tilde{V}^{\lbrack k+1 \rbrack} |^{2}
   + C_1 \underline{T}   \int_{ \R }  | \tilde{V}^{\lbrack k \rbrack} |^{2}
    \end{eqnarray*}
thus we infer -keeping the notations $C_1$-$C_3$ for their squareroots- that
  \begin{eqnarray}
   \label{heure}
  \frac{ c}{4}   || \partial_X  \tilde{V}^{\lbrack k \rbrack} ||
 + \frac{ \sqrt{k+1}}{8}  || \tilde{V}^{\lbrack k \rbrack} ||
 \leqslant  \frac{2}{\sqrt{k+1}}   || \tilde{f}^{\lbrack k \rbrack}  || 
  +
 C_2  \sum_{l=0}^{k-1} \  \frac{k !}{l !} C_a^{k-l}  ||  \partial_X  \tilde{V}^{\lbrack l \rbrack} ||
 \\  \nonumber +  C_3   \sum_{l=0}^{k-1} \   \frac{k !}{l !} C_A^{k-l}  ||   \tilde{V}^{\lbrack l \rbrack} ||
   +   \sqrt{\frac{4\underline{T}  }{k+1}}  || \tilde{V}^{\lbrack k+1 \rbrack} || 
   + C_1 \sqrt{\underline{T} }  || \tilde{V}^{\lbrack k \rbrack}  ||  .
    \end{eqnarray}
We introduce the functions
  \begin{eqnarray*}
a_k (t,x) :=  \frac{ || \tilde{V}^{\lbrack k \rbrack}  ||   }{ k!  C^k }  , \ b_k (t,x) :=  \frac{ ||  \partial_X  \tilde{V}^{\lbrack k \rbrack}  ||   }{ k!  C^k  \sqrt{k+1}}   \text{ and } f_k (t,x) := \frac{ || \tilde{f}^{\lbrack k \rbrack}  ||   }{ (k+1)!  C^k } ,
   \end{eqnarray*}
where $C$ is a positive real which will be chosen in a few lines.
Dividing the estimate (\ref{heure}) by $k!  C^k  \sqrt{k+1}$ yields
  \begin{eqnarray}
   \label{heure2}
 \frac{ c}{4}  b_k +  \frac{1}{8}  a_k  \leqslant 2 f_k + C_2   \sum_{l=0}^{k-1} \  (\frac{C_a}{C})^{k-l} b_l + C_3   \sum_{l=0}^{k-1} \  (\frac{C_A}{C})^{k-l} a_l +   \sqrt{4\underline{T}  } C  a_{k+1}  + C_1 \sqrt{\underline{T} } a_k  .
   \end{eqnarray}
We choose $C$ large enough so that
$ \max (\frac{C_2}{\frac{C}{C_A} -1 },\frac{C_3}{\frac{C}{C_a} -1 })   \leqslant  \min (\frac{c}{8}, \frac{1}{16} )$ and then $T$ is chosen small enough so that $ \sqrt{4\underline{T}  } C  \leqslant  \frac{ 1}{64}  $ and that $ C_1 \sqrt{\underline{T} }  \leqslant  \frac{ 1}{64}  $.
Hence summing over $k \in \N$ the  estimates (\ref{heure2}) yield the a-priori estimate:   for any $t,x \in (0,\underline{T}) \times  \R^d $ 
  \begin{eqnarray}
   \label{heure3}
  \sum_{k \in \N}  (\frac{ c}{8}  b_k +  \frac{1}{32}  a_k ) \leqslant 2 \sum_{k \in \N} f_k .
  \end{eqnarray}
We now define the iterative scheme  $ (\tilde{V}^n )_{n \in \N  } $ by setting $\tilde{V}^0$ as the solution of 
  \begin{equation*}
 \mathcal{E}   \tilde{V}^0 = \tilde{f} \  \text{on }  \  \mathcal{D}_\pm ,
\   \lbrack   \tilde{V}^0 \rbrack = \lbrack \partial_X  \tilde{V}^0 \rbrack  = 0   \ \text{on }  \   \Gamma,
\end{equation*}
and $\tilde{V}^{n+1} $ as the solution of 
  \begin{equation*}
 \mathcal{E}   \tilde{V}^{n+1} = \tilde{f} +  t( D   + A  )\tilde{V}^{n}  \  \text{on }  \  \mathcal{D}_\pm ,
\   \lbrack   \tilde{V}^{n+1} \rbrack = \lbrack \partial_X  \tilde{V}^{n+1} \rbrack  = 0   \ \text{on }  \   \Gamma .
\end{equation*}
A momentÕs thought convinced that  $\tilde{V}^0$  is in $ \mathcal{C}_D$. 
Then  proceeding as in the proof of  the estimate (\ref{heure3}) 
we infer the convergence of  the iterative scheme for any $t,x \in (0,\underline{T}) \times  \R^d $  to a solution $\tilde{V}$ of  the problem (\ref{weak0h}).
Using several time slices yields 
 that $V $ is in  $ \mathcal{C}_D$. 

Now to prove Theorem \ref{weakeqana}, we increase the smoothness with respect to $x$ thanks to  the operators $\Delta_j $ of spectral localization. 
We proceed as previously dealing with the additional spectral commutators as in section 
\ref{wp4} (in particular using the commutator estimate (\ref{commumu})).

 If we denote by $\tilde{\mathcal{B}}$ the space of the functions $f(t,x,X)$ with $  \mathcal{B}$ smoothness in $t,x$ with values in $H^1 (\R)$, this yields that $ \tilde{V}$ is in $\tilde{\mathcal{B}}_D$. Then we prove by induction that $X^k \tilde{V}$ is in $\tilde{\mathcal{B}}_D$ for all $k$  in $\N$. Finally we use the equation to increase by induction the number of derivatives with respect to $X$ and get that  $ \tilde{V}$ is in $\tilde{\mathcal{A}}_D$.
 
  \end{proof}

  \subsection{Other properties of the profile}
   \label{other}

 Let us now prove that the normal projection $V \cdot n$  vanishes.
 We multiply  the equation (\ref{heu}) by $n$ and take into account the equation (\ref{neq}) for $n(t,x)$ so that we get
 \begin{equation}
  \label{heu2red}
 |n|^2  \partial_X^2  (V \cdot n)  +  \frac{X}{2}  \partial_X (V \cdot n)  - \frac{1}{2}  (V \cdot n) =
 t( \partial_t   + v^0 \cdot\nabla_x ) (V \cdot n) .
\end{equation}
Moreover taking also the scalar product of the transmission conditions on the internal boundary $X=0$ with $n$ we get 
 \begin{eqnarray}
 \label{heu21red}  (V \cdot n) |_{X=0^+  } &=& (V \cdot n) |_{X=0^- } ,
\\  \label{heu22red}   \partial_X  (V \cdot n) |_{X=0^+  }  &=& \partial_X   (V \cdot n) |_{X=0^-  } .
  \end{eqnarray}
 Proceeding as in section \ref{wp} we get an energy estimate for the problem 
  (\ref{heu2red})-(\ref{heu21red})-(\ref{heu22red}) from which we infer that  $V \cdot n$  vanishes.
 
 When $s   \geq 2  $ the
 first and second time derivatives of the  functions $n$ and $\omega_0$ are in 
   $ L^\infty([0,T], C^{0, r} (  \R^d ))  $.
   This allows to get some estimates for the
 second time derivative $\partial_t^2 V $ by commuting $\partial_t^2  $ with
  the equation  (\ref{heu2r})-(\ref{heu21r}). 
Hence we can define a trace at $t=0$ of all the terms in the equations  (\ref{heu2r})-(\ref{heu21r}). We hence get that the trace $V  |_{t=0  }$ satisfy  (\ref{ieu20})-(\ref{ieu21})-(\ref{ieu22}).

 
 \subsection{Higher order expansions}
\label{hoprofile}

We are now concerned with the following terms in the expansion with respect to $\ep t$ of the solutions of the Navier-Stokes equations.
Actually in this section we show that if the initial data is piecewise smooth on each side of the interface  $\{ \phi^{0} = 0  \}$
-that is if  $s = + \infty$- then it  is possible to write a complete  asymptotic expansion of the vorticity of the form:
\begin{eqnarray}
\label{complete1}
 \omega^\ep (t,x) =   \sum_{j \geqslant 0 }  \sqrt{\ep t}^j   \  \Omega^{j  }  (t,x,\frac{\phi^{0}(t,x)}{\sqrt{\ep t} } )  +O(\sqrt{\ep t}^\infty)  ,
 \end{eqnarray}
where the first  profile $\Omega^{0 } $ is the one
 constructed in the previous section:  $\Omega^{0 } := \Omega $.

Let us explain the underlying intuition to guess the expansion (\ref{complete1}): 
when the initial data is globally smooth,
 the vorticity $\omega^\ep$ given by the NS equations 
 admits a regular (Taylor-like) expansion:
\begin{eqnarray}
\label{formul0}
 \omega^\ep (t,x) :=  \sum_{j \geqslant 0 } ( \ep t)^j   \  \omega_{j  }  (t,x )  +O((\ep t)^\infty) ,
\end{eqnarray}
so that  we expect that when the initial data is piecewise smooth on each side of the interface  $\{ \phi^{0} = 0  \}$,
the same should still hold true when we incorporate the fast scale $\frac{\phi^{0}(t,x)}{\sqrt{\ep t} }$ so that 
 we should write $\omega^\ep$ through the $\ep$-dependent profile:
\begin{eqnarray}
\label{formul1}
 \omega^\ep (t,x) =   \Omega^\ep (t,x,\frac{\phi^{0}(t,x)}{\sqrt{\ep t} }),
\end{eqnarray}
where $\Omega^\ep (t,x,X)$ admits a regular expansion:
\begin{eqnarray}
\label{formul2}
 \Omega^\ep (t,x,X) :=  \sum_{j \geqslant 0 }  \sqrt{\ep t}^j   \  \Omega_{j  }  (t,x,X )  +O(\sqrt{\ep t}^\infty) .
\end{eqnarray}
Once again to prove this we will consider the velocity formulation, looking first for a determination of the velocity  profiles of the expansion
\begin{eqnarray}
  \label{complete2} v^\ep (t,x) =  v^0 (t,x)  + \sum_{j \geqslant 1 }  \sqrt{\ep t}^j   \  V^{j  }  (t,x,\frac{\phi^{0}(t,x)}{\sqrt{\ep t} } )  +O(\sqrt{\ep t}^\infty) ,
 \end{eqnarray}
where the profile $V^{1}$  is  the one constructed in the previous section: $V^{1} := V$.
Then we will recover the vorticity profiles by
 plugging  (\ref{complete1}) and (\ref{complete2}) into  the  relation (\ref{curlep}) and equalling the terms of order  $\sqrt{\ep t}^{j}$; what leads to:
\begin{eqnarray}
\label{rotrotj}
\Omega^j = \rot_x V^j +
n \wedge \partial_X  V^{j+1}  . 
\end{eqnarray}
To guess the amplitudes in the expansion of the pressure we have to remember that 
we follow a  Rankine-Hugoniot(-BKW) approach looking for profiles such that the expansions  solve -formally- the equations (\ref{R1})-(\ref{R2}) 
  on each side  $\{ \pm \phi^{0} > 0  \}$ plus the  continuity of the velocity, of the pressure and of the vorticity on the  interface  $\{ \phi^{0} = 0  \}$.
  Taking the divergence and the normal scalar product  yields  the pressure problem: 
\begin{align}
\label{pressure1} \Delta p^\ep 
&= - \text{div} \ (v^\ep \cdot \nabla v^\ep )
\\ \label{pressure2} \lbrack   p^\ep    \rbrack
&=0,
\\ \label{pressure3} \lbrack  \partial_n  \ p^\ep    \rbrack
&=   \lbrack  ( -   v^\ep \cdot \nabla v^\ep    + \ep \Delta v^\ep). n  \rbrack,
\end{align}
where  the notation $\lbrack    \cdot   \rbrack $ stands for the jump 
across  the  interface  $\{ \phi^{0} = 0  \}$ (that is for a function $f(t,x)$: 
we denote $\lbrack f   \rbrack 
:= f |_{ \phi^{0} =0^+   } -  f  |_{ \phi^{0} =0^-   }$).
If we plug the expansion  (\ref{complete2}) into the r.h.s. of the equations  (\ref{pressure1})-(\ref{pressure3})  we are led to look for a pressure expansion of the form
\begin{eqnarray}
  \label{complete3} p^\ep (t,x) =  p^0 (t,x)  + \sum_{j \geqslant 2 } \frac{ \sqrt{\ep t}^j }{t}  \  P^{j  }  (t,x,\frac{\phi^{0}(t,x)}{\sqrt{\ep t} } )  +O(\sqrt{\ep t}^\infty) ,
\end{eqnarray}

The  profiles above are of the  following  form: for $ \pm X > 0 $,
\begin{eqnarray}
\label{zyg}
U  (t,x,X) &:=&  \underline{U}  (t,x) + \tilde{U}  (t,x,X) ,
\end{eqnarray}
 where the function $\tilde{U}  (t,x,X)$ is rapidly decreasing  when $ \pm X  \rightarrow \infty$, and the letter $U$ is the placeholder for the $\Omega^{j  }$, the  $V^{j  }$ and the $P^{j  }$.
 We will refer to the term $\underline{U} $ as the regular part and to the term $ \tilde{U}$ as the layer part.
 The  layer part  $ \tilde{P}^2$ is equal to  $ \tilde{P}^2 = t P$ where
  $P$ is the profile of the  previous section.
  This possibility to be smoothly factorized by $t$ is very particular to the order $2$.
  It comes from the fact that the (a-priori)  main contribution given by the Laplacian in the r.h.s. of the equation (\ref{pressure3}) vanishes thanks to the orthogonality property (\ref{orthov}).
  It is therefore fair to acknowledge that the heuristic argument given in section
 \ref{amplitudes} to guess the first order expansion (\ref{formu2p})  is at less not very robust and leads to a correct amplitude almost by chance !
  Furthermore to satisfy the pressure continuity we will have to add to this  layer part  $ \tilde{P}^2$
   a regular part $\underline{P}^{2}$, as anticipated in Remark \ref{ppp}.

To determine  the velocity and pressure profiles  we proceed iteratively, determining at the step $j$
 the velocity profile $V^{j }$, the  regular part of the pressure profile $\underline{P}^{j }$ and the layer part of the following pressure profile $ \tilde{P}^{j+1}$,
  from the profiles   already known by the previous steps.
The step  $j=1$ was done in the previous section.
We now explain how to do a step $j \geqslant 2$ when the previous ones are done.

 \begin{rem}
    \rm
  Regarding the transmission aspect of our strategy
we refer  to  \cite{fourier},  in the setting of the approximation of semi-linear symmetric hyperbolic systems of PDEs by the vanishing viscosity method. 
We also mention   \cite{guesrauchX},  \cite{guesrauch} by Gu\`es and Rauch  in the context of  internal waves for semi-linear symmetric hyperbolic systems
and  \cite{gueswilliams} by Gu\`es and Williams in the context of  viscous shocks  profiles.
 \end{rem}
 \subsubsection{Jump conditions}
\label{hoprofile1}

The transmission conditions read:
  \begin{eqnarray}
  \label{heuho2}
  V^{j} |_{X=0^+ , \phi^{0} = 0^+ } &=&  \ V^{j} |_{X=0^- , \phi^{0} = 0^- }  ,
   \\ \label{wp720}  ( a \partial_X \tilde V^{j} +  \partial_n  V^{j-1} ) |_{X=0^+ , \phi^{0} = 0^+ }  &=& (a \partial_X \tilde V^{j}  + \partial_n  V^{j-1}) |_{X=0^- , \phi^{0} = 0^- }  ,
   \\ \label{heu211}  
   P^{j} |_{X=0^+ , \phi^{0} = 0^+ } &=&  \ P^{j} |_{X=0^- , \phi^{0} = 0^- } 
 \end{eqnarray}
 We use  the vectorfields $w^k$ defined by (\ref{imitating}) to decompose  the vectorfields  as $u =  \frac{1}{a} (u. n) n + u_\text{tan} $ where $u_\text{tan} = \sum_k  \ c_k  (  V .w^k ) w^k $.
In what follows we will use the notation $\lbrack    \cdot   \rbrack $ for two different meanings:
\begin{eqnarray}
\lbrack    \underline{U}    \rbrack 
:=
  \underline{U} |_{ \phi^{0} =0^+   } -    \underline{U}  |_{ \phi^{0} =0^-   }  \text{ and }
\lbrack   \tilde{U}   \rbrack 
:=
\tilde{U}
 |_{X=0^+ , \phi^{0} =0^+   } - \tilde{U}  |_{X=0^+ , \phi^{0} =0^+ } .
 \end{eqnarray}
By assumption the previous velocity profile $V^{j}$ satisfies
  \begin{eqnarray}
 V^{j-1} |_{X=0^+ , \phi^{0} = 0^+ } &=&  \ V^{j-1} |_{X=0^- , \phi^{0} = 0^- } .
 \end{eqnarray}
 As a consequence there holds
  \begin{eqnarray*}
 \lbrack  \partial_n  V^{j-1}_\text{tan}  \rbrack =  \lbrack ( \partial_n  V^{j-1} )_\text{tan} \rbrack,
 \text{ and }    \lbrack ( \partial_n  V^{j-1} ) . n  \rbrack n =  \lbrack  \partial_n  \big( (V^{j-1}  . n) n  .\big)\rbrack ,
 \end{eqnarray*}
and the jump conditions  (\ref{heuho2})-(\ref{wp720})-(\ref{heu211}) reduce to
  \begin{eqnarray}
   \label{heu210} 
\lbrack  \underline{V}^{j}  . n    \rbrack &=& - \lbrack  \tilde{V}^{j} . n   \rbrack ,
  \\     \label{press5}
 \lbrack  \tilde{V}^{j}_\text{tan}  \rbrack &=& -  \lbrack  \underline{V}^{j}_\text{tan}  \rbrack ,
  \\    \label{press6}
  \lbrack a  \partial_X  \tilde{V}^{j}_\text{tan}  \rbrack &=&  -  \lbrack   \partial_n V^{j-1}_\text{tan}  \rbrack ,
   \\   \label{press7}
    \lbrack a  \partial_X ( \tilde{V}^{j} . n) \rbrack &=&  -  \lbrack  \partial_n (V^{j-1} . n )  \rbrack ,
    \\   \label{press8}
      \lbrack  \underline{P}^{j}     \rbrack &=&  - \lbrack  \tilde{P}^{j}    \rbrack .
  \end{eqnarray}

 \subsubsection{Divergence equation}
\label{hoprofile2}

First plugging  the ansatz (\ref{complete2})  into the equation   (\ref{R2}), equalling the terms  at order $\sqrt{\ep t}^j$ yields:
\begin{eqnarray}
\label{nonoj}
  \partial_X  (V^{j} . n) = -  \mathrm{div}_x \,   V^{j-1} .
  \end{eqnarray}
   We first begin to take the limit $ \pm X  \rightarrow \infty$ of the equation to get
   \begin{eqnarray}
\label{nonojbar}
 \mathrm{div}_x \,   \underline{V}^{j-1} = 0.
  \end{eqnarray}
  Then subtracting the equation (\ref{nonojbar}) to the  equation (\ref{nonoj}) yields
\begin{eqnarray}
\label{nonojj}
  \partial_X  (  \tilde{V}^{j} . n) = -  \mathrm{div}_x \,    \tilde{V}^{j-1} .
  \end{eqnarray}
 For  $\pm X > 0$, we 
 integrate between $X$ and $\pm \infty$
 to find
 \begin{eqnarray}
  \label{lerayprofile2bis}
  \tilde{V}^{j} . n :=  -  \int_X^{\pm \infty}  \mathrm{div}_x \,    \tilde{V}^{j-1}  .
 \end{eqnarray}
This already yields 
 that (\ref{press7}) is satisfied.
 To do this we will use  that the divergence can be rewrited as
  \begin{eqnarray*}
\mathrm{div}_x \,  V =  \frac{1}{a} \partial_n (V. n) - \sum_k a_k w^k . \nabla_x (  w^k . V)
  \end{eqnarray*}
where the $w^k$ are tangential to the hypersurface and the $a_k$ are scalar.
Since $\mathrm{div}_x \,   \underline{V}^{j-1} = 0$ there holds
  \begin{eqnarray*}
  \lbrack  \partial_n ( \underline{V}^{j-1} . n  )  \rbrack &= & \lbrack  a \sum_k a_k w^k . \nabla_x (\underline{V}^{j-1} .w^k ) \rbrack   \\  &= & -  \lbrack a \sum_k a_k w^k . \nabla_x (\tilde{V}^{j-1} . w^k ) \rbrack 
   \end{eqnarray*}
because of (\ref{heuho2}). Then
  \begin{eqnarray*}
  \partial_n (V^{j-1} . n  ) &= & 
  \lbrack   \partial_n ( \tilde{V}^{j-1} . n  ) 
 +  a \sum_k a_k w^k . \nabla_x (\tilde{V}^{j-1} . w^k ) \rbrack 
   \\  &= &  \lbrack  a \mathrm{div}_x \,  \tilde{V}^{j-1}  \rbrack 
     \\  &= & -  \lbrack a \partial_X  \tilde{V}^{j} . n  \rbrack 
 \end{eqnarray*}
because of (\ref{nonojj}). 

 Moreover proceeding in the same way at the next order yields
    \begin{eqnarray}
\label{nonojbar3}
 \mathrm{div}_x  \underline{V}^{j} = 0.
  \end{eqnarray}

\subsubsection{Velocity equation}
\label{hoprofile3}
 
 When plugging the ansatz (\ref{complete2}) and (\ref{complete3})  into the equation  (\ref{R1})  the term 
$$ (\partial_t  + v^0 \cdot\nabla_x ) \phi^{0} . \partial_X  \tilde V^{j} $$
 that should a priori appears in the equation, vanishes thanks to the eikonal equation satisfied by $ \phi^{0}$,
 and equalling the terms of order  $\sqrt{\ep t}^{j}$ one get one equation of the form
  \begin{equation}
  \label{heuho}
  (L- \frac{j-1}{2})   {V}^{j} =  f^{j} + \nabla_x P^{j}
+  \partial_X P^{j+1}  n   ,
\end{equation}
where 
$f^{j}$ would be decomposed into 
 $f^{j} = \underline{f}^{j} + \tilde{f}^{j}$
  -as the profiles $U$ were in  (\ref{zyg})- and
 contains terms already determined so that it could be considered as a source term at this step.
 
\subsubsection{Regular part}
\label{hoprofile4}
 
 We first begin to take the limit $ \pm X  \rightarrow \infty$ of the equation  (\ref{heuho}) in order to kill the layer parts and to obtain the following equation for the regular part:
 \begin{equation}
  \label{heuhoreg}
\frac{j}{2}  \underline{V}^{j}  + t (\partial_t  \underline{V}^{j} + v^0 \cdot\nabla_x   \underline{V}^{j}    +   \underline{V}^{j} \cdot\nabla_x v^0 ) +  \nabla_x \underline{P}^{j} 
 =   \underline{f}^{j} ,
\end{equation}
  Since the hypersurface $\{t=0\}$ is characteristic  the equation (\ref{heuhoreg}) does not need any initial condition.
 \begin{lem}
 There is only one couple $(\underline{V}^{j}, \underline{P}^{j} ) $ piecewise smooth satisfying the equations (\ref{heuhoreg})-(\ref{nonojbar3}) on each side $\mathcal{O}_{\pm} (t) $ of the interface, and  the transmission conditions (\ref{heu210})-(\ref{press8}). 
  \end{lem}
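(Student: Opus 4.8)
The plan is to treat the system (\ref{heuhoreg})--(\ref{nonojbar3}) on each side $\mathcal{O}_{\pm}(t)$ as a linear Stokes-type problem in which $t$ plays the role of a parameter rather than a time variable, and to recover the unknowns by a Leray-type projection argument combined with the piecewise transport estimates of section \ref{pte}. First I would rewrite (\ref{heuhoreg}) by dividing by $t>0$: the leading operator becomes $D + \nabla_x v^0 \cdot + \tfrac{j}{2t}\,\mathrm{Id}$ acting on $\underline{V}^{j}$, with $\tfrac{1}{t}\nabla_x \underline{P}^{j}$ as the pressure term and $\tfrac{1}{t}\underline{f}^{j}$ as source. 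The constraint (\ref{nonojbar3}) is $\mathrm{div}_x \underline{V}^{j}=0$. The key structural observation is that applying $\mathrm{div}_x$ to (\ref{heuhoreg}) and using $\mathrm{div}_x \underline{V}^{j}=0$ together with $\mathrm{div}_x v^0 = 0$ yields an elliptic (Laplace) equation for $\underline{P}^{j}$ on each $\mathcal{O}_{\pm}(t)$, whose Neumann data on the interface is prescribed by the normal component of (\ref{heuhoreg}); the transmission conditions (\ref{heu210}) and (\ref{press8}) furnish exactly the Dirichlet-jump and flux-jump data needed to close this elliptic transmission problem for $\underline{P}^{j}$. So the scheme is: solve the elliptic problem for $\underline{P}^{j}$ (piecewise, with the transmission data), then substitute back into (\ref{heuhoreg}) to get a closed linear transport-type ODE-in-$t$ (with a $\tfrac{j}{2t}$ Fuchsian-type coefficient) for $\underline{V}^{j}$, for which uniqueness and existence of a piecewise-smooth solution follow from the piecewise transport estimate (\ref{TE2}) exactly as in section \ref{pte}, the point being that the Fuchsian weight $j/(2t)$ with $j\geq 2$ gives integrability at $t=0$ and forces the unique $L^2_{loc}$ branch just as in the scalar discussion of $t\partial_t V = -V/2$ in the introduction.

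The order of the steps I would carry out: (i) record the reduction of (\ref{heuhoreg}) to the Fuchsian transport form and note that no initial condition at $t=0$ is required because $\{t=0\}$ is characteristic; (ii) take the divergence of (\ref{heuhoreg}), use (\ref{nonojbar3}) and $\mathrm{div}_x v^0=0$, to derive $\triangle_x \underline{P}^{j} = \mathrm{div}_x(\underline{f}^{j} - \tfrac{j}{2}\underline{V}^{j} - t\,\underline{V}^{j}\cdot\nabla_x v^0 - \dots)$ on each side, and check that the right-hand side only involves quantities already determined at previous steps plus $\underline{V}^{j}$ itself appearing linearly; (iii) close the pressure problem by coupling with the transmission conditions (\ref{heu210}), (\ref{press8}) and the normal trace of (\ref{heuhoreg}) on $\{\phi^0=0\}$, invoking standard elliptic transmission theory on the piecewise-$C^{s+1,r}$ domains $\mathcal{O}_{\pm}(t)$ (using Theorem \ref{rychkov} to pass between the domains and the whole space, exactly as done throughout section \ref{pte}); (iv) substitute $\nabla_x \underline{P}^{j}$ back and solve the resulting self-contained linear problem for $\underline{V}^{j}$ by the transport estimate (\ref{TE2}), obtaining piecewise-$C^{s,r}$ regularity and uniqueness by a Gronwall argument; (v) verify a posteriori that the constructed $\underline{V}^{j}$ indeed satisfies $\mathrm{div}_x \underline{V}^{j}=0$ — this follows because $\mathrm{div}_x \underline{V}^{j}$ satisfies a homogeneous Fuchsian transport equation with zero data in the appropriate sense (one takes the divergence of the reconstructed identity and uses that the Laplace equation for $\underline{P}^{j}$ was solved precisely to make the divergence vanish), together with (\ref{lerayprofile2bis}) which already gave us the normal component compatibility.

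The main obstacle I expect is step (iii)--(iv): properly handling the interface $\{\phi^0=0\}$ as a moving, only $C^{s+1,r}$-smooth hypersurface inside the coupled elliptic-transport system, and in particular making sure that solving the elliptic pressure problem does not cost regularity relative to the source $\underline{f}^{j}$. The delicate point is that $\underline{V}^{j}$ appears in the source of its own pressure equation, so one is really solving a fixed-point (or a single linear system) for the pair $(\underline{V}^{j},\underline{P}^{j})$ simultaneously; the contraction is obtained because the pressure term enters the transport equation only through $\nabla_x \underline{P}^{j}$, which is controlled in $C^{s-1,r}(\mathcal{O}_{\pm}(t))$ by the source and by $\underline{V}^{j}$ in a lower-order norm, so that (\ref{TE2}) still closes after a Gronwall argument on the short interval $(0,T)$. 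The transmission property of the operator $\omega\mapsto\nabla v$ discussed in section \ref{pte} (Boutet de Monvel-type, with the careful tracking of how the boundary regularity enters the constants) is exactly what guarantees the elliptic solve stays within the piecewise Hölder scale; I would simply cite that analysis. The Fuchsian degeneracy at $t=0$ is not an obstacle but rather the mechanism enforcing uniqueness, since $j\geq 2$ makes the homogeneous solution $\sim t^{-j/2}$ non-$L^2$ near $0$, so the only admissible solution is the particular one produced by the Duhamel-type integral in (\ref{TE2}).
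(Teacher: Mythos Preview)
Your overall strategy matches the paper's: derive an elliptic transmission problem for $\underline{P}^{j}$ by taking the divergence and the normal trace of (\ref{heuhoreg}), recognize that the resulting map $\underline{V}^{j}\mapsto -\nabla_x\underline{P}^{j}-\underline{V}^{j}\cdot\nabla_x v^0$ is a zero-order operator $\Pi$, and then solve a closed Fuchsian transport problem for $\underline{V}^{j}$. The paper does exactly this, recasting the system as
\[
\tfrac{j}{2}\,\underline{V}^{j}+(\partial_t+v^0\cdot\nabla_x)\,t\,\underline{V}^{j}=\Pi(\underline{V}^{j})+\underline{f}^{j},
\]
and then invoking the transmission property of $\Pi$ (in the smooth-interface setting of this section) for piecewise regularity.

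There is, however, a genuine gap in your step (iv). You propose to divide by $t$ and apply the piecewise transport estimate (\ref{TE2}). This does not work: after dividing by $t$ the zero-order coefficient is $\tfrac{j}{2t}$ and the source is $\tfrac{1}{t}\underline{f}^{j}$, neither of which is integrable on $(0,T)$, so the Duhamel integral in (\ref{TE2}) diverges and there is no initial datum to feed into it. The Fuchsian nature of the problem is not a cosmetic issue that (\ref{TE2}) absorbs; it requires a dedicated argument. The paper keeps the equation in the undivided form and proves a separate well-posedness lemma for the operator $\mathcal{H}:=\gamma+(\partial_t+v^0\cdot\nabla_x)\,t$ (with $\gamma=\tfrac{j}{2}-1\geq 0$) directly in $L^2(\mathcal{O}_\pm)$, via a Green identity
\[
\int_{\mathcal{O}_\pm}\mathcal{H}\underline{V}\cdot\underline{W}
=\gamma\int_{\mathcal{O}_\pm}\underline{V}\cdot\underline{W}
-\int_{\mathcal{O}_\pm} t\,\underline{V}\cdot(\partial_t+v^0\cdot\nabla_x)\underline{W}
+T\int_{\mathcal{O}_\pm(T)}\underline{V}\cdot\underline{W},
\]
which yields the a-priori bound $(\gamma+\tfrac12)\|\underline{V}\|_{L^2}^2+\tfrac{t}{2}\|\underline{V}(t)\|_{L^2}^2\lesssim\|\underline{f}\|_{L^2}^2$ with no boundary term at $t=0$. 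This is the same mechanism as in section \ref{wp}, not section \ref{pte}. Higher piecewise H\"older regularity is then obtained by spectral localization (applying $\Delta_j$, estimating commutators, and letting $p\to\infty$), again not by (\ref{TE2}).

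A secondary point: you say $\nabla_x\underline{P}^{j}$ is controlled in $C^{s-1,r}$, i.e.\ one order below $\underline{V}^{j}$. In fact the paper shows $\Pi$ is of order zero, so $\nabla_x\underline{P}^{j}$ sits in the same space as $\underline{V}^{j}$; the closure comes from the Fuchsian energy estimate above (and a Gronwall in $t$), not from a gain of regularity in the pressure solve.
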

 \begin{proof}
 The idea is to recast the problem into an equation of the form
 \begin{eqnarray}
  \label{recast}
\frac{j}{2} \underline{V}^{j}  +  (\partial_t  + v^0 \cdot\nabla_x ) t  \underline{V}^{j}  
 =  \Pi (\underline{V}^{j})        +   \underline{f}^{j} .
  \end{eqnarray}
 on each side $\mathcal{O}_{\pm} (t) $, where $ \Pi $ is a pseudodifferential operator of order $0$.
For instance
if we denote 
$$g := - \frac{j}{2}  \underline{V}^{j}  - t (\partial_t  \underline{V}^{j} + v^0 \cdot\nabla_x   \underline{V}^{j}    +   \underline{V}^{j} \cdot\nabla_x v^0   )
 +  \underline{f}^{j}.$$
Taking the divergence and the scalar product by $n$ of the equation (\ref{heuhoreg}) we get the following problem for the pressure:
 \begin{eqnarray}
 \label{sa1}   \triangle \underline{P}^{j} &=&  \dive g   \quad \text{for}   \quad  x  \in \mathcal{O}_{\pm} (t),
\\  \label{sa2}
\lbrack    \partial_n \underline{P}^{j}     \rbrack =
 &=&  \lbrack  g.n    \rbrack  ,
\\    \label{sa3} 
\lbrack  \underline{P}^{j}    \rbrack  &=& -
   \lbrack  \tilde{P}^{j}   \rbrack.
 \end{eqnarray}
 Using that the regular parts of the velocity profiles -that is here $v^0 $, $  \underline{V}^{j}$ and $  \underline{V}^{j-2}$- are divergence free
(see  the conditions (\ref{R2}) and  (\ref{nonojbar3})) we simplify the equation (\ref{sa1}) into
 \begin{eqnarray}
  \triangle \underline{P}^{j} &=&  \dive ( \underline{f}^{j}  -  \underline{V}^{j} \cdot\nabla_x v^0   ) -   \sum_{i,k}   \partial_k v^0_i . \partial_i \underline{V}^j_k .
  \end{eqnarray}
 To simplify the equation (\ref{sa2}) 
  we apply the operator $\partial_t  + v^0 \cdot\nabla_x$ to the condition (\ref{heu210}) and simplify thanks to the equation of $n$.
   This leads to 
 \begin{eqnarray}
 \lbrack   \partial_n \underline{P}^{j}      \rbrack = \lbrack
 \frac{j}{2} \underline{V}^{j}
 + t\big( ( \partial_t  + v^0 \cdot\nabla_x ) \tilde{V}^{j} +   \tilde{V}^{j} \cdot\nabla_x v^0
  \big) +    \underline{f}^{j}  \rbrack .n
    \end{eqnarray}
 These simplifications done it becomes clear that the operator 
 $\Pi (\underline{V}^{j})  := -   \nabla_x \underline{P}^{j} - \underline{V}^{j}  \cdot\nabla_x v^0 $
  is of order $0$.
 
  We now set $\gamma := \frac{j}{2}-1 $ and we denote 
   $\mathcal{O}_\pm$  the space-time domains  
   $\mathcal{O}_\pm :=  \{ (t,x)  \in (0,T) \times \R^d / \ x \in \mathcal{O}_\pm (t) \} $
   and $\mathcal{H}$ the operator:
 \begin{eqnarray}
  \label{recast8}
\mathcal{H}  := \gamma + 
  (\partial_t  + v^0 \cdot\nabla_x ) t  .
  \end{eqnarray}
 We have
 \begin{lem}
 Both problems
 \begin{eqnarray}
 \mathcal{H} \underline{V} =  \underline{f} \text{ for } \mathcal{O}_{\pm} 
   \end{eqnarray}
   are well-posed: for any $f $ in $L^2 ( \mathcal{O}_{\pm} ) $ there exists only one solution $\underline{V}$ in $L^2 ( \mathcal{O}_{\pm} ) $. 
   Moreover
    $\underline{V}$ in $C  \big((0,T),L^2 ( \mathcal{O}_{\pm}) \big) $. 
    If $f$ is in $L^\infty ((0,T), C^{s,r} ( \mathcal{O}_{\pm} (t) )) $ then
     $\underline{V}$ in $L^\infty ((0,T), C^{s,r} ( \mathcal{O}_{\pm} (t) )) $.
    \end{lem}
   \begin{proof}
   Existence and uniqueness in $L^2$ are based as in   section  \ref{wp} on a Green identity: for any smooth function $ \underline{W} $ there holds
 \begin{eqnarray}
 \int_{\mathcal{O}_\pm} \mathcal{H} \underline{V} .  \underline{W}
 = \gamma  \int_{\mathcal{O}_\pm} \underline{V} .  \underline{W}
   -   \int_{\mathcal{O}_\pm} t \underline{V} .  (\partial_t  + v^0 \cdot\nabla_x ) \underline{W}
   + T   \int_{\mathcal{O}_\pm (T)}  \underline{V} .  \underline{W}  .
       \end{eqnarray}
   Here we have used that 
 \begin{eqnarray}
  \partial_t  \int_{\mathcal{O}_\pm}  =
   \int_{\mathcal{O}_\pm} (\partial_t  + v^0 \cdot\nabla_x ) .
   \end{eqnarray}
   This identity leads to the a-priori estimate:
 \begin{eqnarray}
  (\gamma + \frac{1}{2})  \int_{\mathcal{O}_\pm} \underline{V}^2   +
   \frac{t}{2}  \int_{\mathcal{O}_\pm (t)} \underline{V}^2
   \lesssim  \int_{\mathcal{O}_\pm}   \underline{f}^2 .
   \end{eqnarray}
This estimate, by a weak-strong argument, leads to uniqueness. 
Besides by Riesz theorem we get the existence part.
Using Rychkov's extension theorem \ref{rychkov} the estimates reduce to estimate in the full plane for globally smooth coefficients and source. 
Then we apply the operators $\Delta_j$ to get
 \begin{eqnarray}
 \mathcal{H} \Delta_j \underline{V} =  \underline{f} +
 t  \lbrack v^0 \cdot\nabla_x , \Delta_{j} \rbrack \underline{V} .
   \end{eqnarray}
We multiply this equation by  $p |\Delta_j V |^{p-2} . \Delta_j V$, where $p >2$, 
and we integrate by parts  in $t,x$ over $(0,T)  \times \R^d$.
The commutators are estimated through paraproduct estimates which follow the same lines as the one of the previous sections.
Finally we let $p$ goes to infinity.
  \end{proof}
  From the previous lemma we infer the existence and uniqueness in $L^2$ for the equation  (\ref{recast}). To get  piecewise smoothness we notice that  the operator 
 $\Pi$ satisfies the transmission property so that it preserves piecewise smoothness at any order (we recall that in this section the interface is assumed to be smooth so that we only need here to refer to classical works as for instance \cite{rempel}).
   \end{proof}

\subsubsection{Layer part}
\label{hoprofile5}

 The tangential part of  $\underline{V}^{j} $ is in general discontinuous at $\{\phi^{0} =0\}$. 
 This jump will have to be compensated by the layer part $ \tilde{V}^{j}$, which we now look for.
 We subtract  the equation (\ref{heuhoreg}) to the equation  (\ref{heuho})  to get for  $ \tilde{V}^{j}$ an equation of the form 
  \begin{equation}
  \label{heuho26}
  (L- \frac{j-1}{2} )  \tilde{V}^{j} =   \partial_X  \tilde{P}^{j+1} . n  + \tilde{f}^{j} .
\end{equation}
 \begin{lem}
 There exists a couple $(\tilde{V}^{j}, \tilde{P}^{j+1} ) $ piecewise smooth satisfying the equations (\ref{heuho26})-(\ref{lerayprofile2bis}) on each side $\pm X >0$ of the interface, and  the transmission conditions  (\ref{press5})-(\ref{press6}).
  \end{lem}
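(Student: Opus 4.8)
The plan is to follow the same scheme used for the first order profile (the case $j=1$ treated in section \ref{wp}), adapting it to the shifted operator $L - \frac{j-1}{2}$ and to the (now determined) source terms $\tilde f^{j}$ and jump data coming from the previous steps. First I would reduce the inhomogeneous transmission conditions (\ref{press5})--(\ref{press6}) to homogeneous ones by the same explicit lifting as in (\ref{lift}): since the tangential jumps $\lbrack \underline V^{j}_\text{tan}\rbrack$ and $\lbrack \partial_n V^{j-1}_\text{tan}\rbrack$ are known piecewise-smooth functions of $(t,x)$ (determined at the previous step), we can subtract from $\tilde V^{j}$ a combination of $e^{\mp X}$ and $e^{\mp 2X}$ with these coefficients; this modifies $\tilde f^{j}$ but keeps it in $\mathcal{A}_D$ and leaves the normal component constrained by (\ref{lerayprofile2bis}).

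Next I would eliminate the unknown pressure profile $\tilde P^{j+1}$ exactly as in section \ref{rid}: projecting (\ref{heuho26}) on $n$ and using the equation (\ref{neq}) for $n$ together with the orthogonality property $\tilde V^{j}\cdot n$ already fixed by (\ref{lerayprofile2bis}), one obtains $\partial_X \tilde P^{j+1}$ as an explicit expression in terms of $\tilde V^{j}$ and known data, hence $\tilde P^{j+1}$ by integration from $\pm\infty$ using the decay at infinity. Substituting back, the problem for the tangential part of $\tilde V^{j}$ becomes a closed equation of the form $(L-\tfrac{j-1}{2}+\text{order-}0)\,\tilde V^{j}_\text{tan} = \tilde g^{j}$ with homogeneous transmission conditions on $\Gamma$ and prescribed decay at $X\to\pm\infty$ — structurally identical to (\ref{weaks}). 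I would then invoke Theorem \ref{weakeq} (after substituting $a$ for $|n|^2$ far from the patch, harmless as in section \ref{avoiding}) for existence and uniqueness in $E_1$, and Theorem \ref{weakeqana} for the $\mathcal{A}_D$ smoothness, noting that the shift $-\tfrac{j-1}{2}$ only improves the coercivity in the energy estimate (\ref{f}) (it adds a positive constant to the zero-order term when $j\geq 2$), so all the arguments of sections \ref{wp} and \ref{wp'} go through verbatim with $k$ replaced by $k+\tfrac{j-1}{2}$.

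Finally one has to check the compatibility of this construction with the divergence equation and with the remaining transmission condition: the normal component $\tilde V^{j}\cdot n$ defined by (\ref{lerayprofile2bis}) must be consistent with the equation obtained by projecting (\ref{heuho26}) normally — this is the self-consistency already verified in section \ref{hoprofile2}, where (\ref{press7}) was shown to hold automatically — and the jump (\ref{press6}) must be exactly the one produced by the lifting, which is how $\tilde g^{j}$ was arranged. The main obstacle I expect is bookkeeping: one must verify that $\tilde f^{j}$, assembled from lower-order profiles and their $X$- and $x$-derivatives, genuinely lies in $\mathcal{A}_D$ and that the decomposition $f^{j}=\underline f^{j}+\tilde f^{j}$ separates cleanly so that the layer part is rapidly decreasing — this rests on the inductive hypothesis that all previous $V^{i}$, $P^{i}$ are of the form (\ref{zyg}) with layer parts in $\mathcal{A}_D$, and on the analyticity-in-time (point \ref{litt6} of Theorem \ref{litt}) that puts $a$, $A$, $n$, $\omega^0_\pm$ in the $D$-analytic classes. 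Once that is in place, existence of the pair $(\tilde V^{j},\tilde P^{j+1})$ follows, and piecewise smoothness is inherited from Theorem \ref{weakeqana}.
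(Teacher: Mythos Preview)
Your proposal is correct and follows essentially the same route as the paper: fix $\tilde V^{j}\cdot n$ by (\ref{lerayprofile2bis}), project (\ref{heuho26}) on $n$ to express $\partial_X\tilde P^{j+1}$ in terms of $\tilde V^{j}$, substitute back to obtain a closed equation of the form $(L-\tfrac{j-1}{2})\tilde V^{j}_{\text{tan}} = \tilde f^{j}_{\text{tan}} + t\tilde A\tilde V^{j}$ with $\tilde A$ a local zero-order operator, and then invoke the analysis of section~\ref{wp} (with the shift only helping coercivity). The paper carries out the normal projection and the tangential reduction a bit more explicitly via identities (\ref{press})--(\ref{press4}), but the structure and the appeal to the section~\ref{wp} machinery are the same as what you describe.
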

 \begin{proof}
 Since the equation (\ref{lerayprofile2bis}) already determines the normal part $\tilde V^{j} . n$ of  the velocity profile $\tilde{V}^{j} $ we will reduce the problem into a problem for the tangential part  $\tilde{V}^{j}_\text{tan} $. The pressure $\tilde{P}^{j+1} $ can be determined in function of  $\tilde{V}^{j} $:
  \begin{eqnarray}
    \label{press}
| n|^2 \partial_X \tilde{P}^{j+1} = (L- \frac{j-1}{2} )  (\tilde{V}^{j}  . n)  + 2 t \tilde{V}^{j} \cdot\nabla_x v^0 . n - \tilde{f}^{j} . n .
   \end{eqnarray}
Here we have  taken the scalar product  of the equation (\ref{heuho2}) with $n$ and use the equation of $n$.
 Taking (\ref{press}) into account  the equation (\ref{heuho26}) now reads 
  \begin{eqnarray}
    \label{press2}
(L- \frac{j-1}{2} )  \tilde{V}^{j} = \frac{1}{a}  \big((L- \frac{j-1}{2} )  (\tilde{V}^{j}  . n)\big)  n 
   + 2 t (\tilde{V}^{j} \cdot\nabla_x v^0 . n) n + \tilde{f}^{j}_\text{tan} . 
  \end{eqnarray}
But  using again the equation of $n$ we have
  \begin{eqnarray}
\nonumber (L- \frac{j-1}{2} )  \tilde{V}^{j}_\text{tan} &=&
(L- \frac{j-1}{2} )  \tilde{V}^{j} -  \frac{1}{a} \big( (L- \frac{j-1}{2} )  (\tilde{V}^{j}  . n)  \big) n
\\   \label{press3}
&&+ \frac{t}{a} (\tilde{V}^{j}  . n)(  \frac{1}{a} (n. \nabla_x v^0 .n ) n -    {}^{t} \nabla_x v^0 .n )   )
 \end{eqnarray}
Mixing  (\ref{press2}) and (\ref{press3}) leads to
  \begin{eqnarray}
   \label{press4}
(L- \frac{j-1}{2} )( \tilde{V}^{j}_\text{tan} ) =
f_\text{tan} + t  \tilde{A} \tilde{V}^{j} 
 \end{eqnarray}
where $ \tilde{A} \tilde{V}^{j} $ denotes the local $0$-order operator
  \begin{eqnarray}
   \tilde{A} \tilde{V}^{j}  :=  2 ( \tilde{V}^{j} \cdot\nabla_x v^0 . n) n 
   + \frac{1}{a} (\tilde{V}^{j}  . n)(  \frac{1}{a} (n. \nabla_x v^0 .n ) n -    {}^{t} \nabla_x v^0 .n )   ).
 \end{eqnarray}
Thanks to the  analysis of section \ref{wp}  there exists $ \tilde{V}^{j}_\text{tan}$ satisfying the equations (\ref{press4})-(\ref{press5})-(\ref{press6}).

   \end{proof}

\subsubsection{On the smoothing of the level function}
\label{hoprofile6}
 
We should be tempted to take into account the viscous smoothing of the level function $\phi^0$ that is
instead of the solution $\phi^0$ of
\begin{eqnarray*}
D v^0 := \partial_t  \phi^0 + v^0 \cdot \nabla_x \phi^0 = 0 .
\end{eqnarray*}
to consider the solution $  \phi^\ep$ of the "viscous eikonal equation":
\begin{eqnarray}
\label{viscouseik}
  \partial_t  \phi^\ep + v^\ep \cdot \nabla_x \phi^\ep = \ep  \Delta_x \phi^\ep ,
  \end{eqnarray}
with, again, the initial data:
 $  \phi^\ep |_{ t =0 }  =  \phi_0$.
Actually reading again the formula  (\ref{10})-(\ref{11})-(\ref{12}) we notice that these terms are the ones in prefactor of $\frac{1}{\sqrt{\ep t} } \partial_X  U $ when one applies the transport-diffusion operator to a function of the form
 $U (t,x,\frac{\phi^0  (t,x)}{\sqrt{\ep t} })$. 

In that case proceeding as previously we have 
 that $\phi^\ep$ admits an expansion of the form 
\begin{eqnarray}
\label{illust}
\phi^\ep (t,x) \sim \phi^\ep_a (t,x) :=  \phi^0 (t,x) + \ep t \Phi (t,x,\frac{\phi^0  (t,x)}{\sqrt{\ep t} }) 
\end{eqnarray}
where he profile $ \Phi (t,x,X) $ is defined as
$ \Phi (t,x,X) :=  \underline{\Phi}  (t,x) + \tilde{\Phi} (t,x,X)$,
where the $ \underline{\Phi}$ are the solutions of 
\begin{eqnarray}
\label{az0}
(1  +  t D )  \underline{\Phi}  =  \Delta_x \phi^0 
\end{eqnarray}
and $ \tilde{\Phi}$ is the solution of the equations 
\begin{eqnarray}
\label{az1}
(  \mathcal{E}- \frac{1}{2} -  t D )  \tilde{\Phi}  = 0.
\end{eqnarray}
with the following transmission conditions across $X=0$:
 \begin{eqnarray}
 \label{heu21rq}
  \lbrack 
  \tilde{\Phi} 
   \rbrack   = -  (\underline{\Phi}_+ - \underline{\Phi}_- ) ,
\text{ and }   \lbrack  \partial_X \tilde{\Phi}  \rbrack  =  0 ,
  \end{eqnarray}
where $\underline{\Phi}_\pm$ are suitable extensions of  $\underline{\Phi}$.
Let us show that  $ \Delta_x \phi^\ep_a$ does not have a jump.
We have, at least in a neighborhood of $\{ \phi^0 \}$, the following identity:
\begin{eqnarray}
 \Delta_x \phi^\ep_a =  \Delta_x \phi^0 + a \partial^2_X \tilde{\Phi}
 + \sqrt{\ep t}  (  \Delta_x \phi^0 .  \partial_X \tilde{\Phi}
 + 2 n  \cdot\nabla_x   \partial_X \tilde{\Phi} )
 + \ep t \Delta_x  \Phi .
  \end{eqnarray}
In the expression above the profiles are evaluated in $X = \frac{\phi^0 (t,x)}{\sqrt{\ep t} }$.
Using the equation (\ref{az1}) we have 
\begin{eqnarray}
\label{az2}
\lbrack  a  \partial_X^2  \tilde{\Phi} \rbrack =
 \lbrack   (1  +  t D ) \tilde{\Phi} \rbrack.
\end{eqnarray}
Since $D:=  \partial_t   + v^0 \cdot\nabla_x$ is tangent to the hypersurface $\{\phi^0 (t,x) = 0 \}$ we infer from  (\ref{heu21rq}) that 
\begin{eqnarray}
\label{az3}
\lbrack  a  \partial_X^2  \tilde{\Phi}  \rbrack = -
 \lbrack (1  +  t D ) \underline{\Phi} \rbrack.
\end{eqnarray}
Using  the equation (\ref{az0}) we get $\lbrack  a  \partial_X^2  \tilde{\Phi} \rbrack = -
 \lbrack   \Delta_x  \underline{\phi}^0 \rbrack$, so that 
\begin{eqnarray}
 \lbrack  \Delta_x \phi^\ep_a  \rbrack =  \sqrt{\ep t}  \lbrack (  \Delta_x \phi^0 .  \partial_X \tilde{\Phi}
 + 2 n  \cdot\nabla_x   \partial_X \tilde{\Phi} )  \rbrack
 + \ep t  \lbrack \Delta_x  \Phi  \rbrack.
  \end{eqnarray}
Besides differenting with respect to $x$ the transmission conditions (\ref{heu21rq}) we get $\lbrack \Delta_x \phi^\ep_a  \rbrack = 0 $.

One can see the link between the two points of view by a Taylor expansion: for any smooth profile $U$ there exists a smooth profile $U^\flat$ such that 
\begin{eqnarray}
U (t,x,  \frac{\phi^\ep_a  (t,x)}{\sqrt{\ep t} }) =
U (t,x,  \frac{\phi^0  (t,x)}{\sqrt{\ep t} }) 
+ \sqrt{\ep t} U^\flat (t,x, \frac{ \phi^0  (t,x)}{\sqrt{\ep t} }) .
\end{eqnarray}
In effect it is sufficient to define  $U^\flat$ by 
 $U^\flat (t,x,X) := U^\dagger (t,x,X, \Phi  (t,x,X))$ where $U^\dagger  (t,x,X,h) := h^{-1} \big(U^\flat (t,x,X+h)-U^\flat (t,x,X) \big) $.

 We believe that the point of view of the phase smoothing adopted in this section could be interesting in view of future extensions to singular vortex patches.
   Let say for instance that $\nabla_x \phi_{0}$ has a jump on the hypersurface $\psi_{0}$. 
  Then  the corresponding solutions $ \phi^\ep $ given by (\ref{viscouseik}) admit an expansion of the form:
\begin{eqnarray}
\label{phabis}
   \phi^\ep (t,x)  \sim  \phi^0 (t,x) + \sqrt{\ep t}  \phi^1 (t,x,\frac{\psi^\ep  (t,x)}{\sqrt{\ep t} }))
  \end{eqnarray}
where $\psi^\ep $ also solve the eikonal eiquation:
\begin{eqnarray}
\label{veik2}
  \partial_t  \psi^\ep + v^0 \cdot \nabla_x \psi^\ep = \ep  \Delta_x \psi^\ep ,
  \\  \psi^\ep |_{t=0  }  = \psi_{0} .
\end{eqnarray}
Let assume that $\psi_{0}$ is smooth so that $\psi^\ep$ is well-approximated by the solution $\psi^0$ of the inviscid 
 eikonal equation:
\begin{eqnarray}
\label{veik3}
  \partial_t  \psi^0 + v^0 \cdot \nabla_x \psi^0 = 0 ,
   \\  \psi^0 |_{t=0  }  = \psi_{0} .
\end{eqnarray}
 As a consequence  the expansion (\ref{phabis}) can be simplified into 
\begin{eqnarray}
\label{pha2}
   \phi^\ep (t,x)  \sim  \phi^0 (t,x) + \sqrt{\ep t}  \phi^1 (t,x,\frac{\psi^0  (t,x)}{\sqrt{\ep t} })) .
  \end{eqnarray}
Now we plug this later expansion  (\ref{pha2})  into  the expansion (\ref{viscousprofile}) to get 
\begin{eqnarray}
\label{viscousprofile}
 \omega^\ep (t,x)  \sim  \tilde\Omega (t,x,\frac{\phi^0  (t,x)}{\sqrt{\ep t} }, \frac{\psi^0  (t,x)}{\sqrt{\ep t} }  ) ,
\end{eqnarray}
where
\begin{eqnarray}
 \tilde\Omega (t,x,X_1 , X_2 ) := \Omega (t,x,X_1 +  \phi^1 (t,x,X_2) ) 
\end{eqnarray}


  \subsection{Tangent parallel layers do not interact }
\label{close}

 As mentioned in Remark \ref{strat} the proof \`a la Chemin  of Theorem \ref{compendium} also succeed to cover the case where the initial vorticity $\omega_0$ is discontinuous across two hypersurfaces   $ \{ \phi_0  = 0  \}$ and  $ \{\phi_0  =  \eta \}$, where $ \eta > 0$: it yields that the corresponding solution of the Euler equations has -at time $t$- a vorticity $\omega_0$ piecewise smooth and discontinuous across the two hypersurfaces $ \{ \phi^0  (t,.)  = 0  \}$ and  $ \{\phi^0  (t,.)  =  \eta \}$, where $\phi^0 $ is again the solution of the transport equation  (\ref{eik1})-(\ref{eik2}). For the corresponding solution of the Navier-Stokes equations two layers of width $ \sqrt{\ep t} $ develop around the hypersurfaces $ \{ \phi^0  (t,.)  = 0  \}$ and  $ \{\phi^0  (t,.)  =  \eta \}$. When $t$ proceeds it comes a time when the layers overlap.
 However they do not  interact and the NS velocities can be described by the superposition of the two layers that is by an expansion of the form 
\begin{eqnarray}
\label{2formu}
 v^\ep (t,x) &\sim & v^0  (t,x) + \sqrt{\ep t}\,  \Big( V (t,x,\frac{\phi^{0}(t,x)}{\sqrt{\ep t}}
 + W (t,x,\frac{\phi^{0}(t,x)- \eta}{\sqrt{\ep t}} )  \Big),
  \end{eqnarray}
  where the profile $V  $ is  again the solution of  the equation  (\ref{heu2r}) with the transmission conditions (\ref{heu21r}), whereas  the extra profile $W$  is the solution of 
   the equation  (\ref{heu2r}), with the transmission conditions:
 \begin{eqnarray*}
 \lbrack W  \rbrack = 0
  \text{ and }    \lbrack  \partial_X W    \rbrack   = - \frac{n \wedge  (\check{\omega}^0_+ -  \check{\omega}^0_-  )}{a} ,
  \end{eqnarray*}
    where  the brackets denote the jump of $W(t,x,X)$  across  $\{ X=0 \}$ that is $ \lbrack W  \rbrack :=  W  |_{X=0^+  } - W |_{X=0^- }$ and
$ \omega^0_\pm$ are some well-chosen extensions
 of the restriction of the Euler vorticity to both sides $ \{ \pm (\phi^{0}(t,.)- \eta) > 0 \} $.

 The point is that both profiles $V$ and $W$ satisfy the orthogonality condition 
 $V \cdot n  = W \cdot n  = 0$ so that the Burgers term $$(V+W)  \cdot n  \partial_X (V+W)$$ which should induce a nonlinear coupling of the two layers identically vanishes (as the self-interaction did in section \ref{Transparency}).

 In the same vein -using locally this argument- we can infer that 
 there is no non-linear  interaction between two vortex patches  tangent in one point.

\subsection{Well-prepared expansions}
\label{bienprep}

Let us also stress that it is also possible to construct some asymptotic expansions of the form:
\begin{eqnarray}
\label{complete1wp}
 \omega^\ep (t,x) =   \sum_{j \geqslant 0 }  \sqrt{\ep}^j   \  \Omega^{j  }  (t,x,\frac{\phi^{0}(t,x)}{\sqrt{\ep } } )  +O(\sqrt{\ep }^\infty)  ,
 \\ \label{complete2wp} v^\ep (t,x) =  v^0 (t,x)  + \sum_{j \geqslant 1 }  \sqrt{\ep }^j   \  V^{j  }  (t,x,\frac{\phi^{0}(t,x)}{\sqrt{\ep } } )  +O(\sqrt{\ep }^\infty) ,
  \\ \label{complete3wp} p^\ep (t,x) =  p^0 (t,x)  + \sum_{j \geqslant 2 }  \sqrt{\ep }^j   \  P^{j  }  (t,x,\frac{\phi^{0}(t,x)}{\sqrt{\ep } } )  +O(\sqrt{\ep }^\infty) ,
\end{eqnarray}
for both the Euler and the NS equations. 
Let us see how the construction is modified, at least for the first velocity profile.

For the Euler equation one get for $V^{1}$ the profile equation 
\begin{eqnarray}
\label{eqpro1wp}
(D  +  A ) V^{1}  = 0,
\end{eqnarray}
which is a transport equation along the flow of $ v^0$ with an extra term, local,  of order $0$.
Notably this equation involves $X$ only as a parameter.

For the NS equations one get the profile equation:
\begin{eqnarray}
\label{eqpro2wp}
(D  +  A - a \partial^2_X ) V^{1}  = 0
\end{eqnarray}
which is hyperbolic in $t,x$ and parabolic in $t,X$.
In particular   the hypersurface $\{t=0\}$ is now non-characteristic.

The equations (\ref{eqpro1wp})-(\ref{eqpro2wp}) are therefore both well-posed when set for $X$ in the whole real line. 
Let us explain an analogy which makes this sound natural.
The expansions  (\ref{complete1wp})-(\ref{complete3wp}) could be seen as  viscous and local counterparts of the asymptotic expansions  of weakly nonlinear geometric optics (cf. for instance \cite{metivieroptics}).
In this latter case the profiles are periodic with respect to the fast variable $X$ and the small parameter $\sqrt{\ep }$ refers to short wavelengths.
In this setting of  geometric optics it  is well-known -at least from experts- that a viscosity of size $\ep $ only parabolizes the profile equations corresponding to an ansatz of the form (\ref{complete1wp})-(\ref{complete3wp}), without disturbing the well-posedness (on the contrary actually). We refer to the papers \cite{guesIMA},  \cite{CheverryCMP}, \cite{junca} which illustrate this remark with some close settings. 

Now if one looks for some solutions $v^\ep$ of the NS equations with a vortex patch $v_I$ as initial data, one has to prescribe zero initial data for the layers so that the compatibility condition between the transmission conditions and the initial condition  on the "corner" $\{t=X=0\}$ are not satisfied even at order zero (cf. Remark \ref{experts}). 
This ruins any hope for smoothness with respect to $X$, which leads to some difficulties in the analysis of the stability of the expansions.
One way to get the compatiblity conditions is to choose the initial condition, restricting ourselves to a kind of well-prepared initial data, assuming that the initial data is already of the form (\ref{complete1wp})-(\ref{complete3wp}), that we could qualified by "well-prepared"  initial data.
 It is possible to prove  the existence of such well-prepared initial data
 thanks to some Borel lemma. We refer for an insight of the method to the papers  \cite{toulouse} and  \cite{fourier}.
For such data  one observed that the transmission conditions persist when time proceeds, and  the lifetime of such expansions is the one of the solution of the Euler equation ("the ground state") which traps the main part of the nonlinearity of the problem.

  \subsection{Weaker singularities}
\label{ws}

 Let us mention that both kind of  expansions (\ref{complete1})-(\ref{complete3}) and (\ref{complete1wp})-(\ref{complete3wp})  can be useful in the case of conormal singularities weaker than  vortex patches.
 If for instance the vorticity is continuous through the initial internal boundary $\{ \phi_0 (x) = 0 \}$ but not the derivative of the vorticity, then the profiles  $\Omega^0$, $V^{1}$ and $P^{2}$  do not depend on $X$ anymore. 
 The layers appear only at the following orders (as for instance in  (\ref{illust})). 
 More generally if for $k \in \N$ the vorticity is $C^k$  through the initial internal boundary $\{ \phi_0 (x) = 0 \}$  it  is possible to write a complete  asymptotic expansion of the vorticity of the form:
\begin{eqnarray*}
 \omega^\ep (t,x) =    \omega^0 (t,x) 
+ \sum_{j = 1 }^{k-1}  (\ep t)^j   \   \underline{\Omega}^{j  }  (t,x  )  
+ \sum_{j \geqslant k}  \sqrt{\ep t}^j   \   \Omega^{j  }  (t,x,\frac{\phi^{0}(t,x)}{\sqrt{\ep t} } )  +O(\sqrt{\ep t}^\infty)  .
 \end{eqnarray*}
 Such an observation was mentioned in the setting  of the approximation of  semi-linear symmetric hyperbolic
systems of PDEs by the vanishing viscosity method in  \cite{fourier}.

      \subsection{Stronger singularities}
\label{ss}

 Let us now talk about stronger singularities than vortex patches, 
  for which layers of larger amplitude are expected. 
   Let us first deal with vortex sheets which involve some discontinuity jumps of the velocity (instead of jumps of the vorticity).
 An initial velocity piecewise smooth with discontinuity jumps across an hypersurface  $ \{ \phi_0  = 0  \}$ is an extremely instable configuration  for the Euler equations: in general the jumps of the velocity do not stay localized on a smooth hypersurface   $ \{ \phi^0 (t,.) = 0  \}$  when time proceeds. 
  A few positive results are available with analytic initial data: local in time persistence was proved by Bardos and al. in  \cite{sulem} (see also some extension to global persistence for small analytic perturbation by Caflisch and Orellana in \cite{co} and by  Duchon and Robert  in \cite{duchon}), but 
  the papers \cite{co2}, \cite{lebeau}, \cite{wu}, \cite{grenier} ruin willingness to extend out the unphysical case of analytic data.
Still a few physical phenomena are known to bring
  some stability such as compressibility (see  \cite{coulombelsecchi} in the  two dimensonal  supersonic case) or surface tension  (see  \cite{hou}, \cite{ambrose}).

 If for one of the previous reason the velocity  $v^0$ given by the Euler equation (or an appropriately modified inviscid system) stays piecewise smooth with discontinuity jumps only across an hypersurface  $ \{ \phi^0 (t,.) = 0  \}$, we expect that  the corresponding velocities given by the Navier-Stokes  equation (or a modified viscous system) should be given by  an expansion of the form:
\begin{eqnarray*}
 v^\ep (t,x) &\sim & v^0  (t,x) + V (t,x,\frac{\phi^{0}(t,x)}{\sqrt{\ep t}} ),
  \end{eqnarray*}
 since the perturbation term  $V$ should have to compensate the jumps of the inviscid solution $v^0$ through  the transmission conditions $ \lbrack V  \rbrack =  - \lbrack  v^0    \rbrack$    where  the first brackets denote the jump of $V$  across  $\{ X=0 \}$ 
    and the second one the jump of $v^0$ across $ \{ \phi^0 (t,.) = 0  \}$. 
    In addition we have to prescribe $ \lbrack  \partial_X V  \rbrack = 0$.
Actually because of some nonlinear effects we have to consider also the next term in the expansion:
\begin{eqnarray}
\label{2dformu2}
 v^\ep (t,x) &\sim & v^0  (t,x) + V^0 (t,x,\frac{\phi^{0}(t,x)}{\sqrt{\ep t}} ) + \sqrt{\ep t} V^1 (t,x,\frac{\phi^{0}(t,x)}{\sqrt{\ep t}} )  
  \end{eqnarray}
  to close the profile equation.
 Actually plugging this ansatz into the NS equations and equalling according to the orders of $\sqrt{\ep t}$ yields -with the notations of the introduction- the equations:
 \begin{equation}
  \label{prandtl}
  V^0 \cdot n  = 0,\
  (L + \frac{1}{2}) V^0 =  t \{ (V^0  \cdot  \nabla_x (v^0 + V^0) + ( V^1  \cdot n ) \partial_X V^0 \}   \text{ and }    \partial_X V^1 \cdot n  = -\dive_x V^0 .
  \end{equation}
This equation contains the same difficulty than the Prandtl equations: to get rid of $V^1$ in the second equation we use the third equation what leads to a loss of derivative. 
It is likely that here again
some positive results are possible in the case of analytic data.
Indeed Caflisch and Sammartino in \cite{WASCOM} have succeed  to prove well-posedness for analytic data in the well-prepared counterpart of the equations  (\ref{prandtl}) 
in two dimensions when the radius of curvature of the curve is much larger than the thickness of the layer.
A  solution more radical but also mathematically simpler to avoid this loss of derivative is to consider some cases without variation in the transverse directions.
For instance we can consider the following very special case: we set the phase  
$ \phi^{0}(t,x)  \equiv  x_1 $ and an inviscid  velocity $v^0$ of the form  $v^0 (t,x)  \equiv  f(x_1 ) e_2 $ where the function $f$ is $C^\infty_c$ on $\R_\pm $. 
The solution of NS equations are then simply
$$ v^\ep (t,x) =   V (\frac{x_1}{\sqrt{\ep t}} )  e_2 ,$$
 where $V$ is the solution of the elliptic equation
$$  \partial^2_X V +   \frac{X}{2} \partial_X V = 0 $$
 with the following transmission conditions on $ \{ X =0 \}$:
 \begin{equation}
  \lbrack V  \rbrack =  - \lbrack  f    \rbrack  \text{ and } \lbrack \partial_X  V  \rbrack =0,
  \end{equation}
   where $ \lbrack  f    \rbrack$ denotes the jump of $f$ across  $ \{ x_1 =0 \}$.

If now we strengthen again the amplitude looking for expansions of the form 
$$ v^\ep (t,x) =  (\ep t)^{-\alpha } \, V (t,\frac{x_2}{\sqrt{\ep t}} ) e_1 ,$$
with $\alpha >0$, we get for $V$ the equation $$  \partial^2_X V +   \frac{X}{2}  \partial_X V + \alpha V = 0 ,$$ which is still coercive in $H^1 (\R )$ for $\alpha <  \frac{1}{4}$ but admit $0$ for eigenvalue for  $\alpha =  \frac{k}{2}$, with $k \in \N^*$. The corresponding eigenfunctions are the Hermite functions $H_k  := \partial^{k-1}_X H_1  $ with $ H_1 := e^{- \frac{X^2}{4} } $.
Hence when strengthening  the amplitude of the transition layer (that is when increasing  $\alpha$) there are still some non-trivial solutions of the profile equation but the nature of the profile problem totally changes, from a elliptic problem with non-homogeneous condition transmissions to an eigenvalue problem.

 In this latter case it is then possible to consider also some viscous perturbations singular with respect to several dimensions. For instance
it is  possible to exhibit some family of solutions of the two dimensonal NS equations with velocities of the form:  
\begin{eqnarray}
\label{2dformu3}
 v^\ep (t,x) &=& \frac{1}{\sqrt{\ep t}}  V (t,\frac{x}{\sqrt{\ep t}} ),
  \end{eqnarray}
 where the profile $V(t,X) $ vanishes when the fast variable $X:= (X_1 , X_2 )$ goes to infinity. 
 The corresponding vorticities are of the form
\begin{eqnarray}
\label{2dformu4}
  \omega^\ep (t,x) = \frac{1}{\ep t}  \Omega (t,\frac{x}{\sqrt{\ep t}} )   \text{ with }     \Omega := \mathrm{curl}_X V.
  \end{eqnarray}
  Plugging these ansatz into the NS equations, whose vorticity formulation  reads as follows in  $2$d:
\begin{align}
\label{ns2d} \partial_t \omega^\ep + v^\ep \cdot \nabla \omega^\ep 
&= \ep \Delta   \omega^\ep ,
\end{align}
  and equalling the terms of order $ \frac{1}{(\ep t)^2}$ and those of order $ \frac{1}{\ep t}$, we get the following pair of equations:
\begin{eqnarray}
\label{ns2d2}
  V \cdot \nabla_X \Omega = 0  \text{ and }  (\Delta_X +  \frac{1}{2} X  \cdot \nabla_X + 1 )  \Omega = t  \partial_t \Omega ,
   \end{eqnarray}
  where we denote $\Delta_X := \partial^2_{X_1} + \partial^2_{X_2} $ and $\nabla_X := ( \partial_{X_1} , \partial_{X_2} )$.
  We then see that the first equation in (\ref{ns2d2}) is satisfied if the vorticity profile $\Omega$ is radially symmetric since in this case the corresponding velocity profile $V$ is orthoradial. The initial hypersurface  $ \{ t  = 0  \}$ is characteristic for the second equation 
   in (\ref{ns2d2}) which have therefore parasite solutions. Actually  omitting the $X$ dependence we get the ODE:  $\Omega = t  \partial_t \Omega$ whose solutions are  $\Omega = C \ln t$. However only one namely  $\Omega \equiv  0 $ has a correct behaviour for $t$ near $0$. Now setting $t=0$ in the  second equation we get the equation 
   $$ (\Delta_X +  \frac{1}{2} X  \cdot \nabla_X + 1 )  \Omega =0$$
    whose solution is 
   given by $ \Omega (X) =  C e^{-X^2 }$ ; where $C$ is determined by the conservation of the vorticity mass. The ansatz (\ref{2dformu3}) describe the
   viscous smoothing of an initial Dirac mass at $x=0$ and are usually referred as Oseen vortex. 
  We refer here for instance to the papers   \cite{gallaywayne},  \cite{gallaygallagher}, and to the references therein, for a much more precise study.
  We do not claim any novelty in the previous lines but we think that it was thought-making to incorporate a little bit of this material here. In particular we found interesting to gather the profile equation (\ref{ns2d2}) and the profile equation (\ref{heu2r}) corresponding to the vortex patches, and to observe in particular  the shift of the spectrum caused by the difference of amplitude, which make the analysis quite different.

  \section{Stability }
\label{stab}

This section is devoted to the proof of Theorem \ref{TheoStab}.
Once again we will work with the velocity formulation.
We will prove:
 \begin{theo}
 \label{TheoStab2}
 For any $k \in \N^*$ there exists $\ep_0 > 0$ such that for $0 < \ep   < \ep_0$
for any $(t,x)  \in (0,T) \times \R^d $
 \begin{eqnarray*}
 \label{velocitystab}
 v^\ep (t,x) = v^0 (t,x) +  \sum_{j = 1 }^k  \sqrt{\ep t}^j   \  V^{j  }  (t,x,\frac{\phi^{0}(t,x)}{\sqrt{\ep t} } ) +  \sqrt{\ep t}^{k+1 }  v^{\ep }_R
 \end{eqnarray*}
 where the profiles $V^{j  }$ are the ones of the previous section and 
  the sequence of the remainders $( v^{\ep }_R  )_{0 < \ep  < \ep_0}$ is in
 $\mathcal{F}$.
  \end{theo}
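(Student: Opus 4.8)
The plan is to proceed by the classical strategy for justifying a BKW-type asymptotic expansion: construct an approximate solution by truncating the expansion, plug it into the Navier--Stokes equations, estimate the consistency error, and then close an energy estimate for the remainder using the linearized structure of the equations around the Euler ground state. First I would fix $k$ and define the approximate velocity $v^\ep_{a,N}(t,x) := v^0(t,x) + \sum_{j=1}^N \sqrt{\ep t}^{\,j}\, V^j(t,x,\frac{\phi^0(t,x)}{\sqrt{\ep t}})$ for a sufficiently large truncation order $N = N(k)$, with $V^j$ the profiles built in section \ref{hoprofile} (which lie in $\mathcal{A}_D$, hence are smooth in $t,x$ and rapidly decreasing in $X$), and likewise an approximate pressure $p^\ep_{a,N}$ from the profiles $P^j$. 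Using the algebraic identities (\ref{10})--(\ref{12}) for how $\partial_t$, $\nabla_x$ and $\ep\Delta_x$ act on functions of the fast variable, together with the profile equations (\ref{heuho}), (\ref{heuhoreg}), (\ref{heuho26}), the divergence relations (\ref{nonoj})--(\ref{nonojbar3}) and the transmission conditions (\ref{press5})--(\ref{press8}), one checks that $v^\ep_{a,N}$ solves Navier--Stokes up to a residual $R^\ep_N$ which is, in the relevant $C^{s',r'}$-in-$x$ norms, of size $O(\sqrt{\ep t}^{\,N})$ uniformly; the transmission conditions ensure no worse-than-expected singular contributions arise from the jump of $\partial_X$ across $X=0$, and the divergence of $v^\ep_{a,N}$ is $O(\sqrt{\ep t}^{\,N})$ as well, which can be corrected by a Leray projection or by adding a small divergence-cleaning term.

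Next I would set $v^\ep =: v^\ep_{a,N} + \sqrt{\ep t}^{\,N-C_0}\, w^\ep$ for an appropriate margin $C_0$ depending on $k$, and derive the equation satisfied by $w^\ep$: it is a perturbed Navier--Stokes system, linear part being $\partial_t w^\ep + v^0\cdot\nabla w^\ep + w^\ep\cdot\nabla v^0 - \ep\Delta w^\ep + \nabla q^\ep = (\text{forcing from }R^\ep_N) + (\text{quadratic terms in }w^\ep)$, with $\dive w^\ep = O(\text{small})$. The crucial point is that the linearization is around the Euler solution $v^0 \in L^\infty(0,T;\mathrm{Lip})$, whose lifetime $T$ is fixed and independent of $\ep$; the term $w^\ep\cdot\nabla v^0$ is controlled by $\|v^0\|_{\mathrm{Lip}}$, and $-\ep\Delta w^\ep$ only helps. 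An energy estimate in $L^2$, then a Gronwall argument absorbing the linear terms and the quadratic ones (which carry extra powers of $\sqrt{\ep t}$), yields $\|w^\ep\|_{L^\infty_t L^2_x}$ bounded uniformly in $\ep$ on $(0,T)$ provided $\ep < \ep_0$; in particular this already extends the Navier--Stokes lifetime to $T^\ep \geq T$. To upgrade to the $\mathcal{F}$-type control, i.e. the weighted $C^{s',r'}$ estimates in Theorem \ref{TheoStab}, I would differentiate the remainder equation in $x$ (and use conormal/piecewise estimates as in section \ref{pte} where discontinuities of the profiles in $X$ meet the boundary), propagating the bounds order by order; the $\sqrt{\ep t}^{\,s'+r'}$ weight is exactly what compensates the loss of one power of $\sqrt{\ep t}$ per derivative falling on the fast variable.

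The main obstacle I anticipate is the energy estimate for $w^\ep$ in the anisotropic setting: the remainder still contains internal-layer structure of width $\sqrt{\ep t}$, so naive high-regularity estimates blow up like negative powers of $\sqrt{\ep t}$, and one must track carefully that every $x$-derivative landing on a profile $V^j(t,x,\phi^0/\sqrt{\ep t})$ produces at worst a factor $1/\sqrt{\ep t}$ which is paid for by the truncation order $N$ and by the weights defining $\mathcal{F}$. Choosing $N$ large enough relative to $k$, $s'$, $r'$ and carrying a bookkeeping of powers of $\sqrt{\ep t}$ through the Gronwall loop is the technical heart; a secondary difficulty is handling the pressure remainder, but since $q^\ep$ is recovered from $w^\ep$ via an elliptic problem (as in (\ref{pressure})) with the $O(\sqrt{\ep t}^{\,\infty})$-type divergence error, this is routine once the velocity estimate is in place. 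A final, purely technical, point is that the profiles are only piecewise smooth in $X$ at $X=0$; but the matching transmission conditions make the approximate solution and its first derivatives continuous, so the jumps enter only at the order of the residual and cause no genuine loss, exactly as in the shock-profile literature cited (\cite{gueswilliams}, \cite{fourier}).
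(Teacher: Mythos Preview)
Your approach is essentially the same as the paper's: truncate the expansion at some order $k'>k$, plug into Navier--Stokes, and estimate the rescaled remainder via an energy/Gronwall argument exploiting Gu\`es' observation that the quadratic terms carry extra powers of $\sqrt{\ep t}$. The paper differs from your outline in two technical respects worth noting. First, instead of writing the remainder equation as $\partial_t w^\ep + v^0\cdot\nabla w^\ep + \ldots$, the paper puts it in Fuchsian form: setting $\tilde v^\ep_R := (\sqrt{\ep t})^{-(k+1)}(v^\ep - v^\ep_a)$ one obtains
\[
t(\partial_t + v\cdot\nabla_x)\tilde v^\ep_R + \tfrac{k+1}{2}\,\tilde v^\ep_R - \ep t\,\Delta\tilde v^\ep_R = \Pi(v^\ep,\tilde v^\ep_R) + \check F^\ep_R,
\]
with $\Pi$ bilinear of order zero; the prefactor $t$ makes $\{t=0\}$ characteristic, so no initial data is needed and the zero-order term $\tfrac{k+1}{2}\tilde v^\ep_R$ gives coercivity directly, which absorbs the $1/t$ issue you would otherwise meet. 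Second, for the $\mathcal F$-control the paper does not differentiate in $x$ or invoke piecewise/conormal estimates on the remainder: it applies the Littlewood--Paley blocks $\Delta_j$ to the Fuchsian equation, multiplies by $p|\Delta_j\tilde v^\ep_R|^{p-2}\Delta_j\tilde v^\ep_R$, integrates over $(0,T)\times\R^d$, handles $[v\cdot\nabla,\Delta_j]$ via the commutator bounds (\ref{commu})--(\ref{commu3}), and lets $p\to\infty$ to reach $L^\infty_t C^{s',r'}_x$. Your route would also close, but the paper's Fuchsian-plus-$L^p$ packaging handles the $t=0$ degeneracy more cleanly and sidesteps much of the anisotropic bookkeeping you flag as the main obstacle.
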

 Theorem \ref{TheoStab} is a corollary of this result.

 \begin{proof}
 For any $k \in \N^*$ there exists $k' >k$ such that the function $ v^\ep_a$ given by 
 \begin{eqnarray*}
 \label{velocitystabis}
 v^\ep_a (t,x) := v^0 (t,x) +  \sum_{j =  1 }^{k' } \sqrt{\ep t}^j   \  V^{j  }  (t,x,\frac{\phi^{0}(t,x)}{\sqrt{\ep t} } ) .
 \end{eqnarray*}
 is an approximated solution of the NS equations in the sense that 
\begin{eqnarray}
\label{formuR}
\partial_t v^\ep_a  + v  \cdot \nabla_x v^\ep_a  +  \nabla_x p^\ep_a = \ep   \Delta  v^\ep_a  +  \sqrt{\ep t}^{k+1 } F^\ep_R   ,
 \\  \dive v^\ep_a  =  \sqrt{\ep t}^{k+1 } \tilde{F}^\ep_R   ,
\end{eqnarray}
 with $( F^{\ep }_R  )_{0 < \ep  < \ep_0}$ and  $( \tilde{F}^{\ep }_R  )_{0 < \ep  < \ep_0}$  in $\mathcal{F}$.
 We observe that to prove Theorem \ref{TheoStab2} it is sufficient to prove that 
$  \tilde{v}^\ep_R :=  \sqrt{\ep t}^{-(k+1)} (v^\ep  - v^\ep_a )$ is in  $\mathcal{F}$.
To do that we recast the NS equations into the following equations for the remainder  $ \tilde{v}^\ep_R$:
\begin{eqnarray}
\label{formuRbis}
t (\partial_t \tilde{v}^\ep_R  + v  \cdot \nabla_x \tilde{v}^\ep_R ) +  \frac{k+1}{2  }  \tilde{v}^\ep_R   +  \nabla_x p^\ep_R= \ep t  \Delta  \tilde{v}^\ep_R  +  F^\ep_R   ,
 \\  \label{formuRbisdiv}
 \dive \tilde{v}^\ep_R  =    \tilde{F}^\ep_R   ,
\end{eqnarray}
 where the NS  pressure $p^\ep$ have been decomposed into
\begin{eqnarray*}
p^\ep  (t,x) =  p^\ep_a  (t,x) +\frac{ \sqrt{\ep t}^{k+1 }}{t} p^\ep_R  (t,x) .
\end{eqnarray*}
We immediately express the pressure remainder $p^\ep_R$ in function of the velocity $ \tilde{v}^\ep_R$ taking the divergence of the equation (\ref{formuRbis}) (and then using  the equation (\ref{formuRbisdiv})) so that 
 the problem now reads
  \begin{eqnarray}
\label{formuR2}
t (\partial_t \tilde{v}^\ep_R  + v  \cdot \nabla_x \tilde{v}^\ep_R ) +  \frac{k+1}{2  }  \tilde{v}^\ep_R - \ep t  \Delta  \tilde{v}^\ep_R  =  \Pi ( v^\ep , \tilde{v}^\ep_R ) +  \check{F}^\ep_R   ,
\end{eqnarray}
 where $ \Pi$ is bilinear of order $0$  and  $( \check{F}^{\ep }_R  )_{0 < \ep  < \ep_0}$ is  in $\mathcal{F}$.
 It is now a classical observation since the paper
 \cite{lastucedolivier} by Gu{\`e}s that -in such a process of remainder estimate- the nonlinear terms in the equation  (\ref{formuR2}) have some $ \ep t $ in factor so that  the  $ ( \tilde{v}^\ep_R  )_{0 < \ep  < \ep_0}$   live till $T$.
 As a consequence we now
 drop the tilde and the index $ \ep$ to simplify and we briefly explain how to get the needed energy estimate.
We apply the operators $\Delta_j $ of  spectral localization to the equations (\ref{formuR2}) to get for $j \geq -1$
  the equations:
 \begin{eqnarray}
\label{formuR2loc}
t (\partial_t  + v  \cdot \nabla_x ) \Delta_{j} v_R  +  \frac{k+1}{2  } \Delta_{j} v_R  - \ep t  \Delta   \Delta_{j} v_R  =  \check{F}_{R,j}
\end{eqnarray}
where
 \begin{eqnarray*}
  \check{F}_{R,j} :=
  \Delta_{j} \Pi ( v , v_R )  +   \Delta_{j} \check{F}_R   + t \lbrack v  \cdot \nabla_x ,  \Delta_{j}   \rbrack v_R 
\end{eqnarray*}
For $p > 2$ we multiply the equation  (\ref{formuR2loc}) by $p | \Delta_{j} v_R   |^{p-2} .  \Delta_{j} v_R $
and we integrate by parts  in $t,x$ on $(0,T)  \times \R^d $ 
 to get the following estimate
  \begin{eqnarray}
  \nonumber 
 (\frac{ (k+1)p}{2} -1)  \int_{ (0,T)\times \R^d  }  |   \Delta_{j} v_R |^p 
     + T ( \int_{ \R^d  }  |  \Delta_{j} v_R  |^p )(T)
     + \ep t   \int_{ (0,T)\times \R^d  }   p(p-1) a  |   \Delta_{j} v_R |^2   | \nabla_x  \Delta_{j} v_R|^{p-2}
\\      \lesssim \int_{ (0,T)\times \R^d  }   p |   \check{F}_{R,j}  |.  |  \Delta_{j} v_R  |^{p-1}
 \lesssim  p \|  \check{F}_{R,j}    \|_{ L^p ((0,T)\times \R^d )}  .  \|  \Delta_{j} v_R  \|_{ L^p ((0,T)\times \R^d ) }^{p-1}  .  \label{formuR3}
    \end{eqnarray}
 For $s'  \in \N$, for any $r'  \in (0,1)$, we multiply by $2^{j(s'+r')p}$.
 Then we use the commutator estimates  (\ref{commu}) and (\ref{commu3})
 to control the first and the third terms of  $\check{F}_{R,j}$.
 We take advantage of their factor $t$ to absorb them by the second term in  (\ref{commu3}) through Gronwall lemma. Finally 
  we take the $p$-th squareroot and we let  $p$ goes to infinity to conclude that the sequence $$( \sqrt{\ep t}^{s'+r'}  \|  \tilde{v}_R^{\ep }  \|_{L^\infty ( (0,T)  , C^{s',r'} (\R^3 )}     )_{0 < \ep  < \ep_0}$$ is bounded. 
  Since this holds for arbitrary $s'  \in \N$ and $r'  \in (0,1)$, this means that the family 
 $(  \tilde{v}_R^{\ep } )_{0 < \ep  < \ep_0}$ is in  $\mathcal{F}$.
 \end{proof}

  \begin{rem}
   Theorem  \ref{TheoStab2} proves the stability of complete expansions. 
 It would be also interesting to look at the stability of short expansions that is to understand to what extent it is possible to justify an expansion with a few terms assuming as weak tangential smoothness as possible. In this direction another approach would be to develop a multi-phase extension of what is done here.
 \end{rem}
 
 I warmly thanks  J-Y. Chemin, R. Danchin, T. Gallay and O. Glass for some useful discussions.
 
\def\cprime{$'$}

\end{document}